\newcommand{\Z}{\ensuremath{\mathbb{Z}}\xspace}
\newcommand{\Q}{\ensuremath{\mathbb{Q}}\xspace}
\newcommand{\R}{\ensuremath{\mathbb{R}}\xspace}
\newcommand{\C}{\ensuremath{\mathbb{C}}\xspace}
\newcommand{\A}{\ensuremath{\mathbb{A}}\xspace}
\newcommand{\F}{\ensuremath{\mathbb{F}}\xspace}
\newcommand{\Qp}{\ensuremath{\mathbb{Q}_{p}}\xspace}
\newcommand{\Zp}{\ensuremath{\mathbb{Z}_{p}}\xspace}
\newcommand{\D}{\mathcal{D}}
\newcommand{\m}{\ensuremath{\mathfrak{m}}\xspace}
\newcommand{\n}{\ensuremath{\mathfrak{n}}\xspace}
\newcommand{\p}{\ensuremath{\mathfrak{p}}\xspace}
\newcommand{\Frob}{\mathrm{Frob}\xspace}
\newcommand{\comment}[1]{}
\DeclareMathOperator{\Gal}{Gal}
\DeclareMathOperator{\End}{End}
\DeclareMathOperator{\Hom}{Hom}
\DeclareMathOperator{\Sym}{Sym}
\DeclareMathOperator{\Spec}{Spec}
\DeclareMathOperator{\Spf}{Spf}
\DeclareMathOperator{\Spa}{Spa}
\DeclareMathOperator{\Tor}{Tor}
\DeclareMathOperator{\Ker}{Ker}
\DeclareMathOperator{\Ind}{Ind}
\DeclareMathOperator{\Ext}{Ext}
\newcommand{\GL}{\ensuremath{\mathrm{GL}}\xspace}
\newcommand{\SL}{\ensuremath{\mathrm{SL}}\xspace}
\newcommand{\mbf}{\mathbf}
\newcommand{\mb}{\mathbb}
\newcommand{\mc}{\mathcal}
\newcommand{\ms}{\mathscr}
\newcommand{\mf}{\mathfrak}
\newcommand{\vp}{\varpi}
\newcommand{\ra}{\rightarrow}
\newcommand{\ctens}{\widehat{\otimes}}
\newcommand{\sub}{\subseteq}
\newcommand{\oo}{\mathcal{O}}
\newcommand{\ol}{\overline}
\newcommand{\bu}{\bullet}
\newcommand{\ka}{\kappa}
\newcommand{\wh}{\widehat}
\newcommand{\wt}{\widetilde}
\newcommand{\G}{\Gamma}
\newcommand{\SO}{\mathrm{SO}}
\newcommand{\PGL}{\mathrm{PGL}}
\newcommand{\fX}{{\mathfrak{X}}}
\newcommand{\hH}{\widehat{H}}
\newcommand{\Art}{\mathbf{Art}}
\newcommand{\CNL}{\mathbf{CNL}}
\newcommand{\Set}{\mathbf{Set}}
\newcommand{\Grp}{\mathbf{Grp}}
\newcommand{\E}{\mathcal{E}}
\newcommand{\T}{\mathbf{T}}
\newcommand{\olQp}{\overline{\mathbb{Q}}_p}
\newcommand{\olqp}{\overline{\mathbb{Q}}_p}
\newcommand{\s}{\mathbf{S}}
\newcommand{\ad}{\mathrm{ad}}
\newcommand{\Reff}{\mathrm{Ref}}
\newcommand{\tr}{\mathrm{tr}}
\newtheorem{thmx}{Theorem}
\newtheorem{theorem}{Theorem}[subsection]
\newtheorem{proposition}[theorem]{Proposition}
\newtheorem{corollary}[theorem]{Corollary}
\newtheorem{lemma}[theorem]{Lemma}
\newtheorem{questionx}{Question}
\theoremstyle{definition}
\newtheorem{definition}[theorem]{Definition}
\newtheorem{remark}[theorem]{Remark}
\mathchardef\mhyphen="2D
\title{}
\author{}
\newcommand{\Qbar}{\overline{\mathbb{Q}}}
\begin{document}

\title{Endoscopy on $\SL_2$-eigenvarieties}
\author{Christian Johansson and Judith Ludwig}

\address{Department of Mathematical Sciences, Chalmers University of Technology and the University of Gothenburg, 412 96 Gothenburg, Sweden}
\email{chrjohv@chalmers.se}

\address{IWR, University of Heidelberg, Im Neuenheimer Feld 205, 69120 Heidelberg, Germany}
\email{judith.ludwig@iwr.uni-heidelberg.de}

\maketitle

\begin{abstract}
In this paper, we study $p$-adic endoscopy on eigenvarieties for $\SL_2$ over totally real fields, taking a geometric perspective. We show that non-automorphic members of endoscopic $L$-packets of regular weight contribute eigenvectors to overconvergent cohomology at critically refined endoscopic points on the eigenvariety, and we precisely quantify this contribution. This gives a new perspective on and generalizes previous work of the second author. Our methods are geometric, and are based on showing that the $\SL_2$-eigenvariety is locally a quotient of an eigenvariety for $\GL_2$, which allows us to explicitly describe the local geometry of the $\SL_2$-eigenvariety. In particular, we show that it often fails to be Gorenstein.
\end{abstract}
\tableofcontents
\counterwithin{equation}{subsection}

\section{Introduction}

A central question in the theory of $p$-adic automorphic forms since its infancy has been to determine when $p$-adic automorphic forms are classical. The answers one might expect will depend on the nature of the theory of $p$-adic automorphic forms that one is using. In this paper, we will be using the overconvergent cohomology of Ash--Stevens, but to begin with we keep the discussion at the level of any `overconvergent' (or `finite slope') theory. In these settings, the original result of this nature is Coleman's result that any overconvergent modular $U_p$-eigenform of weight $k \geq 2$ and slope less than $k-1$ is classical, which has since been generalized to near-complete generality. Going further, one might ask the following question:

\begin{questionx}\label{question}
Assume that $f$ is an overconvergent eigenform for all unramified Hecke operators and the dilating\footnote{Also sometimes called the Atkin--Lehner algebra.} Iwahori--Hecke algebra at $p$, and assume that the system of Hecke eigenvalues of $f$ is classical of classical algebraic weight. Is $f$ itself then classical?
\end{questionx}

Note that the question makes sense: There is a well defined subspace of classical forms inside the space of overconvergent forms, so a priori there can be non-classical eigenforms with the same eigenvalues as a classical form. Question \ref{question} is naturally thought about in terms of eigenvarieties. By its very construction, an eigenvariety $\E$ parametrizes the systems of Hecke eigenvalues of overconvergent automorphic forms, and it carries a coherent sheaf $\mc{M}$ on it whose fibres are the corresponding eigenspaces\footnote{Or typically their duals, suitably interpreted, but we will not worry about this distinction in the introduction.}. Let us assume for simplicity that we are in a `defect zero'-situation where classical points are Zariski dense in the eigenvariety. By Coleman's theorem and its generalizations, all eigenforms are classical for classical eigensystems of regular weight and small slope, and these conditions are generic among all eigensystems of classical weight. To study Question \ref{question} beyond the case of small slopes, one is led to studying the variation of the classical subspace and the local geometry of eigenvarieties near classical points. For $\GL_{2/\Q}$, one can show that the dimension of the classical eigenspace is locally constant (cf. Proposition \ref{constancy of classical subspace} of this paper), and thus the answer to Question \ref{question} is affirmative if and only if $\mc{M}$ is locally free at the point. This local freeness is in turn implied by smoothness of the eigenvariety, which was proved in essentially all cuspidal cases of weight $\geq 2$ by Bella\"iche \cite{bellaiche-crit}. Thus the answer to Question \ref{question} is essentially affirmative for $\GL_{2/\Q}$. In the context of higher rank unitary groups, Question \ref{question} has also been answered affirmatively in many (non-endoscopic) cases by Breuil--Hellmann--Schraen \cite{bhs2,bhs3}.

\medskip

The goal of this paper is to investigate Question \ref{question} for $\SL_{2/F}$, where $F$ is a totally real number field. For $\SL_{2/\Q}$ and its inner forms, one of us (J.L.) has shown that the answer to Question \ref{question} is negative for critically refined endoscopic points, under certain technical assumptions \cite{L2,L1}. This was achieved by showing, using the theory of automorphic representations for these groups \cite{labesse-langlands,lansky-raghuram}, that the dimension of the classical eigenspace drops at such points. On the other hand, the dimension of the fibre of $\mc{M}$ cannot drop by upper semicontinuity. It was speculated in \cite{L1} that the extra non-classical eigenforms should be related to non-automorphic members of the (classical) $L$-packet of the endoscopic point. In this paper, we make this idea precise by computing the whole eigenspace explicitly, including its decomposition into a classical and a non-classical part. At the same time, we substantially generalize the results of \cite{L2,L1}, removing the technical hypotheses of those works and allowing an arbitrary totally real field $F$. This requires a very different approach to understanding spaces of overconvergent automorphic forms and eigenvarieties for $\SL_2$, motivated by ideas from the geometrization of Langlands correspondences.

\subsection{Heuristics on how to compute eigenspaces of overconvergent automorphic forms} \label{subsec: working model}

Before detailing the results of this paper, we wish to give a sketch of how we believe that the structure of these eigenspaces can be understood. The overall picture should be the same regardless of which theory of overconvergent forms we use, so for the purpose of this discussion we give ourselves an eigenvariety $\E$ with its coherent sheaf\footnote{To obtain an object with better structural properties, one should take a suitable direct limit of the sheaves $\mc{M}$ over all tame levels; this will play an important technical role in the paper but we will not elaborate on it here.} $\mc{M}$, for some split (for simplicity) reductive group $G/F$, with $F$ a number field, which we assume has defect zero (i.e. $G(F_v)$ has essentially discrete series for all $v\mid \infty$).  As is now well established in Langlands philosophy, spectra of Hecke algebras should be close to coarse moduli spaces of Galois representations. In our situation, there should be (very roughly speaking) a moduli stack $\mf{X}$ of totally odd, trianguline Galois representations of $G_F := \Gal(\ol{F}/F)$ valued in the dual group\footnote{One should really consider the $C$-group here, but we ignore this point as it does not play a role in the situations we consider in the paper.} $\wh{G}$, with coarse moduli space $X$. There should be maps
\[
\mf{X} \to X \to \E,
\]
and the map $X \to \E$ should (roughly speaking) be an isomorphism when $\wh{G}$ is acceptable, in the sense that its invariant theory is generated by unary invariants, cf. \cite{weidner}. The stack $\mf{X}$ maps to the corresponding stacks $\mf{X}_v$ of $\wh{G}$-valued representations of $G_{F_v} := \Gal(\ol{F}_v/F_v)$ (taking a suitable stack of trianguline, or refined, representations when $v\mid p$), giving us
\[
g = (g_v)_v : \mf{X} \to \prod_v \mf{X}_v.
\]
The sheaf $\mc{M}$ on $\E$ should arise as follows: For $v\nmid p$, there should be a universal family of smooth $G(F_v)$-representations $\mc{F}_v$ on $\mf{X}_v$ interpolating the local Langlands correspondence, cf. \cite{emerton-helm,hellmann2021derived,fargues-scholze}. For $v\mid p$, we expect there to be a suitable $T(F_v)$-representation $\mc{F}_v$ on $\mf{X}_v$, where $T\sub G$ is a maximal torus. This gives us a sheaf
\begin{equation}\label{eq: sheaf local-global}
\mc{F} := \bigotimes_v^\prime g_v^\ast \mc{F}_v
\end{equation}
on $\mf{X}$. Writing $\pi$ for the map $\mf{X} \to \E$, we then expect that
\[
\mc{M} = \pi_\ast \mc{F}.
\]
To simplify the discussion, assume in addition that $G$ is semisimple and simply connected, and that $\wh{G}$ is acceptable (these assumptions hold for $G = \SL_2$). For the purpose of this discussion, this means that we can expect $X=\E$. Looking locally around an irreducible point $\rho \in X$, the map $\mf{X} \to X$ is often an isomorphism. Hypothetically, this can fail because $\rho$ has a non-trivial centralizer group $S_\rho$, or for reasons to do with triangulations/refinements. In this paper, we are mostly interested in the former situation. Locally around $\rho$ in $X$, the geometry would look like
\[
[Y/S_\rho^\prime] \to Y \sslash S_\rho^\prime,
\]
where $Y$ is an affinoid variety with an action of the subgroup $S_\rho^\prime \sub S_\rho$ which fixes the refinement, $[Y/S_\rho^\prime]$ is the stacky quotient and $Y \sslash S_\rho^\prime$ is the GIT quotient. The pushforward map then takes a sheaf on $[Y/S_\rho^\prime]$, viewed as $S_\rho^\prime$-equivariant sheaf on $Y$, to its $S_\rho^\prime$-invariants (as $S_\rho^\prime$ is finite and we are in characteristic $0$, there is no higher cohomology). 

\medskip

Before saying more, let us contrast this to what should be happening for classical spaces of (algebraic) automorphic forms. In that case the same type of picture should apply, but one would now look at the moduli of representations $G_F \to \wh{G}$ which are geometric with fixed Hodge--Tate weights (keeping all assumptions on $G$, for simplicity). The corresponding moduli stack should be zero-dimensional, and irreducible points are isolated, looking like $[\ast/S_\rho]$. The same type of picture should apply --- the sheaves $g_v^\ast \mc{F}_v$ restricted to $[\ast /S_\rho]$ are the local $L$-packets, the sheaf $\mc{F}$ is the global $L$-packet, and pushing forward along
\[
[\ast/S_\rho] \to \ast \sslash S_\rho = \ast
\]
singles out the automorphic part of the $L$-packet --- computing the action of $S_\rho$ on $\mc{F}$ and taking invariants should amount to Arthur's multiplicity formula. In the function field case, much of this picture has been realized in \cite{lafforgue-zhu}, which was a motivation for our work.

\medskip

The difference between the two settings is that the moduli stack $\mf{X}$ in the overconvergent setting is not $0$-dimensional at $\rho$, and the map $\mf{X} \to X$ may therefore be ramified at $\rho$. This induces a filtration on the fibre of $\mc{M}$ at $\rho\in X$, whose graded pieces are parts of the `overconvergent $p$-adic $L$-packet' (which should be taken to be the fibre of $\mc{F}$ at $\rho$, viewed as a point of $\mf{X}$), with multiplicities. The top quotient in this filtration is the (dual of the) classical eigenspace, but as soon as there are more steps in the filtration, we will see non-classical eigenforms. In particular, we expect that the notion of local-global compatibility in the $p$-adic setting is more subtle, with the link between $p$-adic $L$-packets and what is visible in spaces of $p$-adic automorphic forms being more complicated. We also expect that this picture, with straightforward modifications, applies to other theories of $p$-adic automorphic forms as well, such as completed cohomology\footnote{For $\SL_{2/\Q}$, one of us (C.J.) and James Newton have obtained results on local-global compatibility in completed cohomology which confirms this picture.}. We finish this subsection by noting that after the first version of this paper was made public, Emerton--Gee--Hellmann have announced a rather precise conjecture for computing spaces of overconvergent automorphic forms similar in spirit to what we have sketched above; see \cite[Conjecture 9.6.16]{egh}.

\subsection{Results and methods}

The picture that we have given above is of course highly conjectural. Our goal in this paper is, nevertheless, to implement it in spirit for $G=\SL_{2/F}$ with $F$ totally real. In our implementation, moduli spaces or stacks of $\wh{G}=\PGL_2$-representations will not feature\footnote{However, deformation spaces of $\PGL_2$-representations (without any triangulations or refinements) will feature at an important point in our argument.}. Instead we make the following ad hoc replacements (using the notation of \S \ref{subsec: working model}):
\begin{itemize}
\item We replace the global moduli stack $\mf{X}$ by the $\GL_{2/F}$-eigenvariety $\wt{\E}$ together with the action on it by twisting with characters;

\item We replace the sheaf $\mc{F}$ by the coherent sheaf $\wt{\mc{M}}$ on $\wt{\E}$, together with its twisting action.
\end{itemize}
No substitute for the local stacks $\mf{X}_v$ or sheaves $\mc{F}_v$ are needed, and instead a classicality result for $\wt{\mc{M}}$ together with classical local-global compatibility replaces equation (\ref{eq: sheaf local-global}). Using automorphic forms on $\GL_2$ to study automorphic forms on $\SL_2$ has a long history, and was done by Labesse--Langlands \cite{labesse-langlands} in the context of automorphic representation theory, but to the best of our knowledge our work is the first to implement this idea in the context of eigenvarieties. While the comparison between \emph{spaces} of overconvergent automorphic forms on $\GL_2$ and $\SL_2$ is relatively straightforward, the comparison between eigenvarieties (i.e. the analogue of proving $X=\E$) is much more difficult and takes up most of this paper. In some sense this difficulty is natural, since the comparison of eigenvarieties can be regarded as a $p$-adic analogue, in families, of the subtle problem of proving multiplicity $1$ for automorphic representations for $\SL_2$ \cite{ramakrishnan}.

\medskip

Let us now turn to the precise results. We work with the overconvergent cohomology of Ash--Stevens \cite{ash-stevens}, where the corresponding eigenvarieties for $\GL_{2/F}$ have been studied in great detail by Bergdall--Hansen \cite{bh}. In this introduction we start by assuming $F=\Q$, which greatly simplifies many technical aspects. Let $\wt{\E}$ denote the Coleman--Mazur eigencurve, constructed using overconvergent cohomology (see e.g. \cite[\S 6]{JN1}), with tame level being full level $N$ for some $N$ coprime to $p$. We recall that $\wt{\E}$ lives over a weight space $\mc{W}$, which is the moduli space of continuous characters of $\Z_p^\times$. Given an affinoid family $U$ of characters, we have a corresponding module $\D_U$ of locally analytic distributions, dual to the module of locally analytic $\oo(U)$-valued functions on $\Zp$. Overconvergent cohomology is then defined as
\[
H^1(Y_N,\D_U),
\] 
where $Y_N$ is the disconnected modular curve with Iwahori level at $p$ and full tame level $N$. There is an action of Hecke operators on $H^1(Y_N,\D_U)$. At $p$, we have the usual $U_p$- and $S_p$-operators, and also the $\SL_2$-Hecke operator $u_p = U_p^2S_p^{-1}$; both $U_p$ and $u_p$ are induced from compact operators at the level of complexes. To simplify the comparison with the $\SL_2$-theory, we will use $u_p$ instead of the more commonly used $U_p$ when talking about slopes\footnote{Recall the the slope of an eigenform is the $p$-adic valuation of its eigenvalue for a chosen compact Hecke operator at $p$, in this case $u_p$.}. When $H^1(Y_N,\D_U)$ carries a slope decomposition
\[
H^1(Y_N,\D_U) = H^1(Y_N,\D_U)_{\leq h} \oplus H^1(Y_N,\D_U)_{>h}
\]
for some $h \in \Q_{\geq 0}$, the pair $(U,h)$ is said to be a slope datum. The eigencurve $\wt{\E}$ is glued together from the local pieces $\wt{\E}_{U,h}= \Spa \T_{U,h}$, where $\T_{U,h} \sub \End_{\oo(U)}(H^1(Y_N,\D_U)_{\leq h})$ is the sub-$\oo(U)$-algebra generated by the Hecke operators $T_\ell$ for $\ell \nmid Np$ and $U_p$. The coherent sheaf $\wt{\mc{M}}$ on $\wt{\E}$ is defined by
\[
\wt{\mc{M}}(\wt{\E}_{U,h}) = H^1(Y_N,\D_U)_{\leq h}.
\]
The disconnected modular curve has a surjective map $Y_N \to (\Z/N)^\times$ with connected fibres, and the component $Y_N^1$ lying over $1 \in (\Z/N)^\times$ is the $\SL_2$-modular curve of full tame level $N$ and Iwahori level at~$p$. The weight space and the locally analytic distribution modules used to construct the $\SL_2$-eigencurve $\E$ are exactly the same as those used for $\GL_2$. In particular, we have the same slope data $(U,h)$ and the $\SL_2$-slope $\leq h$-overconvergent cohomology modules $H^1(Y_N^1 ,\D_U)_{\leq h}$ are direct summands of the corresponding $\GL_2$-modules $H^1(Y_N,\D_U)_{\leq h}$. Indeed, the map $Y_N \to (\Z/N)^\times$ induces an action of the dual group $H$ of $(\Z/N)^\times$ on $H^1(Y_N,\D_U)_{\leq h}$ (by twisting by characters) and we have
\[
H^1(Y^1_N,\D_U)_{\leq h} = \left( H^1(Y_N,\D_U)_{\leq h} \right)_H.
\]
The $\SL_2$-eigencurve $\E$ is then glued together from the local pieces $\E_{U,h} = \Spa \s_{U,h}$, where $\s_{U,h} \sub \End(H^1(Y^1_N,\D_U)_{\leq h})$ is the $\oo(U)$-subalgebra generated by the $\SL_2$-Hecke operators $t_\ell = T_{\ell}^2 S_{\ell}^{-1}$ for $\ell \nmid Np$ and $u_p$, and the coherent sheaf $\mc{M}$ is defined by
\[
\mc{M}(\E_{U,h}) = H^1(Y_N^1,\D_U)_{\leq h}.
\]
Using classical functoriality from $\GL_2$ to $\SL_2$ and a Chenevier-style interpolation theorem, we construct a surjective and finite morphism $\pi : \wt{\E} \to \E$. The $H$-action on $\wt{\mc{M}}$ induces an $H$-action on $\wt{\E}$ (by twisting), and one might reasonably think that the coarse quotient $\wt{\E} \sslash H$ is equal to $\E$. This, together with the fact that $\mc{M} = (\pi_\ast \wt{\mc{M}})_H$, would enable us to compute the fibres of $\mc{M}$ at classical points in terms of the fibres of $\wt{\mc{M}}$, and we know that the fibres of $\wt{\mc{M}}$ consist of classical eigenforms.

\medskip

As indicated above, the study of the map $\wt{\E} \sslash H \to \E$ requires new ideas. Since we are only interested in the local geometry, we do this locally around the points we are interested in\footnote{In fact, we only construct the map $\pi$ locally.}. The classical points on $\wt{\E}$ we are interested in correspond to pairs $x=(\pi,\alpha)$ where $\pi$ is a cuspidal cohomological automorphic representation of $\GL_2(\A)$ with $\pi^{K^p(N)I} \neq 0$, and $\alpha$ is a refinement (a choice of $U_p$-eigenvalue on $\pi_p^I$), where $I \sub \GL_2(\Zp)$ is the usual upper triangular Iwahori subgroup. Similarly, we will be interested in classical points on $\E$ corresponding to pairs $y=(\Pi,\gamma)$, where $\Pi$ is a global $L$-packet for $\SL_2(\A)$ which is cuspidal and cohomological, with $\SL_2(\A^\infty) \cap K^p(N)I$-fixed vectors, and $\gamma$ is a refinement (a choice of $u_p$-eigenvalue in the $\SL_2(\A^\infty) \cap K^p(N)I$-fixed vectors of $\Pi$). Assume that $x=(\pi,\alpha) \in \wt{\E}$ maps to $y = (\Pi,\gamma) \in \E$. The $L$-packet $\Pi$ is endoscopic if and only if $\pi$ has CM by an imaginary quadratic field $\wt{F}$. If $\rho = \rho_\pi : G_{\Q} \to \GL_2(\ol{\Q}_p)$ is the Galois representation attached to $\pi$, we let $S_\rho$ be the centralizer group of $\ad^0\rho$, viewed as a representation into $\PGL_2(\ol{\Q}_p)$. $S_\rho$ is non-trivial if and only if $\pi$ has CM, in which case it is isomorphic to $\Z/2$. It acts on $\wt{\E}$ by letting the non-trivial element act by the quadratic character corresponding to $\wt{F}$, which we may view as an element of $H$. We let $S_\rho^\prime \sub S_\rho$ denote the subgroup that fixes $x$; we have $S_\rho^\prime \neq S_\rho$ if and only if $\wt{F}$ is inert at $p$. We make one further technical assumption on $\pi$:

\begin{itemize}
\item All $U_p$-eigenvalues on $\pi_p^I$ have multiplicity one.
\end{itemize}
This is often referred to as ``regularity'' in the literature, and it is true for all $\pi$ conditional on the Tate conjecture and true for all $\pi$ corresponding to modular forms of weight $2$ unconditionally \cite{coleman-edixhoven}. It also holds in the CM case. We may now formulate our first main theorem, in the case $F =\Q$:

\begin{thmx}\label{thm A}
The following holds:
\begin{enumerate}
\item If $-\alpha$ is not a refinement of $\pi$, or if $-\alpha$ is a refinement of $\pi$ and $\pi$ has CM, then the natural map $\wh{\oo}_{\E,y} \to \wh{\oo}_{\wt{\E},x}$ of completed local rings induces an isomorphism $\wh{\oo}_{\E,y} \cong \wh{\oo}_{\wt{\E},x}^{S_\rho^\prime}$. In particular, $\E$ is smooth at $y$.

\item If $-\alpha$ is a refinement of $\pi$ and $\pi$ does not have CM, then there are precisely $2$ components locally going through $y$, and both are smooth. 
\end{enumerate}
\end{thmx}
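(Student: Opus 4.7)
The plan is to study the local geometry of $\E$ at $y$ by pulling back to the $\GL_2$-side, combining three inputs: (a) an explicit set-theoretic description of $\pi^{-1}(y)$ via Labesse--Langlands, (b) the relation $\mc{M} = (\pi_\ast\wt{\mc{M}})_H$ coming from $H^1(Y_N^1,\D_U)_{\leq h} = (H^1(Y_N,\D_U)_{\leq h})^H$, and (c) Bella\"iche's smoothness of $\wt{\E}$ at $x$ \cite{bellaiche-crit}, valid under the regularity hypothesis on $U_p$-eigenvalues.

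First I describe $\pi^{-1}(y)$ set-theoretically. Using Labesse--Langlands together with the identity $u_p = U_p^2 S_p^{-1}$ (which identifies $\alpha$ and $-\alpha$ on the $\SL_2$-side), the classical points in the fiber are the pairs $(\pi\otimes\chi,\ \pm\chi(p)\alpha)$ for $\chi \in H$, keeping only those whose second coordinate is actually a refinement. A case-by-case unwinding shows the $H$-stabilizer of $x$ equals $S_\rho'$: trivial when $\pi$ has no CM, and equal to $\{1,\eta\}$ when $\pi$ has CM by $\wt{F}$ splitting at $p$. In the CM-inert case $\eta(p)=-1$ and $\pi\otimes\eta\cong\pi$ automatically forces $\{\pm\alpha\}$ to be the set of $U_p$-eigenvalues on $\pi_p^I$, and the $\eta$-twist then merges the two refinements into a single $H$-orbit. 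Consequently $\pi^{-1}(y)$ is a single $H$-orbit in all cases of Part~(1) except the CM-split subcase where $-\alpha$ is also a refinement, in which it consists of two orbits interchanged by a $\Gal(\wt{F}/\Q)$-involution of $\wt{\E}$ (coming from $\psi\leftrightarrow\psi^\sigma$ on CM Hecke characters, which swaps the two choices of refinement); in Part~(2), $\pi^{-1}(y)$ consists of two genuinely distinct free $H$-orbits.

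Second, under the regularity hypothesis Bella\"iche's smoothness implies $\wt{\mc{M}}$ is free of rank one over $\oo_{\wt{\E}}$ at each classical $x_i\in \pi^{-1}(y)$. Applying $\mc{M} = (\pi_\ast\wt{\mc{M}})_H$ and taking completions gives, by Frobenius reciprocity,
\[
\mc{M}\ctens_{\oo_{\E,y}}\wh{\oo}_{\E,y} \;\cong\; \Bigl(\bigoplus_{x_i\in\pi^{-1}(y)}\wh{\oo}_{\wt{\E},x_i}\Bigr)^H \;\cong\; \prod_{[x_j]}\wh{\oo}_{\wt{\E},x_j}^{H_{x_j}},
\]
the last product being over $H$-orbit representatives, and faithful action of $\wh{\oo}_{\E,y}$ on $\mc{M}_y$ embeds $\wh{\oo}_{\E,y}$ into this ring. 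In Part~(1), after the Galois involution identifies the two summands in the CM-split subcase, the right-hand side collapses to $\wh{\oo}_{\wt{\E},x}^{S_\rho'}$; surjectivity of $\wh{\oo}_{\E,y}\hookrightarrow \wh{\oo}_{\wt{\E},x}^{S_\rho'}$ follows from a Krull-dimension comparison (both are equidimensional of dimension $\dim\mc{W}+1$) together with finiteness of $\pi$. Smoothness of $\E$ at $y$ is then immediate from Chevalley--Shephard--Todd, since $\wh{\oo}_{\wt{\E},x}$ is regular and $S_\rho'$ (trivial or $\Z/2$) acts as a pseudo-reflection.

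For Part~(2), the two free $H$-orbits give $\mc{M}\ctens\wh{\oo}_{\E,y}\cong\wh{\oo}_{\wt{\E},x_1}\times \wh{\oo}_{\wt{\E},x_2}$ with both factors regular by Bella\"iche, and I conclude by showing each projection $\wh{\oo}_{\E,y}\to\wh{\oo}_{\wt{\E},x_i}$ is surjective, exhibiting their kernels as the two minimal primes of $\wh{\oo}_{\E,y}$ with regular quotients, i.e., the two smooth components through $y$. The main obstacle I expect is this surjectivity (and the analogous surjectivity in Part~(1)) together with the rigorous realization of the Galois involution in the CM-split subcase: both require producing enough $\SL_2$-Hecke operators to separate or identify the relevant branches of $\wt{\E}$, and are essentially $p$-adic analogues in families of multiplicity one for $\SL_2$ \cite{ramakrishnan}, resting on the Chenevier-style interpolation theorem used to construct $\pi$ in the first place.
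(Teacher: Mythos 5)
Your overall architecture matches the paper's (describe $\pi^{-1}(y)$ as $H$-orbits, compute $(\pi_\ast\wt{\mc{M}})_H$ orbit by orbit, deduce an embedding of $\wh{\oo}_{\E,y}$ into a product of invariant rings), but there are two genuine gaps, both of which you flag as ``the main obstacle'' without actually closing them.

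First, the classicality of the fibre. Your description of $\pi^{-1}(y)$ via Labesse--Langlands implicitly assumes every point of the fibre is a classical point $(\pi\otimes\chi,\pm\chi(p)\alpha)$. But the fibre lives on an eigenvariety and a priori consists of systems of Hecke eigenvalues, which need not come from classical forms merely because they share $\SL_2$-eigenvalues with $y$. Establishing classicality requires knowing that the twisting character $\chi$ is unramified at $p$ and that $\chi(p)\alpha$ is actually a $\varphi$-eigenvalue on $D_{\mathrm{crys}}(\rho_{\pi,p}\otimes\chi_p)$; the paper deduces this from the family of $(\varphi,\Gamma)$-modules over a good neighbourhood of $\wt{\E}$ and the global triangulation theorem of Kedlaya--Pottharst--Xiao (Proposition 3.2.18). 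Nothing in your argument substitutes for this.

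Second, and more seriously, the surjectivity of $\wh{\oo}_{\E,y}\hookrightarrow\wh{\oo}_{\wt{\E},x}^{S_\rho'}$ (and of the two projections in Part (2)). You claim this ``follows from a Krull-dimension comparison together with finiteness of $\pi$.'' It does not. A finite injection of $1$-dimensional complete local domains with the same residue field need not be surjective: $E[[t^2]]\hookrightarrow E[[t]]$ and $E[[t^2,t^3]]\hookrightarrow E[[t]]$ are both finite, dimension-preserving, residually-trivial, and proper. Without knowing in advance that $\wh{\oo}_{\E,y}$ is normal, you cannot rule out that it is a proper cuspidal subring. This surjectivity is precisely the hard content of the theorem, and the paper's proof uses entirely different machinery: Lafforgue's $\PGL_2\cong\SO_3$-valued pseudorepresentations, Emerson's identification of the pseudorepresentation moduli with the character variety in characteristic zero, and Procesi's generation of $\SO_3$-invariants by traces of words, combined via Theorem \ref{thethingIwant} to show that $R_\rho^{S_\rho}$ is topologically generated by the elements $\tr(\ad^0\rho^{\mathrm{univ}}(\Frob_\ell))$, which map to the $\SL_2$-Hecke operators $\ell^{-1}(t_\ell+1)$. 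That is what produces the needed surjection from the $\SL_2$-Hecke algebra. Your appeal to ``a $p$-adic analogue of multiplicity one'' correctly names the flavour of the difficulty but does not supply an argument, and your appeal to Chevalley--Shephard--Todd is only applicable once the surjectivity is established.

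One smaller point: the ``CM-split subcase where $-\alpha$ is also a refinement'', for which you invent a $\Gal(\wt{F}/\Q)$-involution of $\wt{\E}$, does not occur for $F=\Q$. By Proposition \ref{behaviour of av at split primes}(1), when $F_v=\Q_p$ and $p$ splits in $\wt{F}$ one has $a_p(\pi)\neq 0$, i.e.\ $-\alpha$ is never a refinement; the CM cases of Theorem \ref{thm A} split cleanly into ``$p$ split, $r=0$, $S_\rho'=\Z/2$'' and ``$p$ inert, $r=1$, $S_\rho'=1$, single orbit''. The involution you propose is not used in the paper and would need its own justification; fortunately it is unnecessary.
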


When $S_\rho^\prime$ is non-trivial, i.e., when $\pi$ has CM by $\wt{F}$ with $p$ split in $\wt{F}$, then the action of $S_\rho^\prime$ on $\wh{\oo}_{\wt{\E},x}$ is non-trivial exactly when the refinement $\alpha$ is critical. In particular, the map $\wt{\E} \to \E$ is a local closed immersion at $x$ unless we are in the critically refined CM case, in which case it has ramification degree $2$. From this, we deduce our second main theorem, concerning classicality. Let us write $\mc{M}_y$ for the fibre of $\mc{M}$ at $y$; in the overconvergent setting it has a \emph{quotient} $\mc{M}_y \to \mc{M}_y^{cl}$ of classical forms.

\begin{thmx}\label{thm B}
If $\pi$ does not have CM or the refinement $\alpha$ is non-critical, then $\mc{M}_y \to \mc{M}_y^{cl}$ is an isomorphism. When $\pi$ has CM and $\alpha$ is critical, the kernel of $\mc{M}_y \to \mc{M}_y^{cl}$ is non-trivial, and the kernel may be identified with the direct sum of the $\gamma$-eigenspaces of $u_p$ in the $\SL_2(\A^\infty) \cap K^p(N)I$-fixed vectors of the non-automorphic members of $\Pi$.
\end{thmx}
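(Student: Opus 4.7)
The plan is to derive Theorem B from Theorem A combined with the local relation $\mc{M}=(\pi_\ast \wt{\mc{M}})_H$, the classicality of fibres of $\wt{\mc{M}}$ at non-critical slope classical points, and the Labesse--Langlands description of endoscopic $L$-packets. The argument splits along the cases of Theorem A.

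If $\pi$ does not have CM, or if $\alpha$ is non-critical, Theorem A shows that $\pi:\wt{\E}\to\E$ is a local closed immersion at every preimage of $y$ (case (1), since $S_\rho'$ then acts trivially on $\wh{\oo}_{\wt{\E},x}$) or splits as two smooth sheets each mapping isomorphically (case (2)). In either subcase $(\pi_\ast \wt{\mc{M}})_y$ decomposes as a direct sum over the $H$-orbit $\pi^{-1}(y)$, and taking $H$-invariants identifies $\mc{M}_y$ with $\wt{\mc{M}}_x^{H_x}$, where $H_x$ is the $H$-stabilizer of $x$. Non-critical slope classicality identifies $\wt{\mc{M}}_x$ with the dual of the classical $U_p=\alpha$ eigenspace in $\pi$ at the given tame level, and descending through restriction from $\GL_2$ to $\SL_2$ matches this $H_x$-invariant subspace with the classical $\SL_2$-eigenspace $\mc{M}_y^{cl}$, yielding $\mc{M}_y = \mc{M}_y^{cl}$.

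The core of the argument is the critically refined CM case, where Theorem A produces either ramification at $x$ (in the $p$-split subcase, of degree $\#S_\rho'=2$) or a non-trivial $H$-orbit $\{x,x'=(\pi,-\alpha)\}$ lying over $y$ (in the $p$-inert subcase, where the self-twist $\chi_{\wt{F}}$ satisfies $\chi_{\wt{F}}(p)=-1$). In either setting $(\pi_\ast \wt{\mc{M}})_y$ is strictly larger than $\wt{\mc{M}}_x$, and after taking $H$-invariants the resulting $\mc{M}_y$ is strictly larger than its classical quotient $\mc{M}_y^{cl}$. On the automorphic side, Labesse--Langlands predicts that the endoscopic $L$-packet $\Pi$ contains non-automorphic members, and the plan is to match the excess in $\mc{M}_y$ over $\mc{M}_y^{cl}$ with the direct sum of $u_p=\gamma$ eigenspaces in the $\SL_2(\A^\infty)\cap K^p(N)I$-fixed vectors of those non-automorphic members.

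The main obstacle will be turning this heuristic dimension count into a canonical identification of the kernel of $\mc{M}_y \to \mc{M}_y^{cl}$ with the non-automorphic part of $\Pi$. This requires tracking the $H$- (equivalently $S_\rho'$-)action on the thickened fibre $(\pi_\ast \wt{\mc{M}})_y$ coming from Theorem A and matching it against the Labesse--Langlands parameterization of the members of $\Pi$ by characters of the component group $S_\phi$, with the $p$-split and $p$-inert critically refined CM subcases handled separately.
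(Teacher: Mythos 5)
Your overall strategy matches the paper's: descend from $\GL_2$ to $\SL_2$ via the twisting action and $\mc{M}=(\pi_*\wt{\mc{M}})_H$, input Theorem A, and compare with the classical eigenspace. But two things go wrong.

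First, a classification error. You place the $p$-inert CM subcase inside the critically refined CM case. When $p$ is inert in $\wt{F}$, the local representation $\rho_{\pi,p}$ is absolutely irreducible crystalline, so both refinements are automatically non-critical; moreover $S_\rho'$ is trivial (the twist by $\psi$ moves the refinement), so $\wh{\oo}_{\E,y}\cong\wh{\oo}_{\wt{\E},x}$ by Theorem~A(1), $\s_y=\T_x$, and $\mc{M}_y\to\mc{M}_y^{cl}$ is an isomorphism. This case belongs to the first clause of Theorem~B; the kernel is non-zero only when $p$ is split in $\wt{F}$ and $\alpha$ is critical.

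Second, and decisively, you explicitly defer the core of the argument as ``the main obstacle'' --- turning the dimension excess into a canonical identification of the kernel --- and stop there. But that step is precisely the content of the theorem, and it requires a concrete computation. In the $p$-split critical CM case one has $\s_y=\T_x^{+}$, the invariants of the twisting involution, while $\wt{\mc{M}}_x^\wedge$ is free over $\T_x$; setting $\n_x := \m_y\T_x$ one has $\m_x^2\subset\n_x\subsetneq\m_x$, and $\mc{M}_y=(\wt{\mc{M}}_x^\wedge/\n_x\wt{\mc{M}}_x^\wedge)^{+}$. Flatness then gives a short exact sequence
\[
0\longrightarrow\bigl(\m_x/\n_x\otimes_E\wt{\mc{M}}_x\bigr)^{+}\longrightarrow\mc{M}_y\longrightarrow(\wt{\mc{M}}_x)^{+}\longrightarrow 0,
\]
and since $\m_x/\n_x$ is the one-dimensional $(-1)$-eigenspace of $\m_x/\m_x^2$ (critical case), the kernel is canonically $(\wt{\mc{M}}_x)^{-}$. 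Decomposing $\wt{\mc{M}}_x$ as a restricted tensor product over places and applying the Labesse--Langlands multiplicity formula identifies $(\wt{\mc{M}}_x)^{-}$ with the $u_p=\gamma$-eigenspace of the Iwahori-fixed vectors of the non-automorphic members of $\Pi$. Your outline gestures at ``tracking the $S_\rho'$-action on the thickened fibre'' but supplies neither the ideal $\n_x$ nor the resulting filtration, so neither the non-triviality of the kernel nor its identification is actually established.
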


The interpretation of the kernel in the critically refined CM case is not without subtleties; see Remarks \ref{interpretation} and \ref{interpretation quat}. We also discuss this more after Theorem \ref{thm E} below.

\medskip

Let us now discuss the proofs of Theorems \ref{thm A} and \ref{thm B} further. The key part is to prove Theorem \ref{thm A} and the remarks after it concerning the action of $S_\rho^\prime$; Theorem \ref{thm B} then follows from the identification $\mc{M} = (\pi_\ast \wt{\mc{M}})_H$ and the fact that $\wt{\mc{M}}$ is locally free at $x$. Looking at the map $\pi : \wt{\E} \to \E$, one may roughly divide the argument into two parts:

\begin{enumerate}
\item Show that the set-theoretic fibre $\pi^{-1}(y)$ consists of one or two $H$-orbits. The case of two orbits happens precisely when $-\alpha$ is a refinement of $\pi$ and $\pi$ does not have CM.

\smallskip

\item Show that the number of irreducible components of $\wh{\oo}_{\E,y}$ is equal to the number of orbits as in (1), and that if $\wh{\oo}_{\E,y} \twoheadrightarrow R$ is the irreducible component corresponding to an orbit with representative~$z$, then $R \to \wh{\oo}_{\wt{\E},z}^{S_\rho^\prime}$ is an isomorphism.
\end{enumerate}

For both parts, we make heavy use of Galois-theoretic methods. The proof of part (1) uses the family of Galois representations over $\wt{\E}$ and the fact that it is a refined, or trianguline, family to show that all elements in $\pi^{-1}(y)$ are twists by characters in $H$ of the points mentioned (here and elsewhere we view $(\Z/N)^\times$ as $\Gal(\Q(\zeta_N)/\Q)$). A subtle issue is that the points of $\pi^{-1}(y)$ are not \emph{a priori} classical; to show this requires the full force of global triangulation theory.

\medskip

The proof of (2) is the most novel part of our argument. To simplify the exposition, let us restrict to the critically refined CM case, which is in some sense the most interesting (at least a posteriori, in light of Theorem \ref{thm B}). Part (1) then establishes that $\wh{\oo}_{\E,y}$ is a domain. Bella\"iche \cite{bellaiche-crit} established a local $R=T$ theorem
\[
R_\rho^\alpha \cong \wh{\oo}_{\wt{\E},x}
\]
between $\wh{\oo}_{\wt{\E},x}$ and a deformation ring $R_\rho^\alpha$ parametrizing deformations of $\rho$ that are weakly refined with respect to $\alpha$, and showed that both rings are formally smooth. Using Bella\"iche's description of the tangent space of $R_\rho^\alpha$, we compute the action of $S_\rho$ on $\wh{\oo}_{\wt{\E},x}$ and show that it is non-trivial (this relies on $\alpha$ being critical). We now consider the universal unrestricted deformation ring $R_\rho$ of $\rho$, which surjects onto $R_\rho^\alpha$, and look at the maps
\[
R_\rho \twoheadrightarrow R_\rho^\alpha \cong \wh{\oo}_{\wt{\E},x}.
\]  
Taking invariants for the action of $S_\rho$, we get a surjection
\[
R_\rho^{S_\rho} \twoheadrightarrow \wh{\oo}_{\wt{\E},x}^{S_\rho}.
\]
Let $\rho^{univ}$ be the universal deformation of $\rho$. For $\ell \nmid Np$, $R_\rho^{S_\rho}$ contains the element $\tr(\Frob_\ell,\ad^0 \rho^{univ})$, and this maps to $\ell^{-1}(t_\ell +1) \in \wh{\oo}_{\E,y} \sub \wh{\oo}_{\wt{\E},x}^{S_\rho}$, where $t_\ell$ is the usual $\SL_2$-Hecke operator. To complete the proof of part (2), it suffices to show that the functions $\tr(\Frob_\ell,\ad^0 \rho^{univ})$ generate $R_\rho^{S_\rho}$ topologically. To prove this, we use the theory of $\PGL_2$-valued pseudorepresentations due to Lafforgue \cite{lafforgue}, crucially relying on the result of Emerson \cite{emerson} that the moduli space of pseudorepresentations is equal to the character variety in characteristic $0$, together with results of Procesi on the invariant theory of $\PGL_2$. However, one cannot apply Emerson's result directly, since $G_{\Q}$ is not discrete. To get around this issue requires some extra tricks, where we approximate a suitable quotient of $G_{\Q}$ by a discrete group. 

\medskip

This concludes our sketch of the case $F=\Q$. We will use notation that is similar to above also in the general case; we hope that our notation below is self-explanatory. The case of general totally real $F$ follows the same rough ideas as for $\Q$, but is considerably more technical, and the results are in many ways more interesting and illuminating. One problem is that overconvergent cohomology can now appear in multiple degrees, and as a result the eigenvarieties will not be equidimensional. While it is known from \cite{JN1} that there is a universal family of Galois representations\footnote{Or, more accurately, pseudorepresentations.} over (the nilreduction of) $\wt{\E}$, this family is not known to be trianguline away from the irreducible components of maximal dimension. For this reason, we make the following assumptions on $\pi$:

\begin{enumerate}

\item If $F \neq \Q$, then the reduction $\ol{\rho}_\pi$ is irreducible and generic, in the sense of \cite{caraiani-tamiozzo};

\item For every $v\mid p$, the $U_v$-eigenvalues on the Iwahori-fixed vectors $\pi_v^{I_v}$ have multiplicity $1$;

\item If $F \neq \Q$ and $\pi$ has complex multiplication by a quadratic CM extension $\wt{F}/F$, then $\wt{F} \not\subseteq F(\zeta_{p^\infty})$.

\end{enumerate}

The first assumption guarantees that the relevant overconvergent cohomology groups vanish outside the middle degree, using recent work of Caraiani--Tamiozzo \cite{caraiani-tamiozzo}. The second assumption is the same regular assumption that we had when $F=\Q$ (but, for general $F$, it does not always hold). The third assumption ensures the vanishing of the Bloch--Kato Selmer group $H^1_f(G_F,\ad \, \rho)$ by recent work of Newton--Thorne \cite{newton-thorne}. This vanishing is known in general when $F=\Q$, and is key to Bella\"iche's $R=T$-theorem and its recent generalization to general totally real $F$ by Bergdall--Hansen \cite{bh}. All three assumptions are rather mild; for a discussion of the closely related condition of decency see \cite[\S 1.6]{bh}. 

\medskip

Let us now state our generalizations of Theorems \ref{thm A} and \ref{thm B}. We let $\pi$ be a cuspidal cohomological automorphic representation of $\GL_2(\A_F)$ satisfying the above conditions, we let $\alpha = (\alpha_v)_{v\mid p}$ be a refinement, and we write $x=(\pi,\alpha)$ for the corresponding point on $\wt{\E}$. It induces a classical point $y=(\Pi,\gamma)$ on the $\SL_2$-eigenvariety. We let $r$ denote the number of places $v\mid p$ for which $-\alpha_v$ is also a refinement of $\pi_v$. If $\pi$ has CM by a quadratic extension $\wt{F}/F$, then we let $S_{in}$ denote the set of places $v \mid p$ which are inert in~$\wt{F}$.

\begin{thmx}\label{thm C} The following holds:

\begin{enumerate}
\item Assume that $\pi$ does not have CM. Then $\wh{\oo}_{\E,y}$ has $2^r$ irreducible components, all of which are smooth.

\item Assume that $\pi$ has CM and that $S_{in}\neq \emptyset$. Then $\wh{\oo}_{\E,y}$ has $2^{r-1}$ irreducible components. If, additionally, for all $v\mid p$ which split in $\wt{F}$, $\alpha_v$ is non-critical for all embeddings $\sigma$ inducing $v$ and $-\alpha_v$ is not a refinement of $\pi_v$, then all irreducible components of $\wh{\oo}_{\E,y}$ are smooth.

\item Assume that $\pi$ has CM and that $S_{in} = \emptyset$. Then $\wh{\oo}_{\E,y}$ has $2^r$ irreducible components and we can compute the precise geometry of the components. In particular, when $r=0$, $\wh{\oo}_{\E,y} = \wh{\oo}_{\wt{\E},x}^{S_\rho}$.
\end{enumerate}

\end{thmx}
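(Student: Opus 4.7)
The plan is to implement, for general totally real $F$, the two-step strategy described after Theorem \ref{thm B} in the case $F = \Q$: first identify the set-theoretic fibre $\pi^{-1}(y)\sub\wt{\E}$ as a union of $H$-orbits, then compute $\wh{\oo}_{\E,y}$ component-by-component from the local rings $\wh{\oo}_{\wt{\E},z}$ via the local $R=T$ theorem combined with invariant theory for the $S_\rho$-action. Under assumptions (1)--(3), the Caraiani--Tamiozzo vanishing theorem guarantees that the relevant overconvergent cohomology is concentrated in the middle degree, so $\wt{\E}$ is equidimensional near $x$, the family of pseudorepresentations from \cite{JN1} is available there, and the global triangulation theory applies. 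Using this, I would show that every point of $\pi^{-1}(y)$ is classical of the form $z=(\pi\otimes\chi,\alpha')$ for some $\chi\in H$ and some refinement $\alpha'$ of $\pi\otimes\chi$ inducing $\gamma$, and that for each fixed $\pi' = \pi\otimes\chi$ there are exactly $2^r$ such $\alpha'$, parameterized by the sign choices at the $r$ places $v\mid p$ where both $\pm\alpha'_v$ are $U_v$-eigenvalues.

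Counting $H$-orbits then proceeds case by case. In (1), $\pi$ has no non-trivial self-twists by characters in $H$, so each pair gives its own orbit, yielding $2^r$ orbits. In (2) and (3) the CM character $\chi_{\wt F}\in H$ satisfies $\pi\otimes\chi_{\wt F}\cong\pi$ and induces an involution on refinements that flips $\alpha'_v\leftrightarrow -\alpha'_v$ at each $v\in S_{in}$ and acts trivially at split $v\mid p$. Since for inert $v$ the local component $\pi_v$ is forced to be an unramified principal series with $U_v$-eigenvalues of the form $\pm a$, every element of $S_{in}$ automatically contributes to $r$. Thus in (2) the involution is non-trivial and produces $2^{r-1}$ orbits, while in (3) it acts trivially and produces $2^r$ orbits. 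Combined with the identification $\mc{M} = (\pi_\ast\wt{\mc{M}})_H$ this will yield a decomposition $\wh{\oo}_{\E,y}\cong\prod_z \wh{\oo}_{\wt{\E},z}^{S_{\rho_z}'}$, with $z$ ranging over orbit representatives and $S_{\rho_z}'\sub S_{\rho_z}$ the stabilizer of $z$ in the centralizer group.

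Each factor I would compute using Bergdall--Hansen's $R=T$ theorem \cite{bh}, whose hypotheses are exactly (1) and (3), identifying $\wh{\oo}_{\wt{\E},z}$ with a weakly refined deformation ring $R_{\rho_z}^{\alpha'}$. In case (1), $S_{\rho_z}$ is trivial and the factor $R_{\rho_z}^{\alpha'}$ is formally smooth by Bergdall--Hansen, which immediately gives the claim. In case (2), $S_{\rho_z} = \Z/2$ is non-trivial but $\chi_{\wt F}$ moves $z$ to a distinct orbit representative, so $S_{\rho_z}' = 1$; the additional hypothesis in (2) (non-criticality and no sign-companion at split places) is precisely what ensures that Bergdall--Hansen's formal smoothness of $R_{\rho_z}^{\alpha'}$ applies at the split primes, yielding smooth components. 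In case (3), all places $v\mid p$ split in $\wt F$ so $S_{\rho_z}' = S_{\rho_z} = \Z/2$ at every representative, and the task is to compute the invariant ring $(R_{\rho_z}^{\alpha'})^{\Z/2}$; when $r=0$ the fibre is a single orbit through $x$ and this directly gives $\wh{\oo}_{\E,y} = \wh{\oo}_{\wt{\E},x}^{S_\rho}$, while for $r>0$ one further needs to keep track of the sign-choice components.

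The main obstacle will be this invariant computation in case (3). I would follow the $F=\Q$ argument sketched after Theorem \ref{thm B}: use Bella\"iche's explicit description of the tangent space of $R_\rho^\alpha$ to compute the $S_\rho$-action (showing it is non-trivial precisely because $\alpha$ is critical in this setting, a subtle point relying on assumption (3) via Newton--Thorne's vanishing of $H^1_f(G_F,\ad\,\rho)$), surject from the unrestricted deformation ring $R_\rho$, and prove that the traces $\tr(\Frob_\ell,\ad^0\rho^{univ})$ for $\ell\nmid Np$ topologically generate $R_\rho^{S_\rho}$. The decisive inputs here---Lafforgue's $\PGL_2$-valued pseudorepresentations, Emerson's identification of the moduli of pseudorepresentations with the character variety in characteristic zero, Procesi's invariant theory for $\PGL_2$, and the approximation of a suitable quotient of $G_F$ by a discrete group---are all insensitive to the degree of $F$ and should transfer from the $\Q$ case essentially verbatim. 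The genuinely new difficulties for general $F$ are therefore concentrated in the equidimensionality and $R=T$ ingredients that feed into the earlier steps, and in tracking the combinatorics of refinement sign-choices at the places dividing $p$, which is what produces the factors $2^r$ and $2^{r-1}$.
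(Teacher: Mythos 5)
Your overall strategy is essentially that of the paper: classify the fibre $\pi^{-1}(y)$ using classicality of twists (triangulation theory), count orbits under the CM twisting action, and compute local rings via Bergdall--Hansen's $R=T$ theorem together with the pseudorepresentation machinery. The ingredients you list are the right ones. However, there are two significant gaps in the logic connecting them.

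\textbf{The product decomposition is false.} You claim $\wh{\oo}_{\E,y}\cong\prod_z \wh{\oo}_{\wt{\E},z}^{S_{\rho_z}'}$, but $\wh{\oo}_{\E,y}$ is a local ring and cannot be isomorphic to a product of more than one nonzero ring. What is actually true, and what the paper proves, is that $\wh{\oo}_{\E,y}$ \emph{embeds} into $\bigl(\prod_{z\in\Phi}\wh{\oo}_{\wt{\E},z}\bigr)^{C_\pi}$ (Proposition \ref{localstructure1} and Theorem \ref{main section 3}), and its minimal primes are the kernels of the projections to the individual factors. To turn this into the component count and smoothness statement one must show (a) that the kernels of distinct projections are distinct, and (b) that each projection is surjective onto $\wh{\oo}_{\wt{\E},z}$ (or onto $\wh{\oo}_{\wt{\E},z}^{S'_{\rho_z}}$ in case (3)). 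Neither is automatic. The paper does (a) via an explicit tangent-space calculation (Theorem \ref{split tangent space} and the counting theorem in \S\ref{subsec: CM case II}) and then deduces the component count from finiteness and equidimensionality via lying-over/going-up.

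\textbf{Trace generation is needed in every case, not just case (3).} You invoke the generation of $R_\rho^{S_\rho}$ by $\tr(\Frob_v,\ad^0\rho^{univ})$ only in case (3), but the surjectivity of $\s_y\to\T_z$ in cases (1) and (2) relies on exactly the same mechanism: $\s_y$ is, a priori, just the $\Lambda$-algebra generated by the $\SL_2$-Hecke operators, and the only reason it surjects onto $\T_z$ (case (1)) or onto a suitably large subring (case (2)) is that these traces generate $R_\rho^{S_\rho}$ (Theorem \ref{thethingIwant} plus Corollary \ref{defringsiso}), which surjects onto the weakly refined deformation ring, and the traces map to $q_v^{-1}(t_v+1)\in\s_y$. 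Without this, you cannot even conclude that $\s_y$ has the claimed components. Relatedly, your statement that the non-criticality hypothesis in (2) ``ensures that Bergdall--Hansen's formal smoothness of $R_{\rho_z}^{\alpha'}$ applies at the split primes'' is a misattribution: Bergdall--Hansen smoothness holds under the decency assumptions regardless of criticality. The non-criticality (and the absence of a sign-companion at split $v$) is needed so that $V_{v,\alpha_{z,v}}=V_v^+$ for split $v$, which is precisely what makes the image of $R_\rho^{S_\rho}$ surject onto $\T_z$ in Theorem \ref{main geometry CM noncritical}.

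A smaller point: you work with $H$-orbits of the full fibre $\pi^{-1}(y)$, whereas the paper works with the level-independent set $\Phi\subset\pi^{-1}(y)$ of points with the same underlying $\pi$, acted on by $C_\pi$. The orbit counts are the same, but the paper's reduction to $\Phi$ is what makes the argument independent of the tame level and allows the clean statement of Proposition \ref{localstructure1}; if you work with the full fibre directly you would have to track the level-dependence carefully, since the fibre genuinely grows as $K^p$ shrinks.
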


In the cases excluded by Theorem \ref{thm C}, our methods cannot at present say if the components are smooth or not. Let us now assume that $\pi$ has CM and that $S_{in}=\emptyset$ in order to explicate Theorem \ref{thm C}(3). We focus on the case $r=0$ (for general $r$ each component will look like in Theorem \ref{thm D} below). Since $\wh{\oo}_{\E,y} = \wh{\oo}_{\wt{\E},x}^{S_\rho}$ and $\wh{\oo}_{\wt{\E},x}$ is smooth \cite{bh}, we see that $\wh{\oo}_{\E,y}$ is a particular type of quotient singularity, whose geometric properties can be summarized as follows:

\begin{thmx}\label{thm D}
Assume that $\pi$ has CM and that $r = 0$. Let $n$ be the number of embeddings $\sigma : F \to \ol{\Q}_p$ such that $\alpha$ is critical at $\sigma$. Then $\wh{\oo}_{\E,y}$ is normal and Cohen-Macaulay. Moreover:
\begin{enumerate}
\item If $n=0$ or $n=1$, then $\wh{\oo}_{\E,y}$ is regular.

\item If $n=2$, then $\wh{\oo}_{\E,y}$ is a complete intersection but not regular.

\item If $n$ is even and $\geq 4$, then $\wh{\oo}_{\E,y}$ is Gorenstein but not a complete intersection.

\item If $n$ is odd and $\geq 3$, then $\wh{\oo}_{\E,y}$ is $2$-Gorenstein but not Gorenstein.
\end{enumerate}

\end{thmx}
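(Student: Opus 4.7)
The plan is to derive Theorem \ref{thm D} from Theorem \ref{thm C}(3) together with the classical invariant theory of a linear involution. By Theorem \ref{thm C}(3) we have $\wh{\oo}_{\E,y}\cong R^{S_\rho}$ where $R := \wh{\oo}_{\wt{\E},x}$, and by the generalization of Bella\"iche's theorem in \cite{bh} the ring $R$ is regular; set $d = \dim R$. Since $S_\rho\cong\Z/2$ acts by ring automorphisms on $R$ and we are in characteristic zero, the action can be linearized: Cartan averaging produces a regular system of parameters $x_1,\dots,x_m,y_1,\dots,y_{d-m}$ of $R$ on which the non-trivial element of $S_\rho$ acts as $-1$ on each $x_i$ and as $+1$ on each $y_j$, with $m$ the dimension of the $(-1)$-eigenspace of $S_\rho$ on the cotangent space $\mf m_R/\mf m_R^2$. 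Because $S_\rho$ is finite and we are in characteristic zero, taking invariants commutes with completion, so $\wh{\oo}_{\E,y}$ is the completion at the origin of $k[y_1,\dots,y_{d-m}]\otimes_k A_m$, where $A_m := k[x_1,\dots,x_m]^{\langle-1\rangle}$.

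The first and most delicate step is to prove $m = n$. Via the $R=T$-theorem of \cite{bh}, the cotangent space $\mf m_R/\mf m_R^2$ is dual to a trianguline Galois cohomology group built out of $\ad\,\rho$, and it carries a natural $S_\rho$-action compatible with the geometric one. For the CM representation $\rho = \Ind_{G_{\wt F}}^{G_F}\chi$, the trace-zero adjoint splits equivariantly as
\[
\ad^0\rho \;=\; \epsilon \;\oplus\; \Ind_{G_{\wt F}}^{G_F}(\chi/\chi^c),
\]
with $S_\rho$ acting as $+1$ on the first summand and $-1$ on the second. In the present case $S_{in}=\emptyset$ and $r=0$, the local trianguline conditions at each $v\mid p$ decouple cleanly along embeddings $\sigma$, and inspection of the tangent-space description in \cite{bh} shows that non-critical embeddings contribute only to the $(+1)$-eigenspace while each critical embedding $\sigma$ contributes exactly one dimension to the $(-1)$-eigenspace; this is where I expect the argument to be most technical.

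Granting $m=n$, the four assertions become standard facts about the Veronese ring $A_n$, the affine cone on the $2$-uple embedding $\mb P^{n-1}\hookrightarrow\mb P^{\binom{n+1}{2}-1}$, and they are preserved by tensoring with a regular ring:
\begin{itemize}
\item Normality and Cohen--Macaulayness hold for invariants of any finite group acting on a regular ring in characteristic zero, by a theorem of Hochster--Eagon.
\item For $n\in\{0,1\}$ the action is trivial and $A_n$ is regular; for $n=2$, $A_2 = k[a,b,c]/(ac-b^2)$ is a hypersurface, hence a complete intersection.
\item By Watanabe's theorem, $A_n$ is Gorenstein iff $-I\in\SL_n$, i.e.\ iff $n$ is even. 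For $n$ even and $\geq 4$, comparing the embedding codimension $\binom{n}{2}$ with the number of minimal generators of the Veronese ideal (computed for instance from its Lascoux resolution) shows $A_n$ is not a complete intersection.
\item For $n$ odd and $\geq 3$, the canonical class has order $2$ in the divisor class group of $A_n$, so $\omega_{A_n}^{(2)}\cong A_n$; hence $A_n$ is $2$-Gorenstein but not Gorenstein.
\end{itemize}

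The principal obstacle is the identification $m=n$, i.e.\ pinning down the $S_\rho$-action on the Bergdall--Hansen tangent space embedding-by-embedding. Once this is settled, Theorem \ref{thm D} follows formally from the properties of the rings $A_n$ above.
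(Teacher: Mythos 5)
Your proposal follows the same overall strategy as the paper: reduce to $\wh{\oo}_{\E,y}\cong R^{S_\rho}$ via Theorem~\ref{thm C}(3), use regularity of $R = \wh{\oo}_{\wt{\E},x}$ from Bergdall--Hansen's $R=T$ theorem, linearize the involution to identify $\wh{\oo}_{\E,y}$ as the completion of a polynomial ring tensored with the $\Z/2$-Veronese ring $A_m$ in $m$ variables, and then invoke the classical invariant theory of $A_m$. The ring-theoretic endgame you sketch (Hochster--Eagon for normality and CM, Watanabe's $\SL$-criterion for Gorensteinness, a generator count from the Lascoux resolution for non-CI, and the $2$-torsion of the canonical class for the $2$-Gorenstein claim) is a correct alternative to the paper's hands-on Proposition~\ref{quotientsingularities}, which proves the same facts for $A_m$ from scratch by exhibiting $\omega_X$ explicitly as $j_*\omega_U$ and producing an explicit regular sequence that fails to generate the ideal. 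Both routes are fine; yours trades elementary computations for citations.

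The substantive gap, which you flag yourself, is the identification $m=n$. Your global observation that $\ad^0\rho = \epsilon\oplus\Ind_{G_{\wt F}}^{G_F}(\chi/\chi^c)$ with $S_\rho$ acting by $+1$ and $-1$ on the two summands is correct, and it does produce a decomposition of $H^1(G_F,\ad^0\rho)$ into $\pm1$-eigenspaces. But that alone does not compute the $-1$-eigenspace of the weakly refined tangent space $\bigoplus_{v\mid p}\mf{t}^{\Reff,\alpha_v}_{v,/f}$. The refinement condition cuts out a $d$-dimensional subspace of the $2d$-dimensional $H^1(G_F,\ad^0\rho)$, and the intersection of this subspace with the $-1$-eigenspace is what must be shown to have dimension exactly $n$. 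The paper does this locally at each split $v\mid p$ in Theorem~\ref{split tangent space}, and the argument is not a simple ``decoupling'': one first needs Lemma~\ref{injectivity of twisting} (the twist $\wt\rho_v\mapsto\wt\rho_v\otimes\mathrm{LT}(\kappa_{2,v})$ preserves crystallineness in both directions, so the induced map $L$ on $H^1_{/f}$ is injective), then Lemma~\ref{Image of L} (identifying the image of $L$ in terms of the blocks $H^1_{/f}(\chi_i\chi_j^{-1})$ and the Hodge--Tate-weight conditions encoded by $\Delta_1,\Delta_2$), and finally a vanishing input from \cite[Lemma 7.2]{bergdall-paraboline} to see that the diagonal part of $L(\mf{t}^{\Reff,\alpha_v}_{v,/f})$ lands in the $\Delta_2$-constrained piece. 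It is precisely here that the criticality of $\alpha_v$ at an embedding $\sigma$ (equivalently, $\sigma\in\Delta_2$ in the notation of Remark~\ref{rem: non-crit vs crit in split case}) translates into a one-dimensional contribution to the $-1$-eigenspace via the off-diagonal block $H^1_{/f}(G_{F_v},\chi_1\chi_2^{-1})$. Without this local analysis the equality $m=n$ is unproved, and it is the heart of the theorem; everything else is formal once it is in place.
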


To the best of our knowledge, these are the first higher dimensional examples of points on eigenvarieties which are singular and the structure of the singularity has been worked out explicitly; we comment more on this in \S \ref{subsec: connections}.  Finally, Theorem \ref{thm C} leads to the following generalization of Theorem \ref{thm B}:

\begin{thmx}\label{thm E} Consider the surjection $\mc{M}_y \to \mc{M}_y^{cl}$ from the fibre of $\mc{M}$ at $y$ to its quotient of classical forms. Then the following holds:

\begin{enumerate}
\item Assume either that $\pi$ does not have CM, or that $\pi$ has CM and for all $v\mid p$ which split in $\wt{F}$, $\alpha_v$ is non-critical for all embeddings $\sigma$ inducing $v$ and $-\alpha_v$ is not a refinement of $\pi_v$. Then $\mc{M}_y \to \mc{M}_y^{cl}$ is an isomorphism.

\item Assume that $\pi$ has CM and $S_{in} = \emptyset$. Then the kernel may be identified with the direct sum of copies of the $\gamma = (\gamma_v)_{v \mid p}$-eigenspaces of $(u_v)_{v\mid p}$ in the $\SL_2(\A_F^\infty) \cap K^p(N)I$-fixed vectors of the non-automorphic members of $\Pi$.

\end{enumerate}

\end{thmx}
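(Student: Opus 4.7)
The proof should follow the strategy sketched in the paper for Theorem B: combine the identification $\mc{M} = (\pi_\ast \wt{\mc{M}})_H$ with the fact that $\wt{\mc{M}}$ is locally free at every classical preimage $x'$ of $y$ (a consequence of the local $R=T$ theorem of Bergdall--Hansen generalizing Bella\"iche), and then insert Theorem C in place of Theorem A. Since $\pi\colon \wt{\E}\to\E$ is finite and $H$ acts compatibly on $\wt{\mc{M}}$, taking the fibre at $y$ commutes with $H$-invariants in characteristic zero, so I would first write
\[
\mc{M}_y \;=\; \bigoplus_{\bar{x}\in \pi^{-1}(y)/H}\; \bigl(\wt{\mc{M}}_{\bar{x}}/\m_y\wt{\mc{M}}_{\bar{x}}\bigr)^{H_{\bar{x}}},
\]
where $\bar{x}$ runs over $H$-orbits, $H_{\bar{x}}$ is the stabilizer of a chosen representative, and $\m_y$ denotes $\m_y\cdot\oo_{\wt{\E},\bar{x}}$. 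The orbit structure is exactly what Theorem C and its proof enumerate: a single orbit in the CM case when $r=0$, and in general a number of orbits matching the $2^r$ (resp.\ $2^{r-1}$) components of $\wh{\oo}_{\E,y}$ recorded in parts (1), (2), (3) of Theorem C. Using that $\wt{\mc{M}}_{\bar{x}}$ is a free $\oo_{\wt{\E},\bar{x}}$-module of rank $d_{\bar{x}}$ equal to the classical $\GL_2$-multiplicity, this rewrites as $\bigoplus_{\bar{x}}(\oo_{\wt{\E},\bar{x}}/\m_y)^{d_{\bar{x}},H_{\bar{x}}}$, and the classical quotient $\mc{M}_y^{cl}$ corresponds to the further reduction modulo $\m_{\bar{x}}$, yielding $\bigoplus_{\bar{x}}(k(\bar{x})^{d_{\bar{x}}})^{H_{\bar{x}}}$, which identifies with the classical $\SL_2$-eigenspace at $y$ via classical Langlands functoriality $\GL_2 \leadsto \SL_2$.

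For part (1), I would use the part of Theorem C saying that under the given hypotheses every irreducible component of $\wh{\oo}_{\E,y}$ is smooth and the map $\wh{\oo}_{\E,y}\to\wh{\oo}_{\wt{\E},x'}$ on each component is an isomorphism (when $\pi$ has no CM and $-\alpha_v$ is not a refinement for any $v$, $\pi$ is a local closed immersion of smooth spaces at each preimage; in the CM case the hypotheses force $S_\rho'$ to act trivially on $\wh{\oo}_{\wt{\E},x}$, again giving a local closed immersion). Hence $\wh{\oo}_{\wt{\E},\bar{x}}/\m_y = k(\bar{x})$ and $\mc{M}_y \to \mc{M}_y^{cl}$ is an isomorphism.

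For part (2), Theorem C(3) with $r=0$ gives $\wh{\oo}_{\E,y}=\wh{\oo}_{\wt{\E},x}^{S_\rho}$ with $\wh{\oo}_{\wt{\E},x}$ formally smooth and $S_\rho$ acting non-trivially through the critical embeddings; for general $r$ the same holds on each component. The kernel of $\mc{M}_y\to\mc{M}_y^{cl}$ is the image of $\bigoplus_{\bar{x}}(\m_{\bar{x}}/\m_y\cdot\oo_{\wt{\E},\bar{x}})^{d_{\bar{x}},H_{\bar{x}}}$, whose dimension over $k(y)$ equals $d_{\bar{x}}\cdot\bigl(\mathrm{length}(\oo_{\wt{\E},\bar{x}}/\m_y)-1\bigr)$ per orbit. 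The explicit combinatorial description of $S_\rho$ in Theorem C, together with the fact that the non-trivial element of $S_\rho$ acts on $\wt{\E}$ by twisting by the quadratic character $\eta_{\wt{F}/F}\in H$, lets me label the nilpotent classes: each extra dimension is an eigenvector for the Hecke operators with eigenvalues matching those of the $\eta_{\wt{F}/F}$-twist of $\pi$, which are exactly the Hecke eigenvalues of the non-automorphic members of the $L$-packet $\Pi$ (by Labesse--Langlands). Matching the $u_v$-eigenvalue $\gamma_v$ across the twist uses the explicit formula $u_v=U_v^2S_v^{-1}$ together with the hypothesis $S_{in}=\emptyset$ which guarantees that the twisting character is unramified at every $v\mid p$, so the $u_v$-eigenvalue is preserved.

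The main obstacle will be making the identification in part (2) precise: translating the purely Galois-theoretic $S_\rho$-action on $\wh{\oo}_{\wt{\E},x}$ (encoded via Bella\"iche's description of the tangent space of $R_\rho^\alpha$) into the automorphic statement about members of $\Pi$. I expect to handle this by combining (a) a careful bookkeeping of the action of $\eta_{\wt{F}/F}$ on the refinement $\alpha$ at each place above $p$, (b) the classical local Langlands picture for $\SL_2$ over $F_v$, which exhibits the local $L$-packet as the $\eta$-orbit of the $\GL_2$-restriction, and (c) multiplicity-one statements for the global $L$-packet $\Pi$ under the regularity hypotheses on $\pi$. Everything else in the proof is formal manipulation of $H$-invariants of finite pushforwards once Theorem C is in hand.
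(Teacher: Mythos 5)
Your overall scaffolding matches the paper's: express $\mc{M}_y$ via $H$-coinvariants of the $\GL_2$-sheaf over the fibre, use Theorem~C to understand the local rings, and use local freeness of $\wt{\mc{M}}$ to read off a filtration whose top quotient is classical. Part~(1) can indeed be finished the way you describe once one observes that the hypotheses force each map $\s_y \twoheadrightarrow \T_z$ on components to be surjective (so each $\n_z = \m_y\T_z$ equals $\m_z$), and it is also worth noting that the CM case with $S_{in}=\emptyset$ drops out of part~(2) with $n=0$; the paper splits this into Theorems~\ref{classicality non-CM}, \ref{classicality CM non critical} and \ref{non-classicality CM critical}.

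Two points need repair. The first is minor: your opening display is not an identity at finite tame level, because $Y^1_K \to Y_K$ is not literally the inclusion of the component over $1\in Cl_K^+$ when $F\neq\Q$. The paper fixes this by passing to the direct limit over $K^p$ and using Lemma~\ref{shrinking}/Proposition~\ref{limit} before applying the coinvariance argument (Proposition~\ref{computation of sheaf}); the commutation of the $C_\pi$-coinvariants with $\otimes_{\s_y}\s_y/\m_y$ is then fine in characteristic zero, as you say.

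The second point is a genuine gap in your argument for part~(2). You propose to recognise the kernel as the non-automorphic part of $\Pi$ by ``matching Hecke eigenvalues of the $\eta_{\wt F/F}$-twist of $\pi$.'' This cannot work: the twist of $\pi$ by $\eta_{\wt F/F}$ is $\pi$ itself (that is the definition of CM by $\wt F$), and more fundamentally \emph{every} member of $\Pi$, automorphic or not, has the same system of $\s(K)$-eigenvalues $\phi_y$ --- the whole fibre $\mc{M}_y$ is a $\phi_y$-eigenspace by construction. Remark~\ref{interpretation}(2) makes this explicit: with $S_{in}=\emptyset$ the local twisting action at every finite place is trivial on the relevant fixed vectors, so
$\bigl(\varinjlim_{K^p} H^\ast(Y_K,\mathscr{L}_k)_x\bigr)_+ \cong \bigl(\varinjlim_{K^p} H^\ast(Y_K,\mathscr{L}_k)_x\bigr)_-$
\emph{abstractly} as representations of $\SL_2(\A_F^{p\infty})$ and of the $\SL_2$-Iwahori--Hecke algebra at $p$. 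No statement about Hecke eigenvalues or about the local $L$-packets at finite places can tell the two summands apart. The actual mechanism in the paper is archimedean: the twisting action on the two-dimensional space $H^\ast(\mf{g}_v,K_v,\pi_\infty\otimes\mathscr{L}_k)$ decomposes it into $\pm 1$-eigenlines for each $v\mid\infty$, and Labesse--Langlands (Lemma~\ref{swap}) says that exactly one of $D^+_{k_\sigma+2}$, $D^-_{k_\sigma+2}$ completes a given prime-to-$\infty$ part to an automorphic member; this is what aligns the $+$-part with the classical quotient and the $-$-part with the non-automorphic members. You would also need Lemma~\ref{terms in the SES}(3)--(4), which identifies $\m_z\wt{\mc{M}}^\wedge_z/\n_z\wt{\mc{M}}^\wedge_z$ with $n_z$ copies of the $-$-eigenspace and then compares $z$ with $x$ via an intertwining element of the Iwahori--Hecke algebra; neither of these steps is supplied by your sketch.
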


In case (2), we can compute the multiplicity precisely: When $r=0$ it is the quantity $n$ appearing in Theorem \ref{thm D}, and in general it is a sum of similar quantities over representatives of the $H$-orbits of $\pi^{-1}(y)$. In particular, the multiplicity \emph{need not be $1$}, showing that the seeming symmetry between the automorphic and non-automorphic members in Theorem \ref{thm B} is rather particular to that situation.

\medskip

Our arguments can also be applied in the setting of inner forms of $\GL_{2/F}$ and $\SL_{2/F}$, coming from quaternion algebras $B/F$. The case when $[F:\Q]$ is even and $B$ is ramified exactly at the infinite places is particularly interesting, as all cuspidal cohomological automorphic representations $\pi$ of $\GL_{2/F}$ can be transferred to $B^\times$ via the Jacquet--Langlands correspondence. The resulting eigenvarieties are equidimensional and reduced, so one may prove Theorems \ref{thm C}, \ref{thm D} and \ref{thm E} in this context without the assumption that $\ol{\rho}$ is generic and irreducible. Moreover, the interpretation of the non-classical forms as coming from the non-automorphic members of the $L$-packet is more transparent in this case; see Remark \ref{interpretation quat}.

\begin{remark} We end this discussion of the paper with two remarks.

\begin{enumerate}
\item It is well known that the $p$-adic \emph{generalized} eigenspace at critically refined CM points contains non-classical forms, cf.\ \cite{bellaiche-crit,bh}. Through the method of relating $\GL_2$ and $\SL_2$, these generalized eigenforms are related to our non-classical eigenforms. Indeed, one might describe our result as saying that certain of these generalized eigenforms are eigenforms for the $\SL_2$-Hecke algebra.

\medskip

\item We expect that the method can be generalized to other situations where a larger, isogenous group can be used to understand endoscopy (depending on the availability of the various technical results that underpin it).
\end{enumerate}
\end{remark}

\subsection{Previous results}\label{subsec: connections}

Let us briefly compare our results to some previous work on singularities and non-classical $p$-adic eigenforms, focusing on higher-dimensional situations and cohomological weights.

\subsubsection{Failure of classicality and $p$-adic endoscopy}
Beyond cases of small slope, works such as \cite{bellaiche-crit}, \cite{bh} and \cite{bhs2,bhs3} have established the link between classicality and the local geometry of the eigenvariety. Notably, \cite{bhs2,bhs3} give an affirmative answer to Question \ref{question} for certain higher rank unitary groups under technical conditions (which, in particular, force the points to be non-endoscopic), the most important being that the Weyl group element attached to the triangulation is a product of distinct simple reflections. When this latter condition fails, Hellmann--Hernandez--Schraen have recently announced that the eigenspace should contain non-classical forms, and that they can prove unconditional results in this direction for $U(3)$; see discussion after \cite[Conjecture 9.6.37]{egh}. We note that whereas the ``obstruction to classicality'' in our paper is global, coming from endoscopy, their obstruction is local at $p$.

\medskip

The phenomenon of endoscopy in the global $p$-adic Langlands program has, to the best of our knowledege, hardly been studied before beyond the works \cite{L1,L2} of the second author. We hope that our work will provide a good starting point for further study.

\subsubsection{Eigenvarieties at endoscopic points, and singularities}
In some situations, the local geometry of eigenvarieties at endoscopic points has been studied before. From the perspective of this paper, we highlight some results of Bella\"iche. In \cite{bellaiche} he studied the local geometry of eigenvarieties for a unitary group $U(3)$ at endoscopic points and showed that the eigenvariety is \emph{non-smooth but irreducible}  at certain classical points that arise from endoscopic automorphic representations; this was later generalized to $U(n)$ in \cite{bellaiche-isobaric}. Bella\"iche proves this by studying the family of pseudocharacters $T$ on the eigenvariety and showing that its schematic reducibility locus at his endoscopic point is exactly the point itself. From there, he proves an abstract result on such families which allows him to conclude non-smoothness, but does not allow him to say much more about the geometry. In particular, his method is very different from the one used in this paper, even though the endoscopic points he considers have some similarities with the ones we study here.

\subsection{Outline of the paper}

Let us briefly outline the contents of this paper. Section \ref{sec: aut and gal reps} starts by briefly recalling material on locally symmetric spaces and automorphic representations for $\GL_2(\A_F)$ and $\SL_2(\A_F)$ that we will need. The rest of the section is devoted to the theory of pseudorepresentations of Lafforgue and Galois deformation, primarily for $\PGL_2$-valued representations (or, equivalently, $\SO_3$-valued representations). Here we prove the key result, in the notation above, that $R_\rho^{S_\rho}$ is topologically generated by the elements $\tr(\Frob_v,\ad^0\rho)$ (despite the formulation here, this is naturally a result about $\PGL_2$-valued representations). 

\medskip

Section \ref{sec: eigenvarieties} recalls the theory of overconvergent cohomology for $\GL_{2/F}$ and $\SL_{2/F}$. We use the conventions of \cite{bh}, which we reference heavily. The key results proven are the existence and finer study of the $p$-adic functoriality map $\pi: \wt{\E} \to \E$ (which we only need, and therefore only prove, locally on the weight space) and the study of the fibre $\pi^{-1}(y)$. The proof of the former uses the general Chenevier-style interpolation theorems proved in \cite{JN2} together with some techniques that are special to our situation, and the proof of the latter uses the family of Galois representations on $\wt{\E}$ and the global triangulation theory of \cite{kpx}. 

\medskip

Section \ref{sec: local geometry} then proves Theorems \ref{thm C} and \ref{thm D}. The key here, besides the results of the previous sections, is the local $R=T$-theorem of Bergdall--Hansen \cite{bh} and a detailed calculation of the action of $S_\rho$ on the tangent spaces of the relevant deformation functors. Section \ref{sec: endoscopic forms} then proves Theorem \ref{thm E}, using the results on the local geometry and the formula for the coherent sheaf on $\E$ in terms of the coherent sheaf on $\wt{\E}$.

\medskip

Section \ref{sec: quat} then briefly discusses the analogues of Theorems \ref{thm C}, \ref{thm D} and \ref{thm E} for the inner forms of $\SL_{2/F}$ coming from quaternion algebras ramified exactly at the infinite places, and the paper concludes with an appendix discussing some properties of CM Hilbert modular forms, the quotient singularities appearing in Theorem \ref{thm D}, and some local $p$-adic geometry.

\subsection{Notation}\label{setup}

Throughout this paper, $p$ will denote a fixed prime number and $F$ will be a totally real number field of degree $d:= [F:\Q]$. For any place $v$ of $F$, we write $F_v$ for the completion of $F$ at $v$. If $w$ is a place of $\Q$, we write $\Sigma_w$ for the set of embeddings $F \hookrightarrow \ol{\Q}_w$. If $v$ is a place of $F$, lying over a place $w$ of $\Q$, we write $\Sigma_v$ for the set of embeddings of $F_v \hookrightarrow \ol{\Q}_w$. Moreover, we set 
\[
F_w := \prod_{v \mid w} F_v, \,\,\,\,\,\, \A_F^w := \prod^\prime_{v \nmid w} F_v.
\]
Various common versions of this notation, such as $\A_F^{p\infty} := \bigotimes^\prime_{v \nmid p\infty} F_v$, will also occur. Similarly, we let $\oo_v$ denote the ring of integers of $F_v$, and write 
\[
\oo_w := \prod_{v \mid w} \oo_v, \,\,\,\,\,\, \oo_F^w := \prod_{v \nmid w} \oo_v,
\]
etc. The residue field of $F_v$ will be denoted by $\F_v$, and the cardinality of $\F_v$ will be denoted by $q_v$. If $v$ lies over a place $w$ of $\Q$, we let $d_v := [F_v:\Q_w]$ denote the local degree. For simplicity, we fix an isomorphism $\C \cong \ol{\Q}_p$ throughout the paper, and use the induced identification $\Sigma_\infty \cong \Sigma_p$. 

\medskip

To simplify notation, we will fix uniformizers $\varpi_v$ of $F_v$ for all finite places $v$ of $F$. Nothing that we do will depend on these choices in a crucial way. We also set $\varpi_p = (\varpi_v)_{v\mid p}$, which is an element of $\oo_{p} \cong \prod_{v\mid p}\oo_{v}$.

\medskip

If $K$ is any field, we let $G_K$ denote the absolute Galois group of $K$ (with respect to some choice of separable closure of $K$). If $K$ is a number field and $S$ is a finite set of places containing the infinite places, we let $G_{K,S}$ denote the Galois group of the maximal extension of $K$ that is unramified outside $S$. 

\medskip

If $H(-,-)$ denotes a cohomology theory (typically singular cohomology in this paper), we let
\[
H^\ast(-,-) = \bigoplus_i H^i(-,-)
\]
denote the total cohomology, viewed as a \emph{graded} module. Any map, isomorphism, etc. of total cohomology groups is assumed to respect the grading.

\medskip

In this paper, we will do $p$-adic analytic geometry in terms of adic spaces, and we will use standard notation and terminology from the theory of adic spaces without further mention. We refer to \cite{huber-book,berkeley} for a thorough discussion. In particular, a rigid analytic variety (or rigid space) over a non-archimedean field $K$ will refer to an adic space over $\Spa(K,K^\circ)$ which is locally isomorphic to $\Spa(A,A^\circ)$ for some topologically finitely generated $K$-algebra $A$ (i.e. an affinoid $K$-algebra in the sense of Tate's rigid analytic geometry). For simplicity, we will write $\Spa(A)$ for $\Spa(A,A^\circ)$ (for any Huber ring), as we did in the introduction.

\subsection*{Acknowledgments}

This project has benefited greatly from ideas developed in a collaboration between C.J. and James Newton on $p$-adic local Langlands for $\SL_2(\Qp)$ and local-global compatibility for completed cohomology of $\SL_2(\Q)$, and we wish to thank him for this and for other useful conversations. We wish to thank John Bergdall for his comments on an earlier draft of this paper, in particular for spotting some mistakes and generously sharing his expertise on $(\varphi,\Gamma)$-modules over the Robba ring. We also wish to thank Bergdall and David Hansen for answering our questions on \cite{bh}, and more generally for stimulating conversations on the topic of this paper. Finally, we wish to sincerely thank an anonymous referee for their careful reading of and many comments on our paper, which greatly improved the paper and helped us catch some oversights in an earlier version.   

\medskip

During part of this project, C.J. has been supported by Vetenskapsr\r{a}det Grant 2020-05016, \textit{Geometric structures in the $p$-adic Langlands program}. J.L. acknowledges support from the Deutsche Forschungsgemeinschaft (DFG, German Research Foundation) through TRR 326 \textit{Geometry and Arithmetic of Uniformized Structures}, project number 444845124.

\section{Automorphic and Galois representations}\label{sec: aut and gal reps}

In this section we give some background on the locally symmetric spaces and automorphic representations. We then discuss Lafforgue's pseudorepresentations and Galois deformation theory, proving results that later will be used in a crucial way to identify $\SL_2$-Hecke algebras inside $\GL_2$-Hecke algebras with certain rings of invariants. 

\subsection{Locally symmetric spaces and local systems}\label{subsec: symmetric spaces}

We recall some locally symmetric spaces for $\GL_2$ and $\SL_2$ over $F$. Define
\[
D_\infty = \mf{h}^{\Sigma_\infty},
\]
where $\mf{h} \sub \C$ is the usual upper halfplane. It is acted on by $\GL_2(F_\infty)^\circ$ by M\"obius maps (where, if $G$ is a real Lie group, $G^\circ$ denotes the identity component), and hence by $\GL_2(F)^\circ := \GL_2(F) \cap \GL_2(F_\infty)^\circ$, the matrices in $\GL_2(F)$ with totally positive determinant. For any compact open subgroup $K \sub \GL_2(\A_F^\infty)$, we define
\[
Y_K := \GL_2(F) \backslash \GL_2(\A_F) / K K_\infty \cong \GL_2(F)^\circ \backslash D_\infty \times \GL_2(\A_F^\infty) /K,
\]
where $K_\infty = (Z(\R)\cdot\mathrm{SO}_2(\R))^{\Sigma_\infty}  \sub \GL_2(\R)^{\Sigma_\infty} = \GL_2(F_\infty)$ is a maximal compact connected subgroup modulo centre. The determinant map defines a continuous surjection
\[
Y_K \to Cl_{\det(K)}^+
\]
with connected fibres. Here, if $U \sub (\A_F^\infty)^\times$ is a compact open subgroup
\[
Cl_U^+ := F^\times \backslash \A_F^\times /( U\cdot (F_\infty^\times)^\circ ) = (F^\times)^\circ \backslash (\A_F^\infty)^\times / U
\]
denotes the ray class group of modulus $U\cdot(F_\infty^\times)^\circ$, which is a finite abelian group. In particular, we may identify $Cl_{\det(K)}^+$ with the set of connected components in $Y_K$. For $\SL_2$, we set
\[
Y_K^1 := \SL_2(F) \backslash D_\infty \times \SL_2(\A_F^\infty) /K
\]
for any compact open subgroup $K\sub \SL_2(\A_F^\infty)$. When $K \sub \GL_2(\A_F^\infty)$ is a compact open subgroup, the natural inclusion $\SL_2 \sub \GL_2$ gives a map $Y_{K\cap \SL_2(\A_F^\infty)}^1 \to Y_K$, which is a finite covering map of the component of $Y_K$ mapping to $1\in Cl_{\det(K)}^+$. This covering is explicitly given as
\[
(\SL_2(F) \cap K) \backslash D_\infty \to (\GL_2(F)^\circ \cap K) \backslash D_\infty.
\]
Let $\n \sub \oo_F$ be a non-zero ideal coprime to $p$. In this paper, we will need the level subgroup $K^p(\n) \sub \GL_2(\A_F^{p\infty})$, defined as those matrices in $\GL_2(\wh{\oo}_F^p)$ which are congruent to the identity modulo $\n$. We let $I \sub \GL_2(\oo_{F,p})$ be the Iwahori subgroup of matrices which are upper triangular modulo $\vp_p$. We will need the following observation.

\begin{lemma}\label{shrinking}
Let $K^p \sub \GL_2(\A_F^{p\infty})$ be a compact open subgroup. Then there exists a compact open subgroup $K_0^p \sub K^p$ such that the dashed arrow in the diagram 
\[
\begin{tikzcd}
(\SL_2(F) \cap K^p_0 I) \backslash D_\infty \ar{r} \ar{d} & (\GL_2(F)^\circ \cap K^p_0 I) \backslash D_\infty \ar{d} \arrow[ld,dashed] \\
(\SL_2(F) \cap K^p I) \backslash D_\infty \ar{r} & (\GL_2(F)^\circ \cap K^p I) \backslash D_\infty
\end{tikzcd}
\]
exists.
\end{lemma}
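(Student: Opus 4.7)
The plan is to identify the existence of the dashed arrow with a condition on the determinant of elements of $\GL_2(F)^\circ \cap K^p_0 I$, and then to shrink $K^p_0$ to satisfy it.

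First, I would reformulate the condition. Since scalars in $F^\times$ act trivially on $D_\infty$ and $\PGL_2(F)^\circ$ acts faithfully on a dense open subset of $D_\infty$, the dashed arrow exists if and only if the image of $\GL_2(F)^\circ \cap K^p_0 I$ in $\PGL_2(F)$ is contained in the image of $\SL_2(F) \cap K^p I$. Writing $g \in \GL_2(F)^\circ \cap K^p_0 I$ as $g = \lambda \gamma$ with $\lambda \in F^\times$ and $\gamma \in \SL_2(F)$, one has $\lambda \cdot \mathbf{1}_2 = g\gamma^{-1} \in K^p I$, so $\lambda$ must lie in the subgroup $Z_K := \{\mu \in F^\times : \mu \cdot \mathbf{1}_2 \in K^p I\}$, which is a finite-index subgroup of $\oo_F^\times$. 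The required condition thus becomes
\[
\det\bigl(\GL_2(F)^\circ \cap K^p_0 I\bigr) \subseteq Z_K^2,
\]
and conversely if $\det(g) = \lambda^2$ with $\lambda \in Z_K$ then $\lambda^{-1} g \in \SL_2(F) \cap K^p I$.

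Second, I would identify the obstruction as lying in a finite group. By Dirichlet's unit theorem $\oo_F^\times$ is finitely generated, so $\oo_F^{\times,+}/Z_K^2$ is a finite abelian group. Writing $\Delta_{K^p_0}$ for the image of $\GL_2(F)^\circ \cap K^p_0 I$ under
\[
\det \colon \GL_2(F)^\circ \cap K^p I \longrightarrow \oo_F^{\times,+}/Z_K^2,
\]
the task reduces to finding $K^p_0 \subseteq K^p$ with $\Delta_{K^p_0} = 0$.

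Third, I would show that $\Delta_{K^p_0}$ becomes trivial for sufficiently small $K^p_0$. As $K^p_0$ shrinks, the $\Delta_{K^p_0}$ form a decreasing family of subgroups of a finite group, hence stabilize to some $\Delta_\infty$. A non-trivial stable class $\bar u$, lifted to $u \in \oo_F^{\times,+}$, would yield for each small $K^p_0$ an element $g_{K^p_0} \in \GL_2(F)^\circ \cap K^p_0 I$ with $\det(g_{K^p_0}) \in u \cdot Z_K^2$. Since $\det(K^p_0) \to \{1\}$ in $\wh{\oo}_F^{p,\times}$ as $K^p_0 \to \{1\}$, these determinants converge adelically to $1$ away from $p$, forcing $u$ into the closure of $Z_K^2$ in $\wh{\oo}_F^{p,\times}$. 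The expected finishing step is the closure identity $\overline{Z_K^2} \cap \oo_F^\times = Z_K^2$, obtained by combining finite generation of $\oo_F^\times$ with strong approximation for $\mathbb{G}_m$ (equivalently, finiteness of ray class groups). The hard part of the argument is precisely this closure identity, which ensures the adelic closure of $Z_K^2$ does not meet $\oo_F^\times$ outside $Z_K^2$; this is where the arithmetic nature of $F$ (rather than the mere group-theoretic structure) is essential.
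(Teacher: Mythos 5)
Your reduction to showing $\det(\GL_2(F)^\circ \cap K^p_0 I) \subseteq Z_K^2$ is correct and matches the paper's reformulation. From there, the paper applies Chevalley's theorem directly: it produces an ideal $\m$ prime to $p$ with $U(\m)_+ \subseteq U(\n)_+^2$ and takes $K^p_0 = K^p(\m)$, after which the inclusion is immediate. Your route is more roundabout --- you observe that the obstruction groups $\Delta_{K^p_0}$ stabilise in a finite quotient and then run an adelic closure argument --- but the organisation is sound, \emph{granted} the closure identity.

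The problem is the justification offered for the closure identity $\overline{Z_K^2} \cap \oo_F^\times = Z_K^2$, the closure taken in $(\wh{\oo}_F^p)^\times$. It does \emph{not} follow from finite generation of $\oo_F^\times$ together with ``strong approximation for $\mathbb{G}_m$'' or finiteness of ray class groups. What you actually need is Chevalley's congruence subgroup theorem for $\oo_F^\times$: every finite-index subgroup of $\oo_F^\times$ contains a congruence subgroup $U(\m)_+$ with $\m$ prime to $p$. Given that, the closure identity follows easily (write $Z_K^2$ as a finite union of cosets of such a $U(\m)_+$ and note that the closure of $U(\m)_+$ in $(\wh{\oo}_F^p)^\times$ meets $\oo_F^\times$ inside $U(\m)_+$); without it, your argument has a genuine gap at precisely the step you flagged as ``the hard part''. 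This is the nontrivial arithmetic input, and it is exactly what the paper cites (\cite[Th\'eor\`eme 1]{chevalley}). Once Chevalley is invoked in that form, the passage to limits becomes unnecessary, and the paper's direct choice of $\m$ is the shorter route to the same conclusion.
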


\begin{proof}
Without loss of generality we may assume that $K^p = K^p(\n)$ for some ideal $\n \sub \oo_F$ prime to $p$, and we may look for a $K^p_0$ of the form $K^p(\m)$ for some $\m \sub \n$ (also prime to $p$). If $\mf{a} \sub \oo_F$ is an ideal, we write $U(\mf{a})_+$ for the subgroup of $\oo_F^+$ of totally positive units congruent to $1$ modulo $\mf{a}$. We need to find $\m$ as in the statement such that the image of $\GL_2(F)^\circ \cap K^p(\m)I$ in $\PGL_2(F)$ is contained in the image of $\SL_2(F) \cap K^p(\n)I$, which is equivalent to 
\[
\GL_2(F)^\circ \cap K^p(\m)I \sub (\SL_2(F) \cap K^p(\n)I)\cdot Z(F).
\]
By \cite[Th\'eor\`eme 1]{chevalley}, there exists an ideal $\m$, prime to $p$, such that $U(\m)_+ \sub U(\n)_+^2$. Choose such an $\m$ which is also contained in $\n$. Then, if $g\in \GL_2(F)^\circ \cap K^p(\m)I$, its determinant is in $U(\m)_+$ and hence equal to $z^2$ for some $z\in U(\n)_+$. In particular, we see that
\[
g =  \left( g \cdot \begin{pmatrix} z^{-1} & 0 \\ 0 & z^{-1} \end{pmatrix} \right) \cdot \begin{pmatrix} z & 0 \\ 0 & z \end{pmatrix} \in (\SL_2(F) \cap K^p(\n)I)\cdot Z(F)
\]
as desired. This finishes the proof.
\end{proof}

Let us now move on to discuss local systems on $Y_K$ and $Y_K^1$, following \cite[\S 2.2]{bh} where it is described for general connected reductive groups. The procedure is the same in both cases, so we will just describe it in the case of $Y_K$. Consider the natural map
\[
D_\infty \times \GL_2(\A^\infty_F) \to Y_K,
\]
where $\GL_2(\A_F^\infty)$ carries the \emph{discrete} topology. Let $Z(K) = Z(F) \cap K$ (intersection taken inside $\GL_2(\A_F^\infty)$). Then the map above is a torsor for $(\GL_2(F)^\circ \times K)/Z(K)$ (viewed as a \emph{discrete} group), where $Z(K)$ is diagonally embedded in the product. Here, we let $(\gamma,k) \in \GL_2(F)^\circ \times K$ act on $(\tau,g)\in D_\infty \times \GL_2(\A_F^\infty)$ by $(\gamma,k) \cdot (\tau,g) = (\gamma\tau, \gamma gk^{-1})$. Thus, any left $(\GL_2(F)^\circ \times K)/Z(K)$-module gives a local system on $Y_K$ by descent. There is an exact sequence
\[
1 \to \GL_2(F)^\circ \to (\GL_2(F)^\circ \times K)/Z(K) \to K/Z(K) \to 1.
\]
In this paper, we will only (explicitly) need $(\GL_2(F)^\circ \times K)/Z(K)$-modules which are trivial on $\GL_2(F)^\circ$, i.e. local systems descended from the cover
\[
\mf{Y} := \GL_2(F)^\circ \backslash D_\infty \times \GL_2(\A^\infty_F) \to Y_K,
\]
which has covering group $K/Z(K)$ (naturally acting from the right). Any right $K/Z(K)$-module $N$ gives a local system on $Y_K$, which we will also denote by $N$. Let $C_\bu (\mf{Y})$ be the complex of singular chains on $\mf{Y}$ and let $N$ be a right $K$-module, trivial on $Z(K)$. Then the cohomology $H^\ast(Y_K,N)$ can be computed as the cohomology of the complex $\Hom_K(C_\bu(\mf{Y}),N)$. We will also need to consider the action of Hecke operators. Let $\Delta \sub \GL_2(\A_F^\infty)$ be a monoid containing a compact open subgroup $K\sub \GL_2(\A_F^\infty)$. Then, if $N$ is a \emph{left} $\Delta$-module with trivial $Z(K)$-action, we may define the action of Hecke operators $[K\delta K]$, $\delta \in \Delta$ on $H^\ast(Y_K,N)$, as in \cite[\S 2.2]{bh}. Note that $N$ is made into a right $K$-module (and hence a local system on $Y_K$) by inverting the left $K$-action. In general, the Hecke algebra of compactly supported bi-$K$-invariant locally constant functions on $\Delta$ (under convolution) will be denoted by $\mbf{T}(\Delta,K)$, whenever $K\sub G$ is a compact open subgroup of a locally profinite group $G$ and $K \sub \Delta \sub G$ is a monoid. 

\medskip

We finish by recording a consequence of Lemma \ref{shrinking} that will be important to us later on, as well as some simplifying notation. In this paper, we will only ever work with the level subgroup $I$ at $p$. For any compact open subgroup $K^p \sub \GL_2(\A_F^{p\infty})$, we will set $K = K^p I$, and we set
\[
Y_{K}^1 := Y^1_{K^p I \cap \SL_2(\A_F^\infty)} \,\,\, \text{and} \,\,\, Cl_{K}^+ := Cl_{\det(K)}^+.
\]
We also let $H_{K}$ denote the dual group of the finite abelian group $Cl_{K}^+$, i.e. the group of characters of $Cl_{K}^+$ taking values in an algebraically closed field of characteristic $0$. Finally, if $\n \sub \oo_F$ is a non-zero ideal which is prime to $p$, we simplify the notation even further. We set 
\[
Y_{\n} = Y_{K^p(\n)I}, \,\,\,\, Y^1_{\n} = Y^1_{K^p(\n)I}, \,\,\,\, Cl_{\n}^+ = Cl_{K^p(\n)I}^+ \,\,\, \text{and} \,\,\, H_{\n} = H_{K^p(\n)I}.
\]
Now let $E$ be an algebraically closed field of characteristic $0$. Via the inclusion $H_{K} \sub H^0(Y_{K},E)$, $H_{K}$ acts on $H^\ast(Y_{K},N)$ for any right $E[K ]$-module $N$; we review this in more detail in \S \ref{subsec: ocGL2}.

\begin{proposition}\label{limit}
With notation as in the previous paragraph and $K = K^p I$, the natural map
\[
\varinjlim_{K^p} H^\ast(Y_{K},N)_{H_{K}} \to \varinjlim_{K^p} H^\ast(Y_{K}^1,N)
\]
is an isomorphism. Here $K^p$ runs through the compact open subgroups of $\GL_2(\A_F^{p\infty})$; the intersections $K^p \cap \SL_2(\A_F^{p\infty})$ are then cofinal among the compact open subgroups of $\SL_2(\A_F^{p\infty})$.
\end{proposition}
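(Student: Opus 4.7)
The plan is to split the proposition into two claims. First, I identify the coinvariants $H^\ast(Y_K, N)_{H_K}$ with the cohomology of the identity component of $Y_K$. Second, I use Lemma \ref{shrinking} to show that in the direct limit over $K^p$ the natural restriction along the covering map $p_{K^p} : Y^1_K \to Y_{K,1}$ becomes an isomorphism.

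For the first claim, strong approximation for $\SL_2$ combined with the determinant map identifies the connected components of $Y_K$ with $Cl_K^+$, the identity component being $Y_{K,1} = (\GL_2(F)^\circ \cap K) \backslash D_\infty$. Correspondingly,
\[
H^\ast(Y_K, N) = \bigoplus_{c \in Cl_K^+} H^\ast(Y_{K,c}, N),
\]
and $\chi \in H_K$ acts via cup product with $\chi \in E^{Cl_K^+} = H^0(Y_K, E)$ by scaling the $c$-summand by $\chi(c)$. Since $H_K$ is a finite abelian group and $E$ has characteristic zero, coinvariants coincide with invariants and are computed componentwise: the $c$-summand survives iff $\chi(c) = 1$ for every $\chi \in H_K$, which forces $c = 1$. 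Hence $H^\ast(Y_K, N)_{H_K} \cong H^\ast(Y_{K,1}, N)$. Since $Y^1_K \hookrightarrow Y_K$ factors through $Y_{K,1}$, the pullback $H^\ast(Y_K, N) \to H^\ast(Y^1_K, N)$ kills the $H_K$-action, so the map of the proposition is well-defined and identifies with $p^\ast_{K^p}$.

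For the second claim, Lemma \ref{shrinking} produces, for each $K^p$, a smaller $K^p_0 \sub K^p$ together with a map $s_{K^p_0, K^p} : Y_{K^p_0 I, 1} \to Y^1_{K^p I}$ fitting into the commutative square of the lemma. Commutativity translates into the two identities
\[
p_{K^p} \circ s_{K^p_0, K^p} = \iota^{\GL_2}_{K^p_0, K^p}, \qquad s_{K^p_0, K^p} \circ p_{K^p_0} = \iota^{\SL_2}_{K^p_0, K^p},
\]
where $\iota^{\GL_2}$ and $\iota^{\SL_2}$ denote the natural transition maps of the two direct systems. Pulling back cohomology (the local system $N$ is determined by its fixed right $I$-action at $p$, hence is uniformly compatible across $K^p$), these identities show that the forward maps $p^\ast_{K^p}$ and the backward maps $s^\ast_{K^p_0, K^p}$ interlock the two direct systems, with each pairwise composition equal to a transition map. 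Passing to the direct limit, $\varinjlim s^\ast$ becomes a two-sided inverse of $\varinjlim p^\ast$, completing the proof.

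The main obstacle is of a cofinality/bookkeeping nature: one must verify that the family $\{s_{K^p_0, K^p}\}$ can be organized into a well-defined morphism of direct systems, given that $K^p_0$ depends on $K^p$ and that several valid choices exist. This is a routine cofinality check once Lemma \ref{shrinking} is in hand; the geometric content resides entirely in that lemma.
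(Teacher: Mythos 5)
Your proof is correct and takes essentially the same approach as the paper: identify the $H_K$-coinvariants with the cohomology of the identity component of $Y_K$, then invoke Lemma~\ref{shrinking} to show the two direct systems interlock, forcing the limit map to be an isomorphism. The paper states the final step only as ``a direct consequence of Lemma~\ref{shrinking}''; your unwinding of the interlocking transition maps is exactly the intended content.
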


\begin{proof}
We start by describing the map. The map $Y_{K}^1 \to Y_{K}$ induces a map $H^\ast(Y_{K},N) \to H^\ast(Y_{K}^1,N)$. The coinvariants $H^\ast(Y_{K},N)_{H_{K}}$ are canonically isomorphic to the cohomology of $N$ on the component of $Y_{K}$ corresponding to $1 \in Cl_{K}^+$, so the map $H^\ast(Y_{K},N) \to H^\ast(Y_{K}^1,N)$ factors through the $H_{K}$-coinvariants, and one sees easily that these maps define a direct system as $K^p$ varies. With this preparation, the proposition is then a direct consequence of Lemma \ref{shrinking}.
\end{proof}

\begin{remark}
When $F=\Q$, the maps $H^\ast(Y_{K},N)_{H_{K}} \to H^\ast(Y_{K^p}^1,N)$ are isomorphisms even before taking the direct limit, since $Y_{K}^1$ is equal to the component of $Y_{K}$ corresponding to $1 \in Cl_{K}^+$.
\end{remark}

\subsection{Automorphic representations}\label{automorphic reps}

In this subsection we indicate our conventions on automorphic forms and representations. We will also discuss the aspects of (classical) $L$-packets for $\SL_2$ that we will need, as well as functoriality from $\GL_2$ to $\SL_2$.

\medskip

We start with $\GL_2$, where we will mostly try to follow \cite{bh} closely. Since we will exclusively deal with cohomological automorphic representations, we begin by recalling the definition of a cohomological weight from \cite[Definition 2.4.1]{bh}:

\begin{definition}
A cohomological weight (for $\GL_{2/F}$) is a pair $k = (k_1,k_2)$ of characters $F^\times \to \C^\times$ of the form
\[
k_i(x) = \prod_{\sigma \in \Sigma_\infty} \sigma(x)^{e_i(\sigma)}
\]
for $e_i(\sigma)\in \Z$ such that $e_1(\sigma) \geq e_2(\sigma)$ for all $\sigma \in \Sigma_\infty$ and $\omega_k := k_1 k_2$ is trivial on a finite index subgroup of $\oo_F^\times$.
\end{definition}

If $k$ is a cohomological weight, then $e_1(\sigma) + e_2(\sigma)$ is independent of $\sigma$ and will be denoted by $w$. Setting $k_\sigma = e_1(\sigma) - e_2(\sigma)$, we see that a cohomological weight can be described in terms of a $d+1$-tuple $((k_\sigma)_\sigma,w) \in \Z_{\geq 0}^{\Sigma_\infty} \times \Z$ with $k_\sigma \equiv w$ modulo $2$ for all $\sigma$. Using our fixed isomorphism $\C \cong \ol{\Q}_p$ and the corresponding identification $\Sigma_\infty \cong \Sigma_p$ we can view a cohomological weight as a pair of $\ol{\Q}_p$-valued characters; we will do so without further comment. For each $v \mid p$, we define
\[
k_{i,v}(x) = \prod_{\sigma \in \Sigma_v} \sigma(x)^{e_i(\sigma)} \in \ol{\Q}_p,
\] 
where we use the identification $\Sigma_\infty \cong \Sigma_p$ mentioned above.

\medskip

Given integers $k_\sigma \geq 0$ and $w$ with $k_\sigma \equiv w \mod 2$, there is a (unique) discrete series representation $D_{k_\sigma +2, w}$ of $\GL_2(\R)$ of weight $k_\sigma +2$ and central character $x \mapsto x^{-w}$. If $k$ is a cohomological weight then, following \cite[\S 3.1]{bh}, we set $D_k := \bigotimes_{\sigma \in \Sigma_\infty} D_{k_\sigma +2, w}$, viewed as a representation of $\GL_2(F_\infty)$. We then say that a cuspidal automorphic representation $\pi$ is \emph{cohomological of weight $k$} if $\pi_\infty \cong D_k$. Let us also recall how cohomological representations show up in cohomology. If $n\geq 0$ and $w$ are integers with $n \equiv w \mod 2$ and $R$ is a ring, we define $\mathscr{L}_{n,w}(R)$ to be the $R$-module of polynomials with coefficients in $R$, of degree at most $n$, and give it the $\GL_2(R)$-action
\[
\left( \begin{pmatrix} a & b \\ c & d  \end{pmatrix} \cdot P \right) (X) = \det \begin{pmatrix} a & b \\ c & d  \end{pmatrix}^{\frac{w-n}{2}} (a+cX)^n P\left( \frac{b+dX}{a+cX} \right).
\]
If $k$ is a cohomological weight and $E \sub \ol{\Q}_p$ is an extension of $\Qp$ containing the image of all embeddings $F \hookrightarrow \ol{\Q}_p$, we set
\[
\mathscr{L}_k (E) = \bigotimes_{\sigma \in \Sigma_p} \mathscr{L}_{k_\sigma, w}(E).
\] 
It carries a left action of $\prod_{\sigma\in \Sigma_p} \GL_2(E)$. Using the composite homomorphism
\[
\GL_2(\A_F^\infty) \to \GL_2(F_p ) \to \prod_{\sigma\in \Sigma_p} \GL_2(E),
\]
where the first map is the projection and the second is the product of the maps $\sigma: \GL_2(F_p ) \to  \GL_2(E)$, we get a left action of $\GL_2(\A_F^\infty)$ which is trivial on $Z(K)$ for any sufficient small compact open $K \sub \GL_2(\A_F^\infty)$. In particular, $\mathscr{L}_k (E)$ defines a local system on any $Y_K$. We have that 
$H^\ast(Y_{K},\mathscr{L}_k(E)) \cong H^\ast(Y_{K^\prime},\mathscr{L}_k(E))^{K}$
 via the transition map whenever $K^\prime \sub K$ is normal. The direct limit 
 $$H^\ast(Y,\mathscr{L}_k(E)) := \varinjlim_K H^\ast(Y_{K},\mathscr{L}_k(E))$$
  carries commuting actions of $\GL_2(\A_F^\infty)$ and $\pi_0 := \pi_0(\GL_2(F_\infty))$. If $\pi$ is a cuspidal automorphic representation of $\GL_2(\A_F)$, then the $\pi^\infty$-isotypic part $ H^\ast(Y,\mathscr{L}_k(\ol{\Q}_p))[\pi^\infty]$ of $ H^\ast(Y,\mathscr{L}_k(\ol{\Q}_p))$ is non-zero if and only if $\pi$ is cohomological of weight $k$, in which case
\[
 H^d(Y,\mathscr{L}_k(\ol{\Q}_p))[\pi^\infty] \cong \pi^\infty \otimes R(\pi_0)
\]
as a $\GL_2(\A_F^\infty) \times \pi_0$-representation, where $R(\pi_0)$ is the regular representation of $\pi_0$, and 
\[
H^i(Y,\mathscr{L}_k(\ol{\Q}_p))[\pi^\infty] =0, \text{ for } i \neq d,
\] 
(see \cite[p. 61, p. 74]{harder}). Here we have used the isomorphism $\C \cong \ol{\Q}_p$ to view $\pi^\infty$ as a smooth $\GL_2(\A_F^\infty)$-representation on a $\ol{\Q}_p$-vector space; we will continue to do such things without further discussion. Since
\[
H^\ast(Y,\mathscr{L}_k(\ol{\Q}_p)) = H^\ast(Y,\mathscr{L}_k(E)) \otimes_E \ol{\Q}_p,
\]
$H^\ast(Y,\mathscr{L}_k(E))$ is also concentrated in degree $d$, for any $E$ as above.

\medskip

We will need the notion of a ($p$-)refinement of an automorphic representation. Let $\pi$ be a cuspidal cohomological automorphic representation. For any $v \mid p$ we denote by $I_v$ the Iwahori subgroup of $\GL_2(F_v)$ defined as the preimage of the upper triangular matrices under the reduction map $\GL_{2}(\mc{O}_{F_v}) \ra \GL_2(\mb{F}_v)$. Assume from now on that $\pi_v^{I_v} \neq 0$ for all $v \mid p$, as such $\pi$ are the only ones that we will consider in this paper. 

\begin{definition}
Let $\pi$ be as above. A refinement $\alpha = (\alpha_v)_{v\mid p}$ of $\pi$ is a choice of eigenvalue $\alpha_v$ of $U_v = [ I_v \left( \begin{smallmatrix} \varpi_v & 0 \\ 0 & 1 \end{smallmatrix} \right) I_v ]$ acting on $\pi_v^{I_v}$, for each $v\mid p$. 
\end{definition}

\begin{definition}\label{refrep}
We define a refined automorphic representation to be a pair $(\pi,\alpha)$, where $\pi$ is a cuspidal cohomological automorphic representation and $\alpha$ is a refinement of $\pi$.
\end{definition}

We recall that when $\pi_v$ is an unramified principal series representation, the eigenvalues of $U_v$ on $\pi_v^{I_v}$ are precisely the roots of the $v$-Hecke polynomial $X^2 - a_v(\pi) X + \omega_{\pi} (\varpi_v) q_v$. Here $\omega_\pi$ is the central character of $\pi$ and $a_v(\pi)$ is the eigenvalue of $T_v = [ K_v \left( \begin{smallmatrix} \varpi_v & 0 \\ 0 & 1 \end{smallmatrix} \right) K_v ]$ acting on $\pi_v^{K_v}$, where $K_v = \GL_2(\oo_{F_v})$. The notion of refinement is defined in \cite[Definition 3.4.2]{bh}, and it is equivalent to the definition given above by \cite[Proposition 3.4.4]{bh}.

\subsubsection{The Galois representation associated to a cuspidal cohomological automorphic representation}
Let $\pi$ be a cuspidal cohomological automorphic representation of $\GL_2(\A_F)$ of weight $k$ as above. Then $\pi$ has an associated $p$-adic Galois representation
\begin{equation}\label{eq:classrep}
\rho_{\pi}:G_F\rightarrow \GL_2(\overline{\Q}_p),
\end{equation}
depending on our fixed choice of isomorphism $\C \cong \ol{\Q}_p$ (which we suppress from the notation). We recall some of the main properties of $\rho_\pi$. Let $\rho_{\pi,v}$ denote its restriction to a place $v$ of $F$. Then (see e.g. \cite{bh}, Theorem 6.5.1 and Remark 6.5.2) $\rho_\pi$ is irreducible and for any $v\mid p$, $\rho_{\pi,v}$ satisfies the following properties:

\begin{enumerate}
\item If $\sigma \in \Sigma_v$, then the set $HT_{\sigma}(\rho_{\pi,v})$ of $\sigma$-Hodge--Tate weights of $\rho_{\pi,v}$ is equal to $\left\{\frac{w-k_{\sigma}}{2}, \frac{w+k_{\sigma}}{2}+1 \right\}$.

\item If $\pi_v$ is an unramified principal series representation then $\rho_{\pi,v}$ is crystalline and the characteristic polynomial of $\varphi^{f_v}$ acting on $D_{crys}(\rho_{\pi,v})$  is given by the $v$-th Hecke polynomial $X^2-a_v(\pi)X+ \omega_{\pi}(\varpi_v)q_v$.

\item If $\pi_v$ is an unramified twist of a special representation then $\rho_{\pi,v}$ is semistable (but not crystalline).
\end{enumerate}

For all of this, we use the following conventions: We normalize local class field theory by sending uniformizers to geometric Frobenius elements, and we normalize Hodge--Tate weights so that all the Hodge--Tate weights of the cyclotomic character are $-1$.

\subsubsection{L-packets and classical functoriality from $\GL_2$ to $\SL_2$}\label{subsec:packets}

Let $v$ be a finite place of $F$. We recall that given an irreducible smooth representation $\widetilde{\pi}_v$ of $\GL_2(F_v)$, its restriction to $\SL_2(F_v)$ breaks up into finitely many irreducible smooth representations, each occurring with multiplicity one (\cite[Lemmas 2.4 and 2.6]{labesse-langlands}).  A local $L$-packet of $\SL_2(F_v)$ is defined as a finite set $\{\pi_{v,i}: i = 1, \dots, n\}$ of irreducible smooth representations $\pi_{v,i}$ of $\SL_2(F_v)$, such that there exists an irreducible admissible representation $\widetilde{\pi}_v$ of $\GL_2(F_v)$ with 
\[
 \widetilde{\pi}_v|_{\SL_2(F_v)} \cong \bigoplus_{i=1}^n \pi_{v,i} .
\]
Starting from an irreducible smooth representation $\widetilde{\pi}_v$ of $\GL_2(F_v)$ we denote by $\Pi(\widetilde{\pi}_v)$ the $L$-packet defined by $\widetilde{\pi}_v$. Any irreducible smooth representation of $\SL_2(F_v)$ occurs in the restriction of an irreducible smooth representation of $\GL_2(F_v)$ and therefore belongs to an $L$-packet, see \cite[Lemma 2.5]{labesse-langlands}. Furthermore, suppose an irreducible smooth representation $\pi_v$ of $\SL_2(F_v)$ occurs in the restriction of two representations $\widetilde{\pi}_v$ and $\widetilde{\pi}'_v$. Then there exists a character $\chi: F_v^{\times} \rightarrow \C^\times$, trivial on $\det(Z(F_v))=(F_v^{\times})^2$, such that $\widetilde{\pi}'_v \cong \widetilde{\pi}_v\otimes \chi$, and moreover $\Pi(\widetilde{\pi}_v)= \Pi(\widetilde{\pi}'_v)$ (see \cite[Lemma 2.4]{gelbart-knapp}). In particular two $L$-packets either agree or are disjoint, hence forming $L$-packets gives a partition of the set of equivalence classes of irreducible smooth representations into finite sets. $L$-packets of $\SL_2(F_v)$ have size $1,2$ or $4$ \cite[p.15]{labesse-langlands}. Finally, let us recall that if $\widetilde{\pi}_v$ is an unramified representation of $\GL_2(F_v)$, there exists a unique element $\pi^0_v$ in $\Pi(\widetilde{\pi}_v)$ which has a non-zero $\SL_2(\mathcal{O}_{v})$-fixed vector. More generally, one has the following fact, which we record as a lemma for ease of referencing.

\begin{lemma}\label{special member}
Let $\wt{\pi}_v$ be an irreducible admissible representation of $\GL_2(F_v)$. Then there is a member $\pi_v^0 \in \Pi(\wt{\pi}_v)$ with $(\pi_v^0)^{I_{0,v}}\neq 0$ if and only if $(\wt{\pi}_v \otimes (\chi \circ \det))^{I_v} \neq 0$ for some smooth character $\chi$ of $F^\times_v$. 
\end{lemma}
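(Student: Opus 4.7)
The plan is to interpret $I_{0,v}$ as $I_v \cap \SL_2(F_v)$ and exploit that the determinant induces an isomorphism $I_v/I_{0,v} \xrightarrow{\sim} \oo_v^\times$, so that $I_{0,v}$ is a normal open subgroup of $I_v$ with abelian quotient.

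For the ``if'' direction, suppose $0 \neq v_0 \in (\wt{\pi}_v \otimes (\chi \circ \det))^{I_v}$. Any $g \in I_{0,v}$ satisfies $\det g = 1$, so the twisting character is trivial on $I_{0,v}$, and $v_0$ is therefore $I_{0,v}$-fixed under the untwisted action of $\wt{\pi}_v$. Since $I_{0,v} \subseteq \SL_2(F_v)$ preserves the decomposition $\wt{\pi}_v|_{\SL_2(F_v)} = \bigoplus_i \pi_{v,i}$, projecting $v_0$ to the summands produces a non-zero $I_{0,v}$-fixed vector in some $\pi_{v,i}$, which may be taken as $\pi_v^0$.

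For the ``only if'' direction, I would start from a non-zero $v_0 \in (\pi_v^0)^{I_{0,v}}$, which lies in the full space $W := \wt{\pi}_v^{I_{0,v}}$. Normality of $I_{0,v}$ in $I_v$ makes $W$ stable under $I_v$, and the action factors through the abelian quotient $I_v/I_{0,v} \cong \oo_v^\times$. Because $\wt{\pi}_v|_{\SL_2(F_v)}$ is a finite direct sum of irreducible---hence admissible---smooth representations of $\SL_2(F_v)$ and $I_{0,v}$ is open in $\SL_2(F_v)$, the space $W$ is finite-dimensional. The smooth action of the compact abelian group $\oo_v^\times$ therefore decomposes $W$ into a sum of smooth character eigenspaces. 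I would pick any character $\psi$ of $\oo_v^\times$ occurring in $W$, together with a non-zero $\psi$-eigenvector $w$, and extend $\psi^{-1}$ to a smooth character $\chi$ of $F_v^\times$ using the splitting $F_v^\times \cong \oo_v^\times \times \varpi_v^{\Z}$. By construction, $w$ is then $I_v$-fixed in the twist $\wt{\pi}_v \otimes (\chi \circ \det)$.

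No step presents a real obstacle. The essential input is the normality of $I_{0,v}$ in $I_v$, which promotes $W$ to an $\oo_v^\times$-representation, combined with the finite-dimensionality of $W$, which decomposes it into characters that can then be trivialized by a twist.
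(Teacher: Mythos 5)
The paper records this lemma as a known fact and does not supply a proof, deferring in the surrounding discussion to Labesse--Langlands and Lansky--Raghuram (specifically the classification of principal series $L$-packets). Your argument is correct and takes a genuinely different, entirely self-contained route that avoids the packet classification. The key observations you use are all sound: $I_{0,v}$ is the kernel of $\det \colon I_v \to \oo_v^\times$, hence normal in $I_v$ with abelian quotient $\oo_v^\times$; the space $W = \wt{\pi}_v^{I_{0,v}}$ is finite-dimensional because $\wt{\pi}_v|_{\SL_2(F_v)}$ is a finite direct sum of irreducible admissible $\SL_2(F_v)$-representations and $I_{0,v}$ is open in $\SL_2(F_v)$ (you are right to route through $\SL_2$ here, since $I_{0,v}$ is not open in $\GL_2(F_v)$ so admissibility of $\wt{\pi}_v$ alone wouldn't suffice); and a twist by $\chi \circ \det$ is trivial on $\SL_2(F_v)$, so it leaves the restriction to $\SL_2(F_v)$ and hence the $L$-packet unchanged, which is what lets you conclude in the ``if'' direction that the summand $\pi_{v,i}$ supporting the $I_{0,v}$-fixed vector is in $\Pi(\wt{\pi}_v)$ itself. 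Compared to citing the literature, your argument has the advantage of not depending on the explicit description of unramified principal series packets in \cite{lansky-raghuram}; the only structural inputs are multiplicity-free finite decomposition of $\wt{\pi}_v|_{\SL_2(F_v)}$ and basic facts about smooth finite-dimensional representations of the compact abelian group $\oo_v^\times$.
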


In fact, if $(\wt{\pi}_v \otimes (\chi \circ \det))^{I_v} \neq 0$ for some smooth character $\chi$, then the packet $\Pi(\wt{\pi}_v)$ is either a singleton or has size $2$ (if a packet has size $4$, then any representation $\widetilde{\pi}_v$ defining it is supercuspidal (cf. \cite[Section 2]{labesse-langlands},\cite[Section 3.2]{lansky-raghuram})). Size $2$ only occurs if, up to twist, $\widetilde{\pi}_v \cong \Ind_{B}^{\GL_2(F)}(\psi \otimes \mbf{1})$ is a principal series representation, with $\mbf{1}$  the trivial character and $\psi$ the unramified quadratic character of $F_v^\times$. This follows from the description of principal series $L$-packets in Section 3.2 of \cite{lansky-raghuram} and \cite[Prop. 3.2.8]{lansky-raghuram}, as if $(\wt{\pi}_v \otimes (\chi \circ \det))^{I_v} \neq 0$, then $\Pi(\wt{\pi}_v)$ is necessarily an unramified principal series $L$-packet in the language of loc.\ cit.

\medskip

At the archimedean places one defines $L$-packets in the same way.  In this paper we only consider cuspidal cohomological automorphic representations, so for us it suffices to recall the $L$-packet associated to the discrete series representations $D_{k+2,w}$ of $\GL_2(\R)$ discussed above (in this discussion $k$ is a non-negative integer). Upon restriction from $\GL_2(\R)$ to $\SL_2(\R)$, a discrete series representation $D_{k +2,w}$ decomposes into two irreducible representations $ D^+_{k +2}$ and $D^-_{k +2}$ and these form a so-called discrete series $L$-packet. As the notation suggests, the $L$-packet $\{ D_{k+2}^+, D_{k+2}^- \}$ depends on $k$ but not on $w$.

\medskip

The natural projection defines a map of $L$-groups 
\[
^L\GL_2= \GL_2(\C) \longrightarrow \PGL_2(\C)={^L\SL_2}.
\]
Associated to this map there is a Langlands transfer, which we recall next. Again we only describe the part that is relevant for us.
Let $\widetilde{\pi}=\otimes \widetilde{\pi}_v$ be a cuspidal cohomological automorphic representation of $\GL_2(\A_F)$. The global $L$-packet of $\SL_2(\A_F)$ associated to $\widetilde{\pi}$ is defined as
\[
\Pi(\widetilde{\pi}):= \left\{\bigotimes_v^\prime \pi_v \mid  \pi_v \in \Pi(\widetilde{\pi}_v), \pi_v=\pi^0_v \,\, \text{ for almost all } v \right\}.
\]
We refer to the $L$-packet $\Pi(\widetilde{\pi})$ as the transfer of $\widetilde{\pi}$. We briefly recall the relevant multiplicity results from \cite{labesse-langlands} that complete the description of this transfer. In the following we write $\Pi$ for a global $L$-packet when we do not wish to specify a representation $\widetilde{\pi}$ that gives rise to it. For a finite set $S$ of places of $F$, we let $\Pi_S:= \{\otimes_{v\in S} \pi_v \ |\  \pi_v \in \Pi(\widetilde{\pi}_v)\}$ and $\Pi^S:= \{\otimes_{v\notin S}^\prime \pi_v \ |\  \pi_v \in \Pi(\widetilde{\pi}_v), \pi_v=\pi^0_v \, \text{ for almost all } v \}$. For an irreducible admissible representation $\pi$ of $\SL_2(\A_F)$, we let $m(\pi)$ be the multiplicity of $\pi$ in the cuspidal automorphic spectrum.

\medskip
 
One distinguishes between two kinds of global $L$-packets, namely stable $L$-packets and endoscopic (or unstable) $L$-packets.
Stable $L$-packets have the property that the multiplicity is positive and constant (\cite[Lemma 6.1]{labesse-langlands}). Combined with the multiplicity one result of Ramakrishnan (\cite[Theorem 4.1.1]{ramakrishnan}), we know that for a stable $L$-packet $\Pi$, we have $m(\pi)=1$ for all $\pi \in \Pi$.  

\medskip

If $\Pi$ is an endoscopic $L$-packet then the multiplicity varies. More precisely, $m(\pi)\in \{0,1\}$ for $\pi \in \Pi$. If $\widetilde{\pi}$ is a cuspidal cohomological automorphic representation of $\GL_2(\A_F)$, then $\widetilde{\pi}$ gives rise to an endoscopic $L$-packet if and only if $\widetilde{\pi}$ has complex multiplication by a totally imaginary quadratic extension $\widetilde{F}$ of $F$. In this case $\widetilde{\pi}$ arises from an algebraic Gr\"o{\ss}encharacter $\widetilde{\theta}:\widetilde{F}^{\times}\backslash \A^{\times}_{\widetilde{F}} \rightarrow \C^*$ as described in \cite[Section 12]{jacquet-langlands}, and the $L$-packet $\Pi(\widetilde{\pi})$ only depends on the restriction $\theta$ of $\widetilde{\theta}$ to the subgroup of $\A^{\times}_{\widetilde{F}}$ of elements of norm $1$. We also use the notation $\Pi(\theta)$ for such an $L$-packet. 
Let $v$ be a finite place of $F$ such that $\widetilde{\pi}_v$ is unramified. If $v$ splits in $\widetilde{F}$, then the local $L$-packet $\Pi(\theta)_v$ is a singleton consisting of an $\SL_2(\mc{O}_v)$-unramified representation. If $v$ is inert in $\widetilde{F}$, then $\Pi(\theta)_v=\{\pi_{v,1},\pi_{v,2}\}$ is an unramified $L$-packet of size two, as in Proposition 3.2.12 of \cite{lansky-raghuram}, with a unique $\SL_2(\mc{O}_v)$-unramified element.  

\begin{lemma}\label{swap} Let $\widetilde{\pi}$ be a cuspidal cohomological automorphic representation of $\GL_2(\A_F)$ of cohomological weight $k$, so $\pi_{\infty} \cong D_k = \bigotimes_{\sigma \in \Sigma_\infty} D_{k_\sigma +2, w}$. Assume that $\widetilde{\pi}$ has complex multiplication.
Let $\sigma \in \Sigma_{\infty}$ correspond to an archimedean place $v_0$. Then for any representation $\pi^{v_0}=\bigotimes_{v\neq v_0}^\prime \pi_v \in \Pi(\widetilde{\pi})^{v_0}$
precisely one of the representations 
\[ \pi^+:=\pi^{v_0}\otimes D^+_{k_\sigma +2} \ ,\  \ \pi^-:=\pi^{v_0}\otimes D^{-}_{k_\sigma +2} \]
is automorphic and has multiplicity one, and the other representation has multiplicity zero. 
\end{lemma}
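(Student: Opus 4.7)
The plan is to deduce the lemma from the multiplicity formula for endoscopic $L$-packets of $\SL_2$ due to Labesse--Langlands \cite{labesse-langlands}. For a CM representation $\widetilde{\pi}$ with associated character $\theta$, the cuspidal multiplicity of a member $\pi = \bigotimes_v^\prime \pi_v$ of the global $L$-packet $\Pi(\theta)$ is given by
\[
m(\pi) = \frac{1}{2}\left(1 + \prod_v \epsilon(\pi_v)\right),
\]
where $\epsilon(\pi_v) \in \{\pm 1\}$ is the local character value on the component group of the local $L$-packet (equal to $+1$ at places where the local packet is a singleton, and taking opposite values on the two members when the local packet has size $2$, which is all one needs in the $\SL_2$ CM setting).

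First, I would check the situation at the archimedean place $v_0$. Since $F$ is totally real and $\widetilde{F}$ is totally imaginary, every archimedean place of $F$ is non-split in $\widetilde{F}$, so the local archimedean $L$-packet is of endoscopic type and has size $2$. By the description recalled above, this packet consists precisely of the two discrete series $D^+_{k_\sigma+2}$ and $D^-_{k_\sigma+2}$, and the local character in the Labesse--Langlands pairing assigns opposite signs to these two representations.

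Next, with $\pi^{v_0}$ fixed, set
\[
\epsilon_0 := \prod_{v \neq v_0} \epsilon(\pi_v) \in \{\pm 1\}.
\]
This is a well-defined sign depending only on $\pi^{v_0}$. The two possible completions $\pi^\pm = \pi^{v_0} \otimes D^\pm_{k_\sigma+2}$ then give total products $\prod_v \epsilon(\pi_v)$ equal to $+\epsilon_0$ and $-\epsilon_0$ respectively. Plugging into the multiplicity formula yields $m(\pi^+) + m(\pi^-) = 1$ with each multiplicity in $\{0,1\}$, so exactly one of $\pi^\pm$ is cuspidal automorphic with multiplicity one and the other is not automorphic, as claimed.

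The main point (and the only nontrivial input) is the correct statement of the Labesse--Langlands multiplicity formula, together with the fact that at an archimedean place of a totally real field the discrete series $L$-packet for $\SL_2$ has size two with the component group acting by swapping $D^+$ and $D^-$; both are classical and can be cited directly from \cite{labesse-langlands}. No delicate local computation at the finite places is needed, because the local signs at $v \neq v_0$ only appear through the fixed sign $\epsilon_0$.
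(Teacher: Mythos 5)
Your argument follows essentially the same route as the paper's: apply the Labesse--Langlands multiplicity formula for members of the endoscopic $L$-packet $\Pi(\theta)$, observe that the archimedean local packet has size two with the two discrete-series constituents receiving opposite signs, and conclude that exactly one of $\pi^\pm$ has nonzero multiplicity. That much is fine, and you correctly isolate the key structural input.

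However, there is a gap in the multiplicity formula as you state it. The formula in \cite[Proposition 6.7]{labesse-langlands} reads
\[
m(\pi) = \frac{d(\pi)}{2}\bigl(1 + \langle \epsilon, \pi \rangle\bigr),
\]
where $d(\pi)$ is a priori a positive integer depending only on the $L$-packet; you have silently replaced $d(\pi)$ by $1$. Without knowing $d(\pi)=1$, all you can conclude from the sign argument is that exactly one of $m(\pi^+)$, $m(\pi^-)$ vanishes and the other equals $d(\pi)$, which does not yet give the claimed multiplicity-one statement (and also does not justify writing each multiplicity as an element of $\{0,1\}$). The paper closes this gap by invoking Ramakrishnan's multiplicity-one theorem for $\SL_2$ \cite[Theorem 4.1.1]{ramakrishnan} to deduce $d(\pi)=1$; you should add this citation (or an equivalent argument) for the proof to be complete. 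A smaller point: you should also note, as the paper does, that the cuspidal cohomological hypothesis forces $\theta$ to be of the relevant type (``type (a)'' in the Labesse--Langlands classification), which is what ensures the archimedean local packets are discrete-series packets of size two with nontrivial pairing.
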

\begin{proof} This follows from \cite{labesse-langlands} and \cite{ramakrishnan} as follows. As recalled above, $\Pi(\widetilde{\pi})=\Pi(\theta)$ for a suitable character $\theta$. Our assumption on $\widetilde{\pi}$ to be cuspidal cohomological forces $\theta$ to be what is called of \emph{type (a)} in the classification of \cite[Section 5, p.764]{labesse-langlands}. 
For any $\pi \in \Pi(\theta)$, the multiplicity $m(\pi)$ is then given by the formula 
\[m(\pi)= \frac{d(\pi)}{2}\left(1 + \langle \epsilon, \pi \rangle\right) \]
from \cite[Proposition 6.7]{labesse-langlands}. Here $d(\pi)$ a priori is a positive integer (which only depends on the $L$-packet of $\pi$; see \cite[p. 773]{labesse-langlands}) and $\langle \epsilon, \pi \rangle = \prod_v \langle \epsilon_v, \pi_v \rangle$ is a product of local signs $\langle \epsilon_v, \pi_v \rangle \in \{-1,1\}$, with $\langle \epsilon_v, \pi_v \rangle =1$ for almost all $v$ (see \cite[p.766]{labesse-langlands}), and for any $v\mid \infty$, we have
\[\{\langle \epsilon_v, D^+_{k_\sigma+2} \rangle, \langle \epsilon_v, D^-_{k_\sigma+2} \rangle\}=\{-1,1\}.\]
This implies that exactly one of $m(\pi^+)$ and $m(\pi^-)$ is zero. From \cite[Theorem 4.1.1]{ramakrishnan} we conclude that $d(\pi)=1$ for any $\pi \in \Pi(\theta)$, hence the lemma follows.  
\end{proof}

\subsection{Pseudorepresentations}\label{subsec: pseudoreps}

In this section we will discuss some results on the pseudorepresentations introduced by V. Lafforgue in \cite{lafforgue}. Let $E$ be a field of characteristic $0$ and let $H$ be a split connected reductive group over $E$. Let $\G$ be a (discrete) group and let $A$ be an $E$-algebra. For any integer $n\geq 1$, let $E[H^n]^H$ denote the global functions on $H^n$ which are invariant under the action of $H$ on $H^n$ by diagonal conjugation, and let $C(\G^n,A)$ denote the set-theoretic functions from $\G^n$ to $A$. A pseudorepresentation of $\G$ valued in $H(A)$ is a collection $(\theta_n)_{n\geq1}$ of $E$-algebra homomorphisms
\[
\theta_n : E[H^n]^H \to C(\G^n,A)
\]
satisfying the following conditions:

\begin{itemize}
\item For any $r,s \in \Z_{\geq 1}$ and for any function $\zeta : \{ 1,\dots, r \} \to \{1,\dots, s\}$, any $f\in E[H^r]^H$ and any $s$ elements $\gamma_1,\dots,\gamma_s \in \G$, we have
\[
\theta_s(f^\zeta)(\gamma_1,\dots,\gamma_s) = \theta_r(f)(\gamma_{\zeta(1)},\dots, \gamma_{\zeta(r)}),
\]
where $f^\zeta \in E[H^s]^H$ is defined by $f^\zeta(h_1,\dots,h_s) = f(h_{\zeta(1)},\dots,h_{\zeta(r)})$.

\smallskip

\item For any $r \in \Z_{\geq 1}$, any $\gamma_1,\dots,\gamma_r, \gamma_{r+1} \in \G$ and any $f\in E[H^r]^H$, we have
\[
\theta_{r+1}(\hat{f})(\gamma_1,\dots,\gamma_r,\gamma_{r+1}) = \theta_r(f)(\gamma_1, \dots, \gamma_r \gamma_{r+1}),
\]
where $\hat{f} \in E[H^{r+1}]^H$ defined by $\hat{f}(h_1,\dots,h_r,h_{r+1}) = f(h_1,\dots,h_r h_{r+1})$. 

\end{itemize}
 
%\begin{remark}
%A nice conceptual (and compact) way of thinking about pseudorepresentations is given in \cite{weidner}.
%\end{remark}

\medskip

We recall that any homomorphism $\rho : \G \to H(A)$ induces a pseudorepresentation $\theta$ by letting $\theta_n : E[H^n]^H \to C(\G^n,A)$ be given by
\[
\theta_n(f)(\gamma_1,\dots,\gamma_n) = f(\rho(\gamma_1),\dots, \rho(\gamma_n)).
\]
If $B$ is an $E$-algebra, $\phi : A \to B$ is homomorphism and $\theta$ is a pseudorepresentation valued in $H(A)$, then we may form a new pseudorepresentation $\phi \circ \theta := (\phi_n \circ \theta_n)_{n \geq 1}$, where $\phi_n$ is the $E$-algebra homomorphism $C(\G^n, A) \to C(\G^n,B)$ induced by $\phi$. Thus we may, for fixed $\G$ and $H$, define a moduli functor sending an $E$-algebra $A$ to the set of pseudorepresentations of $\G$ into $H(A)$. This functor is representable by an $E$-algebra by \cite[Theorem 3.0.6(ii)]{emerson}, which we call $R^{ps}$. Attached to $\G$ and $H$ we also have the representation variety and the character variety; recall that the representation variety is the affine $E$-scheme $\Spec R^{rep}$ representing the functor
\[
A \mapsto \Hom_{\mathrm{Group}}(\G, H(A))
\]
and that the character variety is $\Spec R^{char}$, with $R^{char} := (R^{rep})^H$. Association of a pseudorepresentation to a representation then defines a map
\[
\Spec R^{rep} \to \Spec R^{ps}
\]
which factors through the character variety, inducing a canonical map $R^{ps} \to R^{char}$. The following theorem is due to Emerson \cite[Theorem 6.0.5(iii)]{emerson} (note that it crucially uses the assumption that $E$ has characteristic $0$). Let $\rho^{univ}$ denote the universal homomorphism $\G \to H(R^{rep})$, and let $\theta^{univ}$ denote the universal pseudorepresentation.

\begin{theorem}
The canonical map $R^{ps} \to R^{char}$ is an isomorphism. In particular, the map $\iota : R^{ps} \to R^{rep}$ is injective, and $\theta^{univ}$ may be viewed as the pseudorepresentation attached to $\rho^{univ}$.
\end{theorem}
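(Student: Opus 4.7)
The map $R^{ps} \to R^{char}$ itself is easy to produce: the universal representation $\rho^{univ} : \G \to H(R^{rep})$ has an attached pseudorepresentation $\theta^{\rho^{univ}}$ whose value $\theta^{\rho^{univ}}_n(f)(\gamma_1, \ldots, \gamma_n)$, by $H$-invariance of $f \in E[H^n]^H$, is an $H$-invariant regular function on $\Spec R^{rep}$, i.e.\ an element of $R^{char}$. By the universal property of $R^{ps}$ this produces the desired map $R^{ps} \to R^{char}$. Once this is shown to be an isomorphism, the remaining assertions follow at once: injectivity of $\iota : R^{ps} \to R^{rep}$ is just the fact that $R^{char} = (R^{rep})^H$ is a subring of $R^{rep}$, and the identification of $\theta^{univ}$ with $\theta^{\rho^{univ}}$ is forced by the construction. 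So the whole theorem reduces to showing that $R^{ps} \to R^{char}$ is an isomorphism.

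My strategy is to realize both rings as quotients of a common object. Embed $\Spec R^{rep} \hookrightarrow H^{\G}$ via $\rho \mapsto (\rho(\gamma))_{\gamma \in \G}$; the image is the closed subscheme cut out by the multiplicativity relations $f(\rho(\gamma_1 \gamma_2)) = (m^\ast f)(\rho(\gamma_1), \rho(\gamma_2))$ for $\gamma_i \in \G$ and $f \in E[H]$, where $m : H \times H \to H$ is multiplication. Because $H$ is reductive and $E$ has characteristic zero, taking $H$-invariants is exact, so $R^{char}$ is identified with the quotient of $E[H^{\G}]^H := \varinjlim_{(\gamma_1,\ldots,\gamma_n)} E[H^n]^H$ by the ideal generated by the $H$-invariant parts of these relations. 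On the other side, the universal property of $R^{ps}$ identifies it with the quotient of the same colimit by the ideal generated by the multiplication axiom; the permutation axiom is absorbed into the colimit structure via the transition maps indexed by arbitrary set-theoretic maps between finite tuples. The theorem is then equivalent to showing that these two ideals coincide.

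The hard step, and my main anticipated obstacle, is precisely this matching of ideals. The direction that the multiplication axiom gives an $H$-invariant consequence of multiplicativity is immediate: the identity $\theta_{r+1}(\hat f)(\ldots, \gamma_r, \gamma_{r+1}) = \theta_r(f)(\ldots, \gamma_r \gamma_{r+1})$ for $f \in E[H^r]^H$ is exactly the $H$-invariant trace of $\rho(\gamma_r \gamma_{r+1}) = \rho(\gamma_r)\rho(\gamma_{r+1})$. The converse---that the multiplication axiom already generates \emph{all} $H$-invariant consequences of multiplicativity, with no further ``second fundamental theorem''-type relations required---is the substantive part. My approach would be to apply the Reynolds operator: any relation imposed by multiplicativity can be averaged over $H$ to produce its $H$-invariant shadow, and one must verify that this averaging is captured by some combination of permutations and applications of $m^\ast$ at the $H$-invariant level. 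Via Weyl's polarization identities---valid because $E$ has characteristic zero---this reduces to a statement about invariants on finitely many copies of $H$, which for classical $H$ is accessible through Procesi's trace identities and for general reductive $H$ can be deduced by embedding $H \hookrightarrow \GL_N$ and transporting the result using exactness of invariants.
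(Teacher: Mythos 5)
The paper does not prove this statement; it cites it verbatim to Emerson's thesis, \cite[Theorem 6.0.5(iii)]{emerson}, so there is no in-paper argument to compare your proposal against.

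Your proposal correctly frames the problem but is not a proof. You construct the canonical map $R^{ps}\to R^{char}$, correctly deduce the two subsidiary assertions from the isomorphism, and correctly reduce everything to a single ideal-matching statement inside $\varinjlim_n E[H^n]^H$: that the ideal generated by the pseudorepresentation multiplication axiom coincides with the $H$-invariant part of the multiplicativity ideal cutting out $\Spec R^{rep}$ in $H^{\G}$. You also correctly observe that one inclusion is formal and that the reverse inclusion is where the content lies. But then you explicitly defer that reverse inclusion --- the sentence ``one must verify that this averaging is captured by some combination of permutations and applications of $m^\ast$ at the $H$-invariant level'' names the hard step rather than proving it, and everything after it is a list of tools (Reynolds operator, Weyl polarization, Procesi trace identities, embedding $H\hookrightarrow\GL_N$) without an actual argument. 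This is the entire content of Emerson's theorem and it is genuinely nontrivial: for a reductive $H$ acting on an affine scheme with $H$-stable ideal $I$, the $H$-invariant ideal $I^H$ of the invariant ring is not in general generated by Reynolds averages of a chosen generating set of $I$, so some structural input about invariants of $\G$-indexed tuples in $H$ is required to close the gap. Moreover the suggested reduction to $\GL_N$ is not automatic: for a general reductive $H\sub\GL_N$ the restriction $E[\GL_N^n]^{\GL_N}\to E[H^n]^H$ need not be surjective, so one cannot simply transport Procesi's relations for $\GL_N$ to $H$; this is exactly the kind of issue the ``acceptable'' hypothesis in the paper's own heuristics (\S\ref{subsec: working model}) is designed to sidestep, but the theorem here is stated for arbitrary split connected reductive $H$. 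In short: the scaffolding is right, but the load-bearing step is missing.
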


We now set $H = \SO_{2n+1}$ (we are really interested in $H = \PGL_2$ for the purposes of this paper, but it is convenient to use the exceptional isomorphism $\PGL_2 \cong \SO_3$ via the $\mathrm{ad}^0$-representation). Inside $E[H]^H$ we have a function $\tr$, recording the trace of a matrix in $H$. Let $\theta$ be a pseudorepresentation into $H(A)$. We may consider the function $\theta_1(\tr)$ on $\G$ (if $\theta$ is attached to a representation $\rho$, then $\theta_1(\tr)$ is simply $tr\, \rho$). We aim to prove the following:

\begin{proposition}\label{generation}
Let $\theta^{univ}$ be the universal pseudorepresentation into $H=\SO_{2n+1}$. Then $R^{ps}$ is generated, as an $E$-algebra, by the elements $\theta_1^{univ}(\tr)(\gamma)$, $\gamma \in \G$.
\end{proposition}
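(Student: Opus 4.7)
The plan is to reduce the proposition to a classical invariant-theoretic statement for $\SO_{2n+1}$. By Emerson's theorem just recalled, we have a canonical isomorphism $R^{ps} \cong R^{char} = (R^{rep})^H$ under which $\theta_1^{univ}(\tr)(\gamma)$ is transported to $\tr(\rho^{univ}(\gamma))$. Thus it is equivalent to show that $(R^{rep})^H$ is generated, as an $E$-algebra, by the elements $\tr(\rho^{univ}(\gamma))$, $\gamma\in\G$.

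To access this, I would pick any (possibly infinite) generating set $S$ of $\G$ and consider the $E$-algebra $A_S := \bigotimes_{s\in S} \oo(H)$ representing the functor $A \mapsto \Map(S, H(A))$. Restriction of a homomorphism to $S$ yields a natural $H$-equivariant surjection $A_S \twoheadrightarrow R^{rep}$, where $H$ acts by diagonal conjugation. Since $H$ is reductive and we work in characteristic zero, taking $H$-invariants is exact, so $A_S^H \twoheadrightarrow (R^{rep})^H$ is still surjective. Writing $X_s := \rho^{univ}(s) \in H(A_S)$, it therefore suffices to prove that $A_S^H$ is generated as an $E$-algebra by traces of words $\tr(X_{s_1}\cdots X_{s_k})$.

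This latter statement is the First Fundamental Theorem of invariant theory for $\SO_{2n+1}$ acting by simultaneous conjugation in its standard representation, due to Procesi; for finite $S$ it asserts exactly that $A_S^H$ is generated by such traces of words (no generators involving inverses or transposes are needed, since $X^{-1} = X^t$ in $\SO_{2n+1}$ and traces are transpose-invariant). For arbitrary $S$, the claim follows by writing $A_S^H = \varinjlim_{T \subset S \text{ finite}} A_T^H$, using that $H$-invariants commute with filtered colimits. Finally, and crucially, I would invoke the fact that $\G$ is a \emph{group} and $\rho^{univ}$ a \emph{homomorphism}: the image of $X_{s_1}\cdots X_{s_k}$ in $R^{rep}$ equals $\rho^{univ}(s_1)\cdots\rho^{univ}(s_k) = \rho^{univ}(s_1\cdots s_k)$, so each generator $\tr(X_{s_1}\cdots X_{s_k})$ descends to $\tr(\rho^{univ}(\gamma))$ for the single element $\gamma := s_1\cdots s_k \in \G$. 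These elements therefore generate $(R^{rep})^H$, as desired.

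The main obstacle is locating and invoking the precise form of Procesi's theorem for $\SO_{2n+1}$; the rest of the argument is formal manipulation. It is worth emphasizing that the group structure of $\G$ is absolutely essential to reducing ``traces of words in $X_{s_1}, \dots, X_{s_k}$'' to ``traces of the single element $X_{s_1\cdots s_k}$'' — this is what distinguishes pseudorepresentations of groups, where single-element traces suffice, from the more elaborate invariant theory one would encounter for pseudorepresentations of a mere monoid or an abstract set of variables.
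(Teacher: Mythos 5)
Your proof is correct, and it takes a genuinely different route from the paper's, although the two approaches share the same key ingredients (Procesi's fundamental theorem for $\SO_{2n+1}$ and Emerson's identification $R^{ps} \cong R^{char} = (R^{rep})^H$). The paper proceeds ``top-down'': letting $S \subseteq R^{ps}$ be the subalgebra generated by traces, it shows via Procesi that each $\theta_n^{univ}$ actually lands in $C(\G^n, S)$, and then bootstraps to $S = R^{ps}$ by invoking the universal property of $R^{ps}$ (the universal pseudorepresentation factors through $H(S)$, producing a retraction $R^{ps} \to S$). You proceed ``bottom-up'': after moving to $(R^{rep})^H$ via Emerson, you realize it as an $H$-equivariant quotient of $A_S^H \cong E[H^{|S|}]^H$ (exploiting exactness of $(-)^H$ in characteristic zero and passage to filtered colimits for infinite $S$), apply Procesi to the source, and use the group structure of $\G$ to see that each generator descends to a single-element trace. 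Your version avoids the formal universal-property argument at the end of the paper's proof; the paper's version avoids explicitly fixing a generating set of $\G$. Both are clean; yours is perhaps slightly more geometric in flavour.

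One small imprecision: your parenthetical claim that ``no generators involving inverses or transposes are needed, since $X^{-1} = X^t$ in $\SO_{2n+1}$ and traces are transpose-invariant'' is not justified as stated. Transpose-invariance of trace merely transforms $\tr(X_1 X_2^t)$ into $\tr(X_2 X_1^t)$ and does not eliminate the transposes. The claim itself is true, but the correct reason is Cayley--Hamilton together with $\det = 1$ on the group, which expresses $X^{-1}$ as a polynomial in $X$ whose coefficients are polynomials in $\tr(X^j)$. In any case the parenthetical is unnecessary: since $\G$ is a group and $\rho^{univ}$ a homomorphism, even generators $\tr\bigl(X_{s_1}^{\pm 1}\cdots X_{s_k}^{\pm 1}\bigr)$ descend to $\tr\bigl(\rho^{univ}(s_1^{\pm 1}\cdots s_k^{\pm 1})\bigr)$, which is still a trace at a single element of $\G$. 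This is precisely how the paper handles the inverses appearing in Procesi's theorem (stated there for words in the free group).
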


An alternative formulation, if we view $\iota : R^{ps} \to R^{rep}$ as an inclusion, is that $R^{ps}$ is the $E$-subalgebra of $R^{rep}$ generated by the elements $\tr(\rho^{univ}(\gamma))\in R^{rep}$, $\gamma \in \G$. We will allow ourselves this point of view in the proof.

\begin{proof}
Let $S\sub R^{ps}$ denote the $E$-subalgebra generated by the traces; we wish to show that $S=R^{ps}$. The key step is to show that, for every $n$, $\theta_n^{univ}$ takes values in $C(\G^n,S)$.

\medskip

Let $n\geq 1$ be arbitrary. For each word $WD(X_1,\dots,X_n)$ in $X_1,\dots,X_n,X_1^{-1},\dots,X_n^{-1}$ (i.e., an element of the free group on $X_1,\dots,X_n$), define a function in $E[H^n]^H$ by
\[
(h_1,\dots,h_n) \mapsto \tr(WD(h_1,\dots,h_n)).
\]
By \cite[Theorem 7.1]{procesi}, $E[H^n]^H$ is generated, as an $E$-algebra, by these functions. From this it follows that $\theta_n^{univ}$ takes values in $C(\G^n,S)$, by using the identity
\[
\theta_n^{univ}(f) (\gamma_1,\dots,\gamma_n) = f(\rho^{univ}(\gamma_1),\dots,\rho^{univ}(\gamma_n)).
\]
The rest of the proof is then standard: We consider the pseudorepresentation $\theta = (\theta_n)_{n \geq 1}$, $\theta_n : E[H^n]^H \to C(\G^n,S)$ which is ``just $\theta^{univ}$'', but viewed as a pseudorepresentation taking values in $H(S)$. Let $i : S \to R^{ps}$ be the inclusion, then $i \circ \theta = \theta^{univ}$ by construction. On the other hand, the universal property of $\theta^{univ}$ implies that there must be a homomorphism $ \pi : R^{ps} \to S$ such that $\theta = \pi \circ \theta^{univ}$. It follows that $\theta^{univ} = i \circ \pi \circ \theta^{univ}$, so the universal property of $\theta^{univ}$ implies that $i \circ \pi$ is the identity on $R^{ps}$. But this forces $i$ to be surjective, which means that $S=R^{ps}$ as desired.
\end{proof}

\subsection{Deformation Theory}\label{subsec: def theory}
Let $\Lambda$ be a complete local Noetherian ring, with residue field $\ka$ (we allow $\Lambda = \ka$). We will make use of the following notation for some standard categories in deformation theory: We let $\Art_{\Lambda}$ and $\CNL_{\Lambda}$ denote the category of artinian and complete local Noetherian $\Lambda$-algebras with residue field $\ka$, respectively. When $A $ is any local ring, we let $\m_A$ denote the maximal ideal of $A$. Given local homomorphisms $A \to B$, $A \to C$ of complete local Noetherian rings, we may form the completed tensor product
\[
B \ctens_A C := \varprojlim_i \left( B/\m_B^i \otimes_{A/\m_A^i} C/\m_C^i \right).
\]
In particular, if $\Lambda \to \Lambda^\prime$ is a local homomorphism of complete local Noetherian rings, then the completed tensor product $A \mapsto A \ctens_\Lambda \Lambda^\prime$ is a functor $\CNL_\Lambda \to \CNL_{\Lambda^\prime}$. We let $\Set$ denote the category of sets. If
\[
X : \CNL_\Lambda \to \Set
\]
is a representable functor (or $X : \Art_\Lambda \to \Set$ is a pro-representable functor), we write $\oo(X)$ for the object in $\CNL_\Lambda$ that (pro-)represents it.

\medskip

Our goal in this section is an analogue of Proposition \ref{generation} for Galois deformation rings, which will be used later to relate the eigenvarieties for $\GL_2$ and $\SL_2$ at points corresponding to a refined automorphic representation. Throughout this subsection, we fix a cuspidal cohomological automorphic representation $\pi$ of $\GL_2(\A_F)$. Let $S$ be any finite set of places of $F$ containing the infinite places and the places above $p$ such that $\rho_\pi$ is unramified outside $S$. We now let $H = \SO_3$ and let $E$ be a finite extension of $\Qp$ inside $\ol{\Q}_p$ which we think of as large, in particular we may assume that $\rho_\pi(G_F) \subseteq \GL_2(\oo)$ (after conjugation if necessary), were $\oo$ is the ring of integers of $E$. We let $k$ be the residue field of $E$.

\medskip

Let % $\rho = \rho_\pi$ and let
\[
r = \mathrm{ad}^0\rho_\pi : G_{F,S} \to H(E).
\] 
Note that $r$ is irreducible as an $H$-valued representation since $\pi$ is cuspidal, even though $\mathrm{ad}^0\rho_{\pi}$ might be reducible as a $\mathrm{GL}_3$-valued representation. Here we recall from \cite[Definition 3.5]{bhkt} that a homomorphism $\phi : G_{F,S} \to H(E^\prime)$, with $E^\prime/E$ algebraically closed, is said to be completely reducible if, whenever the Zariski closure of $\phi(G_{F,S})$ is contained in a parabolic subgroup, it is moreover contained in a Levi subgroup of said parabolic. Moreover, $\phi$ is called irreducible if the image is not contained in any proper parabolic of $H$. By assumption $r$ lands in $H(\oo)$ and so has a reduction $\ol{r} : G_{F,S} \to H(k)$. We have a universal framed deformation functor $\fX^\square_r $ parametrising continuous lifts of $r$
\[
\wt{r} : G_{F,S} \to H(A)
\]
to $\CNL_E$. It is representable by an object $R^\square_r \in \CNL_E$. We need the following lemma, whose proof is standard.

\begin{lemma}\label{quotient}
There exists a quotient $Q$ of $G_{F,S}$ by a closed subgroup such that $Q$ is topologically finitely generated and such that any continuous lift $\wt{r} : G_{F,S} \to H(A)$ to an object $A\in \CNL_E$ factors through $Q$.
\end{lemma}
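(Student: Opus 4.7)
The plan is to construct $Q$ by reducing $r$ modulo the uniformizer of $\oo$ and applying classical finiteness results in Galois cohomology. First, since $r(G_{F,S}) \subseteq H(\oo)$ and $H(k)$ is finite, the reduction $\ol{r} : G_{F,S} \to H(k)$ has finite image; its kernel is open in $G_{F,S}$ and corresponds to a finite Galois extension $L/F$ unramified outside $S$. Writing $G_L := \ker(\ol{r}) = G_{L,S_L}$ where $S_L$ denotes the places of $L$ above $S$, the finiteness of $H^1(G_{L,S_L}, \F_p)$ (which holds since $S_L$ is finite and contains all places above $p$) implies by Shafarevich that the maximal pro-$p$ quotient $G_L^{(p)}$ of $G_L$ is topologically finitely generated. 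Let $K \subseteq G_L$ denote $\ker(G_L \twoheadrightarrow G_L^{(p)})$; since conjugation by $G_{F,S}$ permutes the closed normal subgroups of $G_L$ with pro-$p$ quotient, $K$ is in fact normal in $G_{F,S}$. I would then set $Q := G_{F,S}/K$, which fits into an extension
\[
1 \to G_L^{(p)} \to Q \to \Gal(L/F) \to 1
\]
of a finite group by a topologically finitely generated pro-$p$ group, and hence is itself topologically finitely generated.

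It remains to check that every continuous lift $\wt{r} : G_{F,S} \to H(A)$ with $A \in \CNL_E$ factors through $Q$, equivalently that $\wt{r}(G_L)$ is pro-$p$. For this I would pass to an integral model: since $\wt{r}(G_{F,S})$ is compact in $H(A)$, fixing an embedding $H \hookrightarrow \GL_n$ the matrix entries of $\wt{r}(G_{F,S})$ form a bounded subset of $A$ and can be accommodated inside a CNL $\oo$-subalgebra $A_0 \subseteq A$ with $\wt{r}(G_{F,S}) \subseteq H(A_0)$. Since $\varpi \in \m_{A_0}$, the residue field $k_0 := A_0/\m_{A_0}$ is a finite extension of $k$, and $\wt{r}$ reduces modulo $\m_{A_0}$ to $\ol{r}$ composed with $H(k) \hookrightarrow H(k_0)$ (because $\wt{r}$ is a lift of $r$ and $\m_A \cap A_0 \subseteq \m_{A_0}$). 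Therefore $\wt{r}(G_L) \subseteq \ker(H(A_0) \to H(k_0)) = H(A_0, \m_{A_0})$, which is pro-$p$ since $A_0 \in \CNL_\oo$. Thus $\wt{r}|_{G_L}$ factors through $G_L^{(p)}$ and $\wt{r}$ through $Q$.

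The main obstacle will be the construction of the integral model $A_0$ in the second paragraph. One must compatibly choose $\oo$-subalgebras $\Lambda_n \subseteq A/\m_A^n$ containing the matrix entries of $\wt{r}(G_{F,S}) \bmod \m_A^n$, and then verify that $A_0 := \varprojlim_n \Lambda_n$ is complete, local, and Noetherian (with induced topology matching the subspace topology from $A$). The key inputs are that each $A/\m_A^n$ is a finite-dimensional $E$-vector space in which compact subsets are $\oo$-bounded, and that Noetherianity of the limit can be ensured by realizing $A_0$ as a quotient of $\oo[[x_1,\dots,x_d]]$ via a finite approximating set of generators; this type of integral model argument is standard in the deformation theory of compact continuous representations over $E$-algebras.
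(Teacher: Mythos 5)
Your construction of $Q$ coincides with the paper's: both take $G'' = \ker\bigl(G_L \twoheadrightarrow G_L^{(p)}\bigr)$ and set $Q = G_{F,S}/G''$, and your justifications for normality (characteristic subgroup) and topological finite generation (Shafarevich via finiteness of $H^1(G_{L,S_L},\F_p)$) correctly fill in what the paper leaves implicit. Where you diverge is in showing that lifts factor through $Q$. The paper first observes that it suffices to treat $A \in \Art_E$ (since $\wt{r}$ kills $G''$ if and only if each $\wt{r} \bmod \m_A^n$ does), and then, for Artinian $A$, filters $H(A)$ by the subgroups $H_i = \ker\bigl(H(A)\to H(A/\m_A^i)\bigr)$; the graded pieces $H_i/H_{i+1}$ for $i\geq 1$ are finite-dimensional $E$-vector spaces whose compact subgroups are pro-$p$, and $H(E)/H(k)$ is handled by the hypothesis $r(G_{F,S}) \subseteq H(\oo)$. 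This avoids integral models entirely.

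The gap in your argument is precisely the step you flag: the construction of $A_0 \in \CNL_\oo$ with $\wt{r}(G_{F,S}) \subseteq H(A_0)$ for an arbitrary $A \in \CNL_E$. Compactness of the image only gives, for each $n$, a bound $p^{-M_n}$ on the denominators of the matrix entries modulo $\m_A^n$, and $M_n$ may grow without bound. The $\oo$-subalgebra generated by the matrix entries is then an inverse limit $\varprojlim_n \Lambda_n$ of finite $\oo$-algebras, but there is no reason for this limit to be Noetherian, nor for its $\m_{A_0}$-adic topology to agree with the subspace topology inherited from $A$ (so that completeness would even be in the right sense). Your sketch that ``Noetherianity can be ensured by realizing $A_0$ as a quotient of $\oo[[x_1,\dots,x_d]]$ via a finite approximating set of generators'' presupposes that the embedding dimension of $A_0$ over $\oo$ is finite, which is not established; note also that $A_0 \hookrightarrow A$ is \emph{not} a local homomorphism (since $\varpi$ is a unit in $A$), so one cannot bound the embedding dimension of $A_0$ by that of $A$. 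The phrase ``this type of integral model argument is standard'' is doing too much work: the standard integral-model statements apply to representations into $\GL_n$ of a Banach algebra or of a finite-dimensional $E$-algebra, not to the projective limit situation you are in.

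The fix is to make the reduction to $\Art_E$ that the paper makes and that you never invoke. Whether $\wt{r}(G_L)$ is pro-$p$ can be checked in each $H(A/\m_A^n)$, since a profinite group that maps to a projective system with pro-$p$ images is pro-$p$. Once $A$ is Artinian, your integral model \emph{does} exist cheaply: $A$ is a finite-dimensional $E$-vector space, the set of matrix entries of $\wt{r}(G_{F,S})$ is compact and closed under multiplication, hence contained in a lattice, and the $\oo$-subalgebra $A_0$ it generates is a finite $\oo$-module, local (its image in $A/\m_A = E$ is $\oo$, and $\m_A \cap A_0$ is nilpotent), and therefore in $\CNL_\oo$. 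Your second paragraph then goes through. Alternatively, once you are in the Artinian case you may simply use the paper's filtration argument, which is shorter and makes no reference to $A_0$ at all.
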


\begin{proof}
Let $G^\prime = \Ker \ol{r} \sub G_{F,S}$ and let $G^{\prime\prime} \sub G^\prime$ be the closed subgroup cutting out the maximal pro-$p$ quotient of $G^\prime$. Then $G^{\prime\prime}$ is characteristic in $G^\prime$ and hence normal in $G_{F,S}$; the quotient $Q = G_{F,S}/G^{\prime\prime}$ is then topologically finitely generated and we claim that every $\wt{r}$ as above factors through $Q$.

\medskip

It suffices to prove this for $A \in \Art_E$. When $A=E$, it follows since $\Ker ( H(\oo) \to H(k) )$ is pro-$p$. For a general $A \in \Art_E$, we filter $H(A)$ by the subgroups $H_i = \Ker (H(A) \to H(A/\mf{m}^i) )$ , where $\mf{m}\sub A$ is the maximal ideal, and the result now follows since $H_i/H_{i+1}$ is a finite-dimensional $E$-vector space for all $i\geq 1$, and all compact subgroups of finite-dimensional $E$-vector spaces are pro-$p$.
\end{proof}

We now fix such a topologically finitely generated quotient $Q$ of $G_{F,S}$ as in Lemma \ref{quotient}, and we can and will consider all lifts of $r$ as representations of $Q$. Let $\G \sub Q$ be a finitely generated dense subgroup. We view $\G$ as a discrete group; any continuous representation of $Q$ is then determined by its restriction to $\G$. We apply the material from \S \ref{subsec: pseudoreps} to $\G$ and the algebraic group $H$ --- we have a representation variety $X^{rep}=\Spec R^{rep}$, the character variety $X^{char} = \Spec (R^{rep})^H$ and the pseudorepresentation variety $X^{ps}=\Spec R^{ps}$, and we may identify $X^{char}$ and $X^{ps}$. Note that these are finite type schemes over $E$, since $\G$ is finitely generated. The representation $r|_{\G}$ defines an $E$-point of $X^{rep}$, and we may form the completed local ring $R^{rep}_r$ of $R^{rep}$ at that point.

\begin{lemma}\label{surjective1}
There is a canonical map $R_r^{rep} \to R_r^\square$, which is surjective.
\end{lemma}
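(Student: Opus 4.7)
The plan is to construct the map via the universal property of $R^{rep}$ applied to the restriction of the universal framed lift, and then deduce surjectivity through the tangent-space criterion, where the density of $\G$ in $Q$ enters in an essential way.

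For the construction, let $\wt{r}^\square : Q \to H(R_r^\square)$ denote the universal framed lift of $r$. Restriction to $\G \sub Q$ yields a homomorphism $\wt{r}^\square|_\G : \G \to H(R_r^\square)$ whose reduction modulo $\m_{R_r^\square}$ equals $r|_\G$. Since $X^{rep} = \Spec R^{rep}$ represents the functor of group homomorphisms $\G \to H(-)$ on $E$-algebras, this produces a unique $E$-algebra map $R^{rep} \to R_r^\square$. Composing with the residue-field projection $R_r^\square \twoheadrightarrow E$ recovers the homomorphism corresponding to $r|_\G$, so the kernel of $R^{rep} \to R_r^\square$ contains $\m_r$. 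The map is therefore local at $\m_r$, factors through the localization $R^{rep}_{(\m_r)}$, and, by $\m$-adic completeness of $R_r^\square$, extends uniquely to the desired canonical map $R_r^{rep} \to R_r^\square$.

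For surjectivity, I would invoke the standard criterion (a consequence of the complete Nakayama lemma) that a local morphism in $\CNL_E$ with residue field $E$ is surjective if and only if the induced map on Zariski tangent spaces is injective, i.e.\ the restriction map $\Hom_{\CNL_E}(R_r^\square, E[\epsilon]) \to \Hom_{\CNL_E}(R_r^{rep}, E[\epsilon])$ is injective. Unwinding universal properties: tangent vectors of $R_r^{rep}$ are \emph{abstract} group homomorphisms $\G \to H(E[\epsilon])$ lifting $r|_\G$ (no continuity, as $R^{rep}$ is just a finite-type $E$-algebra), while tangent vectors of $R_r^\square$ are \emph{continuous} lifts $Q \to H(E[\epsilon])$ of $r$; by the construction in the previous paragraph, the tangent map is simply restriction along $\G \hookrightarrow Q$.

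The required injectivity is then essentially formal: two continuous homomorphisms $Q \to H(E[\epsilon])$ agreeing on a dense subgroup must agree on all of $Q$, since $H(E[\epsilon])$ is Hausdorff in its natural $p$-adic topology. I do not anticipate a substantive obstacle. The only points to verify carefully are the tangent-space descriptions, which are direct unwindings of the defining universal properties, and that $R_r^{rep}$ genuinely lies in $\CNL_E$ with residue field $E$; the latter holds because $r|_\G$ is an honest $E$-valued point of the finite-type $E$-scheme $X^{rep}$, so the maximal ideal $\m_r$ at that point has residue field $E$.
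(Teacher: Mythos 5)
Your proposal is correct and follows essentially the same route as the paper's proof: both build the map $R_r^{rep} \to R_r^\square$ by restricting lifts of $r$ from $Q$ to $\G$, and both deduce surjectivity from the fact that restriction is injective because $\G$ is dense in $Q$. The only real difference is packaging. The paper phrases the injectivity at the level of all $A$-valued points of the two deformation functors, $\fX_r^\square(A) \hookrightarrow \Hom(R_r^{rep}, A)$ for $A \in \Art_E$, and then quotes the standard fact that a monomorphism of pro-representable functors corresponds to a surjection of pro-representing rings; you specialize to $A = E[\epsilon]$ and use the equivalent tangent-space criterion. Either is adequate, and yours is perhaps slightly more explicit about where density is actually used (continuous maps into the Hausdorff group $H(E[\epsilon])$ agreeing on a dense subgroup agree everywhere).

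One small wording slip worth fixing: you write that ``the kernel of $R^{rep} \to R_r^\square$ contains $\m_r$,'' which cannot be right (it would force the map to factor through $E$). What you mean, and what you in fact use in the next clause, is that the kernel of the \emph{composite} $R^{rep} \to R_r^\square \twoheadrightarrow E$ equals $\m_r$, i.e.\ the map $R^{rep} \to R_r^\square$ carries $\m_r$ into $\m_{R_r^\square}$ and is therefore local; it then factors through the completed local ring $R_r^{rep}$ as you say. With that corrected, the argument is complete.
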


\begin{proof}
The ring $R_r^{rep}$ classifies lifts of $r|_{\G}$ to objects $A \in \Art_E$. So there is canonical map of functors
\[
\fX_r^\square \to \Hom(R^{rep}_r,-)
\]
given by restriction from $Q$ to $\G$, which is injective since $\G$ is dense in $Q$. This gives the natural map $R_r^{rep} \to R_r^\square$, which is surjective since the map on functors of points is injective.
\end{proof}

Now let $\theta$ be the pseudorepresentation of $r|_\G$. This gives a point on $X^{ps}$, and our next goal is to compute the completed local ring $R^{ps}_\theta$ of $R^{ps}$ at $\theta$. We start with a technical lemma.

\begin{lemma}\label{orbitclosed}
The orbit of $r|_{\G}$ in $X^{rep}$ is closed.
\end{lemma}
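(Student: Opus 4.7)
The plan is to reduce the statement to a standard closed-orbit criterion: the $H$-orbit of a homomorphism is closed precisely when it is completely reducible (in the sense of \cite{bhkt}), and any irreducible homomorphism is completely reducible. So the main content will be to transfer irreducibility of $r$ (as a $G_{F,S}$-representation) to irreducibility of $r|_{\G}$.

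First I would recall that $r$ is irreducible as an $H$-valued representation, which is already noted in the paper because $\pi$ is cuspidal. By Lemma \ref{quotient}, $r$ factors through the topologically finitely generated quotient $Q$ of $G_{F,S}$; so it makes sense to ask whether $r\colon Q \to H(E)$ is irreducible in the sense that its image is not contained in any proper parabolic $P \subset H$, and this is immediate.

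Next I would argue that $r|_{\G}$ inherits this irreducibility. The subtlety is that $\G$ is dense in $Q$ in the profinite topology, not in the Zariski topology of $H$. However, for any parabolic $P \subset H$ defined over $E$, the subgroup $P(E) \subset H(E)$ is $p$-adically closed, so $r^{-1}(P(E))$ is a closed subgroup of $Q$. If $r(\G) \subseteq P(E)$, then this closed subgroup contains $\G$ and hence equals $Q$, contradicting irreducibility of $r$. So $r|_{\G}\colon \G \to H(E)$ is again not contained in any proper parabolic, i.e.\ it is irreducible as a homomorphism into $H$, and in particular it is completely reducible.

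Finally, I would invoke the theorem of Richardson (in the form used in \cite{bhkt}) that for the action of the reductive group $H$ on $X^{rep} = \Hom(\G, H)$ by simultaneous conjugation, a homomorphism has closed $H$-orbit if and only if it is $H$-completely reducible. Since $\G$ is finitely generated, $X^{rep}$ is a closed subscheme of a finite product $H^n$ and this criterion applies directly, giving that the orbit of $r|_{\G}$ is closed. I do not expect any serious obstacle; the only point requiring care is the mismatch between the profinite topology on $Q$ and the Zariski topology used to define irreducibility, which is handled by the continuity observation above.
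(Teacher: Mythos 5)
Your proof is correct, and it takes a genuinely different route from the one in the paper, so a comparison is worthwhile.

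The paper's proof is ``downstream'' of the pseudorepresentation machinery already set up in this section: it considers the map $f\colon X^{rep}\to X^{ps}$, notes that $\theta$ is a closed point so $f^{-1}(\theta)$ is closed, and then shows $f^{-1}(\theta)$ is exactly the orbit by appealing to Lafforgue's uniqueness theorem \cite[Theorem~4.5]{bhkt}: any completely reducible representation with pseudorepresentation $\theta$ is conjugate to $r|_{\G}$ (and any representation with pseudorepresentation $\theta$ is necessarily irreducible, hence completely reducible). Your proof instead bypasses $X^{ps}$ entirely and invokes Richardson's closed-orbit criterion for the $H$-conjugation action on $X^{rep}\subset H^n$: the orbit of a point is closed if and only if the corresponding homomorphism is completely reducible, and you supply the complete reducibility from irreducibility. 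What your approach buys is that it is more self-contained and does not need the pseudorepresentation variety or Lafforgue's theorem at all; what the paper's approach buys is that it reuses exactly the map $f$ and the object $\theta$ that the subsequent Proposition~\ref{localring1} needs anyway, so the bookkeeping is minimal in context.

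One point in your favour that deserves emphasis: both proofs ultimately rest on the claim that $r|_{\G}$ is irreducible (i.e.\ not contained in a proper parabolic), and this is \emph{not} automatic from irreducibility of $r$ on $Q$, since $\G$ is only dense in the profinite topology. Your continuity argument --- $P(E)$ is $p$-adically closed in $H(E)$, so $r^{-1}(P(E))$ is a closed subgroup of $Q$ containing the dense subgroup $\G$, hence all of $Q$ --- is exactly the right way to bridge this, and the paper leaves this step implicit. (For the closed-orbit criterion you do need this over $\bar E$ rather than $E$; this is fine because $r$ is absolutely irreducible and the same continuity argument works for parabolics defined over any finite extension $E'/E$, with closedness of the orbit being a geometric property that can be checked after base change to $\bar E$.) The only small caveat with your write-up is that the closed-orbit criterion is Richardson's theorem proper rather than something stated in \cite{bhkt}, so the citation should be adjusted, but the mathematical content is sound.
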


\begin{proof}
Consider the map $f : X^{rep} \to X^{ps}$. Since $\theta$ defines a closed point of $X^{ps}$, it suffices to prove that the orbit of $r_{\G}$ inside $X^{rep}$ is equal to $f^{-1}(\theta)$. In other words, we need to show that every representation whose pseudorepresentation equals $\theta$ is conjugate to $r|_{\G}$ (over an algebraically closed extension). For completely reducible representations, this is \cite[Theorem 4.5]{bhkt} (the proof being due to V. Lafforgue). Since $r$ is irreducible (in the sense above), any $\G \to H(E^\prime)$ with pseudorepresentation $\theta$ must be irreducible as well, and hence completely reducible. This finishes the proof.
\end{proof}

Before computing $R_\theta^{ps}$, we briefly discuss quotients of functors on $\CNL_\Lambda$ by group functors, refering to \cite[\S 2.4]{khare-wintenberger} for a more thorough discussion. We remark that while the general setup of \cite{khare-wintenberger} assumes that the residue field is finite, the discussion in \cite[\S 2.4]{khare-wintenberger} does not use this assumption and we will freely use results from there without further comment. Let $\Lambda $ be a complete local Noetherian ring and let $X : \CNL_\Lambda \to \Set$ be a functor acted on by a group functor $G : \CNL_\Lambda \to \Grp$. The quotient $X/G$ is defined by
\[
A \mapsto X(A)/G(A).
\]
If $X$ and $G$ are representable, $G$ is smooth and the action is free (meaning that $G(A)$ acts freely on $X(A)$ for all $A\in \CNL_\Lambda$), then $X/G$ is represented by the equalizer of
\[
\oo(X) \rightrightarrows \oo(G) \ctens_\Lambda \oo(X),
\]
where the two maps are dual to the action map $G \times X \to X$ and the projection map $G \times X \to X$. Moreover, the natural map $X \to X/G$ is a (trivial) $G$-torsor, hence smooth; for all of this see \cite[Proposition 2.5]{khare-wintenberger}. We will also write $\oo(X)^G$ for $\oo(X/G)$. 

\medskip

Now we go back to $R_\theta^{ps}$. Let $\hH$ be the (representable) group functor on $\CNL_E$ defined by
\[
\hH (A) = \Ker\left( H(A) \to H(E) \right),
\]
for $A\in \CNL_E$. It acts naturally on $R^{rep}_r$. Moreover, let $H_r$ be the centraliser (in $H$) of the image of $r$; it is either trivial or isomorphic to $\Z/2$ (as a group scheme over $E$). It also acts naturally on $R^{rep}_r$.

\begin{proposition}\label{localring1}
The natural map $R_\theta^{ps} \to R_r^{rep}$ induces an isomorphism $R_\theta^{ps} \cong \left( (R_r^{rep})^{\hH} \right)^{H_r}$.
\end{proposition}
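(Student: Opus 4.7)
The plan is to use the Luna \'etale slice theorem applied to the action of the reductive group $H=\SO_3$ on the affine scheme $X^{rep}$. The natural map in the statement arises by completing Emerson's isomorphism $R^{ps}\cong (R^{rep})^H$: by Lemma~\ref{orbitclosed} the $H$-orbit of $r|_\G$ is closed in $X^{rep}$ and is precisely the scheme-theoretic preimage of $\theta$ under $X^{rep}\to X^{ps}$, so completion at $\theta$ yields a canonical map $R^{ps}_\theta\to R_r^{rep}$. The image is fixed by the $\hH$-action inherited from the $H$-action (any element of $\hH(A)$ reduces to the identity and hence acts trivially on the image of $R^{ps}$ already at the level of $X^{ps}$), and fixed by $H_r$ (the stabilizer of $r|_\G$). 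Since $\hH$ is normal in the subgroup of $H$ that preserves the formal neighborhood of $r|_\G$, the map factors as
\[
R^{ps}_\theta\to \bigl((R_r^{rep})^{\hH}\bigr)^{H_r}\hookrightarrow R_r^{rep}.
\]

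To show the factored map is an isomorphism, I would apply the slice theorem. Since $H$ is reductive in characteristic zero, the orbit $H\cdot r|_\G$ is closed, and the stabilizer $H_r$ is finite (either trivial or $\Z/2$) and hence reductive, there exists an $H_r$-stable affine slice $S\subseteq X^{rep}$ containing $r|_\G$ such that the natural $H$-equivariant map
\[
H\times^{H_r} S \longrightarrow X^{rep}
\]
is \'etale at $[1,r|_\G]$ with saturated image, and such that the induced map $S\sslash H_r\to X^{rep}\sslash H = X^{ps}$ is \'etale at $[r|_\G]$. Passing to completions at the corresponding points, and observing that the $H_r$-orbit of $(1,r|_\G)$ in $H\times S$ consists of $|H_r|$ distinct points (because the projection $h\mapsto h^{-1}$ to the $H$-factor is injective on $H_r$), one gets
\[
R_r^{rep}\;\cong\;\widehat{\oo}_{H,1}\,\widehat{\otimes}_E\,\widehat{\oo}_{S,r|_\G},
\qquad
R^{ps}_\theta\;\cong\;(\widehat{\oo}_{S,r|_\G})^{H_r},
\]
where the second identification uses that in characteristic zero invariants under a finite group commute with completion.

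It then remains to identify the two sides. Under the first isomorphism, the $\hH$-action on $R_r^{rep}$ corresponds to the left-translation action of the formal group $\hH$ on $\widehat{\oo}_{H,1}$ (with trivial action on the $S$-factor); as this is the action of the formal group on itself it is free, and
\[
(R_r^{rep})^{\hH}\;\cong\;\widehat{\oo}_{S,r|_\G}.
\]
The residual $H_r$-action on this ring, obtained by restricting the left $H$-action on $H\times^{H_r}S$ to $H_r$ and using the equivalence $[hg,s]=[g,hs]$ (valid for $h\in H_r$), is precisely the natural $H_r$-action on the slice $S$ at its fixed point $r|_\G$. Taking $H_r$-invariants then yields
\[
\bigl((R_r^{rep})^{\hH}\bigr)^{H_r}\;\cong\;(\widehat{\oo}_{S,r|_\G})^{H_r}\;\cong\;R^{ps}_\theta,
\]
as required.

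The main obstacle is the careful bookkeeping of the various group actions under the slice presentation: verifying that the Luna slice theorem applies in our algebraic setting (the conditions are satisfied by Emerson's description of $R^{rep}$, reductivity of $H$ in characteristic zero, closedness of the orbit from Lemma~\ref{orbitclosed}, and finiteness of $H_r$), and then checking that the transport of the $H$-action identifies $\hH$ with the free-translation action on the first factor and $H_r$ with the natural slice action on the second. One must also check compatibility with the natural map constructed from Emerson's theorem, but this is forced by the uniqueness of all the identifications.
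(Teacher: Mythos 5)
Your proposal follows essentially the same route as the paper's proof: invoke Luna's \'etale slice theorem, use the presentation $H\times^{H_r}S\to X^{rep}$ to factor out $\hH$ as the free translation action on the $H$-factor, and then factor out the finite group $H_r$ acting on the slice. The one detail the paper handles and you omit is the initial base change to $\overline{E}$, which is used to reduce to the classical statement of Luna's theorem over an algebraically closed field; you should either include this reduction (checking, as the paper does, that forming completed tensor products, equalizers, and invariants commutes with this flat base change so the isomorphism descends) or explicitly cite a version of the slice theorem valid over non-algebraically-closed fields in characteristic zero.
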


\begin{proof} 
It suffices to prove this after base changing from $E$ to an algebraic closure $\ol{E}$. Indeed the completed tensor product functor $\CNL_E \to \CNL_{\ol{E}}$ is exact, commutes with equalizers and a homomorphism in $\CNL_E$ is an isomorphism if and only if it maps to an isomorphism in $\CNL_{\ol{E}}$. So we may base change to $\ol{E}$, but for simplicity with still use the same notation for the base changed objects throughout this proof.

\medskip

By Luna's \'etale slice theorem \cite[p. 97]{luna} there is an affine, locally closed subvariety $Y = \Spec T \sub X^{rep}$, which contains $r$ and is stable under $H_r$ (being the stabilizer of $r$), such that we have a Cartesian diagram
\[
\xymatrix{Y \times^{H_r} H \ar[r] \ar[d] & X^{rep} \ar[d] \\
Y/H_r \ar[r] & X^{ps}
}
\]
where both horizontal maps are \'etale and the top horizontal map is $H$-equivariant. In particular, we have maps
\[
Y \times H \to (Y \times H)/H_r = Y\times^{H_r} H \to X^{rep},
\]
where the right map is the map appearing in the commutative square above. The left map is \'etale, since $H_r$ acts freely on $Y \times H$, so the composition is \'etale. The point $(r,1) \in Y \times H$ maps to $r \in X^{rep}$, so we get an $\hH$-equivariant isomorphism $T_r \ctens_E \oo(\hH) \cong R_r^{rep}$ of completed local rings, where $T_r$ is the completed local ring of $T$ at $r$. Taking quotients, it follows that $T_r \cong (R_r^{rep})^{\hH}$.

\medskip

To finish off, note that the \'etaleness of $Y/H_r \to X^{ps}$ induces an isomorphism $T_r^{H_r} \cong R_\theta^{ps}$ (here we use that $H_r$ is finite to see that $T_r^{H_r}$ is the completed local ring of $Y/H_r$ at the orbit of $r$). Combining this with the previous isomorphism then gives the proposition.
\end{proof}

Now recall the deformation functor $\fX_r$ of $r$. It is defined as the quotient of $\fX_r^\square$ by the action of $\hH$. This action is free (since $r$ is irreducible), so $\fX_r$ is represented by $R_r = (R_r^\square)^{\hH}$. Similarly, $\hH$ acts freely on $R_r^{rep}$ since $r|_\Gamma$ is irreducible.

\begin{corollary}\label{surjective2}
The surjection $R^{rep}_r \to R_r^\square$ from Lemma \ref{surjective1} induces a surjection $R_\theta^{ps} \to R_r^{H_r}$.
\end{corollary}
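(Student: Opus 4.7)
The plan is to take $\hH$-invariants and then $H_r$-invariants of the surjection $R_r^{rep} \twoheadrightarrow R_r^\square$ of Lemma \ref{surjective1}, which is equivariant for both actions since restriction of lifts from $Q$ to $\G$ commutes with conjugation. Taking invariants for the finite group $H_r$ in characteristic $0$ is easy by averaging; the point is to show that taking $\hH$-invariants preserves surjectivity, which I would achieve by exploiting the product structure coming from the free $\hH$-actions on both sides.

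Concretely, the proof of Proposition \ref{localring1} already gives an $\hH$-equivariant isomorphism $R_r^{rep} \cong T_r \ctens_E \oo(\hH)$, with $\hH$ acting only on the second factor by translation. Since $\hH$ also acts freely on $\fX_r^\square$ (because $r$ is irreducible), an analogous application of \cite[Proposition 2.5]{khare-wintenberger}, combined with the section of $\fX_r^\square \to \fX_r$ obtained by choosing any representative of the universal deformation, yields an $\hH$-equivariant isomorphism $R_r^\square \cong R_r \ctens_E \oo(\hH)$ with $\hH$ acting only on $\oo(\hH)$. Under these identifications, the surjection from Lemma \ref{surjective1} becomes an $\hH$-equivariant continuous surjection $T_r \ctens_E \oo(\hH) \twoheadrightarrow R_r \ctens_E \oo(\hH)$. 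By adjunction, such a map is of the form $\phi \ctens \mathrm{id}_{\oo(\hH)}$ for a unique $\phi \colon T_r \to R_r$ (namely the map to $(R_r \ctens_E \oo(\hH))^{\hH} = R_r$), and composing with $\mathrm{id} \ctens \epsilon$ (evaluation at the identity of $\hH$) shows that $\phi$ itself is surjective.

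This produces an $H_r$-equivariant surjection $(R_r^{rep})^{\hH} = T_r \twoheadrightarrow R_r = (R_r^\square)^{\hH}$. Since $H_r$ is finite of order at most $2$ and $E$ has characteristic zero, the averaging projector realises $H_r$-invariants as a direct summand of any $H_r$-representation, so taking $H_r$-invariants preserves surjectivity. Combined with the identification $R_\theta^{ps} \cong ((R_r^{rep})^{\hH})^{H_r} = T_r^{H_r}$ from Proposition \ref{localring1}, this yields the desired surjection $R_\theta^{ps} \twoheadrightarrow R_r^{H_r}$. The main subtle point is the $\hH$-equivariant product decomposition of $R_r^\square$; once the triviality of the $\hH$-torsor $\fX_r^\square \to \fX_r$ is in place, the rest is a formal manipulation with invariants and completed tensor products.
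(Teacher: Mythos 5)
Your route differs from the paper's: the paper argues at the level of functors of points (the map $\Hom(R_r^\square, A) \to \Hom(R_r^{rep}, A)$ is injective, both sides carry compatible free $\hH(A)$-actions, hence the map on quotients $\Hom(R_r, A) \to \Hom((R_r^{rep})^{\hH}, A)$ is injective for all $A$, giving surjectivity of $(R_r^{rep})^{\hH} \to R_r$), while you trivialize both torsors explicitly and try to read off the map on invariants. The final $H_r$-step (averaging in characteristic $0$) is fine, and the isomorphisms $R_r^{rep} \cong T_r \ctens_E \oo(\hH)$ and $R_r^\square \cong R_r \ctens_E \oo(\hH)$ are both available as you describe.

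The gap is the claim that the $\hH$-equivariant surjection $\Psi : T_r \ctens_E \oo(\hH) \twoheadrightarrow R_r \ctens_E \oo(\hH)$ must be of the form $\phi \ctens \mathrm{id}_{\oo(\hH)}$. This is false: $\hH$-equivariance of an $E$-algebra map is strictly weaker than $\oo(\hH)$-linearity. Dually, an $\hH$-equivariant morphism $\hH \times \Spf R_r \to \hH \times \Spf T_r$ has the shape $(g, \xi) \mapsto (g \cdot f(\xi), \psi(\xi))$ for some $f : \Spf R_r \to \hH$, and the twist $f$ need not be trivial, because the two trivializations (a Luna slice on the representation-variety side, a choice of representative of the universal deformation on the other) are chosen independently and the chosen representative has no reason to lie in the slice. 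Already with $T_r = R_r = E$ and $\hH$ one-dimensional, the translation $t \mapsto t + c$ of $E[[t]]$ is an $\hH$-equivariant surjection not of product form. The ``adjunction'' you invoke holds for $\oo(\hH)$-linear maps, which is not given, and so ``composing with $\mathrm{id} \ctens \epsilon$'' does not by itself show $\phi$ is surjective. The conclusion is nevertheless true and your strategy is salvageable: precomposing the closed immersion with $\Spf R_r \to \hH \times \Spf R_r$, $\xi \mapsto (f(\xi)^{-1}, \xi)$, yields a closed immersion landing in $\{1\} \times \Spf T_r$, which exhibits $\psi$ (and hence $\phi$ dually) as a closed immersion --- but this un-twist needs to be stated. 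The paper's functor-of-points argument avoids the twist altogether and is the shorter route.
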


\begin{proof}
The maps $\Hom(R_r^\square,A) \to \Hom(R_r^{rep},A)$ are injective for all $A\in \Art_E$ and both the domain and the codomain are freely acted on by $\hH(A)$, so we conclude that
\[
\Hom(R_r,A) = \Hom(R_r^\square,A)/\hH(A) \to \Hom(R_r^{rep},A)/\hH(A) = \Hom((R_r^{rep})^{\hH},A)
\]
is injective for all $A$. It follows that the induced map $(R_r^{rep})^{\hH} \to R_r$ is surjective. The result then follows by taking $H_r$-invariants (which are exact since $H_r$ is finite) and applying Proposition \ref{localring1}.
\end{proof}

We now get to the main result of this section.

\begin{theorem}\label{thethingIwant}
$R_r^{H_r}$ is topologically generated by the traces $\tr(r(\Frob_v))$ for $v \notin S$.
\end{theorem}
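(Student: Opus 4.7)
The plan is to combine Proposition \ref{generation} with Corollary \ref{surjective2}, and then use continuity together with the Chebotarev density theorem to pass from a trace-generation statement over $\Gamma$ to one over Frobenii at unramified places.

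First, I would invoke Proposition \ref{generation} to see that $R^{ps}$ is generated as an $E$-algebra by the elements $\theta_1^{univ}(\tr)(\gamma)$ for $\gamma \in \Gamma$. Completing at the maximal ideal corresponding to $\theta$, the images of these elements topologically generate $R^{ps}_\theta$, since the canonical map from $R^{ps}$ to its completion has dense image. Composing with the surjection $R^{ps}_\theta \twoheadrightarrow R_r^{H_r}$ from Corollary \ref{surjective2}, and chasing through the identifications of Proposition \ref{localring1} and Lemma \ref{surjective1} --- which reduce to the tautology that $\theta_1(\tr)$ is the trace function of the underlying representation --- one finds that $R_r^{H_r}$ is topologically generated as an $E$-algebra by the traces $\tr(r^{univ}(\gamma))$ for $\gamma \in \Gamma$, where $r^{univ}$ denotes the universal deformation of $r$ valued in $H(R_r)$.

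Next, I would bootstrap from $\Gamma$ to all of $G_{F,S}$ by continuity. Since $r^{univ} : Q \to H(R_r)$ is continuous when $R_r$ is equipped with its $\mf{m}_{R_r}$-adic topology, the function $g \mapsto \tr(r^{univ}(g))$ is continuous, and any closed $E$-subalgebra of the complete ring $R_r$ is stable under limits. Hence the closed $E$-subalgebra of $R_r^{H_r}$ generated by $\{\tr(r^{univ}(\gamma)) : \gamma \in \Gamma\}$ coincides with the one generated by $\{\tr(r^{univ}(g)) : g \in Q\}$, and, since $Q$ is a topological quotient of $G_{F,S}$ through which $r^{univ}$ factors by Lemma \ref{quotient}, also with the one generated by $\{\tr(r^{univ}(g)) : g \in G_{F,S}\}$.

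Finally, by the Chebotarev density theorem the set $\{\Frob_v : v \notin S\}$ is dense in $G_{F,S}$, so one more application of continuity identifies the closed $E$-subalgebra above with the one generated by $\{\tr(r^{univ}(\Frob_v)) : v \notin S\}$. Combining with the first step, $R_r^{H_r}$ is topologically generated by these Frobenius traces, as claimed. I anticipate no real obstacles beyond carefully tracking the identifications through Lemma \ref{surjective1}, Proposition \ref{localring1}, and Corollary \ref{surjective2}; the preceding subsection has been set up precisely so that this chase is routine, and the key substantive input --- namely that traces suffice to generate the invariant ring for $H = \SO_3$ --- has already been absorbed into Proposition \ref{generation} via Procesi's invariant theory.
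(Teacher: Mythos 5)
Your strategy --- combine Proposition~\ref{generation} with Corollary~\ref{surjective2} to reduce to trace-generation over $\Gamma$, then pass to Frobenii via Chebotarev and continuity --- is the same as the paper's, and the diagram chase you defer through Lemma~\ref{surjective1}, Proposition~\ref{localring1} and Corollary~\ref{surjective2} is indeed routine. However, there is a genuine error in the continuity step. You assert that $r^{univ}\colon Q \to H(R_r)$ is continuous when $R_r$ carries its $\m_{R_r}$-adic topology. Reducing modulo $\m_{R_r}$ recovers $r\colon Q \to H(E)$, and $\m$-adic continuity would force $\ker(r)$ to be an open subgroup of $Q$, i.e.\ $r$ to have finite image; this is false since $r$ is irreducible with values in $H(E)$ and has infinite image. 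The topology for which $r^{univ}$ genuinely is continuous --- the one implicit in the proof of Lemma~\ref{quotient} --- is the one in which each $R_r/\m^n$ is regarded as a finite-dimensional $E$-vector space with its $p$-adic (not discrete) topology. Because the residue field here is $E$ rather than a finite field, this topology is strictly different from the $\m_{R_r}$-adic one, and ``closed $E$-subalgebra'' does not mean the same thing in the two, so the chain of equalities you assert between closed subalgebras does not carry through as written.

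The fix is what the paper does: work in the finite-dimensional quotients $B_n = R_r^{H_r}/\m^n$ one at a time, where the reduction of $\tr\circ r^{univ}$ genuinely is a continuous function into a finite-dimensional $E$-vector space. Chebotarev density of $\{\Frob_v\}_{v\notin S}$ in $Q$ then gives that the Frobenius traces are $p$-adically dense among all the traces $\tr(r^{univ}(\gamma))$ inside $B_n$, and the paper concludes via a perturbation-of-basis argument --- equivalently, by noting that any $E$-subspace of the finite-dimensional space $B_n$ is automatically closed, so density of the generated subalgebra upgrades to equality. After correcting the topology in your write-up, this last refinement becomes unavoidable: density in the correct topology only says that the subalgebra generated by the Frobenius traces is \emph{dense} in each $B_n$, and you must still invoke finite-dimensionality over $E$ to conclude that it is all of $B_n$. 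Your proposal omits this step.
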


\begin{proof} Corollary \ref{surjective2} gives a surjection $R_\theta^{ps} \to R_r^{H_r}$ and, by Proposition \ref{generation}, $R_\theta^{ps}$ is topologically generated by the traces $\tr(r_{\G}^{univ}(\gamma))$, $\gamma \in \G$, where $r_{\G}^{univ}$ is the universal representation $\G \to H(R^{rep})$. Let $r^{univ} : Q \to H(R_\rho)$ be the universal deformation of $r$. By a diagram chase, we see that the image of $\tr(r_{\G}^{univ}(\gamma))$ under the map $R^{ps} \to R_r^{H_r}$ is equal to $\tr(r^{univ}(\gamma))$, for $\gamma \in \G$.

\medskip

To simplify notation, set $A = R_\theta^{ps}$, $B = R_r^{H_r}$ and set $A_n = A/\mf{m}_A^n$ and $B_n = B/\mf{m}_B^n$, for $n\in \Z_{\geq 1}$. We need to show that $B_n$ is generated by $\tr(r^{univ}(\Frob_v))$ for $v \notin S$. By the first paragraph of this proof, $B_n$ is generated by the elements $\tr(r^{univ}(\gamma))$, $\gamma \in \G$. Since $B_n$ is a finite-dimensional $E$-vector space, this means that we can find an $E$-vector space basis of finitely many polynomials in the $\tr(r^{univ}(\gamma))$ for $B_n$. Since $r^{univ}$ modulo $\mf{m}_B^n$ is continuous and $\{ \Frob_v \mid v \notin S \}$ is dense in $Q$, we may approximate this basis arbitrarily well using polynomials in the $\tr(r^{univ}(\Frob_v))$. Since a small perturbation of a basis of a finite dimensional $E$-vector space is still a basis, we see that $B_n$ has an $E$-vector space basis consisting of polynomials in the $\tr(r^{univ}(\Frob_v))$. Thus $B_n$ is generated by the $\tr(r^{univ}(\Frob_v))$ as an $E$-algebra, as desired. This finishes the proof.
\end{proof}

Now set $\rho = \rho_\pi$, where we recall that $\pi$ is assumed to be cuspidal cohomological. We finish this subsection by comparing $R_r$ with the universal deformation ring $R^{\psi}_\rho$ with fixed determinant $\psi = \det \rho$ of $\rho$. Sending a deformation to its projectivization gives a map
\[
R_r \to R^{\psi}_\rho
\]  
which induces an isomorphism on tangent spaces, since both rings have tangent space $H^1(G_{F,S}, \ad^0 \rho)$ and the induced map is the identity. Here and below, we write $\ad^0 \rho$ instead of $r$ when we want to think of it as a three-dimensional representation (and not a projective representation, or an orthogonal representation).

\begin{proposition}\label{smoothness}
If $F\neq \Q$ and $\pi$ has CM by the extension $\wt{F}/F$, assume that $\wt{F} \not\sub F(\zeta_{p^\infty})$. Then $R^{\psi}_\rho$ is formally smooth of dimension $2d$.
\end{proposition}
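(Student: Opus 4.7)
The plan is to reduce formal smoothness to the vanishing of the obstruction space $H^2(G_{F,S}, \ad^0\rho)$, and to compute the dimension via Tate's global Euler characteristic formula. The tangent space of $R_\rho^\psi$ is naturally $H^1(G_{F,S}, \ad^0\rho)$ and the obstructions lie in $H^2(G_{F,S}, \ad^0\rho)$; once $H^2$ vanishes, formal smoothness is automatic and the dimension equals $\dim H^1$.

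For the Euler characteristic, since $F$ is totally real of degree $d$ and $\rho_\pi$ is totally odd, complex conjugation at each real place acts on the two-dimensional $\rho$ with eigenvalues $\{+1,-1\}$, hence on $\ad^0\rho$ with eigenvalues $(+1,-1,-1)$, so the $-1$-eigenspace has dimension $2$. Tate's formula then gives $\chi(G_{F,S}, \ad^0\rho) = -2d$. Moreover $H^0(G_{F,S}, \ad^0\rho) = 0$: in general by absolute irreducibility of $\rho$ (as $\pi$ is cuspidal), and in the CM case one can verify it explicitly from the decomposition $\ad^0\rho \cong \eta \oplus \Ind_{G_{\wt F}}^{G_F}(\chi^\sigma/\chi)$, where $\eta$ is the non-trivial quadratic character of $\wt F/F$ and $\chi^\sigma \neq \chi$ (else $\rho$ would be reducible). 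Once $h^2 = 0$ is established, this yields $h^1 = 2d$ as required.

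The heart of the matter is therefore to show $H^2(G_{F,S}, \ad^0\rho) = 0$. Using the self-duality $\ad^0\rho \cong (\ad^0\rho)^\vee$ via the trace pairing, Tate local duality and the Poitou--Tate nine-term exact sequence yield
\[
\dim H^2(G_{F,S}, \ad^0\rho) = \dim X^1_S(G_{F,S}, \ad^0\rho(1)) + \sum_{v \in S\setminus\{\infty\}} \dim H^0(G_{F_v}, \ad^0\rho(1)) - \dim H^0(G_{F,S}, \ad^0\rho(1)),
\]
where $X^1_S$ denotes the kernel of the restriction map on $H^1$ to local cohomology at places of $S$. I would argue that each term on the right vanishes. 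The global $H^0$ vanishes because the cyclotomic character has infinite order, so $\ad^0\rho(1)$ has no non-trivial invariants. The local $H^0$ terms at finite places of $S$ vanish by Hodge--Tate weight considerations at $v\mid p$ and Frobenius eigenvalue distinctness at $v\nmid p$, after enlarging $S$ if necessary. The crucial vanishing of $X^1_S(G_{F,S}, \ad^0\rho(1))$ follows from the Bloch--Kato-type theorems of Newton--Thorne; in the CM case one first reduces via the decomposition $\ad^0\rho(1) = \eta(1) \oplus \Ind_{G_{\wt F}}^{G_F}((\chi^\sigma/\chi)(1))$ and Shapiro's lemma to a Selmer statement for characters over $\wt F$, and it is precisely here that the hypothesis $\wt F \not\subset F(\zeta_{p^\infty})$ enters, ruling out the degenerate case in which $\eta$ would coincide with $\chi_{\mathrm{cyc}}^{-1}$ after restriction to a suitable subgroup and would otherwise produce Kummer-type Selmer classes.

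The $H^0$ and Euler characteristic steps are formal; the real obstacle is the vanishing of $H^2$, where the deep input is the Bloch--Kato conjecture for the adjoint representation, and where the hypothesis on $\wt F$ is unavoidable in the CM case.
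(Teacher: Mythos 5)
Your high-level framework is correct (obstruction space is $H^2(G_{F,S},\ad^0\rho)$, tangent space is $H^1$, and the global Euler characteristic gives $h^1 - h^2 = 2d$ once $H^0 = 0$), but your route to $H^2 = 0$ is not the paper's, and it has gaps that the paper's route is specifically designed to avoid. The paper does not attempt to show $H^2 = 0$ directly. Instead it uses Kisin's argument: vanishing of the Bloch--Kato Selmer group $H^1_f(G_F,\ad^0\rho)$ (which is what Newton--Thorne, resp.\ Bella\"iche for $F = \Q$, actually prove) yields, via the exactness of
\[
0 \to H^1_f(G_F,\ad^0\rho) \to H^1(G_{F,S},\ad^0\rho) \to \bigoplus_{v} H^1_{/f}(G_{F_v},\ad^0\rho),
\]
together with genericity at $v \nmid p$, the \emph{upper bound} $h^1 \leq 2d$. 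Combined with $h^1 - h^2 = 2d$, this forces $h^1 = 2d$ and $h^2 = 0$. The vanishing of $H^2$ thus falls out as a corollary, rather than being established by a separate local-global computation.

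Your Poitou--Tate approach, by contrast, needs three vanishings, and at least two of them are shaky. First, what Newton--Thorne prove is $H^1_f(G_F, \ad^0\rho) = 0$, not $\Sha^1_S(G_{F,S}, \ad^0\rho(1)) = 0$; these are Selmer groups with different local conditions on a different module, and the passage from one to the other is precisely the content of the Greenberg--Wiles/Kisin argument you are trying to skip. Second, the claim that $H^0(G_{F_v}, \ad^0\rho(1)) = 0$ at $v\mid p$ "by Hodge--Tate weight considerations" fails in parallel weight $2$: there $\ad^0\rho_v(1)$ has a Hodge--Tate weight equal to $0$, so this cohomology group is not ruled out by weights alone, and ruling it out would require an additional input such as the Ramanujan bound. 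Third, "after enlarging $S$ if necessary" is not available: $S$ is determined by the deformation problem (the ramification set of $\rho$ plus $p$ and $\infty$), and changing $S$ changes both $R_\rho^\psi$ and the $H^2$ you are computing. The paper's route packages all of this cleanly into one bound; yours leaves real work undone at exactly the places where the argument is delicate.
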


\begin{proof}
This follows from the vanishing of $H_f^1(G_F, \ad^0\rho)$ by an argument of Kisin. We make a few remarks since our setup does not match the available references (in particular, we don't want to assume that $p > 2$). First, our assumptions do imply that $H_f^1(G_F, \ad^0\rho)=0$, by \cite[Theorem 5.4]{newton-thorne} and \cite[Proposition 2.15(iv)]{bellaiche-crit}. Then, by the argument in \cite[Theorem 8.2]{kisin-geometricdef} (see also the proof of \cite[Theorem 6.1.6]{allen2}), the vanishing of $H_f^1(G_F, \ad^0\rho)$ implies that the tangent space $H^1(G_{F,S}, \ad^0\rho)$ has dimension at most $2d$ (this uses local-global compatibility and genericity at finite places away from $p$. Genericity is satisfied as $\pi$ is cuspidal). By \cite[Lemma 9.7]{kisin-overconvmfs} (the global Euler characteristic formula plus vanishing of $H^3$) and irreducibility of $\rho$, 
\[
\dim H^1(G_{F,S}, \ad^0\rho) - \dim H^2(G_{F,S}, \ad^0\rho) = 2d.
\]
It follows that $\dim H^1(G_{F,S}, \ad^0\rho) = 2d$ and $\dim H^2(G_{F,S}, \ad^0\rho) = 0$, which gives the result.
\end{proof}

\begin{corollary}\label{defringsiso}
If $F\neq \Q$ and $\pi$ has CM by the extension $\wt{F}/F$, assume that $\wt{F} \not\sub F(\zeta_{p^\infty})$. Then the natural map $R_r \to R^{\psi}_\rho$ is an isomorphism.
\end{corollary}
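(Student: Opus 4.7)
The strategy is to combine the tangent space identification noted just before the statement with the formal smoothness of $R^\psi_\rho$ from Proposition \ref{smoothness}, and to show that the natural map admits a section which is itself forced to be an isomorphism. The two key ingredients are: (i) the induced map on tangent spaces is the identity on $H^1(G_{F,S}, \ad^0 \rho)$, and (ii) $R^\psi_\rho$ is formally smooth of dimension $2d$ over $E$.

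First I would use (i) together with the completed Nakayama lemma to deduce that the map $R_r \to R^\psi_\rho$ is surjective. Next, using (ii), I would choose a (non-canonical) isomorphism $R^\psi_\rho \cong E[[T_1,\dots,T_{2d}]]$, lift each generator $T_i$ to an element $\tilde T_i \in R_r$ (possible by the surjectivity established in the previous step), and define a continuous $E$-algebra homomorphism $s: R^\psi_\rho \to R_r$ by $T_i \mapsto \tilde T_i$. By construction the composite $R^\psi_\rho \xrightarrow{s} R_r \to R^\psi_\rho$ is the identity, so $s$ is a section of $R_r \to R^\psi_\rho$ and in particular injective.

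It remains to show that $s$ is surjective. On $\mathfrak m/\mathfrak m^2$, the map induced by $s$ is a one-sided inverse to the tangent map of $R_r \to R^\psi_\rho$, which is an isomorphism between finite-dimensional $E$-vector spaces of the same dimension. Hence $s$ also induces an isomorphism on tangent spaces, and completed Nakayama gives that $s$ is surjective. Being both injective and surjective, $s$ is an isomorphism, and consequently so is its inverse $R_r \to R^\psi_\rho$.

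There is no substantial obstacle left at this stage: all of the serious input has been absorbed into Proposition \ref{smoothness} (which in turn relies on the vanishing of $H^1_f(G_F, \ad^0\rho)$ from Newton--Thorne, under the assumption on $\widetilde F$ when $F\neq \Q$) and into the prior observation that both deformation problems have the same tangent space $H^1(G_{F,S}, \ad^0\rho)$. The corollary is a short formal consequence of these facts.
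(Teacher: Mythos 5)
Your proof is correct and follows essentially the same route as the paper: the paper simply invokes the standard fact that a map in $\CNL_E$ inducing an isomorphism on tangent spaces and with formally smooth target is an isomorphism, and your argument (surjectivity via completed Nakayama, then constructing a section $s$ by lifting coordinates of the power-series target, then showing $s$ is an isomorphism by comparing cotangent maps) is precisely the standard proof of that fact, applied to the two deformation rings via Proposition \ref{smoothness}.
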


\begin{proof}
Any map $ A \to B$ in $\CNL_E$ which is an isomorphism on tangent spaces and where $B$ is formally smooth is an isomorphism, so the corollary follows from Proposition \ref{smoothness}. 
\end{proof}

\section{Eigenvarieties}\label{sec: eigenvarieties}

In this section we set up the basics of overconvergent cohomology and eigenvarieties for $\GL_2$ and $\SL_2$ over $F$, and recall results on the family of Galois representation over $\GL_2$. We largely follow \cite{bh}, with only some superficial changes. We then discuss the $p$-adic functoriality map between these eigenvarieties and study its fibres at points corresponding to refined automorphic representations, under some mild technical assumptions.

\subsection{Overconvergent cohomology for $\GL_2$} \label{subsec: ocGL2} 
Recall that $\mc{O}_p = \prod_{v\mid p} \mc{O}_v$, and that we have fixed a uniformizer $\varpi_v \in \mc{O}_v$ for every $v\mid p$ and defined $\varpi_p:=\prod_{v \mid p} \varpi_v \in \mc{O}_p$. We let $\mbf{e}:=\prod_{v \mid p} e_v$, where $e_v$ is the ramification index of $F_v/\Q_p$.

\subsubsection{Locally analytic distribution modules on $\mc{O}_p$}
To define overconvergent cohomology, we will work with the locally analytic distribution modules used in \cite{bh}. We recall the definitions from \cite[\S 5.2]{bh}. Choose a $\Z_p$-linear isomorphism $\nu :\Z^d_p \cong \mc{O}_p$ (a global chart on $\oo_p$) and consider the $\Zp$-algebra
\[ 
\mbf{A}^{\circ}(\mc{O}_p, \Qp) := \{f :\mc{O}_p \rightarrow \Q_p : f\circ \nu \in \Z_p\langle z_1,\dots, z_d \rangle\}.
\]
Then $\mbf{A}(\mc{O}_p, \Qp) := \mbf{A}^{\circ}(\mc{O}_p, \Qp)[1/p]$ is isomorphic to $\Q_p\langle z_1,\dots, z_d \rangle$ via $f \mapsto f\circ \nu$ and this is a $\Q_p$-Banach algebra, with norm induced from the supremum norm on  $\Q_p\langle z_1,\dots, z_d \rangle$. 

\medskip

We will use the following spaces of locally analytic functions on $\mc{O}_p$: 
For $\mbf{s}=(s_v)_{v \mid p} \in \Z_{\geq 0}^{v \mid d}$ we define
\[ 
\mbf{A}^{\mbf{s},\circ}(\mc{O}_p, \Q_p) := \{f :\mc{O}_p \rightarrow \Q_p : z\mapsto f(a+ \varpi_p^{\mbf{s}}z) \in \mbf{A}^{\circ}(\mc{O}_p, \Q_p) \text{ for all } a\in \mc{O}_p \} \text{ and }
\]
\[
\mbf{A}^{\mbf{s}}(\mc{O}_p, \Q_p) = \mbf{A}^{\mbf{s},\circ}(\mc{O}_p, \Q_p)[1/p], 
\]
and the latter is again a $\Q_p$-Banach algebra. For any Noetherian $\Q_p$-Banach algebra $R$ we equip 
\[
\mbf{A}^{\mbf{s}}(\mc{O}_p, R):= \mbf{A}^{\mbf{s}}(\mc{O}_p, \Q_p)\widehat{\otimes}_{\Q_p} R
\]
with its induced tensor product topology; this is a potentially orthonormalizable $R$-Banach module. When $\mbf{s}'\geq \mbf{s}$, the natural map $\mbf{A}^{\mbf{s}}(\mc{O}_p, R)\rightarrow \mbf{A}^{\mbf{s}'}(\mc{O}_p, R)$ is injective with dense image and compact whenever $\mbf{s}'\geq \mbf{s} + \mbf{e}$. 

\medskip

The space of $R$-valued locally analytic functions on  $\mc{O}_p$ is defined as the direct limit
\[\mathcal{A}(\mc{O}_p,R)=\varinjlim_{|\mbf{s}|\rightarrow \infty} \mbf{A}^{\mbf{s}}(\mc{O}_p, R), \]
where $|\mbf{s}|=\min \{s_v : v \mid  p\}$. We put the direct limit topology on $\mathcal{A}(\mc{O}_p,R)$.
We denote the $R$-Banach space dual of $\mbf{A}^{\mbf{s}}(\mc{O}_p, R)$ by $\mbf{D}^{\mbf{s}}(\mc{O}_p, R)$ and set
\[\mathcal{D}(\mc{O}_p,R) = \varprojlim_{|\mbf{s}|\rightarrow \infty} \mbf{D}^{\mbf{s}}(\mc{O}_p,R).\]
We equip $\mathcal{D}(\mc{O}_p,R) $ with the projective limit topology. The Banach algebras $R$ that we will use in practice come from $p$-adic weights. To make this precise, let us introduce the weight space that we shall be working with. It differs from that in \cite{bh}, and has been chosen with our application to $\SL_2$ in mind.

\subsubsection{Weight space}\label{sec:weightspace}
We fix a finite extension $E \sub \Qp$ inside $\ol{\Q}_p$, which is assumed to be `large'. All adic spaces we consider will be over $E$. Let $\mbf{G}={\rm Res}_{\Q}^{F}\GL_2$. We set $\mbf{G}_{\Zp}=\prod_{v\mid p}{\rm Res}_{\Zp}^{\oo_{F_{v}}}\GL_{2}$, and 
consider $\mbf{T}=\prod_{v\mid p}{\rm Res}_{\Zp}^{\oo_{F_{v}}}\mbf{T}_{v}$, where $\mbf{T}_{v}$ is the (maximal) diagonal torus in $\GL_2/\oo_{F_{v}}$. Then $\mbf{T}(\Z_p)=\prod_{v\mid p} \mbf{T}_v(\mc{O}_{F_v})\cong (\mc{O}_p^{\times})^2$. This is a compact abelian $p$-adic Lie group and to it we may associate an analytic adic space $\mathcal{W}^{big}:= \Spf(\mathbb{Z}_p[[\mbf{T}(\Z_p)]])^{rig}\times_{\Spa(\Q_p)} \Spa(E)$, which we refer to as the `big' weight space. This is the $2d$-dimensional rigid analytic space over $E$ such that, for any affinoid $E$-algebra $R$, the $R$-points are given by
\[ 
\mathcal{W}^{big}(R)=\mathcal{W}^{big}(R,R^{\circ})= \{\text{continuous characters } \ka: \mbf{T}(\Z_p)\rightarrow R^{\times}\}
\]
(see \cite[Definition 5.1.4]{bh} and the discussion following it). Any point $\ka \in \mathcal{W}^{big}(R)$ has components $\ka_1,\ka_2: \mc{O}_p^{\times}\rightarrow R^{\times}$, given by $\ka_1(z) = \ka\left( \left( \begin{smallmatrix} z & 0 \\ 0 & 1 \end{smallmatrix} \right) \right)$ and $\ka_2(z) = \ka\left( \left( \begin{smallmatrix} 1 & 0 \\ 0 & z \end{smallmatrix} \right) \right)$. Let us fix a classical cohomological weight $k = (k_1,k_2) = ((k_\sigma)_\sigma, w)$. In order to compare eigenvarieties on $\GL_2$ and $\SL_2$ locally around a point of weight $k$, we consider the following subspace of $\mathcal{W}^{big}$.

\begin{definition}
Let $k$ be a cohomological weight. We define the $k$-\emph{weight space} as the Zariski closed subspace $\mc{W}_k \sub \mc{W}^{big}$ such that for any $E$-algebra $R$:
\[ \mc{W}_k(R)=\{ (\ka_1,\ka_2) \in \mathcal{W}^{big}(R): \ka_1\ka_2=k_1 k_2\}.  \]
\end{definition}

It is easy to see that $\mc{W}_k$ is isomorphic to $\Spf(\mathbb{Z}_p[[\mc{O}_p^{\times}]])^{rig}\times_{\Spa(\Q_p)} \Spa(E)$ by sending $\ka$ in the latter to $(\ka, \ka^{-1}k_1 k_2)$ in the former, so $\mc{W}_k$ is $d$-dimensional and smooth. For any $E$-affinoid algebra $R$, we will call an element of $\mc{W}_k(R)$ a \emph{$p$-adic weight}, and we will restrict ourselves to these weights in what follows.

\subsubsection{Monoid-action}
Let $R$ be a Noetherian $E$-Banach algebra. For any $p$-adic weight $\ka :\Spa(R) \rightarrow \mc{W}_k$, the space of locally analytic distributions $\mc{D}(\mc{O}_p,R)$ can be equipped with a `weight $\ka$'-action by a certain monoid. We briefly recall the definitions, referring to \cite[\S 5.3]{bh} for more details. For any continuous character $\chi :\mc{O}_p^{\times} \rightarrow R^{\times}$, there exists (by a theorem of Amice) a tuple $\mbf{s}(\chi) \in \Z_{\geq 0}^{\{v\mid p\}}$ such that the extension $\chi_{!}: \mc{O}_p \rightarrow R$ of $\chi$ by zero is an element of $\mbf{A}^{\mbf{s}(\chi)}(\mc{O}_p, R)$. Note that then also $z \mapsto \chi(cz+d) \in \mbf{A}^{\mbf{s}(\chi)}(\mc{O}_p, R)$ for any $c\in \varpi_p \mc{O}_p, d \in \mc{O}_p^{\times}$, as the map $z\mapsto cz+d$ is a polynomial (cf. \cite[Lemma 5.3.1]{bh}). 

\medskip

Let $\ka = (\ka_1,\ka_2) : \Spa(R) \rightarrow \mc{W}_k$ be a $p$-adic weight and define $\mbf{s}(\ka):= \mbf{s}(\ka_1 \ka_2^{-1})$. For $v \mid p$, recall that $I_v$ denotes the Iwahori subgroup of $\GL_2(\oo_v)$ of upper triangular matrices modulo $\varpi_v$, and that $I = \prod_{v \mid p} I_v \sub \GL_2(F_p)$. Consider the subgroup 
\[
N_1= \begin{pmatrix} 1 & \varpi_p\mc{O}_p \\  & 1 \end{pmatrix}\subset \GL_2(\mc{O}_p).
\]
For  $v\mid p$, let $\Sigma_v:= \left\{ (^{\varpi_v^a} \ _{\varpi_v^b}) : a, b \in \Z \right\}\subset \GL_2(F_v)$ and put $\Sigma:= \prod_{v\mid p} \Sigma_v$. Let 
 $$ \Sigma^{+}= \left\{ t\in \Sigma \mid tN_{1}t^{-1}\sub N_{1} \right\}. $$

We set $\Delta_p:=I\Sigma^{+}I \subset \GL_2(F_p)$ and note that this is a monoid. Consider the submonoid of $\Delta_p$ given by 
\[
\Delta_p^{\circ} = \left\{\begin{pmatrix} a & b \\ c & d \end{pmatrix} \in \GL_2(F_p)\cap M_2(\mc{O}_p)\mid c\in \varpi_p \mc{O}_p \text{ and } d\in \mc{O}_p^{\times}\right\}.
\]
Note that we indeed have $\Delta_p^\circ \sub \Delta_p$: If $g = \left(^a_c \ ^b_d \right) \in \Delta_p^\circ$ with $\det(g) = \varpi_p^{\mbf{a}}x$ where $x\in \oo_p^\times$, then we have
\[
\begin{pmatrix} d & -b \\ 0 & 1 \end{pmatrix}  \begin{pmatrix} a & b \\ c & d \end{pmatrix} =  \begin{pmatrix} ad-bc & 0 \\ c & d \end{pmatrix} = \begin{pmatrix} \varpi_p^{\mbf{a}} & 0 \\ 0 & 1 \end{pmatrix} \begin{pmatrix} x & 0 \\ c & d \end{pmatrix},
\]
which shows that $g\in I \Sigma^+ I$. Moreover, let $\mbf{a},\mbf{b} \in \Z^{\{v\mid p\} }$. As $\mathrm{diag}(\varpi_p^\mbf{a}, \varpi_p^\mbf{b})=\mathrm{diag}(\varpi_p^\mbf{b}, \varpi_p^\mbf{b})\mathrm{diag}(\varpi_p^{\mbf{a}-\mbf{b}},1)$, we can write any element $g \in \Delta_p$ uniquely as $\xi g^\circ$ with $g^\circ \in \Delta_p^{\circ}$ and $\xi=\mathrm{diag}(\varpi_p^{\mbf{b}}, \varpi_p^{\mbf{b}})$ a central element with entries a power of the product of the uniformizers.

\medskip

We equip $\mc{O}_p$ with the continuous left action of $\Delta_p$ given by $g\cdot z:= \frac{az+b}{cz+d}$ for $g = \left(^a_c \ ^b_d \right) \in \Delta_p$ and $z\in \mc{O}_p$. For $\ka$ as above and $\mbf{s}\geq \mbf{s}(\ka)$ we have a continuous $R$-linear right action of $\Delta_p$ on $\mbf{A}^{\mbf{s}}(\mc{O}_p, R)$ defined by
\[
f|_g(z):=\ka_1 \ka_2^{-1}(cz+d) \ka_2(\det(g) \varpi_p^{-v(\det g)}) f(g\cdot z) 
\]
where $g= \xi g^{\circ}$ with $g^{\circ} = \left(^a_c \ ^b_d \right) \in \Delta_p^{\circ}, f\in \mbf{A}^{\mbf{s}}(\mc{O}_p, R)$, and $z \in \mc{O}_p$. We equip $\mbf{D}^{\mbf{s}}(\mc{O}_p, R)$ with the dual left action, i.e., $(g\cdot \mu)(f)=\mu(f|_g)$ for $\mu \in \mbf{D}^{\mbf{s}}(\mc{O}_p, R), f \in \mbf{A}^{\mbf{s}}(\mc{O}_p, R)$ and $g\in \Delta_p$. 
For $\mbf{s}'\geq \mbf{s}$, the action of $\Delta_p $ is compatible with the injective restriction map $\mbf{A}^{\mbf{s}}(\mc{O}_p, R)\rightarrow \mbf{A}^{\mbf{s}'}(\mc{O}_p, R)$, hence we get a continuous action of $\Delta_p$ on $\mc{A}(\mc{O}_p,R)$, and similarly $\Delta_p$ acts on the locally analytic distributions $\mc{D}(\mc{O}_p,R)$.

\begin{definition}\label{weightaction} Let $\ka: \Spa(R) \rightarrow \mc{W}_k$ be a weight. For $\mbf{s}\geq \mbf{s}(\ka)$ we define $\mbf{A}^{\mbf{s}}_{\ka}:=\mbf{A}^{\mbf{s}}(\mc{O}_p, R)$, $\mbf{D}^{\mbf{s}}_{\ka}:=\mbf{D}^{\mbf{s}}(\mc{O}_p, R)$, $\mc{A}_{\ka}:=\mc{A}(\mc{O}_p, R)$ and $\mc{D}_{\ka}:=\mc{D}(\mc{O}_p, R)$ as the respective $R$-modules equipped with the continuous action by $\Delta_p$ defined above.
\end{definition}

\subsubsection{Hecke algebras}\label{subsec:Heckealgebras}
We need some more notation. First, we fix a compact open subgroup $K_{v}\sub \GL_2(F_v)$ for each prime $v \nmid p$, equal to $\GL_2(\mc{O}_v)$ for all but finitely many $v$. We write $K^{p}=\prod_{v \nmid p}K_{v}$ (and refer to it as the \emph{tame level}), and set $K=K^{p}I$. We let $S(K)$ denote the union of the set of places $v \nmid p$ where $K_v$ is not equal to $\GL_2(\mc{O}_v)$ and the places dividing $p\infty$.  For simplicity, we will assume that $K$ is neat, which is the case when $K^{p}$ is sufficiently small. 

\medskip

For our Hecke algebras, we consider the monoid
\[
\Delta = \Delta_p \times \prod_{v\notin S(K)} \GL_2(F_v) \times \prod_{v\in S(K),\, v\nmid p} K_v  \sub \GL_2(\A_F^\infty),
\]
which contains $K$. We will write $\mbf{T}(K) := \mbf{T}(\Delta,K)$ for the corresponding Hecke algebra over $\oo_E$. $\mbf{T}(K)$ may be described as
\[
\mbf{T}(\Delta_p,I) \otimes \bigotimes^{\prime}_{v\notin S(K)} \mbf{T}(\GL_2(F_{v}), K_{v}).
\] 
In particular, $\mbf{T}(K)$ is commutative, and we may describe an explicit set of generators. When $v\notin S(K)$, the spherical Hecke algebra $\mbf{T}(\GL_2(F_v),K_v)$ is generated by $T_v=[K_v (^{\varpi_v}\ _{1}) K_v]$, $S_v=[K_v (^{\varpi_v}\ _{\varpi_v}) K_v]$ and $S_v^{-1}$, where we recall that we have fixed uniformizers $\varpi_v$ of $F_v$.  The Iwahori-Hecke algebra $\mbf{T}(\Delta_p,I)$ is the (commutative) subalgebra of $\mbf{T}(\GL_2(F_p),I)$ generated by the indicator functions $[I\delta I]$, where $\delta \in \Sigma^{+}$. Indeed, $\mbf{T}(\Delta_p,I)$ is generated by the operators $U_v=[I_v (^{\varpi_v}\ _{1}) I_v]$, $S_v=[I_v (^{\varpi_v}\ _{\varpi_v}) I_v]$ and $S_v^{-1}$ for  $v\mid p$. 

\medskip

Any left $\Delta_p$-module $N$ will be viewed as a left $\Delta$-module via the natural projection $\Delta \to \Delta_p$. When $Z(K)$ acts trivially, this gives us a local system on $Y_K$, with an action of $\mbf{T}(K)$.

\subsubsection{Group actions on cohomology}  
Let $K=K^p I$ be as above and consider the compact open subgroup $\det(K) \sub \A_F^\infty$. We will define an action (`twisting') of the character group $H_K$ of the ray class group $Cl_K^+ = Cl_{\det(K)}^{+}$ on the cohomology $H^*(Y_K, N)$, for any $E$-local system $N$. This will play a crucial technical role in our paper. Recall that our finite extension $E\subset \Qbar_p$ of $\Q_p$ is assumed to be sufficiently large, in particular we want all $\Qbar_p^{\times}$-valued characters of $Cl^+_K$ to take values in $E^{\times}$.

\medskip

To define the action of $H_K$, first note that if $\chi \in H_K$ then $\chi \circ \det\colon \GL_2(\A_F) \rightarrow E^{\times}$ defines an element of $H^0(Y_K, E)$. Now consider the cup product
\[
\cup \colon H^0(Y_K, E)\otimes H^n(Y_K, N) \rightarrow H^n(Y_K, N).\]
The following proposition is trivial, since cupping with classes in $H^0(Y_K,E)$ is merely multiplication by locally constant functions.

\begin{proposition}
Let $\chi \in H_K$. For any  $E[\Delta]$-module $N$ and any $n\geq 0$, the cup product 
\[
H_K \times H^n(Y_K, N) \rightarrow H^n(Y_K, N),\,\,\, (\chi, c)\mapsto \chi \cdot c := \chi\cup c 
\]
defines a group action of $H_K$ on $H^n(Y_K, N)$.
\end{proposition}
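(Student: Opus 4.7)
The plan is to verify the two axioms of a group action, namely that $1_{H_K} \cdot c = c$ and $(\chi_1 \chi_2) \cdot c = \chi_1 \cdot (\chi_2 \cdot c)$ for all $\chi_1,\chi_2 \in H_K$ and $c \in H^n(Y_K,N)$. Both reduce to standard properties of the cup product once one identifies the element $\chi \circ \det \in H^0(Y_K,E)$ correctly.

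First I would note that $\chi \circ \det \colon \GL_2(\A_F) \to E^\times$ is right $K$-invariant and left $\GL_2(F)$-invariant (since $\chi$ is trivial on $(F^\times)^\circ$ and $K_\infty$), so it descends to a locally constant $E$-valued function on $Y_K$. Under the identification of $H^0(Y_K,E)$ with locally constant $E$-valued functions on $Y_K$, the assignment $\chi \mapsto \chi \circ \det$ is a group homomorphism from $H_K$ into the multiplicative group of $H^0(Y_K,E)^\times$: the image of the trivial character is the constant function $1$, and the product $\chi_1 \chi_2$ maps to the pointwise product $(\chi_1 \circ \det)(\chi_2 \circ \det)$.

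Next, I would invoke the standard fact that cup product gives $H^*(Y_K,N)$ the structure of a graded module over the graded ring $H^*(Y_K,E)$. In particular, for $f \in H^0(Y_K,E)$ and $c \in H^n(Y_K,N)$, the cup product $f \cup c$ is associative and unital in $f$: we have $1 \cup c = c$ and $(f_1 f_2) \cup c = f_1 \cup (f_2 \cup c)$, where $f_1 f_2$ denotes the product in the ring $H^*(Y_K,E)$, which on degree zero classes coincides with pointwise multiplication of locally constant functions.

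Combining these two observations immediately yields $1_{H_K} \cdot c = c$ and $(\chi_1 \chi_2) \cdot c = \chi_1 \cdot (\chi_2 \cdot c)$, so the map $(\chi, c) \mapsto \chi \cup c$ defines a group action of $H_K$ on $H^n(Y_K,N)$. There is no genuine obstacle here; as the authors note, the statement is essentially tautological given the compatibility of cup product with the ring structure on $H^*(Y_K,E)$.
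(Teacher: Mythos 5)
Your proof is correct and takes the same approach as the paper, which dismisses the proposition as trivial because cupping with $H^0(Y_K,E)$-classes is just multiplication by locally constant functions; you simply spell out the verification that $\chi \mapsto \chi \circ \det$ is a group homomorphism into $H^0(Y_K,E)^\times$ and then use the $H^*(Y_K,E)$-module structure on $H^*(Y_K,N)$.
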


The following lemma explains how the action of $H_K$ interacts with Hecke operators. 

\begin{lemma}\label{twisting and Hecke}
For any $\delta \in \Delta$, with corresponding element $[K\delta K] \in \mbf{T}(K)$, we have 
\[ [K \delta K] \circ \chi= \chi(\det \delta) \chi\circ  [K \delta K]\]
as elements in $\mathrm{End}(H^*(Y_K,N))$. 
%\begin{enumerate}
%\item For $v \notin S$ and $v\nmid p$ we have 
%\[T_v \circ \chi = \chi(\varpi_v) \chi \circ T_v, \]
%\[S_v \circ \chi = \chi(\varpi^2_v) \chi \circ S_v.\]
%\item For $v\mid p$, we have 
%\[U_v \circ \chi = \chi(\varpi_v) \chi \circ U_v.\]
%\end{enumerate}
\end{lemma}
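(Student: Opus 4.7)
The plan is to read off both sides directly from a Hecke-correspondence description of $[K\delta K]$, using that $\chi$ acts by multiplication with the locally constant function $[g]\mapsto \chi(\det g)$. Concretely, one realises $[K\delta K]$ via the standard correspondence
\[
Y_K \xleftarrow{p_1} Y_{K_\delta} \xrightarrow{p_2} Y_K, \qquad K_\delta := K\cap \delta K\delta^{-1},
\]
where $p_1$ is induced from the inclusion $K_\delta \sub K$ and $p_2$ is induced from right multiplication $g\mapsto g\delta$, which descends to $Y_{K_\delta}\to Y_K$ because $\delta^{-1}K_\delta \delta \sub K$. According to the convention in \cite{bh}, $[K\delta K]$ is then realised as $(p_1)_\ast p_2^\ast$ (the opposite convention would yield the same identity, with the scalar $\chi(\det\delta)^{-1}$ passing to the other side; we will use whichever convention matches \cite{bh} so as to recover the statement as written).

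The key input is that $\chi$, viewed as an element of $H^0(Y_K,E)$, pulls back along the two legs by differing by exactly $\chi(\det\delta)$. Along $p_1$ one trivially has $p_1^\ast \chi = \chi$, whereas for $[g]\in Y_{K_\delta}$,
\[
(p_2^\ast \chi)([g]) = \chi(\det(g\delta)) = \chi(\det\delta)\cdot \chi(\det g),
\]
and hence $p_2^\ast \chi = \chi(\det\delta)\cdot p_1^\ast \chi$ as classes in $H^0(Y_{K_\delta}, E)$. Equivalently, in a right-coset decomposition $K\delta K = \bigsqcup_j \delta_j K$, every representative satisfies $\det\delta_j \in \det(K)\cdot \det\delta$, and by the very definition of $H_K$ as characters of $Cl^+_K = F^\times\backslash \A_F^\times/(\det(K)\cdot (F_\infty^\times)^\circ)$ the character $\chi$ is trivial on $\det(K)$, so $\chi(\det\delta_j)=\chi(\det\delta)$ for all $j$; this is the same statement read on the group-cochain side.

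Combining the compatibility of cup product with pullback with the projection formula $(p_1)_\ast\bigl(p_1^\ast a \cup b\bigr) = a\cup (p_1)_\ast b$, one computes for any $c\in H^\ast(Y_K, N)$
\[
[K\delta K](\chi\cup c) = (p_1)_\ast p_2^\ast(\chi\cup c) = (p_1)_\ast\bigl(p_2^\ast\chi \cup p_2^\ast c\bigr) = \chi(\det\delta)\,(p_1)_\ast\bigl(p_1^\ast\chi \cup p_2^\ast c\bigr)
\]
\[
= \chi(\det\delta)\, \chi \cup (p_1)_\ast p_2^\ast c = \chi(\det\delta)\, \chi \cup [K\delta K](c),
\]
which is precisely the claimed identity in $\End(H^\ast(Y_K,N))$. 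The only real obstacle is bookkeeping: pinning down the convention for the Hecke action used in \cite{bh} so that the correspondence is assembled as $(p_1)_\ast p_2^\ast$ rather than $(p_2)_\ast p_1^\ast$, since this is what determines on which side of the identity the scalar $\chi(\det\delta)$ sits. Once this is fixed, the projection-formula argument above is essentially formal and yields the lemma.
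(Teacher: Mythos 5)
Your proof is correct but packages the argument differently from the paper. The paper works directly at the level of cochains on the covering $\mf{Y}\to Y_K$: it decomposes $K\delta K = \bigcup_i \delta_i K$, writes $[K\delta K]\phi(\sigma,g) = \sum_i \delta_i\cdot\phi(\sigma,g\delta_i)$, observes that $\chi(\det\delta_i)=\chi(\det\delta)$ for every representative $\delta_i$ (precisely your remark that $\chi$ is trivial on $\det K$), and compares the two sides term by term. You instead phrase the same mechanism via the Hecke correspondence $Y_K \xleftarrow{p_1} Y_{K_\delta} \xrightarrow{p_2} Y_K$, the identity $p_2^\ast\chi = \chi(\det\delta)\,p_1^\ast\chi$ in $H^0(Y_{K_\delta},E)$, and the projection formula $(p_1)_\ast(p_1^\ast a\cup b) = a\cup (p_1)_\ast b$. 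The projection-formula route is conceptually cleaner and generalizes painlessly (it makes no reference to a specific cochain model), at the small cost of needing to pin down that \cite{bh}'s normalisation of $[K\delta K]$ matches $(p_1)_\ast p_2^\ast$ rather than $(p_2)_\ast p_1^\ast$; the paper's cochain computation avoids that bookkeeping by working in a fixed explicit model. Your caveat about the convention is the right one to flag, and is resolved exactly as you say: the cochain formula in \cite{bh} realises $[K\delta K]$ as pullback along right multiplication by $\delta$ followed by a trace along $p_1$, which is $(p_1)_\ast p_2^\ast$, so the scalar $\chi(\det\delta)$ lands where the statement puts it.
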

\begin{proof}
Recall the space $\mf{Y} = \GL_2(F)^\circ \backslash D_\infty \times \GL_2(\A_F^\infty)$ from \S \ref{subsec: symmetric spaces}. For any $N$ as above, the cohomology $H^*(Y_K, N)$ is the cohomology of the complex
\[
C_{\mathrm{ad}}^{\bullet}(K,N)=\mathrm{Hom}_K(C_{\bullet}(\mf{Y}),N)
\]
by descent, where $C_{\bullet}(\mf{Y})$ is the complex of singular chains of $\mf{Y}$. A singular $n$-simplex of $\mf{Y}$ can be written as $(\sigma,g)$, where $g\in \GL_2(\A_F^\infty)$ and $\sigma$ is a singular $n$-simplex of $D_\infty$. Let $\phi \in  C_{\mathrm{ad}}^{\bullet}(K,N)$ and decompose $K\delta K = \bigcup_i \delta_i K$ into right cosets. 
Then $[K\delta K] \phi = \sum_i \delta_i \phi$, where $\delta_i \phi (\sigma,g)=\delta_i \cdot \phi( \sigma, g \delta_i)$. Note that since $\chi(\det(g))=1$ for all $g\in K$, we have $\chi(\det \delta_i)=\chi(\det \delta)$ for all $i$. Then we have
\begin{align*}
\chi \cdot ( [K\delta K]( \phi))(\sigma, g)&= \chi(\det g) ([K\delta K](\phi))(\sigma,g)  \\
&= \chi(\det g) \sum_i \delta_i \cdot \phi(\sigma,g \delta_i).
\end{align*}

and
\begin{align*}
([K\delta K] (\chi \cdot \phi))(\sigma,g)&= \sum_i \delta_i \cdot (\chi(\phi))(\sigma,g \delta_i) \\
&= \sum_i \chi(\det(g \delta_i))  \delta_i \cdot \phi(\sigma, g \delta_i)  \\
&= \chi(\det \delta) \chi(\det g)  \sum_i \delta_i \cdot  \phi(\sigma, g \delta_i).
\end{align*}
Comparing the two expressions gives the lemma. 
\end{proof}

\begin{remark}
In particular, any Hecke operator $[K\delta K]$ with $\delta \in \Delta \cap \SL_2(\A_{F,f})$ commutes with the action of $H_K$.
\end{remark}

We now want to introduce the eigenvariety, and define an action of $H_K$ on it. For that we need to check that the action of $H_K$ respects slope decompositions. We now briefly recall the relevant aspects of the construction of the eigenvariety, cf. \cite{hansen} and \cite[\S 6.2]{bh}. For $v\mid p$ we set $ u_v:= [K (^{\varpi_v}\ _{\varpi_v^{-1}}) K]$ and let
\[
U_p:= \prod_{v\mid p} U^{e_v}_v, \ u_p = \prod_{v\mid p} u^{e_v}_v, \ S_p=\prod S^{e_v}_v.
\]
Note that $u_v = U_v^2 S_v^{-1}$ and similarly $u_p=U^2_p S_p^{-1}$. Given an open affinoid subset $\Omega \sub \mc{W}_k$, we get a corresponding weight $\ka_\Omega : \T(\Zp) \to \oo(\Omega)^\times$. To simplify notation, we will write $\mbf{A}_{\Omega}^{\mbf{s}}$ instead of $\mbf{A}_{\ka_\Omega}^{\mbf{s}}$ for any $\mbf{s}\geq\mbf{s}(\Omega)$, etc. Both $U_p$ and $u_p$ are `controlling operators', inducing compact operators on each term of a (fixed) choice of Borel--Serre complex $C_\bu(K,\mbf{A}^{\mbf{s}}_{\Omega})$. While it is customary to use $U_p$ as a controlling operator, we will use $u_p$ since it is also an $\SL_2$-Hecke operator, which later will make it easier to compare eigenvarieties for $\GL_2$ and $\SL_2$. As $u_p$ acts compactly on each $C_i(K,\mbf{A}^{\mbf{s}}_{\Omega})$ we can form the Fredholm series 
\[
f_{\Omega,u_p}(t)=\det(1-tu_p \mid  C_\ast(K,\mbf{A}^{\mbf{s}}_{\Omega})).
\]
As $\Omega$ runs over all affinoid open subspaces of $\mc{W}_k$ these glue together to a Fredholm series $f_{u}(t) \in \mc{O}(\mc{W}_k)\{\{t\}\}$ and we let $\mc{Z} \sub \mc{W}_k \times \mb{A}^1$ be the corresponding Fredholm hypersurface. Let $(\Omega, h)$ be a slope-adapted pair for $u_p$ (we refer to \cite[\S 6.2]{bh} or \cite[\S 4.1]{hansen} for this notion). In particular, this means that we have a slope $\leq h$ decomposition 
\[
H^*(Y_K, \mbf{D}_{\Omega}^\mbf{s}))= H^*(Y_K, \mbf{D}_{\Omega}^\mbf{s}))_{\leq h} \oplus H^*(Y_K, \mbf{D}_{\Omega}^\mbf{s}))_{>h} 
\]
where the term $H^*(Y_K, \mbf{D}_{\Omega}^\mbf{s}))_{\leq h}$ is independent of $\mbf{s}$. In light of this independence, we will simply write $H^*(Y_K, \D_{\Omega})_{\leq h}$ for it (cf. \cite[\S 2.3.12]{urban}). We get an induced morphism 
\[
\psi_{\Omega,h} \colon \mbf{T}(K)\rightarrow \mathrm{End}_{\oo(\Omega)}(H^*(Y_K, \D_{\Omega})_{\leq h}).
\]
As the pairs $(\Omega,h)$ range over slope-adapted pairs, the $H^*(Y_K, \D_{\Omega})_{\leq h}$ glue together to a graded $\mc{O}_{\mc{Z}}$-module, which we denote by $\wt{\mc{M}}= \wt{\mc{M}}_{K^p}$, and the $\psi_{\Omega,h}$ glue to a morphism $\psi \colon  \mbf{T}(K)\rightarrow \mathrm{End}_{\oo_\mc{Z}}(\wt{\mc{M}})$. The eigenvariety $\wt{\E} =\wt{\E}_{K^p}$ is defined to be the relative adic spectrum over $\mc{Z}$ of the coherent $\oo_{\mc{Z}}$-subalgebra of $\mathrm{End}_{\oo_\mc{Z}}(\wt{\mc{M}})$ generated by the image of $\psi$, and it is independent of the choice of controlling operator by \cite[Proposition 3.4.4]{JN2}. Phrased differently, $\wt{\E}$ is obtain by gluing the `local pieces' $\wt{\E}_{\Omega,h}=\Spa(\mbf{T}_{\Omega,h})$, with $\mbf{T}_{\Omega,h} := \mathrm{Im}(\psi_{\Omega,h} \otimes \oo(\Omega))$, over $\mc{Z}$, where $(\Omega,h)$ ranges over slope-adapted pairs. The eigenvariety depends on $K^p$ but, as indicated, we will suppress this dependence from the notation unless we need to deal with multiple tame levels.  

\medskip

The action of $H_K$ on cohomology induces an action on the eigenvariety, which we will now describe on the local pieces.
The $\oo(\Omega)$-module $\mathrm{End}_{\oo(\Omega)}(H^*(Y_K, N))$ has an $H_K$-module structure defined by $(h \cdot \phi)(m):= h \phi(h^{-1}m)$ for $\phi \in \mathrm{End}_{\oo(\Omega)}(H^*(Y_K, N))$, $h \in H_K$ and $m \in H^*(Y_K, N)$.

\begin{proposition} 
\begin{enumerate}
\item The action of $H_K$ on $H^*(Y_K, \mbf{D}_{\Omega}^{\mbf{s}})$ commutes with changing $\mbf{s}$, and leaves $H^*(Y_K, \mbf{D}_{\Omega}^{\mbf{s}})_{\leq h}$ invariant.
\item The induced action of $H_K$ on $\mathrm{End}_{\oo(\Omega)}(H^*(Y_K, \mbf{D}_{\Omega}^{\mbf{s}})_{\leq h})$ sends $\mbf{T}_{\Omega,h}$ to itself.
\item If $\phi : \mbf{T}_{\Omega,h} \to \ol{\Q}_p$ is a homomorphism and $\chi \in H_K$, then $\chi \circ \phi$ sends $[K\delta K]$ to $\chi(\det \delta)[K\delta K]$.
\end{enumerate}
\end{proposition}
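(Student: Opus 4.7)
The plan is to reduce all three parts to Lemma~\ref{twisting and Hecke}, carefully tracking what the commutation relation $[K\delta K]\circ\chi = \chi(\det\delta)\,\chi\circ[K\delta K]$ implies for each assertion.

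For part (1), I would first observe that compatibility with changing $\mbf{s}$ is essentially formal: the natural map $\mbf{D}_{\Omega}^{\mbf{s}'}\to\mbf{D}_{\Omega}^{\mbf{s}}$ for $\mbf{s}'\geq\mbf{s}$ is $\Delta_p$-equivariant, the $H_K$-action is given by cup product with the class $\chi\circ\det\in H^0(Y_K,E)$, and cup product is natural in the coefficient local system. For invariance of the slope-$\leq h$ part, I would use that the controlling operator $u_p = \prod_{v\mid p} u_v^{e_v}$ is a product of double cosets $[K\,\mathrm{diag}(\varpi_v,\varpi_v^{-1})\,K]$, each of whose representatives has determinant $1$. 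Lemma~\ref{twisting and Hecke} then gives $[u_p,\chi]=0$ for every $\chi\in H_K$. Since the slope decomposition for $u_p$ is cut out by a polynomial in $u_p$ (cf.\ \cite[\S 4.1]{hansen}), it is preserved by $H_K$.

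For part (2), the action of $H_K$ on $\mathrm{End}_{\oo(\Omega)}(H^\ast(Y_K,\mbf{D}_\Omega^\mbf{s})_{\leq h})$ is conjugation $\chi\cdot\phi=\chi\circ\phi\circ\chi^{-1}$, as read off from the formula $(\chi\cdot\phi)(m)=\chi\phi(\chi^{-1}m)$ once we remember that $\chi$ acts invertibly (via cup product with the invertible function $\chi\circ\det$). Rearranging Lemma~\ref{twisting and Hecke} directly yields
\[
\chi\cdot [K\delta K]\;=\;\chi\circ[K\delta K]\circ\chi^{-1}\;=\;\chi(\det\delta)^{-1}\,[K\delta K],
\]
which lies in $\mbf{T}_{\Omega,h}$. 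Because conjugation is an $\oo(\Omega)$-algebra automorphism of the endomorphism algebra and $\mbf{T}_{\Omega,h}$ is generated as an $\oo(\Omega)$-algebra by the images of the Hecke operators $[K\delta K]$, it follows that $\chi\cdot\mbf{T}_{\Omega,h}\subseteq\mbf{T}_{\Omega,h}$; applying the same computation to $\chi^{-1}$ gives the reverse inclusion.

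Part (3) then drops out by dualizing. The induced action on the set of $\ol{\Q}_p$-valued characters of $\mbf{T}_{\Omega,h}$ is pullback by the inverse, $(\chi\circ\phi)(T)=\phi(\chi^{-1}\cdot T)$, so the formula from part (2) gives $(\chi\circ\phi)([K\delta K])=\chi(\det\delta)\,\phi([K\delta K])$, as required. The only obstacle is keeping the sign and inverse conventions straight when passing from Lemma~\ref{twisting and Hecke} to the conjugation action on endomorphisms; no additional input beyond that lemma and the definition of the slope decomposition is needed.
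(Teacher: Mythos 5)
Your argument is correct and takes essentially the same route as the paper: both reduce everything to Lemma~\ref{twisting and Hecke}, with invariance of the slope-$\leq h$ part coming from the fact that the controlling operator $u_p$ has determinant-one representatives and hence commutes with $H_K$ (the paper cites \cite[Proposition 2.3.2(a)]{hansen} for the precise statement that anything commuting with $u_p$ preserves the slope decomposition, which is the clean version of your ``cut out by a polynomial in $u_p$'' remark). Your sign bookkeeping in parts (2) and (3) is consistent with the convention implicit in the paper, namely that the action on points is $\phi\mapsto\phi\circ(\chi^{-1}\cdot)$, which is exactly what is needed so that $\chi\circ\phi$ is the eigensystem of the twist $\pi\otimes(\chi\circ\det)$.
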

\begin{proof} Commutation with changing $\mbf{s}$ is clear from the definitions (as, more generally, the action of $H_K$ commutes with morphisms of local systems), and since the action of $H_K$ commutes with $u_p$ it preserves the slope decomposition by \cite[Proposition 2.3.2(a)]{hansen}. This proves (1). Parts (2) and (3) then follow directly from Lemma \ref{twisting and Hecke}.
\end{proof}

In particular, part (3) shows that the action of $H_K$ has the desired twisting effect on systems of Hecke eigenvalues.

\subsection{Galois representations and the geometry of eigenvarieties}\label{subsec: Galois reps}

In this subsection we discuss some further aspects of $\wt{\E}$ and its geometry, using Galois representations as our main tool. Aside from the Hecke algebra $\T(K)=\T(\Delta,K)$, we will also need the subalgebra $\T(K^p) \sub \T(K)$ where we remove all Hecke operators at places dividing $p$. If $x\in \wt{\E}(\ol{\Q}_p)$, then $x$ corresponds to a homomorphism
\[
\phi_x : \T(K) \to \ol{\Q}_p,
\]
which lands inside $\ol{\Z}_p$, by (the proof of) \cite[Lemma 6.5.4]{bh}. The kernel of $\phi_x$ is a height $1$ prime ideal that we will denote by $\p_x$. We may restrict $\phi_x$ to $\T(K^p)$ and then reduce it modulo the maximal ideal of $\ol{\Z}_p$ to obtain a
homomorphism $\T(K^p) \to \ol{\F}_p$. The corresponding maximal ideal will be denoted by $\m_x$. We remark that this notation is in conflict with the notation in \cite{bh}, where our $\p_x$ is denoted by $\m_x$, but we hope that this won't cause any confusion. We will also sometimes write $\phi_x$ for the induced homomorphism $\oo(\wt{\E}) \to \ol{\Q}_p$.

\medskip

By \cite[Theorem 5.4.5]{JN1}, there exists a continuous $2$-dimensional pseudocharacter $T : G_F \to \oo^+(\wt{\E}^{red})$ characterised by
\[
T(Frob_v) = T_v
\]
for all $v\notin S(K)$, where $\wt{\E}^{red}$ denotes the nilreduction of $\wt{\E}$. Its determinant $D(g) := (T(g)^2 - T(g^2))/2$ is the continuous character $G_F \to \oo^+(\wt{\E}^{red})^\times$ characterised by
\[
D(Frob_v)= q_v S_v
\]
for $v\notin S(K)$. For every $x\in \wt{\E}(\ol{\Q}_p)$, $T_x = \phi_x \circ T$ arises from a semisimple continuous Galois representation $\rho_x : G_F \to \GL_2(\ol{\Q}_p)$. When $x$ arises from a refined automorphic representation $(\pi,\alpha)$ (as we recall below), then $\rho_x \cong \rho_{\pi}$ agrees with the Galois representation attached to $\pi$ as recalled in (\ref{eq:classrep}). We write $\ol{\rho}_x$ for the (semisimple) reduction of $\rho_x$. A first consequence of the the existence of $T$ is that $\wt{\E}^{red}$ doesn't change if we remove a finite number of unramified Hecke operators away from $p$, while keeping the same tame level.

\begin{lemma}\label{changing Hecke operators}
Let $K_1^p \sub K^p$ be a compact open subgroup and set $K_1 = K^p_1 I$. Let $(\Omega,h)$ be a slope adapted pair for $\wt{\E}_{K^p}$, and let $\wt{\E}_{\Omega,h} = \Spa \T_{\Omega,h}$ be the corresponding local piece. Then the natural map $\T(K_1) \otimes \oo(\Omega) \to \T_{\Omega,h}^{red}$ is surjective.
\end{lemma}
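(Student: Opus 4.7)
The plan is to deduce surjectivity from the existence of the continuous pseudocharacter $T: G_F \to \oo^+(\wt{\E}^{red})$ supplied by \cite[Theorem 5.4.5]{JN1}, together with Chebotarev density and a lying-over argument. Restricting $T$ to the local piece $\wt{\E}_{\Omega,h} = \Spa \T_{\Omega,h}$ yields a continuous pseudocharacter $T: G_F \to \T_{\Omega,h}^{red}$ (for the natural Banach topology on the finite $\oo(\Omega)$-algebra $\T_{\Omega,h}^{red}$), satisfying $T(\Frob_v) = T_v$ and $D(\Frob_v) = q_v S_v$ for all $v \notin S(K)$. Let $A \subseteq \T_{\Omega,h}^{red}$ denote the image of $\T(K_1) \otimes \oo(\Omega)$ and let $\overline{A}$ denote its closure; the task is to prove $A = \T_{\Omega,h}^{red}$.

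First I would run a density argument. Since $T$ factors through $G_{F,S(K)}$ and $S(K) \subseteq S(K_1)$, the natural quotient $G_{F,S(K_1)} \twoheadrightarrow G_{F,S(K)}$ carries the Chebotarev-dense set $\{\Frob_v : v \notin S(K_1)\}$ to a dense subset of $G_{F,S(K)}$. Because $T_v \in A$ and $q_v S_v \in A$ for $v \notin S(K_1)$, continuity of $T$ and of $D(g) = \tfrac{1}{2}(T(g)^2 - T(g^2))$ implies that $T(g), D(g) \in \overline{A}$ for every $g \in G_{F,S(K)}$. Specializing at $g = \Frob_v$ for $v \in S(K_1) \setminus S(K)$ and dividing by the unit $q_v \in \oo(\Omega)^\times$ then yields $T_v \in \overline{A}$ and $S_v \in \overline{A}$ for all $v \notin S(K)$.

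The main obstacle I anticipate is to produce $S_v^{-1} \in \overline{A}$ for $v \in S(K_1) \setminus S(K)$, since the pseudocharacter only delivers $S_v$ itself. Here I would invoke lying-over. The closure $\overline{A}$ is an $\oo(\Omega)$-subalgebra of the finite $\oo(\Omega)$-algebra $\T_{\Omega,h}^{red}$, so by Noetherianness $\overline{A}$ is itself finite over $\oo(\Omega)$ and $\T_{\Omega,h}^{red}$ is integral over $\overline{A}$. If $S_v \in \overline{A}$ were contained in some maximal ideal of $\overline{A}$, lying-over would produce a prime of $\T_{\Omega,h}^{red}$ containing $S_v$, contradicting that $S_v$ is a unit in $\T_{\Omega,h}^{red}$. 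Hence $S_v \in \overline{A}^{\times}$ and $S_v^{-1} \in \overline{A}$. Together with the Iwahori-Hecke operators at $p$ (which lie directly in $A$), $\overline{A}$ then contains a full $\oo(\Omega)$-algebra generating set of $\T_{\Omega,h}^{red}$, giving $\overline{A} = \T_{\Omega,h}^{red}$. Finally, $A$ is a finitely generated $\oo(\Omega)$-submodule of the finite Banach $\oo(\Omega)$-module $\T_{\Omega,h}^{red}$ (using Noetherianness of $\oo(\Omega)$), hence closed, so $A = \overline{A} = \T_{\Omega,h}^{red}$ as required.
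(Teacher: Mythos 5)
Your proof is correct and follows the same core strategy as the paper's: use the pseudocharacter $T$ together with Chebotarev density and the closedness of $A$ (a finitely generated submodule of the finite Banach $\oo(\Omega)$-module $\T_{\Omega,h}^{red}$, over the Noetherian Banach ring $\oo(\Omega)$). The paper's own proof is considerably terser — it only explicitly states that $T_v \in A$ for $v \in S(K_1)\setminus S(K)$ and then ``concludes'' — but one really does need to account for $S_v$ and $S_v^{-1}$ as well, since $\T(K)$ contains all three generators of the spherical Hecke algebra for $v \notin S(K)$. Your derivation of $S_v \in A$ from the determinant $D(g) = \tfrac12\bigl(T(g)^2 - T(g^2)\bigr)$ (using that $q_v \in \oo(\Omega)^\times$) is the right move, and the lying-over argument for $S_v^{-1}$ — exploiting that $\T_{\Omega,h}^{red}$ is integral over the closed subring $A$, so that the element $S_v$, being a unit upstairs, cannot lie in any maximal ideal of $A$ — is a genuinely useful addition that closes a gap the paper's proof leaves implicit. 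In short: same route, but you supply the steps concerning $S_v$ and $S_v^{-1}$ that the published proof glosses over.
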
 

\begin{proof}
Let $A \sub \T_{\Omega,h}^{red}$ be the image of $\T(K_1) \otimes \oo(\Omega)$. Since $\T_{\Omega,h}^{red}$ is a finite $\oo(\Omega)$-module, $A$ is a closed subalgebra. Let $v \in S(K_1) \setminus S(K)$. Consider the pseudocharacter $T_\Omega : G_F \to \T_{\Omega,h}^{red}$. By Chebotarev and the fact that $A$ is closed, $T_v$ contained in $A$, which proves the lemma. 
\end{proof}

Next, we recall the following variant of \cite[Lemma 6.5.6]{bh}, which has the same proof. By definition, a good neighbourhood of a point $x\in \wt{\E}(\ol{\Q}_p)$ is a neighbourhood $U$ of $x$ which is a connected component of $\wt{\E}_{\Omega,h}$, for some slope-adapted pair $(\Omega,h)$.

\begin{lemma}\label{good nhood}
Let $x\in \wt{\E}(\olQp)$. Assume that $\rho_x$ is defined over the coefficient field $E$ and is absolutely irreducible. Then there exists a good neighbourbood $U$ of $x$ and a continuous Galois representation $\rho_U : G_F \to \GL_2(\oo(U))$ such that $\rho_U$ restricts to $\rho_y$ for every $y \in U(\olQp)$. Moreover, $\rho_y$ is absolutely irreducible for all $y\in U(\olQp)$. 
\end{lemma}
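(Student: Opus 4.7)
The plan is to adapt the argument of \cite[Lemma 6.5.6]{bh} to this setting. The key input is the classical theorem of Rouquier--Nyssen (see also Chenevier): an $n$-dimensional continuous pseudocharacter valued in a complete local ring whose residual representation is absolutely irreducible lifts uniquely to a continuous genuine representation. In our setup the pseudocharacter is $T : G_F \to \oo^+(\wt{\E}^{red})$, and its specialization at $x$ corresponds to $\rho_x$, which is absolutely irreducible by hypothesis.

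First I would fix an arbitrary initial good neighborhood $V = \Spa(A)$ of $x$, coming from a slope-adapted pair $(\Omega,h)$, and let $A^{red}$ denote the nilreduction of $A$. The global pseudocharacter restricts to a continuous $T_V : G_F \to A^{red,+}$ whose specialization at $x$ is $\tr \rho_x$. Applying the Rouquier--Nyssen theorem to the completed local ring of $A^{red}$ at $x$ produces a continuous representation $G_F \to \GL_2(\wh{A}^{red}_x)$ lifting $\rho_x$, with trace $T_V$.

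To spread this representation from the completed local ring to an honest Zariski open neighborhood, I would use the generalized matrix algebra / idempotent method: pick elements $g_1,\dots,g_4 \in G_F$ whose images $\rho_x(g_i)$ form an $E$-basis of $M_2(E)$, so that the matrix $(T(g_ig_j))_{i,j}$ is invertible at $x$. This invertibility is an open condition, so it persists on a Zariski open neighborhood $V_0 \sub V^{red}$ of $x$, and on $V_0$ one constructs a two-sided ideal cut out by these elements that realizes $A^{red}[G_F]|_{V_0}$ as a $2\times 2$ matrix algebra (the traces of $g_ig_j$ being invertible ensures that the Azumaya locus contains $V_0$). This yields $\rho_{V_0} : G_F \to \GL_2(\oo(V_0))$ with trace $T_V|_{V_0}$. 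Since good neighborhoods form a cofinal system at $x$ (by shrinking $\Omega$ inside weight space), I can choose a good neighborhood $U$ of $x$ with $U^{red} \sub V_0$; passing through the surjection $\oo(U) \twoheadrightarrow \oo(U^{red})$ (or, if needed, lifting inductively along the nilpotent thickening $\ker(\oo(U)\to\oo(U^{red}))$, where the obstruction vanishes because the residual representation is absolutely irreducible) produces $\rho_U : G_F \to \GL_2(\oo(U))$. By construction, for any $y \in U(\olQp)$ the specialization of $\rho_U$ has trace equal to $T$ specialized at $y$, which forces $\rho_U|_y \cong \rho_y$ (since $\rho_y$ is semisimple and pseudocharacters determine semisimple representations in characteristic zero).

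For the final claim, absolute irreducibility of $\rho_y$ is a Zariski open condition on $U$: the reducibility locus, being the vanishing locus of finitely many functions built from the traces on the generalized matrix algebra, is Zariski closed. Since $x$ lies in the absolutely irreducible locus, further shrinking $U$ gives the last assertion. The main technical obstacle I anticipate is the spreading-out step and the minor subtlety of passing from $\oo(U^{red})$ to $\oo(U)$; the rest of the argument is formal given the pseudocharacter on $\wt{\E}^{red}$.
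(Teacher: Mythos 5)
Your overall strategy — restricting the pseudocharacter $T$ on $\wt{\E}^{red}$ to a good neighbourhood, invoking the Rouquier--Nyssen/Carayol theorem over the completed local ring, spreading out via the openness of the condition that the matrix $\bigl(T(g_ig_j)\bigr)_{i,j}$ be invertible (equivalently, via the Cayley--Hamilton/GMA formalism), and then shrinking so that absolute irreducibility persists — is exactly the standard argument that \cite[Lemma 6.5.6]{bh} runs, and which the paper simply cites without reproducing. So the substantive core of your proof matches the paper's.

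The one step that is genuinely wrong is the parenthetical patch passing from $\oo(U^{red})$ to $\oo(U)$. First, "passing through the surjection $\oo(U)\twoheadrightarrow\oo(U^{red})$" goes in the wrong direction: to promote a homomorphism valued in $\GL_2(\oo(U^{red}))$ to one valued in $\GL_2(\oo(U))$ you would need a ring \emph{section}, not the quotient map, and such a section need not exist. Second, the fallback — "lifting inductively along the nilpotent thickening, where the obstruction vanishes because the residual representation is absolutely irreducible" — is false: for a square-zero thickening with ideal $I$ the obstruction to lifting lies in $H^2(G_F,\ad\rho_x\otimes I)$, and absolute irreducibility of $\rho_x$ says nothing about $H^2$; it only gives you uniqueness-up-to-conjugation of a representation with a prescribed pseudocharacter. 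There is also no pseudocharacter over the non-reduced $\oo(U)$ to pin such a lift down, since $T$ is only constructed over $\wt{\E}^{red}$. In fact no lift is required: the statement is only meaningful (and only used) up to nilreduction — the specializations $\rho_y$ at $\olQp$-points factor through $\oo(U)^{red}$, and in the situations where the lemma is later invoked the relevant neighbourhoods lie in $\wt{\E}_{mid}$, which is reduced by Theorem~\ref{consequence of ct}. You should simply read $\oo(U)$ as the reduced structure sheaf here (as in \cite{bh}) and drop the nilpotent-lifting attempt.
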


Let us now briefly recall the middle degree eigenvariety, as constructed\footnote{The eigenvarieties $\wt{\E}$ and $\wt{\E}_{mid}$ in \cite{bh} are constructed using compactly supported cohomology instead of usual cohomology. To construct these eigenvarieties for usual cohomology, replace the use of Borel--Moore homology in \cite{bh} by usual homology.} in \cite[\S 6.4]{bh}. The geometry of the whole $\wt{\E}$ is difficult to control. There is a Zariski open subspace $\wt{\E}_{mid} \sub \wt{\E}$, called the middle degree eigenvariety, which has better geometric properties. In particular, $\wt{\E}_{mid}$ is equidimensional of dimension $d$ and reduced \cite[Proposition 6.4.6, Theorem 6.4.8]{bh}. By \cite[Proposition 6.4.3]{bh}, $\wt{\E}_{mid}$ is characterized by the property that, for $x \in \wt{\E}(\olQp)$ with weight $\ka$, $x\in \wt{\E}_{mid}(\olQp)$ if and only $H^i(Y_K, \mbf{D}_\ka^{\mbf{s}})_{\p_x} = 0$ for all $i \neq d$ (and $\mbf{s}$ sufficiently large). This condition is not easy to check. When $F=\Q$ one has $\wt{\E} = \wt{\E}_{mid}$ (see the beginning of \cite[\S 6.1]{JN1}). In general one now has the following result, using the powerful vanishing result of \cite{caraiani-tamiozzo}.

\medskip

\begin{theorem}\label{consequence of ct}
Let $\ol{\phi} : T(K^p) \to \ol{\F}_p$ be a system of Hecke eigenvalues with kernel $\m$ which has an associated Galois representation $\ol{\rho} : G_F \to \GL_2(\ol{\F}_p)$. Assume that $\ol{\rho}$ is irreducible. Additionally, assume $\ol{\rho}$ is generic in the sense of \cite[Definition 7.1.3(3)]{caraiani-tamiozzo}: There exists a rational prime $\ell \nmid 2p$ such that:
\begin{enumerate}
\item $\ell$ is totally split in $F$;

\item If $v$ is any place above $\ell$ in $F$, then $\ol{\rho}$ is unramified at $v$ and the eigenvalues of $\ol{\rho}(Frob_v)$ have ratios different from $\ell$ and $\ell^{-1}$. 
\end{enumerate}
Then $H^i(Y_K, \mathscr{D}_\ka)_{\leq h, \m} = 0$ unless $i=d$, for any $\ka \in \mc{W}_k(\olqp)$ and any $h$. Hence, if $x\in \wt{\E}(\olqp)$ is a point with $\ol{\rho}_x$ generic and irreducible, then a fortiori $H^i(Y_K, \mathscr{D}_\lambda)_{\p_x} = 0$ unless $i=d$, and $x$ lies in $\wt{\E}_{mid}$.
\end{theorem}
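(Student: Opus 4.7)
The plan is to reduce the vanishing to the Caraiani--Tamiozzo vanishing for torsion cohomology by a standard filtration-and-approximation argument.

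First, since the slope-$\leq h$ subspace is a direct summand of $H^i(Y_K, \mbf{D}^{\mbf{s}}_\ka)$ independent of $\mbf{s}$ (for $\mbf{s} \geq \mbf{s}(\ka)$ sufficiently large), and vanishing of the full cohomology implies vanishing of the slope-$\leq h$ part, it suffices to prove $H^i(Y_K, \mbf{D}^{\mbf{s}}_\ka)_\m = 0$ for $i \neq d$. Fixing an $\oo_E$-integral structure $\mbf{D}^{\mbf{s},\circ}_\ka \subset \mbf{D}^{\mbf{s}}_\ka$ and running a devissage along the short exact sequences $0\to \mbf{D}^{\mbf{s},\circ}_\ka/\varpi \to \mbf{D}^{\mbf{s},\circ}_\ka/\varpi^n \to \mbf{D}^{\mbf{s},\circ}_\ka/\varpi^{n-1} \to 0$, followed by an inverse limit (valid since $Y_K$ has the homotopy type of a finite CW complex, so cohomology commutes with Mittag--Leffler limits) and inversion of $\varpi$, it further suffices to show $H^i(Y_K, \mbf{D}^{\mbf{s},\circ}_\ka/\varpi)_\m = 0$ for $i \neq d$.

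Next, I would exploit the $I$-stable filtration on $\mbf{A}^{\mbf{s},\circ}_\ka/\varpi$ by polynomial degree, which dualizes to a decreasing $I$-stable filtration on $\mbf{D}^{\mbf{s},\circ}_\ka/\varpi$. The graded pieces, up to a twist by a character of the torus depending on polynomial degree, can be identified with the mod-$\varpi$ reductions of classical weight local systems $\mathscr{L}_{k'}(\oo_E/\varpi)$ for appropriate classical weights $k'$. Since the twisting happens at $p$, it does not affect the action of $\T(K^p)$, so the same maximal ideal $\m$ acts on each graded piece; moreover, the genericity condition on $\ol{\rho}$ depends only on its restriction at the auxiliary split prime $\ell \nmid p$, so it is insensitive to such twists. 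Applying \cite[Theorem 7.1.3]{caraiani-tamiozzo} to each graded piece gives $H^i(Y_K, \mathrm{gr}^j(\mbf{D}^{\mbf{s},\circ}_\ka/\varpi))_\m = 0$ for $i \neq d$. Assembling via the spectral sequence associated to the filtration then yields the required mod-$\varpi$ vanishing, completing the proof of the first statement. The \emph{a fortiori} statement follows because $\p_x$ lies over $\m_x$, so the localization at $\p_x$ factors through the localization at $\m_x$, and membership in $\wt{\E}_{mid}$ is then immediate from the characterization in \cite[Proposition 6.4.3]{bh}.

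The main obstacle will be the precise identification of the associated graded of $\mbf{D}^{\mbf{s},\circ}_\ka/\varpi$ with mod-$\varpi$ reductions of classical weight local systems, compatibly with the full $K^p$- and $\T(K^p)$-actions, so that the Caraiani--Tamiozzo vanishing applies to each graded piece with the same maximal ideal $\m$. Some care is also required to handle convergence of the spectral sequence (since the filtration is infinite), though this can be managed by truncating the filtration, applying vanishing, and passing to the limit using the Mittag--Leffler argument already invoked.
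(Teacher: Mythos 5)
Your argument is exactly the content of \cite[Theorem B.0.1]{bh}, which is what the paper's one-line proof cites: it runs the Bergdall--Hansen devissage machine with the stronger torsion-vanishing input \cite[Theorem 7.1.6]{caraiani-tamiozzo} in place of the dihedral special case used in \cite[Theorem B.0.2]{bh}. The only small imprecisions in your sketch are that the graded pieces of the degree filtration on the mod-$\varpi$ integral distribution module are (after refining to a total monomial order) rank-one character twists rather than general $\mathscr{L}_{k'}(\oo_E/\varpi)$, and the relevant Caraiani--Tamiozzo vanishing theorem is 7.1.6, not 7.1.3 (the latter being the definition of generic).
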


\begin{proof}
The proof of \cite[Theorem B.0.1]{bh} goes through to give this result, using \cite[Theorem 7.1.6]{caraiani-tamiozzo} instead of the weaker (thanks to \cite[Lemma 7.1.8]{caraiani-tamiozzo}) special case used in the proof of \cite[Theorem B.0.2]{bh}.
\end{proof}

As the reduction $\ol{\rho}_x$ is constant along connected components, we see that whole components of $\wt{\E}$ are contained in $\wt{\E}_{mid}$. 

Next, we discuss `classical points', i.e., how to realize refined automorphic representations $(\pi,\alpha)$ as points on $\wt{\E}$. For this (and other things later on), it will be convenient to discuss a different way of normalizing the $\Delta_p$-action on $\mbf{D}_k^{\mbf{s}}$ for cohomological weights $k =(k_1,k_2)$. Let $g = \left(^a_c \ ^b_d \right) \in \Delta_p$ and let $f \in \mbf{A}^{\mbf{s}}(\oo_p, E)$. We define a new action $\star$ of $\Delta_p$ on $\mbf{A}^{\mbf{s}}(\oo_p,E)$ by
\[
(f \star g)(z) = k_1 k_2^{-1}(cz+d)k_2(\det(g))f(gz),
\]
In other words, we remove the normalizations that were forced upon us by the fact that general weights are only defined on $\oo_p^\times$ and not the whole $F_p^\times$.  We write $\mbf{A}_{k,\star}^{\mbf{s}}$ for $\mbf{A}^{\mbf{s}}(\oo_p,E)$ with the right action $\star$, and $\mbf{D}_{k,\star}^{\mbf{s}}$ for $\mbf{D}^{\mbf{s}}(\oo_p,E)$ with the dual left action (also denoted by $\star$). Note that the action of $I$ is unchanged, so we have an equality $H^\ast(Y_K, \mbf{D}_{k,\star}^{\mbf{s}}) = H^\ast(Y_K,\mbf{D}_{k}^{\mbf{s}})$ which respects the action of the Hecke operators away from $p$. The Hecke operators at $p$ are scaled by an easily computable factor. Taking (co)limits, we obtain $\mc{A}_{k,\star}$ and $\D_{k,\star}$. The benefit of this new normalization lies in the comparison with the local system~$\ms{L}_k$. There is a $\Delta_p$-equivariant surjective map
\[
I_k : \D_{k,\star} \to \ms{L}_k,
\]
called the integration map, which is defined in \cite[Definition 5.4.1]{bh} (see also \cite[\S 3.2]{hansen}). The map $I_k$ is the first step $C_0 \to \ms{L}_k$ of a resolution $C_\bu \to \ms{L}_k$, with
\[
C_i = \bigoplus_{\ell(w)=i} \D_{w \cdot k, \star}
\]
of $\Delta_p$-modules. This is the locally analytic BGG resolution; the sum goes over elements of the Weyl group of length $i$. All that is important for us is that all terms in $C_\bu$ are direct sums of things of the form $\D_{k^\prime,\star}$ for some algebraic weights $k^\prime$, so we will not recall the definitions of the maps or of the dot-action $w \cdot k$ of the Weyl group, instead referring to \cite[\S 3.3]{urban} for more details. We then have the following result.

\begin{proposition}\label{integration map surjective}
Let $\ol{\phi} : T(K^p) \to \ol{\F}_p$ be a system of Hecke eigenvalues with kernel $\m$ which has an associated Galois representation $\ol{\rho} : G_F \to \GL_2(\ol{\F}_p)$. Assume that $\ol{\rho}$ is irreducible and generic. Then $I_k$ induces a surjection $H^d(Y_K,\D_{k,\star})_{\leq h, \m} \to H^d(Y_K,\ms{L}_k)_{\leq h, \m}$.
\end{proposition}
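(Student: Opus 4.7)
The plan is to exploit the locally analytic BGG resolution $C_\bu \to \ms{L}_k$ recalled just before the statement, where $C_0 = \D_{k,\star}$ with augmentation $I_k$, $C_i = \bigoplus_{\ell(w)=i} \D_{w\cdot k,\star}$, and the complex has length equal to that of the longest Weyl element, i.e., $d$. All differentials and the augmentation are $\Delta_p$-equivariant, so they descend to morphisms of local systems on $Y_K$ that commute with the Hecke action; in particular, they respect both localization at $\m$ and slope $\leq h$ decompositions with respect to $u_p$.

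The key input is that Theorem \ref{consequence of ct}, as a consequence of the Caraiani--Tamiozzo vanishing theorem, applies to every $\D_{\kappa,\star}$ with $\kappa$ an algebraic weight, not just to $\kappa \in \mc{W}_k$: the $m$-localization kills nothing at finite places away from $p$, the genericity hypothesis on $\ol\rho$ depends only on $\ol\phi$, and the existence of a slope $\leq h$ decomposition for $u_p$ on $\D_{w\cdot k,\star}$ follows because this module differs from $\D_{w\cdot k}$ only by a rescaling of the $p$-adic Hecke operators. Thus for every $i$ and every $j \neq d$, one has $H^j(Y_K, C_i)_{\leq h, \m} = 0$.

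I would then conclude by means of the hypercohomology spectral sequence attached to the quasi-isomorphism $C_\bu \simeq \ms{L}_k$. After localizing at $\m$ and passing to the slope $\leq h$ part, its $E_1$-page is concentrated in the single row $j=d$, so $E_1 = E_\infty$ and $H^d(Y_K, \ms{L}_k)_{\leq h, \m}$ is computed as the cokernel of
\[
H^d(Y_K, C_1)_{\leq h, \m} \longrightarrow H^d(Y_K, C_0)_{\leq h, \m},
\]
the quotient map being induced by the augmentation $I_k$. Alternatively, and more elementarily, one can split the BGG resolution into short exact sequences $0 \to K_i \to C_i \to K_{i-1} \to 0$ with $K_{-1} = \ms{L}_k$ and $K_d = 0$, and show by descending induction on $j$ that $H^{d+j}(Y_K, K_{j-1})_{\leq h, \m} = 0$ for all $j \geq 1$, using the long exact sequence together with the vanishing of $H^{d+j}(Y_K, C_j)_{\leq h, \m}$ for $j \geq 1$; the case $j=1$ is the desired vanishing of $H^{d+1}(Y_K, \ker I_k)_{\leq h, \m}$, which gives the surjection.

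The main obstacle I anticipate is justifying the extension of Theorem \ref{consequence of ct} to the weights $w \cdot k$ outside $\mc{W}_k$ that occur in the BGG resolution, and checking that the change of normalization from $\D_{w\cdot k}$ to $\D_{w\cdot k,\star}$ interacts correctly with the slope-$\leq h$ decomposition for $u_p$ (and with the $\m$-localization). Everything else is essentially formal spectral-sequence bookkeeping, provided one keeps careful track of Hecke-equivariance of the BGG differentials.
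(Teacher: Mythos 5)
Your argument is correct and is exactly what the paper's one-line proof (``This follows directly by applying Theorem~\ref{consequence of ct} to the resolution $C_\bu \to \ms{L}_k$'') is summarizing; your spectral-sequence (or short-exact-sequence) bookkeeping is the faithful unwinding of that sentence. Neither of the two worries you flagged is actually an obstacle: the weights $w\cdot k$ lie in $\mc{W}_k$ because the dot action sends $(e_1(\sigma),e_2(\sigma))$ to $(e_2(\sigma)-1,e_1(\sigma)+1)$ at each embedding, hence preserves $e_1(\sigma)+e_2(\sigma)$ and therefore $\ka_1\ka_2$, so Theorem~\ref{consequence of ct} applies to them verbatim (it is stated for all of $\mc{W}_k(\olqp)$, not just dominant weights); and the passage from $\D_{w\cdot k}$ to $\D_{w\cdot k,\star}$ leaves the prime-to-$p$ Hecke action, and hence the $\m$-localization, unchanged while rescaling $u_p$ by a fixed unit, which the ``for any $h$'' quantifier in Theorem~\ref{consequence of ct} absorbs.
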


\begin{proof}
This follows directly by applying Theorem \ref{consequence of ct} to the resolution $C_\bu \to \ms{L}_k$. 
\end{proof}

Let $(\pi,\alpha)$ be a refined automorphic representation of weight $k$.  Assume that $(\pi^{\infty})^K\neq0$ (we say $\pi$ has level $K$). We get a system of Hecke eigenvalues $\phi_\pi : \T(K) \to \olqp$ as follows: For $v \nmid p$ such that $K_v=\GL_2(\oo_{F_v})$, we let $\phi_\pi(T_v)$ and $\phi_\pi(S_v)$ be the eigenvalues of $T_v$ and $S_v$ acting on $\pi_v^{K_v}$, respectively. For $v \mid p$, we define $\phi_\pi(U_v) = k_{2,v}(\varpi_v^{-1})\alpha_v$ and we let $\phi_\pi(S_v)$ be $k_{1,v}k_{2,v}(\varpi_v^{-1})$ times the scalar that $S_v$ acts on $\pi_v^{I_v}$ by. The normalizing factors account for the difference between the actions of $\Delta_p$ on the modules $\mc{D}_k$ and $\mc{D}_{k,\star}$.

\begin{corollary}\label{defining classical points}
Let $(\pi,\alpha)$ be a refined automorphic representation of $\GL_2(\A_F)$ of weight $k$ and level $K$ and let $\phi_\pi$ be as above. Assume that $\ol{\rho}_\pi$ is irreducible and generic. Then $\phi_\pi$ defines a point on $\wt{\E}_{mid}$ of weight $k$.
\end{corollary}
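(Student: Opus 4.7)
The plan is to lift a classical eigensystem for $\pi$ from $H^d(Y_K, \ms{L}_k)$ to the overconvergent distribution cohomology $H^d(Y_K, \mc{D}_{k,\star})$ via the integration map of Proposition \ref{integration map surjective}, thereby producing the desired point $x$ on $\wt{\E}$, and then to deduce $x \in \wt{\E}_{mid}$ from Theorem \ref{consequence of ct}.

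Concretely, since $\pi$ is cuspidal cohomological of weight $k$ and level $K$, the $\pi^\infty$-isotypic part of $H^d(Y_K, \ms{L}_k)$ is non-zero (see \S \ref{automorphic reps}) and the refinement $\alpha = (\alpha_v)_{v\mid p}$ singles out a simultaneous $(U_v)_{v\mid p}$-eigenspace on $\bigotimes_{v\mid p}\pi_v^{I_v}$ with $U_v$-eigenvalue $\alpha_v$; together with the spherical eigenvalues of $\pi^{K^p}$, this produces a $\T(K)$-eigensystem occurring in $H^d(Y_K, \ms{L}_k)$. Since $\ol{\rho}_\pi$ is irreducible, the associated residual maximal ideal $\m = \ker \ol{\phi}_\pi \subset \T(K^p)$ is well-defined, and after choosing a slope $h \geq v_p(\phi_\pi(u_p))$ together with a slope-adapted affinoid $\Omega \subset \mc{W}_k$ containing $k$, this eigensystem already appears in $H^d(Y_K, \ms{L}_k)_{\leq h, \m}$. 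Proposition \ref{integration map surjective}, whose irreducibility and genericity hypothesis is exactly our standing assumption on $\ol{\rho}_\pi$, then provides a $\T(K)$-equivariant surjection
\[
H^d(Y_K, \mc{D}_{k,\star})_{\leq h, \m} \twoheadrightarrow H^d(Y_K, \ms{L}_k)_{\leq h, \m}.
\]
Since any Hecke eigensystem appearing in the target of a Hecke-equivariant surjection must also appear in the source (the annihilator in $\T(K)\otimes\oo(\Omega)$ of the source is contained in that of the target), the eigensystem lifts. Translating from the $\star$-action on $\D_{k,\star}$ to the unstarred action on $\D_k$ used to build $\wt{\E}$ introduces exactly the central-scaling factors $k_{2,v}(\varpi_v^{-1})$ at $U_v$ and $k_{1,v}k_{2,v}(\varpi_v^{-1})$ at $S_v$ which convert this eigensystem into $\phi_\pi$, yielding a point $x \in \wt{\E}_{\Omega,h}(\olQp) \subset \wt{\E}(\olQp)$ of weight $k$ with Hecke character $\phi_\pi$.

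Finally, by construction $\ol{\rho}_x = \ol{\rho}_\pi$ is irreducible and generic, so the last assertion of Theorem \ref{consequence of ct} places $x$ in $\wt{\E}_{mid}$. The substantive inputs (vanishing outside the middle degree and the surjectivity of the locally analytic BGG integration map on the generic, irreducible locus) have already been packaged into Proposition \ref{integration map surjective} and Theorem \ref{consequence of ct}, so the only genuinely non-formal step here is the normalization bookkeeping between $\D_{k,\star}$ and $\D_k$ that identifies the $\star$-eigensystem with $\phi_\pi$ at primes above $p$.
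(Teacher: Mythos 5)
Your proof is correct and takes exactly the paper's approach --- the paper's own proof is the one-liner ``This follows directly from Proposition \ref{integration map surjective} and Theorem \ref{consequence of ct},'' and you have simply spelled out the steps being elided, including the normalization bookkeeping between $\D_{k,\star}$ and $\D_k$. One small caveat: the general assertion ``any Hecke eigensystem appearing in the target of a Hecke-equivariant surjection must also appear in the source'' is not literally true if ``appear'' means ``have an eigenvector'' (e.g.\ $\Z \twoheadrightarrow \Z/2$ over $\Z$); what is true, and what you invoke in the parenthetical, is the annihilator/support containment $\Ann(\text{source}) \subseteq \Ann(\text{target})$, which is precisely the condition needed for the system of Hecke eigenvalues to factor through $\T_{\Omega,h}$ and hence to define a point of $\wt{\E}_{\Omega,h}$.
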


\begin{proof}
This follows directly from Proposition \ref{integration map surjective} and Theorem \ref{consequence of ct}.
\end{proof}

From now on, we will write $x=(\pi,\alpha)$ for the point constructed from $(\pi,\alpha)$ by Corollary \ref{defining classical points}, and we will refer to these points as `classical'.

\begin{remark}
All $(\pi,\alpha)$ of weight $k$ and level $K$ (in the sense that they contribute to $H^d(Y_K,\ms{L}_k)$) do define points on $\wt{\E}$, but without some result like the corollary above we do not know in general if these points have weight $k$ (in the sense that it is not clear that they contribute to $H^\ast(Y_K,\mc{D}_k)$).
\end{remark}

From now until the end of this subsection, let $x=(\pi,\alpha)$ be a point on $\wt{\E}$ coming from a refined automorphic representation such that $\ol{\rho}_x$ is irreducible and generic\footnote{These assumptions are not strictly speaking needed for all results that follow.}. The Galois representation $\rho_x=\rho_\pi$ satisfies local-global compatibility with $\pi$ at all places, by the work of many people (see \cite[\S 6.5]{bh} for more details). A consequence is the following invariance result for the local geometry of the eigenvariety at classical points, which will be important to us. We thank John Bergdall for discussions related to this result. For the Coleman--Mazur eigencurve, see \cite[Proposition 2.6]{bergdall-upper-bounds}. The result below can be deduced from \cite[Theorem 3.2]{saha}, as pointed out to us by Bergdall, but we give a slightly different phrasing of the proof since we will use the same argument again in Proposition \ref{constancy of classical subspace} below.

\begin{proposition}\label{changing tame level}
Let $x=(\pi,\alpha)$ be as above, and let $\n \sub \oo_F$ be the prime-to-$p$ conductor of $\pi$. Let $K^p \sub K^p_1(\n)$ be any compact open subgroup. Then the natural map
\[
\wt{\E}_{K_1^p(\n)} \to \wt{\E}_{K^p}
\]
is a local isomorphism at $x$.
\end{proposition}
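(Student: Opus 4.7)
My plan is to prove the proposition by showing that the induced map on completed local rings $\wh{\oo}_{\wt{\E}_{K^p},x} \to \wh{\oo}_{\wt{\E}_{K_1^p(\n)},x}$ is an isomorphism. Pick a slope-adapted pair $(\Omega, h)$ containing $x$ on both sides. The inclusion of the $K_1^p(\n)/K^p$-invariant direct summand $H^d(Y_{K_1^p(\n) I},\mc{D}_\Omega)_{\leq h} \hookrightarrow H^d(Y_{K^p I},\mc{D}_\Omega)_{\leq h}$ is equivariant for the common Hecke operators at places $v \notin S(K^p) \supseteq S(K_1^p(\n))$, inducing a morphism $\T_{\Omega,h}^{K^p,red} \to \T_{\Omega,h}^{K_1^p(\n),red}$; by Lemma \ref{changing Hecke operators}, with $K^p$ in the role of the deeper level and $K_1^p(\n)$ in the role of the shallower level, this morphism is surjective. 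Since $\ol{\rho}_x$ is irreducible and generic, Theorem \ref{consequence of ct} places $x$ in the Zariski-open middle-degree locus $\wt{\E}_{mid}$, which is reduced by \cite[Theorem 6.4.8]{bh}, so the surjection lifts to a surjection of full completed local rings, and the natural morphism $\wt{\E}_{K_1^p(\n)} \to \wt{\E}_{K^p}$ is thus a closed immersion of germs at $x$.

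Next I would show this surjection is an isomorphism via a Galois-theoretic argument. By Lemma \ref{good nhood} take a good neighbourhood $U$ of $x$ on $\wt{\E}_{K_1^p(\n)}$ with Galois family $\rho_U : G_F \to \GL_2(\oo(U))$. For any $v \in S(K^p) \setminus S(K_1^p(\n))$---equivalently, $v \nmid \n p$ with $K^p_v \neq \GL_2(\oo_v)$---the local component $\pi_v$ is unramified since $v$ does not divide the prime-to-$p$ conductor $\n$, so $\rho_{\pi,v}$ is unramified, and after shrinking $U$ the family $\rho_{U,v}$ is unramified throughout $U$. By local-global compatibility at $v$, extended from classical specialisations to all of $U$ via the standard closed-locus interpolation, the action of any Hecke operator in $\T(K^p I)$ at $v$ is expressible in terms of $\tr \rho_{U,v}(\Frob_v)$ and $\det \rho_{U,v}(\Frob_v)$. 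Chebotarev density applied to the pseudocharacter $T$ on $\wt{\E}^{red}_{K_1^p(\n)}$ places these values in the image of $\T(K_1^p(\n) I)$ inside $\wh{\oo}_{\wt{\E}_{K_1^p(\n)},x}$, so any formal deformation of the eigensystem at level $K_1^p(\n)$ extends uniquely to one at level $K^p$, and the surjection must be an isomorphism.

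The main obstacle will be propagating local-global compatibility at the places $v \in S(K^p) \setminus S(K_1^p(\n))$ from classical points to the whole family $U$. The reliance on $x \in \wt{\E}_{mid}$ is critical here, both for reducedness in the first paragraph and for the accumulation of classical points needed to feed the interpolation argument; in its absence one would be forced to work throughout on nilreductions and separately control the nilpotent structure via the tangent-space-type arguments of Bella\"iche and Bergdall--Hansen.
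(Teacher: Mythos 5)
Your first paragraph correctly establishes the closed-immersion step and matches the paper: Lemma \ref{changing Hecke operators} (with $K^p$ as the deeper level) gives surjectivity of $\T_{\Omega,h}^{K^p,red} \to \T_{\Omega,h}^{K_1^p(\n),red}$, and since $x$ lies in both middle-degree loci (Theorem \ref{consequence of ct}), which are reduced and Zariski open, this is already a surjection of completed local rings. One small slip: the Hecke operators at $v \in S(K^p)\setminus S(K_1^p(\n))$ live in $\T(K_1^p(\n)I)$, not $\T(K^pI)$, since $v \in S(K^p)$.

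The second paragraph has a genuine gap: it does not establish injectivity of $\wh{\oo}_{\wt{\E}_{K^p},x}\to\wh{\oo}_{\wt{\E}_{K_1^p(\n)},x}$. You work on a good neighbourhood $U$ in $\wt{\E}_{K_1^p(\n)}$, where the family $\rho_U$ is automatically unramified at every $v\notin S(K_1^p(\n))$ (its pseudocharacter factors through $G_{F,S(K_1^p(\n))}$ by density of classical points), so no shrinking is needed and nothing new is learned; expressing the extra Hecke operators in terms of $\tr\rho_U(\Frob_v)$ and invoking Chebotarev then merely re-derives the surjectivity you already have. What is actually required is the converse: one must take a good neighbourhood of $x$ on $\wt{\E}_{K^p,mid}$ and show that the nearby classical points have prime-to-$p$ conductor dividing $\n$ --- equivalently, that the local component $\pi_v^\prime|_{\GL_2(\oo_v)}$ is constant for $v\in S(K^p)\setminus S(K_1^p(\n))$ --- so that they lie in the image of the closed immersion and are Zariski dense in each component through $x$, forcing equality. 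On $\wt{\E}_{K^p,mid}$ the family need not a priori be unramified at such $v$, and the unramified locus in a family of Galois representations is Zariski \emph{closed}, not open, so shrinking cannot produce it. The paper proves the constancy of the inertia type by mapping a good neighbourhood of $x$ on $\wt{\E}_{K^p,mid}$ to the moduli space $X_{WD}$ of rank-two Weil--Deligne representations, which is smooth near pure points, and invoking local-global compatibility. Your closing sentence also reverses the required direction: the nontrivial content is that a deformation of the eigensystem seen at level $K^p$ extends to one appearing at level $K_1^p(\n)$; passing from $K_1^p(\n)$ to $K^p$ is just restriction and is automatic.
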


\begin{proof} As recalled above, the middle degree eigenvarieties $\wt{\E}_{K_1^p(\n),mid}$ and $\wt{\E}_{K^p,mid}$ are reduced and equidimensional of dimension $d$. Moreover, the points on $\wt{\E}_{K_1^p(\n)}$ and $\wt{\E}_{K^p}$ defined by $x=(\pi,\alpha)$ lie on $\wt{\E}_{K_1^p(\n),mid}$ and $\wt{\E}_{K^p,mid}$. By Lemma \ref{changing Hecke operators} the natural map $\wt{\E}_{K_1^p(\n)} \to \wt{\E}_{K^p}$ induces a closed immersion $\wt{\E}_{K_1^p(\n)}^{red} \to \wt{\E}_{K^p}^{red}$, and hence a closed immersion $\wt{\E}_{K_1^p(\n),mid} \to \wt{\E}_{K^p}^{red}$ with $x$ inside $\wt{\E}_{K^p,mid}$. It therefore suffices to prove that any component of $\wt{\E}_{K^p,mid}$ which contains $x$ has a Zariski dense set of classical cuspidal regular points of conductor $\leq \n$. Since $\wt{\E}_{K^p,mid}$ is equidimensional of dimension $d$, any component $C$ of $\wt{\E}_{K^p}$ passing through $x$ has a Zariski dense set of points coming from refined automorphic representations $(\pi^\prime, \alpha^\prime)$, and we may take this Zariski dense set to be contained in the intersection of $V = C \cap U$ of $C$ with a good neighbourhood $U$ as in Lemma \ref{good nhood}, where we have a Galois representation $\rho_V $. Fix a prime $v \nmid p \infty$. The Weil--Deligne representation attached to $\rho_V |_{G_{F,v}}$ (by Grothendieck's monodromy theorem in families, see e.g. \cite[\S 7.8.3]{bellaiche-chenevier}) defines a morphism $V \to X_{WD}$ to the moduli space of $2$-dimensional Weil--Deligne representations over $E$, as constructed e.g. in \cite{hellmann2021derived}. At pure points $X_{WD}$ is smooth, so using local-global compatibility and the structure of the components of $X_{WD}$ one sees that, in fact, $\pi^\prime_v |_{\GL_2(\oo_v)}$ is constant on classical points of $V$. In particular, the conductor is constant as desired. 
\end{proof}

Our next goal is to prove that certain twists of classical points are classical. To do this, we first discuss local-global compatibility for the determinant $D$ at arbitrary points of cohomological weight $k=(k_1,k_2)$. Before stating the result, we recall that an algebraic character $\chi : \A_F^\times \to \ol{E}^\times$ of weight $w\in \Z$ is a character that has an open kernel and satisfies
\[
\chi(a) = \prod_\tau \tau(a)^w
\]
for all $a \in F^\times$, where $\tau$ runs through the embeddings of $F$ into $\overline{E}$. Algebraic characters correspond bijectively to geometric characters of $G_F$. The correspondence can be pinned down by the equality of the values of Frobenii at almost all places, and the weight $w$ corresponds to the Hodge--Tate weights (since $F$ is totally real, the Hodge--Tate weights of geometric characters are all equal). Moreover, the geometric character is crystalline at places above $p$ if and only the algebraic character is unramified at all places above $p$, i.e. $\chi|_{\oo_{F,v}^\times}$ is trivial for all $v|p$.

\begin{proposition}\label{actionofZ}
Let $k$ be a cohomological weight and let $f \in H^\ast(Y_K, \mbf{D}_{k,\star}^{\mbf{s}})$ be an eigenvector for $\mbf{T}(K)$. Then the center $Z(\A_F)$ acts on $f$ via an algebraic character of weight $w$ on $\alpha$ which is unramified at all places above $p$ (here $Z(\A_F^\infty)$ acts via $\mbf{T}(K)$, and we define $Z(F_\infty)$ to act trivially).
\end{proposition}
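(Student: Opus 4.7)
The plan is to read off $\chi$ by evaluating the Hecke action of $Z(\A_F^\infty)$ on $f$ at two classes of central elements: central units at the primes above $p$ (to verify unramifiedness at $p$), and diagonally embedded elements of $F^\times$ (to verify algebraicity of weight $w$). Since $\chi|_{F_\infty^\times} \equiv 1$ by construction, the algebraicity statement reduces to computing the Hecke eigenvalue of $[KzK]$ on $f$ for $z = \left(\begin{smallmatrix} a & 0 \\ 0 & a\end{smallmatrix}\right) \in Z(\A_F^\infty)$ with $a \in F^\times$ diagonally embedded, and showing it equals $\omega_k(a) = \prod_\sigma \sigma(a)^w$.

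The unramifiedness at $p$ is essentially formal: for $u \in \oo_v^\times$ with $v \mid p$, the matrix $\left(\begin{smallmatrix} u & 0 \\ 0 & u\end{smallmatrix}\right)$ lies in $I_v \sub K$, so $KzK = K$ and $[KzK]$ acts as the identity on $H^\ast(Y_K, \mbf{D}_{k,\star}^{\mbf{s}})$, hence $\chi(z) = 1$. For the algebraicity, the essential observation is that $\det z = a^2$ is totally positive at every archimedean place, so $z \in \GL_2(F)^\circ$; combined with the fact that $z$ is central in $\GL_2(F_\infty)$ and hence acts trivially on $D_\infty$ by M\"obius transformations, this forces $(\sigma, gz)$ and $(\sigma, g)$ to represent the same class in $\mf{Y} = \GL_2(F)^\circ \backslash D_\infty \times \GL_2(\A_F^\infty)$. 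Since $KzK = zK$ is a single coset, the standard Hecke operator formula then gives
\[
[KzK]\phi(\sigma, g) = z \cdot \phi(\sigma, gz) = z \cdot \phi(\sigma, g),
\]
so $[KzK]$ acts on cohomology as the scalar monoid action of $z$ on $\mbf{D}_{k,\star}^{\mbf{s}}$. A direct application of the $\star$-action formula at $g = \left(\begin{smallmatrix} a & 0 \\ 0 & a\end{smallmatrix}\right)$ (so $c=0$, $d=a$, $\det g = a^2$, $g\cdot x = x$) yields multiplication by $k_1 k_2^{-1}(a) \cdot k_2(a^2) = k_1(a) k_2(a) = \omega_k(a) = \prod_\sigma \sigma(a)^w$, as desired.

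I do not anticipate a serious obstacle beyond carefully unpacking the definitions. The subtlest point is the identification of $[KzK]$ with the scalar monoid action of $z$ on the coefficients; this hinges on the totally positive determinant of $z$, which lets one absorb the right translation by $z$ on the cover into the left $\GL_2(F)^\circ$-action. Once this identification is in hand, the rest is a direct unpacking of the $\star$-action formula.
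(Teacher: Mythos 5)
Your proof is correct and follows essentially the same approach as the paper: unramifiedness at $p$ comes from $Z(\oo_p) \sub I \sub K$, and the weight-$w$ algebraicity comes from observing that $z \in Z(F)$ acts trivially on $\mf{Y}$ (being central and hence in $\GL_2(F)^\circ$), so that only the coefficient $\star$-action remains, which evaluates to $k_1(a)k_2(a) = \prod_\sigma \sigma(a)^w$. The only cosmetic difference is that you phrase the central action through double cosets $[KzK]$, whereas the paper works directly with the translation formula $(zf)(\sigma) = z_p \star f(\sigma z)$ on $\Hom_K(C_\bu(\mf{Y}),\mbf{D}_{k,\star}^{\mbf{s}})$; the latter sidesteps the fact that not every $z = \mathrm{diag}(a,a)$ with $a \in F^\times$ lies in the monoid $\Delta$ at places in $S(K)$.
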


\begin{proof} 
By the definitions the open subgroup $(Z(\A_F^\infty) \cap K)Z(F_\infty)$ acts trivially and $Z(F_p) \cap I = Z(\oo_p)$, so it remains to compute the action of $Z(F)$. The action of $z\in Z(\A_F)$ on $f \in \Hom(C_\bu(\mf{Y}), \mbf{D}_{k,\star}^{\mbf{s}})$ is, by the definitions, given by
\[
(zf)(\sigma) = z_p \star f(\sigma z).
\]
If $z \in Z(F)$, then $z$ acts trivially on $\mf{Y}$, so $\sigma z = \sigma $ for all $\sigma \in C_\bu(\mf{Y})$. Moreover, from the definitions we see that $z_p$ acts by $k_1 k_2^{-1}(z_p)k_2(z_p^2) = k_1 k_2 (z_p)$ on $\mbf{D}_{k,\star}^{\mbf{s}}$, which is equal to $\prod_\tau \tau(z)^w$. The proposition follows.
\end{proof}

\begin{corollary}\label{local-global for determinant}
Let $x\in \wt{\E}(\olqp)$ be a point of cohomological weight $k$. Then the determinant $D_x$ is crystalline of parallel Hodge-Tate weight $w+1$.
\end{corollary}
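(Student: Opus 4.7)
The plan is to compute $D_x$ on Frobenius elements at places outside $S(K)$, recognize the resulting character as a product of two crystalline characters, and conclude via Chebotarev density.

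First, the defining property $D(\Frob_v) = q_v S_v$ reduces the problem to evaluating $\phi_x(S_v)$ for $v \notin S(K)$, since by continuity and Chebotarev density $D_x$ is determined by these values. Since $x$ has cohomological weight $k$, I would choose a slope-adapted pair $(\Omega,h)$ with $k \in \Omega$ and $x \in \wt{\E}_{\Omega,h}$, and specialize the module $H^*(Y_K, \mathscr{D}_\Omega)_{\leq h}$ at $k$ to obtain $H^*(Y_K, \mathscr{D}_k)_{\leq h}$. In this finite-dimensional $\olqp$-space the $\phi_x$-generalized eigenspace is nonzero, and therefore contains an honest $\mbf{T}(K)$-eigenvector $f$ realizing $\phi_x$. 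Since the normalization $\star$ only affects the Hecke operators at $p$, we may equally well regard $f$ as an eigenvector in $H^*(Y_K, \mbf{D}_{k,\star}^{\mbf{s}})_{\leq h}$ with the same $\mbf{T}(K^p)$-eigenvalues, and Proposition \ref{actionofZ} applied to $f$ then produces an algebraic character $\omega : \A_F^\times \to \olqp^\times$ of weight $w$, unramified at all places above $p$, describing the centre's action on $f$.

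For $v \notin S(K)$, the operator $S_v = [K_v \mathrm{diag}(\varpi_v,\varpi_v) K_v]$ is a single coset equal to the action of the central element $\mathrm{diag}(\varpi_v,\varpi_v) \in Z(\A_F^\infty)$, hence $\phi_x(S_v) = \omega(\varpi_v)$ and so $D_x(\Frob_v) = q_v\,\omega(\varpi_v)$ for all such $v$. By global class field theory (sending uniformizers to geometric Frobenii), $\omega$ corresponds to a Galois character $\omega_{\mathrm{Gal}} : G_F \to \olqp^\times$ with $\omega_{\mathrm{Gal}}(\Frob_v) = \omega(\varpi_v)$; since $\omega$ is algebraic of weight $w$ and unramified above $p$, $\omega_{\mathrm{Gal}}$ is crystalline at every $v \mid p$ with parallel Hodge--Tate weight $w$. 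Combined with the fact that $\chi_{\mathrm{cyc}}^{-1}(\Frob_v) = q_v$ and $\chi_{\mathrm{cyc}}^{-1}$ is crystalline of parallel Hodge--Tate weight $+1$, Chebotarev density gives $D_x = \chi_{\mathrm{cyc}}^{-1} \cdot \omega_{\mathrm{Gal}}$, which is a product of crystalline characters of parallel Hodge--Tate weights $1$ and $w$. Hence $D_x$ is crystalline of parallel Hodge--Tate weight $w+1$.

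The main technical subtlety will be the step of extracting an actual $\mbf{T}(K)$-eigenvector (not merely a generalized one) at weight $k$ from the point $x$, so that Proposition \ref{actionofZ} may be applied. As sketched above, this follows routinely by specializing the slope-adapted module at $k$ and passing to an honest eigenvector inside the nonzero finite-dimensional generalized eigenspace for $\phi_x$; once this is done, everything else is a formal computation on Frobenius values together with class field theory.
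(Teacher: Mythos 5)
Your proof is correct and follows the same route as the paper: invoke $D(\Frob_v)=q_v S_v$, apply Proposition \ref{actionofZ} to an eigenvector in weight $k$ to read off the centre's algebraic character, and identify $D_x$ as $\chi_{\mathrm{cyc}}^{-1}$ times the associated geometric character. The paper gives this argument more tersely (``for a choice $f$ of eigenvector corresponding to the system of eigenvalues $x$''); your proposal usefully spells out the normalization of $\chi_{\mathrm{cyc}}$ and the extraction of an honest eigenvector, though the claim that the $\phi_x$-generalized eigenspace of $H^\ast(Y_K,\mathscr{D}_k)_{\leq h}$ is nonzero itself rests on a derived Nakayama / Tor spectral sequence argument that both you and the paper leave implicit.
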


\begin{proof}
From the identity $D(Frob_v)= q_v S_v$ for $v\notin S(K)$ we see that $D$ is the cyclotomic character times the geometric character corresponding to the algebraic character in Proposition \ref{actionofZ} (for a choice $f$ of eigenvector corresponding to the system of eigenvalues $x$). The corollary follows.
\end{proof}

For the rest of this subsection let us fix a classical point $x=(\pi,\alpha)$ of weight $k$ as above. The Hecke algebra $\T(K)$ has a subalgebra $\s(K)$ consisting of Hecke operators $[K\delta K]$ with $\det(\delta)=1$. This subalgebra is generated by the operators $u_v$ at $v\mid p$, and the operators $t_v = [K \left( \begin{smallmatrix} \varpi_v & 0 \\ 0 & \varpi_v^{-1} \end{smallmatrix} \right) K]$ for $v\notin S(K)$. Let us now consider a point $z \in \wt{\E}(\olqp)$ of weight $k$ and such that
\[
\phi_x(t_v) = \phi_z(t_v)
\]
for $v\notin S(K)$. Equivalently, $\ad^0 \rho_x \cong \ad^0 \rho_z$ as $3$-dimensional representations. Our goal is to show that there exists a finite order Hecke character $\chi$, unramified at all $v\mid p$, such that $z = (\pi \otimes (\chi \circ \det), \alpha^\prime)$ for some refinement $\alpha^\prime$ of $\pi \otimes (\chi \circ \det)$, under an additional assumption on $x$. We start by noting that, since $\ad^0 \rho_x \cong \ad^0 \rho_z$, the projective representations of $\rho_x$ and $\rho_z$ are isomorphic (by Proposition \ref{generation}) and hence there exists a character $\chi : G_F \to \olqp^\times$ such that $\rho_z \cong \rho_x \otimes \chi$. Taking determinants, we see that $\chi^2 = (\det \rho_z)(\det \rho_x)^{-1}$. By Corollary \ref{local-global for determinant}, $\chi^2$ is a crystalline character of Hodge--Tate weight $0$, and hence of finite order. It follows that $\chi$ is of finite order and hence de Rham. To go further, however, we need to discuss the $p$-adic Hodge-theoretic properties of $\rho_x$ and $\rho_z$, which are currently only known over $\wt{\E}_{mid}$. In particular, recall that, since $\pi_p^I\neq 0$, $\rho_x$ is semistable by local-global compatibility. To say more about $\rho_z$, we need the following:

\begin{proposition}\label{twists are on E-mid}
The reduction $\ol{\rho}_z$ is irreducible and generic. In particular, the point $z$ lies on $\wt{\E}_{mid}$.
\end{proposition}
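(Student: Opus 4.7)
The plan is to reduce to the fact that $\ol{\rho}_z$ is (up to twist by a finite order character) isomorphic to $\ol{\rho}_x$, which is assumed to be irreducible and generic, and then invoke Theorem \ref{consequence of ct}. Concretely, I would proceed as follows.

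First, recall from the discussion preceding the proposition that we have produced a character $\chi : G_F \to \ol{\Q}_p^\times$ of finite order such that $\rho_z \cong \rho_x \otimes \chi$. After possibly enlarging the coefficient field, $\chi$ takes values in $\ol{\Z}_p^\times$ (since it is finite order, its image lies in roots of unity), and reducing modulo the maximal ideal we obtain a character $\ol{\chi} : G_F \to \ol{\F}_p^\times$ such that $\ol{\rho}_z \cong \ol{\rho}_x \otimes \ol{\chi}$ as semisimple residual representations. Since twisting by a character preserves (ir)reducibility of a two-dimensional representation, the irreducibility of $\ol{\rho}_x$ immediately implies the irreducibility of $\ol{\rho}_z$.

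Next I would verify genericity. By hypothesis there exists a rational prime $\ell \nmid 2p$, totally split in $F$, such that for every place $v\mid \ell$ of $F$, $\ol{\rho}_x$ is unramified at $v$ and the eigenvalues of $\ol{\rho}_x(\Frob_v)$ have ratio different from $\ell^{\pm 1}$. Since $\chi$ is of finite order, it is ramified at only finitely many places; by Chebotarev there are infinitely many $\ell$ satisfying the genericity condition for $\ol{\rho}_x$, so we may choose such an $\ell$ with the additional property that $\chi$ (and hence $\ol{\chi}$) is unramified at every place $v\mid \ell$. For such an $\ell$, $\ol{\rho}_z$ is unramified at each $v\mid \ell$, and since $\ol{\rho}_z(\Frob_v) = \ol{\chi}(\Frob_v)\cdot \ol{\rho}_x(\Frob_v)$, the ratio of the two eigenvalues of $\ol{\rho}_z(\Frob_v)$ equals the corresponding ratio for $\ol{\rho}_x(\Frob_v)$, hence is still different from $\ell^{\pm 1}$. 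Thus $\ol{\rho}_z$ is generic in the sense of \cite[Definition 7.1.3(3)]{caraiani-tamiozzo}.

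The final sentence (``In particular, $z$ lies on $\wt{\E}_{mid}$'') is then an immediate application of Theorem \ref{consequence of ct} to the point $z$. I do not expect any genuine obstacle in this argument; the only subtlety is to remember to absorb the (finitely many) ramification places of $\chi$ when choosing $\ell$, which is harmless because the genericity condition on $\ol{\rho}_x$ is already witnessed by an infinite set of primes of positive Chebotarev density.
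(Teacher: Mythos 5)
Your proof is correct and follows the same approach as the paper, which simply asserts that irreducibility and genericity are trivially preserved under twisting and then invokes Theorem \ref{consequence of ct}. You are in fact more careful than the paper's one-line argument: you correctly flag that the witnessing prime $\ell$ for genericity of $\ol{\rho}_x$ could a priori be one at which $\chi$ ramifies, and you resolve this by noting via Chebotarev that the set of witnessing primes has positive density while $\chi$ is ramified at only finitely many places. This is a genuine (if minor) subtlety that the word ``trivially'' glosses over; one could also observe directly that $\chi = \rho_z \otimes \rho_x^{-1}$-style considerations force $\chi$ to be unramified outside $S(K)\cup\{v\mid p\}$, but your density argument is cleaner and just as valid.
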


\begin{proof}
Irreducibility and genericity are trivially preserved under twisting, proving the first statement, and the second then follows from Theorem \ref{consequence of ct}.
\end{proof}

By Lemma \ref{good nhood} we may choose a good neighbourhood $U$ of $z$ over which the pseudocharacter $T$ on $\wt{\E}$ comes from a Galois representation $\rho_U$, which additionally is absolutely irreducible at all $z^\prime \in U(\olqp)$. The key $p$-adic Hodge-theoretic property of the representations $\rho_{z^\prime, v}$, for $v \mid p$, is that they are trianguline. This is a consequence of the global triangulation results of \cite{kpx}; we now recall what we need. We will need some notation for rank $1$ $(\varphi,\Gamma)$-modules; see \cite[\S 6.2]{kpx} for more details. Let $v\mid p$ and let $L$ be a (large) finite extension of $\Qp$ containing all embeddings of $F_v$ into $\olqp$. We write $\mc{R}_{F_v,L}$ for the Robba ring of $F_v$ with coefficients in $L$. The rank $1$ $(\varphi, \Gamma_{F_v})$-modules over $\mc{R}_{F_v,L}$ correspond bijectively to continuous characters
\[
\delta : F_v^\times \to L^\times 
\]
and we write $\mc{R}_{F_v,L}(\delta)$ for the corresponding $(\varphi,\Gamma_{F_v})$-module. $\mc{R}_{F_v,L}(\delta)$ is crystalline if and only if $\delta|_{\oo_v^\times}$ is of the form
\[
a^k : a \mapsto a^{k} := \prod_{\sigma \in \Sigma_v} \sigma(a)^{k_\sigma}
\]
for an integer tuple $k=(k_\sigma)_\sigma$; in this case the $\sigma$-Hodge--Tate weight of $\mc{R}_{F_v,L}(\delta)$ is $-k_\sigma$ (recall that with our normalizations all the Hodge--Tate weights of the cyclotomic character are $-1$). If, more generally, $A$ is an affinoid $\Qp$-algebra and $\delta : F_v^\times \to A^\times$ is a continuous character, then one may construct a family $\mc{R}_{\Spa(A)}(\delta)$ of rank $1$ $(\varphi,\Gamma_{F_v})$-modules over $\Spa(A)$, which specializes to the pointwise construction for any $A \to L$. Let us now briefly discuss some aspects of triangulations of semistable representations that we will need. First, let $D$ be an arbitrary rank $2$ $(\varphi,\Gamma_{F_v})$-module over $\mc{R}_{F_v,L}$. A triangulation of $D$ is a short exact sequence
\[
0 \to \mc{R}_{F_v,L}(\delta_1) \to D \to \mc{R}_{F_v,L}(\delta_2) \to 0
\]
of $(\varphi,\Gamma_{F_v})$-modules; $\delta_1$ and $\delta_2$ are called the (ordered) parameters of the triangulation. Now, let $\rho : G_{F_v} \to \GL_2(L)$ be a semistable representation with corresponding $(\varphi,\Gamma_{F_v})$-module $D_{rig}(\rho)$ and filtered $(\varphi,N)$-module $D_{st}(\rho)$. By Berger's equivalence \cite[Th\'eor\`eme A]{berger}, triangulations of $D_{rig}(\rho)$ correspond to short exact sequences
\[
0 \to D_1 \to D_{st}(\rho) \to D_2 \to 0
\]
of filtered $(\varphi,N)$-modules, where $D_1$ and $D_2$ have rank $1$. Assume additionally that the $\varphi^{f_v}$ has no repeated eigenvalue on $D_{crys}(\rho)$ (in particular, $\rho$ is allowed to be semistable non-crystalline) and that the Hodge--Tate weights are $k_1 < k_2$ (i.e. they are distinct) --- we will now describe the possible triangulations of $D_{st}(\rho)$ (all of them are strict in the sense of \cite[Definition 6.3.1]{kpx}). The local Langlands correspondence assigns a smooth admissible $\GL_2(F_v)$-representation to the $(\varphi,N)$-module $D_{st}(\rho)$ (forgetting the filtration) which we denote by $\pi(\rho)$. The $U_v$-eigenvalues on $\pi(\rho)^{I_v}$ are equal to the $\varphi^{f_v}$-eigenvalues on $D_{crys}(\rho)$. We will have one or two triangulations, depending on whether $D_{crys}(\rho)$ has rank one or two. Let $\alpha$ be an eigenvalue of $\varphi^{f_v}$ on $D_{crys}(\rho)$. Following \cite[Example A.2.1]{bh}, one gets an embedding
\[
\mc{R}_{F_v,L}(\delta_1^\prime) \to D_{rig}(\rho),
\]
where $\delta_1^\prime = a^{-k_1}ur(\alpha)$; here $ur(\alpha)$ denotes the character on $F_v^\times$ which is trivial on $\oo_v^\times$ and sends $\varpi_v$ to $\alpha$. The image may not be saturated, so this may not give rise to a triangulation, but its saturation is of the form $\mc{R}_{F_v,L}(\delta_1)$ with $\delta_1 = a^{-s}\delta_1^\prime$, with $s_\sigma = 0$ or $s_\sigma = k_{2,\sigma} - k_{1,\sigma}$. This gives us a triangulation
\[
0 \to \mc{R}_{F_v,L}(\delta_1) \to D_{rig}(\rho) \to \mc{R}_{F_v,L}(\delta_2) \to 0,
\]
where $\delta_2 = \det(\rho)\delta_1^{-1}$. If $s_\sigma = 0$, we say that the triangulation is non-critical at $\sigma$, otherwise we say that it is critical at $\sigma$. If the valuation of $\alpha$ is small compared to the Hodge--Tate weights, then the triangulation will be non-critical at all $\sigma$; cf. \cite[Remark 6.3.9]{bh}.

\begin{remark}\label{rem: non-crit vs crit in split case}
Assume that $\rho = \chi_1 \oplus \chi_2$ is the sum of two characters. Let $\alpha$ and $\beta$ be the $\varphi^{f_v}$-eigenvalues on $D_{rig}(\chi_1)$ and $D_{rig}(\chi_2)$, respectively. and let $k_{\chi_i,\sigma}$ be the $\sigma$-Hodge--Tate weights of $\chi_i$, $i=1,2$. Assume, as above, that $k_{\chi_1,\sigma} \neq k_{\chi_2,\sigma}$ for all $\sigma$. Then $\alpha$ is non-critical at $\sigma$ if and only if $k_{\chi_1,\sigma} < k_{\chi_2,\sigma}$, and similarly for $\beta$. Phrased differently, for a fixed $\sigma$, either $\alpha$ is non-critical and $\beta$ is critical at $\sigma$, or vice versa.
\end{remark}

Now we return to our point $z\in \wt{\E}_{mid}(\olqp)$, of cohomological weight $k$, and its neighbourhood $U$. Assume that $z^\prime = (\pi_{z^\prime}, \alpha_{z^\prime}) \in U(\olqp)$ is a classical point of weight $k_{z^\prime}$, and assume in addition that $z^\prime$ satisfies the very strong form of `numerical non-criticality' given in \cite[Remark 6.3.9]{bh} (in particular, the assumptions imply that $\pi_{z^\prime,v}$ is an unramified principal series for all $v\mid p$). Such points are Zariski dense in $U$ (by the proof of \cite[Proposition 6.4.6(4)]{bh} -- see the remarks in the proof of \cite[Proposition 6.5.8(2)]{bh}). For $v\mid p$, $\rho_{z^\prime,v}$ has a triangulation
\[
0 \to \mc{R}_{F_v,L}(\delta_{2,z^\prime}) \to D_{rig}(\rho_{z^\prime,v}) \to \mc{R}_{F_v,L}(\delta_{1,z^\prime}) \to 0
\] 
by the discussion above and the assumptions on $\alpha_{z^\prime}$, where 
\[
\delta_{2,z^\prime} = a^{-k_{z^\prime,v,2}}ur(\alpha_{z^\prime,v}) \,\,\, \text{and}\,\,\, \delta_{1,z^\prime} = a^{- k_{z^\prime,v,1} -1}ur(\beta_{z^\prime,v}),
\]
where $\beta_{z^\prime,v}$ is the other $U_v$-eigenvalue on $(\pi_{z^\prime,v})^{I_v}$. The parameters $\delta_{i,z^\prime}$ extend to families $\delta_i$ over all of $U$: $\delta_2$ is given by $\delta_2(\varpi_v)=U_v \in \oo(U)^\times$ and $\delta_2|_{\oo_v^\times} = a^{-\ka_{U,v,2}}$, and $\mc{R}_{F_v,L}(\delta_1) = D_{rig}(\det \rho_U) \otimes \mc{R}_{F_v,L}(\delta_2^{-1})$. In particular, $\delta_1|_{\oo_v^\times} = a^{- \ka_{U,v,1} -1}$. With these preparations, we can now prove that $z$ is a classical point.

\begin{proposition}\label{twists are classical}
Assume that $\varphi^{f_v}$ acting $D_{crys}(\rho_{x,v})$ does not have any repeated eigenvalues for all $v\mid p$. Then, with notation and assumptions as above, $\chi$ is unramified and $z=(\pi \otimes (\chi \circ \det), \alpha^\prime)$ for some refinement $\alpha^\prime$ of $\pi \otimes (\chi \circ \det)$.
\end{proposition}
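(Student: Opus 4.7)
My strategy is to use the global triangulation over the good neighbourhood $U$ of $z$ together with the explicit classification of triangulations of $D_{rig}(\rho_{x,v})$ coming from its semistable structure. Since $\rho_U$ is trianguline along a Zariski dense set of classical points satisfying the very strong numerical non-criticality of \cite[Remark 6.3.9]{bh}, an application of the global triangulation theorem of Kedlaya--Pottharst--Xiao \cite{kpx} gives, for each $v\mid p$, a triangulation of $D_{rig}(\rho_{z,v})$ whose parameters are the specializations $\delta_{1,z},\delta_{2,z}$ of the family parameters $\delta_1,\delta_2$ on $U$ (up to the usual saturation, which only modifies $\delta_{i}|_{\oo_v^\times}$ by an integer power of the character $a$). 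In particular $\delta_{2,z}|_{\oo_v^\times}$ is an integer power of $a$, namely $a^{-k_{v,2}}$ in the notation of \S\ref{subsec: Galois reps}, and analogously for $\delta_{1,z}$.

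On the other hand, $\rho_{x,v}$ is semistable with distinct $\varphi^{f_v}$-eigenvalues, so the (at most two) triangulations of $D_{rig}(\rho_{x,v})$ are classified explicitly as recalled in \S\ref{subsec: Galois reps}, with parameters $\delta_1^x,\delta_2^x$ whose restrictions to $\oo_v^\times$ are integer powers of $a$. Since $\rho_{z,v}\cong \rho_{x,v}\otimes \chi_v$ with $\chi_v:=\chi|_{G_{F_v}}$, every triangulation of $D_{rig}(\rho_{z,v})$ is obtained from a triangulation of $D_{rig}(\rho_{x,v})$ by tensoring with $\mc{R}_{F_v,L}(\chi_v)$, and so has parameters $\delta_i^x\chi_v$. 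Matching the parameter of the global triangulation at $z$ with one of these twists and comparing restrictions to $\oo_v^\times$ therefore forces $\chi_v|_{\oo_v^\times}$ to be an integer power of $a$; since $\chi_v$ is finite order (as established before the statement of the proposition), this power must be trivial, so $\chi$ is unramified at every $v\mid p$.

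It then follows that $\pi':=\pi\otimes(\chi\circ\det)$ is a cuspidal cohomological automorphic representation of $\GL_2(\A_F)$ of the same weight $k$ and Iwahori level at $p$, with $\rho_{\pi'}\cong \rho_z$ and $\ol{\rho}_{\pi'}$ irreducible and generic. Choosing for each $v\mid p$ the refinement $\alpha'_v$ of $\pi'_v$ corresponding to the $U_v$-eigenvalue determined by $\delta_{2,z}(\varpi_v)=\phi_z(U_v)$---which is indeed one of the $U_v$-eigenvalues on $(\pi'_v)^{I_v}$, since the parameter matching of the previous paragraph identifies it, up to the normalization factor $k_{2,v}(\varpi_v^{-1})$ of Corollary \ref{defining classical points}, with $\chi_v(\varpi_v)$ times a $U_v$-eigenvalue on $\pi_v^{I_v}$---the classical point attached to $(\pi',\alpha')$ by Corollary \ref{defining classical points} agrees with $z$ on every Hecke operator and hence coincides with it.

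The hard part is the careful application of the global triangulation of \cite{kpx} at the possibly critical point $z$, in particular the handling of the saturation of the triangulation line so that the specialization of $\delta_{2,z}|_{\oo_v^\times}$ really is the weight character prescribed by the family; the other ingredients (classification of triangulations of semistable modules, and the translation between Hecke eigenvalues and triangulation parameters) then make the identification of $\chi$ as unramified and of the refinement $\alpha'$ essentially formal.
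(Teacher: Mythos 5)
Your plan follows essentially the same strategy as the paper's proof: both apply the global triangulation theorem of Kedlaya--Pottharst--Xiao along the good neighbourhood $U$ of $z$, then compare the resulting triangulation of $D_{rig}(\rho_{z,v})$ with the explicitly classified triangulations of the semistable module $D_{rig}(\rho_{x,v})$ to pin down $\chi_v$ and identify the refinement. The only genuine variation is at the step where you conclude $\chi_v$ is unramified. You match the crystalline parameter $\delta_{2,z}|_{\oo_v^\times}$ against a twist by $\chi_v$ of a triangulation parameter of $D_{rig}(\rho_{x,v})$, force $\chi_v|_{\oo_v^\times}$ to be an integer power of $a$, and use finiteness of the order of $\chi$ to kill that power. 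The paper takes a slightly more economical route: from the saturated triangulation it deduces that both parameters are crystalline, hence $D_{crys}(\rho_{z,v})\neq 0$, which directly contradicts $\chi_v$ being ramified since $\rho_{z,v}=\rho_{x,v}\otimes\chi_v$ with $\rho_{x,v}$ semistable. Your route is sound, but it loads more weight onto the bijection between triangulations under twisting, whereas the paper's version only needs the soft statement that a crystalline rank-one submodule forces $D_{crys}\neq 0$.

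The part you label ``the hard part'' is in fact the heart of the proof, and simply citing \cite{kpx} does not discharge it: \cite[Corollary~6.3.10]{kpx} produces a triangulation over a proper modification $U^\prime\to U$, with a quotient $Q$ that may fail to be free, and you must show that the specialization at $w$ remains exact and can be saturated to a bona fide triangulation of $D_{rig}(\rho_{z,v})$ whose bottom parameter still restricts to an integer power of $a$ on $\oo_v^\times$. The paper does this via a Tor computation showing $\Tor_1^{\oo(U^\prime)}(E_w,Q)$ is $t$-torsion, hence maps to zero into the line $\mc{R}_{F_v,E}(\delta_{2,z})$, and then analyzes the saturation using \cite[Example~A.2.1]{bh}. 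There is also a technical reduction you omit: $U$ may have several irreducible components through $z$, and one must first pass to a single component so that the density hypothesis of \cite[Definition~6.3.2]{kpx} can be verified. With these points filled in, your argument goes through and matches the paper's.
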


\begin{proof}
Fix $v \mid p$; we need to prove that $\chi$ is unramified at $v$ and that $\alpha_v^\prime := \varpi_v^{k_2}U_v(z)$ is an eigenvalue of $\varphi^{f_v}$ on $D_{crys}(\rho_{z,v})$. If $U$ has multiple irreducible components going through $z$, replace $U$ by one of these components. We restrict all constructions above to this component, and keep all the notation; in particular, the component will still be called $U$. Then, using the classical points described above, $D_{rig}(\rho_{U,v})$ is a densely pointwise strictly trianguline $(\varphi,\Gamma_{F_v})$-module on $U$ of rank $2$ in the sense of \cite[Definition 6.3.2]{kpx}. By \cite[Corollary 6.3.10]{kpx} (and possibly enlarging $E$), there is an affinoid $U^\prime$ and a map $f : U^\prime \to U$ with an $E$-point $w$ such that $f(w)=z$, and such that $D=f^\ast D_{rig}(\rho_U)$ has a (global) triangulation
\begin{equation}\label{globaltriangulation}
0 \to \mc{R}_{U^\prime}(\delta_2) \to D \to Q \to 0,
\end{equation}
where $Q$ embeds into $\mc{R}_{U^\prime}(\delta_1)$ with cokernel $N$ killed by a power $t^n$ of Fontaine's ``$p$-adic $2\pi i$'' $t$. Write $E_w$ for the field $E$ considered as the residue field of $U^\prime$ at $w$. From the $\Tor$-long exact sequence for the short exact sequence
\[
0 \to Q \to \mc{R}_{U^\prime}(\delta_1) \to N \to 0
\]
we see that $\Tor_1^{\oo(U^\prime)}(E_w,Q) = \Tor_2^{\oo(U^\prime)}(E_w,N)$, and hence is killed by $t^n$, and that
\[
0 \to \Tor_1^{\oo(U^\prime)}(E_w,N) \to Q_w \to \mc{R}_{F_v,E}(\delta_{1,z}) \to N_w \to 0
\] 
is exact, so the free quotient of $Q_w$ embeds into $\mc{R}_{F_v,E}(\delta_{1,z})$. That $\Tor_1^{\oo(U^\prime)}(E_w,Q)$ is killed by $t^n$ implies that the specialization
\begin{equation}\label{quasitriangulation}
0 \to \mc{R}_{F_v,E}(\delta_{2,z}) \to D_{rig}(\rho_{z,v}) \to Q_{w} \to 0 
\end{equation}
of (\ref{globaltriangulation}) to $w$ is exact (since any map from $\Tor_1^{\oo(U^\prime)}(E_w,Q)$ to $\mc{R}_{F_v,E}(\delta_{2,z})$ has to be zero). Now $Q_w$ might not be a free $\mc{R}_{F_v,E}$-module, so this may not be a triangulation, but, as noted above, the free part of $Q_w$ embeds into $\mc{R}_{F_v,E}(\delta_{1,z})$ and the torsion part is killed by $t^n$. Arguing as in \cite[Example A.2.1]{bh} we deduce that by replacing $\mc{R}_{F_v,E}(\delta_{2,z})$ in equation (\ref{quasitriangulation}) with its saturation in $D_{rig}(\rho_{z,v})$ we obtain a triangulation
\begin{equation}\label{triangulation}
0 \to \mc{R}_{F_v,E}(\delta_{2,z}^\prime) \to D_{rig}(\rho_{z,v}) \to \mc{R}_{F_v,E}(\delta_{1,z}^\prime) \to 0 
\end{equation}
with $\delta_{2,z}^\prime = a^{s} \delta_{2,z}$ and $\delta_{1,z}^\prime = a^{-s}\delta_{1,z}$, where $s_\sigma=0$ or $s_\sigma =k_{2,\sigma}-k_{1,\sigma} -1$ otherwise. In particular, both parameters are crystalline, and hence $D_{crys}(\rho_{z,v}) \neq 0$. However, if $\chi_v$ was ramified, we would have $D_{crys}(\rho_{z,v}) = 0$ since $\rho_{z,v}=\rho_{x,v}\otimes \chi_v$ and $\rho_{x,v}$ is semistable. We conclude that $\chi_v$ is unramified, as desired, and hence that $\rho_{z,v}$ is semistable and $\pi \otimes (\chi \circ \det)$ is a refinable automorphic representation.

\medskip

To show that $z$ is classical of the desired form, it remains to show that $\alpha_v^\prime := \varpi_v^{k_2}U_v(z)$ is a refinement at $v$, i.e. an eigenvalue of $\varphi^{f_v}$ on $D_{crys}(\rho_{z,v})$. But this follows directly by comparing the triangulation (\ref{triangulation}) with the possible triangulations of semistable representations as outlined above (note that $\varphi^{f_v}$ does not have repeated eigenvalues on $D_{crys}(\rho_{z,v})$ by our assumptions, as this property is stable under twisting). 
\end{proof}

To finish this subsection, we will study the fibre of the coherent sheaf $\wt{\mc{M}}$ on $\wt{\E}$ near a classical point $x=(\pi,\alpha)$. We assume that each $\alpha_v$ has multiplicity one (as an eigenvalue of $U_v$ on $\pi_v^{I_v}$) and that $\ol{\rho}_x$ is irreducible and generic. By Proposition \ref{integration map surjective}, there is a surjection
\[
\wt{\mc{M}}_z \to H^d(Y_K, \ms{L}_{k_z})[\p_z]
\]
for any classical point $z = (\pi_z,\alpha_z)$ of weight $k_z$ satisfying the same assumptions as $x$, which is an isomorphism whenever the slope of $\alpha_z$ is sufficiently small (cf. \cite[Proposition 6.3.8]{bh}). 

\begin{proposition}\label{constancy of classical subspace}
Under the assumptions above, the dimension $\dim H^d(Y_K, \ms{L}_{k_z})[\p_z]$ is constant on classical points in a small enough neighbourhood of $x$. 
\end{proposition}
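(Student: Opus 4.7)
The plan is to express $\dim H^d(Y_K,\ms{L}_{k_z})[\p_z]$ as a product of local factors via strong multiplicity one, and then show each factor is locally constant on classical points near $x$. By the irreducibility and genericity hypotheses on $\ol{\rho}_x$ and Lemma \ref{good nhood}, I may choose a good neighborhood $U$ of $x$ on which the universal pseudocharacter comes from a Galois representation $\rho_U$ that is absolutely irreducible at every point. Strong multiplicity one for $\GL_2$ then implies that, for any classical point $z = (\pi_z,\alpha_z) \in U$, the representation $\pi_z$ is determined by $\phi_z$. Combining this with the decomposition $H^d(Y,\ms{L}_{k_z})[\pi_z^\infty] \cong \pi_z^\infty \otimes R(\pi_0)$ recalled in \S\ref{automorphic reps}, one obtains the formula
\[
\dim H^d(Y_K,\ms{L}_{k_z})[\p_z] = 2^d \cdot \prod_{v \in S(K),\, v \nmid p\infty} \dim (\pi_{z,v})^{K_v} \cdot \prod_{v \mid p} m_{\alpha_{z,v}}\bigl((\pi_{z,v})^{I_v}\bigr),
\]
where $m_{\alpha}(V)$ denotes the multiplicity of $\alpha$ as an eigenvalue of $U_v$ on $V$. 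It therefore suffices to show that each finite-place factor on the right is locally constant on classical points in a neighborhood of $x$.

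For each $v \in S(K)$ with $v \nmid p\infty$, I would reuse the Weil--Deligne argument from the proof of Proposition \ref{changing tame level}: using Grothendieck's monodromy theorem in families (\cite[\S 7.8.3]{bellaiche-chenevier}), the family $\rho_U|_{G_{F_v}}$ gives rise to a morphism $U \to X_{WD,v}$ to the moduli space of $2$-dimensional Weil--Deligne representations of $W_{F_v}$. After shrinking $U$, the smoothness of $X_{WD,v}$ at pure points together with local--global compatibility at $v$ force all classical points of $U$ to lie in a single component of $X_{WD,v}$, so the inertial type of $\pi_{z,v}$ is constant and hence $\dim (\pi_{z,v})^{K_v}$ is constant.

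For $v \mid p$, there is an analogous but subtler variation of Weil--Deligne parameters for the semistable family $\rho_U|_{G_{F_v}}$ at classical points; after shrinking $U$, I would argue that the inertial type of $\pi_{z,v}$ is constant on classical points of $U$, so $\dim (\pi_{z,v})^{I_v}$ is constant. The refinement eigenvalue $\alpha_{z,v}$ is, up to a normalization factor involving the weight, the value at $z$ of $U_v \in \oo(U)^\times$, and so depends analytically on $z$; the condition that $\alpha_{z,v}$ be a simple $U_v$-eigenvalue on $(\pi_{z,v})^{I_v}$ is therefore open, and since it holds at $z=x$ by hypothesis, it holds on a neighborhood. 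The main obstacle is establishing the constancy of the inertial type at places $v \mid p$, since variation of Weil--Deligne parameters in $p$-adic families is more delicate than at $v \nmid p$; once that is in hand, the analyticity of $\alpha_{z,v}$ and the openness of the simple-eigenvalue condition conclude the argument.
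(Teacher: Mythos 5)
Your overall strategy --- decompose $\dim H^d(Y_K, \ms{L}_{k_z})[\p_z]$ into a product of local factors via multiplicity one and argue local constancy of each factor --- is the same as the paper's, and your treatment of $v \nmid p\infty$ via the Weil--Deligne moduli space $X_{WD}$ and purity is exactly the argument the paper cites from the proof of Proposition~\ref{changing tame level}. The problem is the factor at $v \mid p$, where you leave a genuine gap that you explicitly flag as "the main obstacle": you propose to show constancy of the inertial type of $\pi_{z,v}$ in $p$-adic families, which you correctly note is delicate, and then you appeal to openness of the simple-eigenvalue condition. But the openness you want is not a statement about $\alpha_{z,v}$ alone; it also involves the companion $U_v$-eigenvalue $\beta_{z,v}$ and the isomorphism class of $\pi_{z,v}$, so it only becomes transparent \emph{after} the inertial type has been fixed --- the very step you haven't supplied.

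The paper sidesteps this entirely with a simpler observation that your proposal misses. Fix a slope-adapted neighbourhood $V$ on which the slope is bounded; then as $V$ shrinks, classical points $z \neq x$ in $V$ have weight tending to infinity while slope stays bounded, so every such $z$ is of sufficiently small slope. For a small-slope classical point, the two $U_v$-eigenvalues on $\pi_{z,v}^{I_v}$ cannot coincide (their product is pinned down by the weight via $S_v$ and $q_v$, so if they were equal the common slope would be large), and in the Steinberg case $\pi_{z,v}^{I_v}$ is already one-dimensional. Either way $\dim \pi_{z,v}^{I_v}[U_v = \alpha_{z,v}] = 1$ for all classical $z \neq x$ in $V$, and it equals $1$ at $z = x$ by the standing multiplicity-one hypothesis on $\alpha$. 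No inertial type constancy at $p$ is needed. You should replace your speculative $v \mid p$ paragraph by this small-slope argument; as written, your proof is incomplete precisely where you say it is.
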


\begin{proof}
By the description of cuspidal cohomology, we have
\[
H^d(Y_K, \ms{L}_{k_z})[\p_z] \cong \left( \bigotimes_{v\mid p} \pi_{z,v}^{I_v}[U_v = \alpha_{z,v}] \otimes \bigotimes_{v \nmid p\infty} \pi_{z,v}^{K_v} \right)^{\oplus 2^d}.
\]
Since $\dim \pi_v^{I_v}[U_v = \alpha_{v}] =1$ by assumption, we can choose a neighbourhood $V$ of $x$ so that $\dim \pi_{z,v}^{I_v}[U_v = \alpha_{z,v}] =1$ for any classical point $z$ in $V$ (one may choose $V$ so that the slope is constant, and such that any classical point $z \neq x$ in $V$ has sufficiently small slope, i.e., sufficiently large weight). Hence, after possibly shrinking $V$, it suffices to prove that $\dim \pi_{z,v}^{K_v}$ is constant on classical points in $V$ for $v \nmid p$. But by the argument in the proof of Proposition \ref{changing tame level}, $\pi_{z,v}|_{\GL_2(\oo_v)}$ is constant on classical points in a small enough neighbourhood of $x$. This finishes the proof.
\end{proof} 

As noted above, $\wt{\mc{M}}_z \cong H^d(Y_K, \ms{L}_{k_z})[\p_z]$ when $z$ has small slope. By semicontinuity of the fibre dimension of $\wt{\mc{M}}$, we immediately get the following corollary to Proposition \ref{constancy of classical subspace}.

\begin{corollary}\label{3.2.13}
We have $\wt{\mc{M}}_x \cong H^d(Y_K, \ms{L}_{k_x})[\p_x]$ if and only if $\wt{\mc{M}}$ is locally free at $x$.
\end{corollary}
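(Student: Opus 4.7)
The plan is to turn the question into a comparison of fibre dimensions. First, the surjection $\wt{\mc{M}}_x \twoheadrightarrow H^d(Y_K,\ms{L}_{k_x})[\p_x]$ provided before the corollary is an isomorphism if and only if the two sides have the same $E$-dimension, so it suffices to show that this latter equality of dimensions is equivalent to local freeness of $\wt{\mc{M}}$ at $x$.

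Next, I would fix a small neighbourhood $V$ of $x$ on which Proposition \ref{constancy of classical subspace} applies, so that the dimension of $H^d(Y_K,\ms{L}_{k_z})[\p_z]$ takes a constant value, say $d_0$, on all classical points $z \in V$. In particular $\dim H^d(Y_K,\ms{L}_{k_x})[\p_x]=d_0$. Using the fact recalled just before the corollary that $\wt{\mc{M}}_z \cong H^d(Y_K,\ms{L}_{k_z})[\p_z]$ for classical $z$ of sufficiently small slope, together with the density statement for such small-slope points in each irreducible component of $\wt{\E}_{mid}$ through $x$ (as in the proof of Proposition \ref{changing tame level}), we obtain a Zariski dense subset of $V$ on which $\dim_E \wt{\mc{M}}_z = d_0$.

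The last step is the standard semicontinuity argument for coherent sheaves: the function $z \mapsto \dim_E \wt{\mc{M}}_z$ is upper semicontinuous, and it equals $d_0$ on a Zariski dense set, so the generic rank of $\wt{\mc{M}}$ on each component of $\wt{\E}_{mid}$ through $x$ is exactly $d_0$, and $\dim_E \wt{\mc{M}}_x \geq d_0$ with equality if and only if the fibre dimension is locally constant at $x$, i.e.\ if and only if $\wt{\mc{M}}$ is locally free at $x$. Combining this equivalence with the one from the first paragraph yields the corollary. No real obstacle is expected here, as every ingredient has already been assembled; the only point to be mildly careful about is that $x$ may lie on several components of $\wt{\E}_{mid}$, which is handled by applying the semicontinuity/density argument componentwise.
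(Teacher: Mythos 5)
Your proposal is correct and follows essentially the same route as the paper: the paper's own proof is exactly the observation that $\wt{\mc{M}}_z \cong H^d(Y_K,\ms{L}_{k_z})[\p_z]$ for small-slope classical $z$ (which are Zariski dense near $x$ on each component of $\wt{\E}_{mid}$), combined with Proposition \ref{constancy of classical subspace} and upper semicontinuity of the fibre dimension of $\wt{\mc{M}}$. You have merely spelled out the density and semicontinuity steps that the paper leaves implicit, including the correct componentwise handling, so there is no gap.
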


We will later need the following consequence of smoothness:

\begin{proposition}[{\cite[Proposition 8.1.3]{bh}}]\label{locfree}
If $\wt{\mc{E}}_{mid}$ is smooth at $x$, then $\wt{\mc{M}}$ is locally free at $x$. 
\end{proposition}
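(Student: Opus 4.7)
The plan is to reduce to a commutative algebra statement about completed local rings and then to invoke miracle flatness combined with the Auslander--Buchsbaum formula. Since local freeness of a coherent sheaf may be detected on completed stalks, I would replace $\wt{\mc{M}}_x$ and $\oo_{\wt{\E},x}$ with their completions $M := (\wt{\mc{M}}_x)^\wedge$ and $T := \wh{\oo}_{\wt{\E},x}$, and reduce Proposition \ref{locfree} to showing that $M$ is a free $T$-module. Since $\wt{\E}_{mid}$ is Zariski open in $\wt{\E}$ and contains $x$, the hypothesis of smoothness makes $T$ a regular local ring of dimension $d$ (as $\wt{\E}_{mid}$ is equidimensional of dimension $d$).

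Let $A := \wh{\oo}_{\Omega,\ka(x)}$, so $A$ is a power series ring in $d$ variables, and let $(\Omega,h)$ be a slope datum containing $x$. The weight map $\wt{\E}_{\Omega,h} \to \Omega$ is finite, and since $x$ is a point of $\wt{\E}_{mid}$, completing at $x$ gives a finite local homomorphism $A \to T$ between regular local rings of the same dimension $d$; by miracle flatness (or directly by Auslander--Buchsbaum applied to $T$ as an $A$-module), this map is flat. Next, I would show that $M$ is a free $A$-module of finite rank. This uses the slope decomposition: $H^d(Y_K,\D_\Omega)_{\leq h}$ is a direct summand of a potentially orthonormalizable $\oo(\Omega)$-Banach module, hence projective over $\oo(\Omega)$. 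Its base change to $A$ decomposes, by finiteness of the weight map, as $\bigoplus_{y \in \pi^{-1}(\ka(x))} (\wt{\mc{M}}_y)^\wedge$, so the direct summand $M$ is $A$-projective and, $A$ being local, $A$-free.

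The key point is now a depth computation. As $M$ is $A$-free of finite rank, $\mathrm{depth}_A M = d$. Because $A \to T$ is finite and $T$ is local with $\mathfrak{m}_A T$ being $\mathfrak{m}_T$-primary (indeed $T$ has residue field finite over $A/\mathfrak{m}_A = E$, so $\mathfrak{m}_T$ is the unique prime of $T$ above $\mathfrak{m}_A$), the standard fact that $\mathrm{depth}_A M = \mathrm{depth}_T M$ for a finite $T$-module gives $\mathrm{depth}_T M = d$. Since $T$ is regular of dimension $d$, the Auslander--Buchsbaum formula yields
\[
\mathrm{pd}_T M + \mathrm{depth}_T M = \mathrm{depth}\, T = d,
\]
so $\mathrm{pd}_T M = 0$ and $M$ is $T$-free, as required.

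The main obstacle I expect is not conceptual but rather bookkeeping: checking carefully that the $A$-module obtained by localizing-completing $\wt{\mc{M}}$ at $x$ really is the completed stalk $M$, and that the depth identity under a finite local extension applies in this context. Once these are in place, the argument is a clean application of miracle flatness and Auslander--Buchsbaum, which is essentially how Bergdall--Hansen prove their Proposition 8.1.3.
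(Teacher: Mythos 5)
The proposition is cited from Bergdall--Hansen and the paper does not give its own proof, so I am evaluating your argument on its own terms.

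Your reduction to completed local rings and the final depth computation (finite $A \to T$ between regular local rings of the same dimension $d$, miracle flatness, $\mathrm{depth}_A M = \mathrm{depth}_T M$ for a finite $T$-module, and Auslander--Buchsbaum forcing $\mathrm{pd}_T M = 0$) are all correct as stated. The problem is in the step where you assert that $M$ is $A$-free, and more specifically in the claimed justification. You write that $H^d(Y_K,\D_\Omega)_{\leq h}$ is ``a direct summand of a potentially orthonormalizable $\oo(\Omega)$-Banach module, hence projective.'' Neither half of this is right: the slope decomposition exhibits $H^d(Y_K,\D_\Omega)_{\leq h}$ as a direct summand of $H^d(Y_K,\D_\Omega)$, which is the $d$-th cohomology of a complex whose terms are (potentially ON) Banach modules --- it is not itself a summand of a Banach module; and even a genuine summand of a potentially ON Banach module would be infinite-dimensional and not projective as an $\oo(\Omega)$-module. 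What \emph{is} true is that the slope-$\leq h$ Borel--Serre complex $C^\bullet_{\leq h}$ is a bounded complex of \emph{finite projective} $\oo(\Omega)$-modules, so $H^d_{\leq h}$ is finitely generated --- but cohomology of a complex of projectives need not be projective (e.g.\ $A \xrightarrow{t} A$ over $A = E[[t]]$ has $H^1 = A/t$).

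The correct route to $A$-freeness of $M$, and the real content of \cite[Proposition 8.1.3]{bh}, is to use the defining property of $\wt{\E}_{mid}$: the \emph{fiber} cohomology $H^i(Y_K,\D_{\ka(x)})_{\p_x}$ vanishes for $i \neq d$. Feeding this into the Tor-spectral sequence of \cite[Theorem 3.3.1]{hansen},
\[
\Tor_{-p}^{\oo(\Omega)}\bigl(H^q(Y_K,\D_\Omega)_{\leq h},\, \oo(\Omega)/\m_{\ka}\bigr) \Rightarrow H^{p+q}(Y_K,\D_{\ka})_{\leq h},
\]
localized at $\p_x$ and combined with a Nakayama argument on the complex of finite free $A$-modules, one deduces that this complex (completed and localized at $x$) is quasi-isomorphic to a single finite free $A$-module sitting in degree $d$; in particular $\Tor_i^A(M, A/\m_A) = 0$ for $i > 0$, so $M$ is $A$-flat, hence $A$-free. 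Once this step is supplied, the rest of your argument --- miracle flatness plus the depth transfer and Auslander--Buchsbaum --- goes through as written. So the overall strategy is sound, but the crucial freeness over weight space is exactly the nontrivial input you have elided, and the reason you offer for it is false.
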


\subsection{Overconvergent cohomology for $\SL_2$}\label{subsec: ocSL2}

We now discuss overconvergent cohomology and the corresponding eigenvarietes for $\SL_{2}$ over $F$.

\subsubsection{Weight space and the weight action on locally analytic distributions}
Let $T_0\subset \SL_2/\mc{O}_F$ be the diagonal torus. 
Let $\mc{W}=\Spf(\oo_E[[T_0(\mc{O}_p)]])^{rig}$ be the rigid analytic space over $E$ parametrizing continuous characters of $T_0(\mc{O}_p)$. We will often identify $T_0$ with $\mb{G}_m$ by sending $\mathrm{diag}(t,t^{-1})$ to $t$, and use this to identify characters of $T_0$ with characters of $\mb{G}_m$. The weight space $\mc{W}$ is related to $\mc{W}_k$ from \S \ref{sec:weightspace} by the natural inclusion $T_0\subset T$, which induces a map 
\[
\mathrm{res}:\mc{W}_k \rightarrow \mc{W}
\]
by restriction along $T_0(\oo_p) \sub T(\oo_p)$. Explicitly, it is given by  $(\chi_1, \chi_2)\mapsto \chi_1\chi_2^{-1} = k_1 k_2 \chi_2^{-2}$, and in particular it is finite \'etale. 

\medskip

Let  $\kappa :\Spa(R) \rightarrow \mc{W}$ be a $p$-adic weight. We let $\mbf{s}(\kappa) \in \Z_{\geq 0}^{\{v\mid p\}}$ be a tuple such that the extension $\kappa_{!}: \mc{O}_p \rightarrow R$ of $\kappa$ by zero is an element of $\mbf{A}^{\mbf{s}(\kappa)}(\mc{O}_p, R)$. 
Just as for $\GL_2$, we can equip the spaces $\mc{A}(\oo_p,R)$ and $\mc{D}(\mc{O}_p,R)$ with weight actions. Let $I_0=I\cap \SL_2(\mc{O}_p)$ be the product of the Iwahori subgroups $I_{0,v} = I_v \cap \SL_2(\oo_v)$ for $v\mid p$. Still for  $v\mid p$, let $\Sigma_{0,v}:=\Sigma_v \cap \SL_2(F_v)= \left\{ (^{\varpi_v^a} \ _{\varpi_v^{-a}}) : a \in \Z \right\}$ and put $\Sigma_0:= \prod_{v\mid p} \Sigma_{0,v}$. Again consider the subgroup 
\[
N_1= \begin{pmatrix} 1 & \varpi_p\mc{O}_p \\  & 1 \end{pmatrix}
\]
of the unipotent radical and define
\[
 \Sigma_0^{+}= \left\{ t\in \Sigma_0 \mid tN_{1}t^{-1}\sub N_{1} \right\}.
\]
We consider the monoid $\Delta_{0,p}:=I_0\Sigma_0^{+}I_0=\Delta_p\cap \SL_2(F_p)$. Now for $\kappa$ as above and $\mbf{s}\geq \mbf{s}(\kappa)$ we have a continuous $R$-linear right action of $\Delta_{0,p}$ on $\mbf{A}^{\mbf{s}}(\mc{O}_p, R)$ defined by
\[
f|_g(z):=\ka(cz+d) f(g\cdot z) 
\]
where $f\in \mbf{A}^{\mbf{s}}(\mc{O}_p, R)$, $z \in \mc{O}_p$ and $g= \xi g^{\circ}$ is the same decomposition as in the $\GL_2$ case, i.e. $g^{\circ} = \left(^a_c \ ^b_d \right) \in \Delta_p^{\circ} $ satisfies $d\in \mc{O}_p^{\times}$. As in Definition \ref{weightaction}, for any $p$-adic weight $\ka: \Spa(R) \rightarrow \mc{W}$ and $\mbf{s}\geq \mbf{s}(\ka)$ we define $\mbf{A}^{\mbf{s}}_{\kappa}:=\mbf{A}^{\mbf{s}}(\mc{O}_p, R), \mbf{D}^s_{\kappa}:=\mbf{D}^{\mbf{s}}(\mc{O}_p, R), \mc{A}_{\kappa}:=\mc{A}(\mc{O}_p, R)$ and $\mc{D}_{\kappa}:=\mc{D}(\mc{O}_p, R)$ as the respective $R$-modules equipped with the continuous action by $\Delta_{0,p}$ defined above.

\begin{remark}
Note that, by definition, if $\kappa:\Spa(R) \rightarrow \mc{W}_k$ is a $p$-adic $\GL_2$-weight, then $\kappa_0:=\mathrm{res}\circ\kappa$ is a $p$-adic $\SL_2$-weight and the respective $R[\Delta_{0,p}]$-modules $\mbf{A}^{\mbf{s}}_{\kappa}$ and $\mbf{A}^{\mbf{s}}_{\kappa_0}$ agree. The only difference between these objects is that the former has an action of the bigger monoid $\Delta_p$. By dualizing, the same remark applies for distribution modules. 
\end{remark}

\subsubsection{Overconvergent cohomology and eigenvarieties}
Overconvergent cohomology and eigenvarieties for $\SL_2$ are defined according to the same general recipe as outlined for $\GL_2$ in \S \ref{subsec: ocGL2}. We will briefly go through what we need in order to set up notation.

\medskip

Let $K_v \sub \SL_2(F_v)$ be compact open subgroups for $v \nmid p\infty $ with $K_v = \SL_2(\oo_v)$ for all but finitely many $v$. Set $K^p = \prod_{v \nmid p\infty} K_v$ and $K = K^p I_0$ and assume that they are neat. We define the abstract $\SL_2$-Hecke algebra $\mbf{S}(K)$ as follows.  Let $S(K)$ denote the union of set of places $v \nmid p\infty$ where $K_v \neq \SL_2(\oo_v)$ and the places above $p$ and $\infty$. We define 
\[
\mbf{S}(K):= \mbf{S}(\Delta_{0,p},I_0) \otimes \bigotimes^{\prime}_{v\notin S(K)} \mbf{S}(\SL_2(F_{v}), K_{v}) 
\]
where $\mbf{S}(\SL_2(F_v), K_{v})=C_c(K_{v}\backslash \SL_2(F_v)/K_{v}, E) $ is the spherical Hecke algebra over $E$ for $v \notin S(K)$, and $\mbf{S}(\Delta_{0,p},I_0)$ is the Hecke algebra (over $E$) for the Hecke pair $(\Delta_{0,p},I_0)$. The latter is also the commutative subalgebra of the Iwahori--Hecke algebra $C_c(I_0\backslash \SL_2(F_p)/I_0, E)$ generated by the $[I_0\delta I_0]$, where $\delta \in \Sigma^{+}_0$. Note that when $K = \SL_2(\A_F^{\infty}) \cap \wt{K}$ for some compact open $\wt{K} \sub \GL_2(\A_F^\infty)$ with $\wt{K}_p = I$, then $\mbf{S}(K)$ is naturally identified with the subalgebra $\s(\wt{K}) \sub \T(\wt{K})$ defined in \S \ref{subsec: Galois reps}, and in particular we have the $\SL_2$-Hecke operators defined in \S \ref{subsec: ocGL2} in $\s(K)$. Before proceeding, we remark that $t_v = T_v^2 S_v^{-1} - q_v -1$ in the spherical Hecke algebra for $\GL_2(F_v)$ (this is a standard calculation and is used in the proof of Proposition \ref{Galois rep SL2} later on).

\medskip

We define $u_{p}:= [K (^{p}\ _{p^{-1}}) K] \in \mbf{S}(K)$. Let $\Omega = \Spa(R) \sub \mc{W}$ be an open affinoid subdomain and let $\mbf{s}\geq\mbf{s}(\Omega)$. Then $u_{p}$ acts compactly on the chain complex $C_\bullet(Y_K^1,\mbf{A}^{\mbf{s}}_{\Omega})$ (for a choice of Borel--Serre complex) and we can form the Fredholm series 
\[
f_{\Omega,u_{p}}(t)=\det(1-tu_{p}\mid C_\bullet(Y_K^1,\mbf{A}^{\mbf{s}}_{\Omega})).
\]
As $\Omega$ varies these glue together to a Fredholm series $f_{u_{p}}(t) \in \mc{O}(\mc{W}_0)\{\{t\}\}$ and we let $\mc{Z}_{u_p}\subset \mc{W} \times \mb{A}^1_E$ be the corresponding Fredholm hypersurface. For any slope-adapted pair $(\Omega, h)$ for $u_{p}$ we get a slope $\leq h$ decomposition 
\[
H^*(Y_{K}^1, \mathcal{D}_{\Omega})= H^*(Y^1_{K}, \mathcal{D}_{\Omega})_{\leq h} \oplus H^*(Y^1_{K}, \mathcal{D}_{\Omega})_{>h}
\]
(cf. \cite[\S 2.3.12]{urban}) and an induced morphism 
\[
\psi_{\Omega,h} \colon \mbf{S}(K)\rightarrow \mathrm{End}_{\oo(\Omega)}(H^*(Y_{K}^1, \mathcal{D}_{\Omega})_{\leq h}).
\]
The $H^*(Y_{K}, \mathcal{D}_{\Omega})_{\leq h}$ glue together to a graded $\mc{O}_{\mc{Z}_{u_{p}}}$-module, which we denote by $\mc{M}$ and the $\psi_{\Omega,h}$ glue to a morphism $\psi \colon  \mbf{S}(K)\rightarrow \mathrm{End}(\mc{M})$. From this, we may construct the eigenvariety $\E = \E_{K^p}$ as for $\GL_2$. As for $\GL_2$, we may define\footnote{We remark that the construction of the middle degree eigenvariety and the characterization of its points given in \cite{bh} is completely general and uses nothing special to $\GL_{2/F}$. Of course, it may be empty in general.} a middle degree eigenvariety $\E_{mid}$, whose points are characterised by the condition that $x \in \E_{mid}(\ol{\Q}_p)$ if and only if $H^i(Y_K^1,\D_\ka)_{\p_x} = 0$ for $i \neq d$, where $\ka$ is the weight of $x$.

\medskip

Our next goal is to define classical systems of Hecke eigenvalues and their corresponding points on the eigenvariety. It turns out that these are more natural to associate with $L$-packets than individual automorphic representations. We make the following definition.

\begin{definition}
A pair $(\Pi,\gamma)$ is called a refined $L$-packet if 

\begin{enumerate}
\item $\Pi = \Pi(\wt{\pi})$ is the global $L$-packet of $\SL_2(\A_F)$ associated with a cuspidal cohomological automorphic representation $\wt{\pi}$ of $ \GL_2(\A_F)$;

\smallskip

\item $\gamma=(\gamma_v)_{v|p}$, where each $\gamma_v$ is an eigenvalue of $u_v$ acting on $\pi_v^{I_{0,v}}$ for some member $\pi_v$ of the local $L$-packet $\Pi_v$ of $\Pi$ at $v$ ($\gamma$ is the called a refinement of $\Pi$).

\end{enumerate}

\end{definition}

\begin{definition}
Let $(\Pi,\gamma)$ be a refined $L$-packet. Assume that $K=K^p I_0 \sub SL_2(\A_F)$ is a compact open subgroup, with $K^p=\prod_{v\nmid p\infty} K^p_v$. Assume that there exists an element $\pi\in \Pi$ (not necessarily automorphic) such that $(\pi^{\infty})^K\neq 0$. We say that $K^p$ is a tame level for $\Pi$ in this case. Let $S(K)$ denote the set of finite places $v$ for which $K^p_v \neq \SL_2(\oo_v)$. Then $\Pi_v$ is an unramified principal series $L$-packet for all $v\notin S(K)$. Let $\pi_v$ be the unique member of $\Pi_v$ with $\pi_v^{\SL_2(\oo_v)}\neq 0$ for $v\notin S(K)$. We define the system of Hecke eigenvalues $\phi = \phi_{(\Pi,\gamma)} : \s(K) \to \ol{\Q}_p$ attached to $(\Pi,\gamma)$ by setting $\phi(t_v)$ to be the eigenvalue of $t_v$ acting on $\pi_v^{\SL_2(\oo_v)}$ for $v\notin S(K)$, and letting $\phi(u_v) = k_v(\vp_v) \gamma_v$ for $v\mid p$, where $k$ is the weight of $\Pi$.
\end{definition}

\begin{remark}
Let $(\Pi,\gamma)$ be a refined $L$-packet.

\begin{enumerate}
\item Note that the system of Hecke eigenvalues $\phi=\phi_{(\Pi,\gamma)}$ implicitly depends on the choice of tame level $K^p$. The reader should think of this choice as the tame level of an eigenvariety.

\smallskip

\item Consider the system of Hecke eigenvalues $\phi$, with a tame level $K^p$ chosen. For each $v\notin S(K)$ we have the $\SL_2(\oo_v)$-spherical representation $\pi_v$ (as mentioned in the definition of $\phi$) and for each $v\mid p$ we can choose a member $\pi_v \in \Pi_v$ such that $\gamma_v$ is an eigenvalue of $u_v$ on $\pi_v^{I_{0,v}}$. Then, by Lemma \ref{swap}, there exists an automorphic member $\tau \in \Pi$ such that $\tau_v = \pi_v$ for all $v\notin S(K)$ and all $v\mid p$, and $(\pi^\infty)^K\neq 0$. However, $\tau$ (or even $\tau^\infty$) need not be unique. This is the reason that we find it more natural to attach systems of Hecke eigenvalues to $L$-packets rather than automorphic representations. 

\end{enumerate}
\end{remark}

\begin{remark}

Consider the setting of the previous remark. Choose a cuspidal cohomological automorphic representation $\wt{\pi}$ of $\GL_2(\A_F)$ such that $\Pi = \Pi(\wt{\pi})$, and let $K=K^p I_0$ be as above. After twisting $\wt{\pi}$ if necessary, we may assume that $\wt{\pi}$ has level containing $\wt{K} = \wt{K}^p I$ with $S(\wt{K}) = S(K)$, $K \sub \SL_2(\A_F^\infty) \cap \wt{K}$, and that there exists a (not necessarily unique) refinement $\alpha$ of $\wt{\pi}$ such that $\phi_{(\Pi,\gamma)}$ is the restriction of $\phi_{(\wt{\pi},\alpha)}$ to $\s(\wt{K}) = \s(K)$. We will always assume that $\wt{\pi}$ is chosen this way. 

\medskip

With $\wt{\pi}$ as above, let $\alpha = (\alpha_v)_{v|p}$ be a refinement of $\wt{\pi}$ and let $\phi_{(\wt{\pi},\alpha)}$ be the system of Hecke eigenvalues of $(\wt{\pi},\alpha)$ with respect to the tame level $\wt{K}$. For $v\mid p$, define $\gamma^\prime_v$ to be the eigenvalue of $u_v$ acting on $\wt{\pi}_v^{I_v}[U_v=\alpha_v]$. Then $(\Pi,\gamma^\prime)$ is a refined $L$-packet such that $\phi_{(\Pi,\gamma^\prime)}$ is the restriction of $\phi_{(\wt{\pi},\alpha)}$ to $\s(K)$. Any refinement of $\Pi$ arises from a refinement of $\wt{\pi}$ in this way. Moreover, note that two refinements $\alpha$ and $\beta$ of $\wt{\pi}$ give rise to the same refinement $\gamma$ of $\Pi$ if and only $\alpha_v = \pm \beta_v$ for all $v\mid p$ (the signs do not need to be the same for all $v$), since $u_v = U_v^2 S_v^{-1}$.

\end{remark}

Equipped with this, we now define the classical points on $\E$ that we will be interested in. Let $(\Pi,\gamma)$ be a refined $L$-packet and let $K^p$ be a tame level for $\Pi$. Let $\p_{(\Pi,\gamma)}$ be the prime ideal of $\s(K)$ corresponding to $\phi_{(\Pi,\gamma)}$. We will use the following vanishing result.

\begin{lemma}\label{vanishing for SL(2)}
Assume that the reduction  $\ol{\rho}$ modulo $p$ of $\rho_{\wt{\pi}}$ is generic and irreducible. Let $k^\prime$ be a cohomological weight for $\GL_2$; by abuse of notation we denote its restriction to $\SL_2$ by $k^\prime$ as well. Let $\phi_{(\Pi,\gamma)}^\prime$ denote the restriction of $\phi_{(\Pi,\gamma)}$ to $\s(K^\prime)$ for a compact open subgroup $K^\prime = (K^\prime)^p I_0 \sub K$. Let $\p_{(\Pi,\gamma)}^\prime$ be the kernel of $\phi_{(\Pi,\gamma)}^\prime$. Then $H^i(Y_K^1,\D_{k^\prime})_{\p_{(\Pi,\gamma)}^\prime} = 0$ if $i \neq d$. 
\end{lemma}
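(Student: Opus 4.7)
The plan is to reduce the vanishing to Theorem \ref{consequence of ct} by lifting to $\GL_2$. First I would choose a compact open subgroup $\wt{K}^p \subset \GL_2(\A_F^{p\infty})$ small enough that: (i) $\wt{\pi}$ has a nonzero $\wt{K}^p I$-fixed vector, so that a suitable lift $\alpha$ of $\gamma$ defines a classical point $x = (\wt{\pi},\alpha) \in \wt{\E}_{\wt{K}^p}(\olqp)$ by Corollary \ref{defining classical points}; (ii) $\wt{K}^p \cap \SL_2(\A_F^{p\infty}) \subset (K')^p$, so that $\s(\wt{K}^p I) \subset \s(K')$; and (iii) by Lemma \ref{shrinking}, the identity component $Y^\circ_{\wt{K}^p I}$ of $Y_{\wt{K}^p I}$ admits a finite covering map onto $Y^1_K$. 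Pullback along the map in (iii) then yields an injective, $\s(\wt{K}^p I)$-equivariant map
\[
H^i(Y^1_K, \D_{k'}) \hookrightarrow H^i(Y_{\wt{K}^p I}, \D_{k'})_{H_{\wt{K}^p I}},
\]
using that $H_{\wt{K}^p I}$-coinvariants compute the cohomology of the identity component (cf. the proof of Proposition \ref{limit}) and that $\s(\wt{K}^p I)$ commutes with $H_{\wt{K}^p I}$ by Lemma \ref{twisting and Hecke}.

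Next, set $\q := \p'_{(\Pi,\gamma)} \cap \s(\wt{K}^p I)$. Since localization of an $\s(K')$-module at $\p'_{(\Pi,\gamma)}$ is a further localization of localization at $\q$, it suffices to show that the right-hand side above vanishes after localization at $\q$ for $i \neq d$. Fix a slope $h$; then $H^i(Y_{\wt{K}^p I}, \D_{k'})_{\leq h}$ is finite-dimensional over $E$ and $\T(\wt{K}^p I)$ acts through a finite-dimensional quotient, so localization at $\q$ decomposes as a finite direct sum of localizations at maximal ideals $\wt{\m} \subset \T(\wt{K}^p I)$ lying over $\q$, and it suffices to show $H^i(Y_{\wt{K}^p I}, \D_{k'})_{\leq h, \wt{\m}} = 0$ for $i \neq d$ for each such $\wt{\m}$. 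For each such $\wt{\m}$ the associated semisimple Galois representation $\rho_{\wt{\m}}$ has the same pseudocharacter of $\ad^0$ as $\rho_{\wt{\pi}}$, because the operators $t_v \in \s(\wt{K}^p I)$ for $v \notin S(\wt{K}^p I)$ are polynomial expressions in $T_v$ and $S_v^{\pm 1}$ which determine $\tr(\ad^0 \rho)$ at $\Frob_v$. By Proposition \ref{generation} applied to $H = \SO_3 \cong \PGL_2$, this forces $\rho_{\wt{\m}} \cong \rho_{\wt{\pi}} \otimes \chi$ for some continuous character $\chi : G_F \to \olqp^\times$. Reducing modulo the maximal ideal of $\ol{\Z}_p$, $\ol{\rho}_{\wt{\m}}$ is a twist of $\ol{\rho}_{\wt{\pi}}$ and hence remains irreducible and generic (both properties being preserved under twisting by a character), so Theorem \ref{consequence of ct} yields the required vanishing for each $\wt{\m}$.

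The main obstacle is setting up the tame level $\wt{K}^p$ so that conditions (i)--(iii) hold simultaneously with the desired Hecke algebra inclusion, and justifying the twisting step: the latter is where Proposition \ref{generation} plays a crucial role, allowing one to pass from the $\PGL_2$-pseudocharacter information carried by $\s(\wt{K}^p I)$ back to the actual $\GL_2$-Galois representation, up to twist by a character, so that the genericity hypothesis on $\ol{\rho}_{\wt{\pi}}$ can be transferred to every companion $\wt{\m}$ over $\q$.
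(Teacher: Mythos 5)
Your proposal is correct and follows essentially the same route as the paper's proof: shrink the tame level using Lemma \ref{shrinking} to inject the $\SL_2$-overconvergent cohomology into that of $\GL_2$, localize so that the contribution breaks into $\GL_2$-Hecke eigensystems lifting $\phi_{(\Pi,\gamma)}$, observe that the attached Galois representations are character twists of $\rho_{\wt{\pi}}$ (a step the paper states tersely and you justify via the $\ad^0$-pseudocharacter and Proposition \ref{generation}), and then apply Theorem \ref{consequence of ct} since irreducibility and genericity of the mod-$p$ reduction are preserved by twisting. Your version adds useful technical scaffolding (fixing a slope to get a finite decomposition into maximal ideals, and identifying the identity-component cohomology with $H_{\wt{K}^p I}$-coinvariants), but the underlying argument is the same.
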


\begin{proof}
Note that the effect of introducing $K^\prime$ in the formulation is simply to say that we do not need all of $\s(K)$; we may remove a finite number of Hecke operators away from $p$. By Lemma \ref{shrinking}, we may assume that $\wt{K}^\prime \sub \wt{K}$ is such that $H^i(Y_K^1, \D_{k^\prime})$ injects into $H^i(Y_{\wt{K}^\prime},\D_{k^\prime})$ by shrinking it further if necessary. The localization $H^i(Y_K^1,\D_{k^\prime})_{\p_{(\Pi,\gamma)}}$ then injects into
\[
\bigoplus_x H^i(Y_{\wt{K}^\prime},\D_{k^\prime})_{\p_x},
\]
where the sum is taken over those systems of $\GL_2$-Hecke eigenvalues $x$ appearing in $H^i(Y_{\wt{K}^\prime},\D_{k^\prime})$ which lift $\phi_{(\Pi,\gamma)}$. Since the corresponding Galois representations are all twists of $\rho_{\wt{\pi}}$, vanishing follows from Theorem \ref{consequence of ct}.
\end{proof}

As for $\GL_2$, we have the twisted version $\D_{k,\star}$ of $\D_{k}$, the integration map $I_{k} : \D_{k,\star} \to \mathscr{L}_{k}$ and the locally analytic dual BGG resolution of $\mathscr{L}_{k}$, and under the assumptions of Lemma \ref{vanishing for SL(2)} the integration map induces a surjection
\[
H^d(Y_K^1,\D_{k,\star})_{\p_{(\Pi,\gamma)}} \to H^d(Y_K^1, \mathscr{L}_k)_{\p_{(\Pi,\gamma)}},
\]
and $\phi_{(\Pi,\gamma)}$ lies on $\E_{mid}$. By the general arguments proving  \cite[Proposition 6.4.6, Theorem 6.4.8]{bh}, $\E_{mid}$ is equidimensional of dimension $d$ and reduced (and, by definition, Zariski open in $\E$). Finally, we record the existence of a three-dimensional pseudocharacter over the Zariski closure $\ol{\E}_{mid}$ of $\E_{mid}$ (we remark that $\ol{\E}_{mid}$ is exactly the union of irreducible components of $\E$ of dimension $d$).

\begin{proposition}\label{Galois rep SL2}
There exists a three-dimensional pseudocharacter $t : G_{F} \to \oo^+(\ol{\E}_{mid})$ which is unramified outside $S(K)$ and characterized by the property
\[
t(Frob_v) = q_v^{-1}(t_v +1)
\]
for all $v\notin S(K)$. 
\end{proposition}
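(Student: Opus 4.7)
The plan is to build $t$ by interpolation from classical points, using that at such a point the required trace at $\mathrm{Frob}_v$ is the $\mathrm{ad}^0$ of a $\GL_2$-valued Galois representation, which is invariant under twist and hence depends only on the $L$-packet. The key algebraic identity we will repeatedly exploit is that for any $2$-dimensional representation $\rho$ one has
\[
\tr(\ad^0 \rho(g)) \;=\; \frac{\tr(\rho(g))^2}{\det \rho(g)} - 1,
\]
so applied to $\rho = \rho_{\wt{\pi}}$ and $g=\Frob_v$ (for $v\notin S(K)$) we obtain, using $\det \rho_{\wt{\pi}}(\Frob_v) = q_v \cdot \phi_{\wt{\pi}}(S_v)$ and the classical Hecke identity $t_v = T_v^2 S_v^{-1} - q_v - 1$, the formula
\[
\tr(\ad^0 \rho_{\wt{\pi}}(\Frob_v)) = q_v^{-1}\!\left( \phi_{(\Pi,\gamma)}(t_v) + 1 \right).
\]
This is the equality that we want to promote to a global statement over $\overline{\E}_{mid}$.

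First I would reduce to each irreducible component separately. Since $\E_{mid}$ is reduced and equidimensional of dimension $d$, and $\overline{\E}_{mid}$ is by definition the Zariski closure of $\E_{mid}$ (equivalently, the union of those irreducible components of $\E$ of dimension $d$), each component $C \sub \overline{\E}_{mid}$ contains a Zariski open dense subset that lies in $\E_{mid}$, and hence $\overline{\E}_{mid}$ is itself reduced. Then I would observe that pseudocharacters on different components agree on intersections by Chebotarev density applied to Frobenius traces (which are the globally defined elements $q_v^{-1}(t_v + 1) \in \oo(\overline{\E}_{mid})$), so it suffices to construct $t$ on each component.

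Next I would check Zariski density of classical points on each such component $C$. By the usual argument (analogous to the proof of \cite[Proposition 6.4.6(4)]{bh}), points $x=(\Pi,\gamma)$ of cohomological weight $k$ of very small slope form a Zariski dense subset of $\E_{mid}$, and hence of $C$. At each such $x$, pick any $\wt{\pi}$ with $\Pi = \Pi(\wt{\pi})$; then $\ad^0\rho_{\wt{\pi}}$ is a continuous $3$-dimensional Galois representation independent of the choice of $\wt{\pi}$ (any two choices differ by a character twist which $\ad^0$ kills), and it is unramified outside $S(K)$ because we may choose $\wt{\pi}$ with $S(\wt K) = S(K)$. The Frobenius traces of $\ad^0\rho_{\wt{\pi}}$ are the specializations at $x$ of the global functions $q_v^{-1}(t_v+1) \in \oo^+(C)$, by the identity above.

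Finally I would invoke a Chenevier-style interpolation theorem for pseudocharacters on reduced rigid spaces, in the form used in \cite[Theorem 5.4.5]{JN1} (and in greater generality in \cite{JN2}). Concretely: the map $v \mapsto q_v^{-1}(t_v+1)$ extends (via Chebotarev density of unramified Frobenii in $G_{F,S(K)}$ and the formal theory of pseudocharacters) to a continuous pseudocharacter of $G_{F,S(K)}$ of dimension $3$ valued in $\oo^+(C)$, provided that at a Zariski dense set of points this abstract datum does come from an actual semisimple $3$-dimensional representation, which is exactly what the classical points supply. Gluing the pieces produces the desired $t : G_F \to \oo^+(\overline{\E}_{mid})$, unramified outside $S(K)$, with the prescribed values on Frobenius. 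The main technical obstacle is the interpolation step itself, i.e.\ verifying the hypotheses of the Chenevier-type theorem in the present non-equidimensional setting and ensuring that values land in the integral structure $\oo^+$ rather than merely $\oo$; this boundedness follows from the fact that the $t_v$ are bounded on $\overline{\E}_{mid}$ (being integral over the bounded ring $\oo^+(\mc{W})$ by compactness of $u_p$-slope decompositions), which combined with reducedness is enough to run the argument.
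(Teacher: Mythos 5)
Your proposal is correct and follows essentially the same route as the paper's own (sketched) argument: identify the Zariski-dense set of classical points, observe that at each such point the associated $3$-dimensional pseudocharacter is that of $\ad^0\rho_{\wt\pi}$ (independent of the choice of $\wt\pi$ in the $L$-packet), verify via the Hecke identity $t_v = T_v^2 S_v^{-1} - q_v - 1$ together with $\tr(\ad^0\rho)(\Frob_v) = \tr(\rho(\Frob_v))^2/\det\rho(\Frob_v) - 1$ that the Frobenius trace is $q_v^{-1}(t_v+1)$, and glue by a Chenevier-type interpolation on the reduced space $\ol{\E}_{mid}$. The paper cites Chenevier's \emph{D\'eterminants} Example 2.32 for the gluing step and does not pass through irreducible components; your reduction to components plus Chebotarev gluing is harmless but adds an unnecessary layer, since the Chenevier technique applies directly to the reduced, equidimensional (not, as you suggest, non-equidimensional) space $\ol{\E}_{mid}$. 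Your remark on $\oo^+$-integrality via boundedness of the $t_v$ is exactly the point that makes the image land in $\oo^+$ rather than $\oo$.
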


\begin{proof}
The proof is standard so we content ourselves with a sketch; cf. \cite[Lemma 5.8]{L1} and \cite[Lemma 2.2]{L2}. The classical points $y=(\Pi,\gamma)$ in $\ol{\E}_{mid}$ whose $L$-packet $\Pi$ comes from a cuspidal $\GL_2(\A_F)$-automorphic representation $\pi$ of cohomological weight (and unramified outside $S(K)$) are Zariski dense. For each such $y=(\Pi,\gamma)$ coming from $\pi$, there is an associated Galois representation $\ad^0\rho_\pi$ such that for $v\notin S(K)$, $tr(\ad^0\rho_\pi(Frob_v))q_v -1$ is the $t_v$-eigenvalue on $(\pi^\infty)^K$. The corresponding pseudocharacters then glue together by the technique in \cite[Example 2.32]{chenevier-determinants}.
\end{proof}

\subsection{Comparison and $p$-adic functoriality}\label{subsec: p-adic functoriality} Our final goal in this section is to provide a comparison between the $\GL_2$- and $\SL_2$-eigenvarieties, with the goal of computing the local geometry of $\E$ at the points we are interested in, as well as the fibre of $\mc{M}$ at those points.

\medskip

Let us now describe the setting that we will work in in this subsection. We let $(\pi,\alpha)$ be a refined automorphic representation for $\GL_2(\A_F)$, of cohomological weight $k$. We make the following assumptions on $\pi$:
\begin{enumerate}
\item $\ol{\rho}_\pi$ is irreducible and generic;

\smallskip

\item For any $v\mid p$, $D_{crys}(\rho_{\pi,v})$ has no $\varphi^{f_v}$-eigenvalue of multiplicity $>1$.
\end{enumerate}

We then introduce the following objects:

\begin{itemize}
\item We let $\Pi = \Pi(\pi)$ be the global $L$-packet for $\SL_2(\A_F)$ corresponding to $\pi$;

\smallskip

\item Let $K^p = \prod_{v \nmid p\infty} K_v^p$ be such that $(\pi^{\infty})^{K} \neq 0$, with $K = K^p I$. For simplicity, we assume that $K^p \sub K_1^p(\m)$ with $\m$ the prime-to-$p$ conductor of $\pi$;

\smallskip

\item We write $x$ for the classical point attached to $(\pi,\alpha)$ on $\wt{\E} = \wt{\E}_{K^p}$, and $y$ for the classical point attached to $(\Pi,\gamma)$ on $\E = \E_K := \E_{(K \cap \SL_2(\A_F^\infty))}$, where $\gamma$ is the refinement of $\Pi$ induced by $\alpha$ as in \S \ref{subsec: ocSL2}.
\end{itemize}

It is the local structure of $\E$ around $y$ that we wish to understand. To this end, consider the weight space $\mc{W}_k$ for $\GL_2$, with its restriction map
\[
\mathrm{res} : \mc{W}_k \to \mc{W}.
\]
This map is finite \'etale, so by Proposition \ref{local isom} there is an open affinoid neighbourhood $U$ in $\mc{W}_k$ of $k$ which maps isomorphically onto its image in $\mc{W}$ by $\mathrm{res}$. We use this to identify $U$ and $k$ with their respective images in $\mc{W}$ and to view them as both $\GL_2$-  and $\SL_2$-weights; we hope that this does not cause confusion. Shrinking $U$ if necessary, we may assume that there is an $h \in \Q_{\geq 0}$ such that $(U,h)$ is slope adapted both for $\SL_2$ and $\GL_2$, and that $x$ and $y$ have slope $\leq h$ (for $u_p$ --- recall that all slopes in this paper are with respect to $u_p$ for both $\SL_2$ and $\GL_2$). The natural map $Y_K^1 \to Y_K$ then gives us a map
\[
H^d(Y_K, \D_U)_{\leq h} \to H^d(Y_K^1, \D_U)_{\leq h},
\]
equivariant for the action of $\mbf{S}(K)$ on both sides, where $\mbf{S}(K)$ is viewed as a subalgebra of $\mbf{T}(K)$ for the action on the left hand side. We may factor this map as 
\begin{equation}\label{spaces prelim}
H^\ast(Y_K, \D_U)_{\leq h} \twoheadrightarrow H^\ast(Y_K, \D_U)_{\leq h,H_K} \hookrightarrow H^\ast(Y_K^1, \D_U)_{\leq h},
\end{equation}
with $\mbf{S}(K) := \s(K \cap \SL_2(\A_F^\infty))$ acting on all three terms and the maps are equivariant (we note that $H^\ast(Y_K, \D_U)_{\leq h,H_K}$ can be obtained from $H^\ast(Y_K, \D_U)$ either by first taking the slope $\leq h$-part and then taking $H_K$-coinvariants or vice versa). 

\medskip

We now consider four a priori different Hecke algebras. First, there is $\T_{U,h}$ acting on $H^\ast(Y_{K}, \D_U)_{\leq h}$. To emphasize the level we will call it $\T_{U,h}^K$; note that $\T^K_{U,h}$ is the local piece of the eigenvariety $\wt{\E}_K$ corresponding to $(U,h)$. Second, the subalgebra of $\T_{U,h}^K$ generated by $\oo(U)$ and the image of $\s(K)$ will be denoted by $\s_{U,h}^{\GL_2, K}$. Third, the subalgebra of $\End_{\oo(U)}(H^\ast(Y_K, \D_U)_{\leq h,H_K}))$ generated by $\oo(U)$ and the image of $\s(K)$ will be denoted by $\ol{\s}_{U,h}^{\GL_2, K}$. Finally, the subalgebra of $\End_{\oo(U)}(H^\ast(Y_K^1, \D_U)_{\leq h}))$ generated by $\oo(U)$ and the image of $\s(K)$ will be denoted by $\s_{U,h}^{K}$; note that $\s_{U,h}^{K}$ is the local piece of $\E_K$ corresponding to $(U,h)$. All these Hecke algebras are finite over $\oo(U)$. From the definitions and the diagram (\ref{spaces prelim}) we obtain natural maps between these Hecke algebras, summarized in the diagram
\begin{equation}\label{hecke algebras prelim}
\T^K_{U,h} \hookleftarrow \s_{U,h}^{\GL_2, K} \twoheadrightarrow \ol{\s}_{U,h}^K \twoheadleftarrow \s_{U,h}^K.
\end{equation}
For our analysis, we also want to relate $\T^K_{U,h}$ and $\s_{U,h}^{K}$ more directly, by $p$-adically interpolating classical functoriality. Let $\Spa(\T_{U,h}^{K,mid})$ be the (disjoint) union of connected components $C$ of $\Spa(\T_{U,h}^K)$ which satisfy $C \sub \wt{\E}_{mid}$; note that this union is indeed affinoid. By shrinking $U$ if necessary, we may assume that $U$ is connected and that $x$ and all its twists lie in $\Spa(\T_{U,h}^{K,mid})$. We may then construct the $p$-adic functoriality map.

\begin{proposition}\label{p-adic functoriality}
There is a (necessarily unique) $\oo(U)$-algebra map $\s_{U,h}^K \to \T_{U,h}^{K,mid}$ making the diagram
\[
\begin{tikzcd}
\s(K) \arrow[d] \arrow[r] &   \T(K) \arrow[d]  \\
\s_{U,h}^{K} \arrow[r]   & \T_{U,h}^{K,mid}
\end{tikzcd}
\]
commute.
\end{proposition}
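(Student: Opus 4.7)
The plan is to use a Chenevier-style interpolation theorem from \cite{JN2}. The desired map $\s_{U,h}^K \to \T_{U,h}^{K,mid}$ is equivalent, by the definition of $\s_{U,h}^K$ as the local piece of the $\SL_2$-eigenvariety at $(U,h)$, to showing that the composite $\oo(U)$-algebra homomorphism $\oo(U)\otimes \s(K) \to \oo(U)\otimes \T(K) \to \T_{U,h}^{K,mid}$, coming from the inclusion $\s(K)\hookrightarrow \T(K)$ (provided by the identities $t_v = T_v^2 S_v^{-1} - q_v - 1$ for $v\notin S(K)$ and $u_v = U_v^2 S_v^{-1}$ for $v\mid p$), factors through the quotient onto $\s_{U,h}^K$. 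Commutativity of the square and uniqueness of the factorization will then be automatic.

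First I would verify the structural hypotheses needed to apply interpolation: $\T_{U,h}^{K,mid}$ is reduced, because it is obtained from $\T_{U,h}^K$ by splitting off the idempotent corresponding to the union of connected components lying in $\wt{\E}_{mid}$, and $\wt{\E}_{mid}$ is reduced of pure dimension $d$ by \cite[Theorem 6.4.8]{bh}. Next, the set of classical points $z=(\pi',\alpha')$ in $\Spa(\T_{U,h}^{K,mid})$ is Zariski dense, by equidimensionality and the argument in \cite[Proposition 6.4.6(4)]{bh}. At each such classical point, classical functoriality produces the refined $L$-packet $(\Pi(\pi'),\gamma')$, where $\gamma'$ is the refinement of $\Pi(\pi')$ induced by $\alpha'$; the identities above show that the restriction of the character $\phi_{z}$ to $\s(K)\subset \T(K)$ coincides with $\phi_{(\Pi(\pi'),\gamma')}$.

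The key remaining point is to see that $\phi_{(\Pi(\pi'),\gamma')}$ defines a classical point of $\Spa(\s_{U,h}^K)$ of the correct weight and slope $\leq h$. Since the weight $U$ has been arranged in the setup so that $\mathrm{res}: U\to \mathcal{W}$ is an open immersion and $(U,h)$ is slope-adapted for both $\GL_2$ and $\SL_2$ overconvergent cohomology, it suffices to check that $\phi_{(\Pi(\pi'),\gamma')}$ appears in $H^d(Y_K^1,\D_{\ka_z})_{\leq h}$ for the weight $\ka_z$ of $z$. This follows from Lemma \ref{vanishing for SL(2)} together with the surjection $H^d(Y_K^1,\D_{\ka_z,\star})\twoheadrightarrow H^d(Y_K^1,\mathscr{L}_{\ka_z})$ provided by the locally analytic BGG resolution, combined with the classical occurrence of $(\Pi(\pi'),\gamma')$ in the $\SL_2$-cohomology of $\mathscr{L}_{\ka_z}$ (one uses that $\ol{\rho}_{\pi'}=\ol{\rho}_\pi$ on a connected component, so the genericity and irreducibility hypotheses are propagated from our fixed $x$).

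Having produced a Zariski dense set of classical points of $\Spa(\T_{U,h}^{K,mid})$ whose $\s(K)$-characters factor through classical points of $\Spa(\s_{U,h}^K)$, I would then invoke the interpolation theorem of \cite{JN2} to upgrade this to an $\oo(U)$-algebra homomorphism $\s_{U,h}^K \to \T_{U,h}^{K,mid}$. Reducedness of $\T_{U,h}^{K,mid}$ gives uniqueness of the factorization and hence commutativity of the square. The main obstacle is arranging the hypotheses of the Chenevier-style interpolation of \cite{JN2} so that they apply in our setup — one must package $\s_{U,h}^K$ with its natural module $H^d(Y_K^1,\D_U)_{\leq h}$ as a small eigenvariety datum in the sense of loc.\ cit., and check that the $\GL_2$-character into $\T_{U,h}^{K,mid}$ defines a valid family of eigenvalues for this datum on a dense set of points; everything else is formal.
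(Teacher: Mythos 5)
Your proposal is correct and follows essentially the same approach as the paper: the paper's own proof is a short sketch citing the Chenevier-style interpolation theorem of \cite{JN2}, reducedness of $\T_{U,h}^{K,mid}$, Zariski density of classical points, and the fact that classical points of $\Spa(\T_{U,h}^{K,mid})$ map to classical points of $\Spa(\s_{U,h}^{K})$. You have merely filled in the details the paper leaves implicit — in particular the verification (via Lemma \ref{vanishing for SL(2)} and the integration map from the locally analytic BGG resolution) that the transferred system $\phi_{(\Pi(\pi'),\gamma')}$ genuinely lies in $H^d(Y_K^1,\D_{\ka_z})_{\leq h}$, which is exactly the point the paper summarizes as ``by the same procedure that associated $y$ to $x$.''
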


\begin{proof}
This is the $p$-adic interpolation of classical functoriality from $\GL_2$ to $\SL_2$. The proof is standard, so we will only sketch it. Classical weights are Zariski dense in $U$ by connectedness, so classical points $x^\prime = (\pi^\prime, \alpha^\prime)$ of small slope are Zariski dense in $\Spa(\T_{U,h}^{K,mid})$ since it is equidimensional of dimension $d$. These points `map' to points $y^\prime = (\Pi^\prime, \gamma^\prime)$ on $\Spa(\s_{U,h}^K)$ by the same procedure that associated $y$ to $x$ earlier in this subsection. Since $\T_{U,h}^{K,mid}$ is reduced, the map now exists by the interpolation theorem \cite[Theorem 3.2.1]{JN2}.
\end{proof} 

Our next step is to take the completed localization of all terms in diagram (\ref{hecke algebras prelim}) at $\p_y$. We write the diagram thus obtained as:
\begin{equation}\label{hecke algebras localized}
\T^K_{y} \hookleftarrow \s_{y}^{\GL_2, K} \twoheadrightarrow \ol{\s}_{y}^K \twoheadleftarrow \s_{y}^K.
\end{equation}
Note that $\T_y^K$ can also be described as the completed localization of $\T_{U,h}^{K,mid}$ at $\p_y$, and hence the $p$-adic functoriality map from Proposition \ref{p-adic functoriality} induces a map $\s_y^K \to \T_y^K$ whose image is $\s_y^{\GL_2,K}$. Note also that 
\[
\T^K_{y} = \prod_{z \in \pi_K^{-1}(y)} \T^K_{z},
\]
where $\pi_{K} : \Spec \T^K_{U,h}  \to \Spec \s_{U,h}^{\GL_2,K}$ is the map induced by the inclusion, and $\T^K_{z}$ is the completed local ring of $\T^K_{U,h}$ at $z$. To study these Hecke algebras, we will need to consider what happens when we vary $K$. If $K^\prime = (K^\prime)^p I \sub K$, then we have a diagram
\begin{equation}\label{hecke algebras changing level}
\begin{tikzcd}
\T_y^{K^\prime} \arrow[d] &   \s_y^{\GL_2,K^\prime} \arrow[l, hook] \arrow[d] \arrow[r, two heads]  &  \ol{\s}_y^{K^\prime} \arrow[d] \arrow[dr, dashed] & \s^{K^\prime}_y \arrow[l, two heads] \arrow[d]  \\
\T_y^{K}   & \s_y^{\GL_2,K} \arrow[l, hook] \arrow[r,two heads] & \ol{\s}_y^{K}  & \s^{K}_y \arrow[l, two heads]     
\end{tikzcd}
\end{equation}
The vertical arrows are the usual level changing maps, but the dashed arrow comes from Lemma \ref{shrinking} and exists whenever $K^\prime$ is small enough (depending on $K$). To finish this section, we will show that all vertical arrows are surjective. For the map $\T_y^{K^\prime} \to \T_y^{K}$, this follows from Lemma \ref{changing Hecke operators}. Surjectivity of the remaining maps also follow from the same argument (using Chebotarev) once we have established the existence of suitable pseudocharacters.

\medskip

For $\s_y^{\GL_2, K}$, consider the representation $\rho_{\hat{y}} : G_F \to \GL_2(\T_y^K)$ obtained from Lemma \ref{good nhood} and the map $\T_{U,h}^K \to \T_y^K$, and let $T_0 : G_F \to \T_y^K$ be the pseudocharacter corresponding to $\mathrm{ad}^0(\rho_{\hat{y}})$. Since $T_0(Frob_v) = q_v^{-1}(t_v +1) \in \s_y^{\GL_2,K}$ for all $v \notin S(K)$, $T_0$ is valued in $\s_y^{\GL_2,K}$. Chebotarev then gives us surjectivity of $\s_y^{\GL_2,K^\prime} \to \s_y^{\GL_2,K}$, which also implies surjectivity of $\ol{\s}_y^{K^\prime} \to \ol{\s}_y^{K}$. Finally, surjectivity of $\s_y^{K^\prime} \to \s_y^{K}$ follows from Proposition \ref{Galois rep SL2}. 

\medskip

Let us now take stock of the situation. We have a commutative diagram
\begin{equation}\label{main diagram}
\begin{tikzcd}
\T_y^K &   \s_y^{\GL_2,K} \arrow[l, hook] \arrow[r, two heads]  &  \ol{\s}_y^{K} & \s^{K}_y \arrow[l, two heads] \arrow[ll, two heads, bend right]
\end{tikzcd}
\end{equation}
where the map $\s_y^K \to \s_y^{\GL_2,K}$ is the $p$-adic functoriality map. In the rest of this section, we will prove that this $p$-adic functoriality map is an isomorphism, while simultaneously proving that $\s_y^K$ and $\s_y^{\GL_2,K}$ are independent of the tame level $K$. This is the main result of this section; the next section will analyse the situation further using Galois deformation theory. To prove these results, we first have to analyse $\T_y^K$. The set $\pi_K^{-1}(y)$ changes as $K$ changes, so $\T_y^K$ is visibly not independent of $K$, and we have to replace it with something that is. Recall the product decomposition
\[
\T_y^K = \prod_{z \in \pi_K^{-1}(y)} \T_z^K.
\]
We now consider the set $\pi_K^{-1}(y)$. Let $z \in \pi_K^{-1}(y)$. By Proposition \ref{twists are classical}, there exists a finite image Hecke character $\chi$, unramified at all $v\mid p$, such that
\[
z = (\pi \otimes (\chi \circ \det), \alpha_z)
\]
for some refinement $\alpha_z$. We do \emph{not} assert that $\chi \in H_K$, since we have not determined how ramified $\chi$ is away from $p$. Twisting by $\chi^{-1}$, we obtain a point 
\[
(\pi,\alpha^\prime) := (\pi, (\chi(\varpi_v)\alpha_{z,v})_v ).
\]
Here $\alpha^\prime$ is a refinement of $\pi$, possibly different from $\alpha$. Note that $(\pi,\alpha^\prime) \in \pi_K^{-1}(y)$. Let $\Phi$ denote the set of all points $(\pi,\alpha^\prime) \in \pi_K^{-1}(y)$ (i.e. all points in $\pi_K^{-1}(y)$ whose corresponding automorphic representation is $\pi$); this set is independent of $K$. If $\pi$ has CM by $\wt{F}/F$ we let $C_{\pi} = \{1,\psi \}$ where $\psi$ is the quadratic character corresponding to $\wt{F}/F$; otherwise set $C_{\pi} =1$. The group $C_\pi$ acts on $\Phi$ by twisting. Explicitly, if $\chi \in C_\pi$ and $(\pi,\alpha^\prime)\in \Phi$, then
\[
\chi\cdot(\pi,\alpha^\prime) := (\pi \otimes (\chi \circ \det), (\chi(\varpi_v)\alpha^\prime_v)_v ) = (\pi, (\chi(\varpi_v)\alpha^\prime_v)_v ),
\]
since $\pi \otimes (\chi \circ \det) = \pi$ for $\chi \in C_\pi$. Note also that all $\chi \in C_\pi$ are quadratic, so $\chi(\varpi_v)\in \{\pm 1\}$ for all $v\mid p$ (and hence $\chi\cdot(\pi,\alpha^\prime) \in \pi_K^{-1}(y)$).

\begin{proposition}\label{localstructure1}
The natural map 
\[
\s_y^{\GL_2,K} \to \prod_{z\in \Phi} \T_z^{K}
\]
is injective, and these Hecke algebras are naturally independent of $K$. Moreover, $C_\pi$ acts on $\prod_{z\in \Phi} \T_z^K$ and
\[
\s_y^{\GL_2,K} \sub \left( \prod_{z\in \Phi} \T_z^K \right)^{C_\pi}.
\]
\end{proposition}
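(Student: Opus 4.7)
The plan is to prove all three assertions together by passing to sufficiently small tame levels and exploiting the $H_K$-twisting action.

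First, the target $\prod_{z\in\Phi}\T_z^K$ is naturally independent of $K$: every $z\in\Phi$ has the form $(\pi,\alpha_z)$ and so shares the prime-to-$p$ conductor $\m$ of $\pi$, so Proposition \ref{changing tame level} yields a canonical isomorphism $\T_z^K\cong\T_z^{K_1^p(\m)I}$ for each such $z$.

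For injectivity, take $t\in\s_y^{\GL_2,K}$ mapping to zero in $\prod_{z\in\Phi}\T_z^K$ and fix $z\in\pi_K^{-1}(y)$; by Proposition \ref{twists are classical} we may write $z=(\pi\otimes\chi,\alpha_z)$ for a finite-order Hecke character $\chi$ unramified at $p$. Choose $K'=(K')^p I\sub K$ small enough that $\chi\in H_{K'}$; this is possible because $\chi$ has finite order and $H_{K'}$ eventually absorbs any such character as $(K')^p$ shrinks. Use the surjection in diagram (\ref{hecke algebras changing level}) to lift $t$ to $t'\in\s_y^{\GL_2,K'}$. The twisting action by $\chi\in H_{K'}$ on $\T_y^{K'}$ permutes $\pi_{K'}^{-1}(y)$ and sends $z':=(\pi,(\chi^{-1}(\varpi_v)\alpha_{z,v})_v)\in\Phi$ to $z$, producing an isomorphism $\T_{z'}^{K'}\xrightarrow{\sim}\T_z^{K'}$. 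By Lemma \ref{twisting and Hecke}, elements of $\s(K')$ commute with $H_{K'}$-twists, so $\s_y^{\GL_2,K'}$ sits in the $H_{K'}$-invariants of $\T_y^{K'}$; consequently the images of $t'$ in $\T_{z'}^{K'}$ and in $\T_z^{K'}$ agree under this isomorphism. Since $z'\in\Phi$, the identification $\T_{z'}^{K'}\cong\T_{z'}^K$ from Proposition \ref{changing tame level} is compatible with the surjection $\s_y^{\GL_2,K'}\twoheadrightarrow\s_y^{\GL_2,K}$, so the image of $t'$ in $\T_{z'}^{K'}$ equals the image of $t$ in $\T_{z'}^K$, which vanishes by hypothesis. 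Hence $t'$ vanishes in $\T_z^{K'}$, and applying the compatible surjection once more, $t$ vanishes in $\T_z^K$.

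The $C_\pi$-action on $\prod_{z\in\Phi}\T_z^K$ is constructed analogously: when $\pi$ has CM by $\wt F/F$ with $\psi$ the quadratic character cutting out $\wt F$, twisting by $\psi$ permutes $\Phi$ via $(\pi,\alpha)\mapsto(\pi,(\psi(\varpi_v)\alpha_v)_v)$ using $\pi\otimes\psi\cong\pi$; realizing this at a level $K'$ with $\psi\in H_{K'}$ and descending via Proposition \ref{changing tame level} yields a canonical $C_\pi$-action, and the $C_\pi$-invariance of $\s_y^{\GL_2,K}$ is the $H_{K'}$-invariance used above. Finally, the $K$-independence of $\s_y^{\GL_2,K}$ is then formal: for $K'\sub K$ the surjection $\s_y^{\GL_2,K'}\twoheadrightarrow\s_y^{\GL_2,K}$ is between subrings of the same $K$-independent target, hence an isomorphism. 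The main technical obstacle is bookkeeping the level-change compatibilities---verifying that the surjection $\T_y^{K'}\twoheadrightarrow\T_y^K$ of diagram (\ref{hecke algebras changing level}) restricts componentwise to surjections $\T_{\tilde z}^{K'}\twoheadrightarrow\T_z^K$ for chosen preimages $\tilde z$ of each $z\in\pi_K^{-1}(y)$, that on $\Phi$-components this is the isomorphism of Proposition \ref{changing tame level}, and that the twist-isomorphisms at level $K'$ interact correctly with the level-changing maps---but none of these verifications is conceptually deep.
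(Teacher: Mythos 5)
Your proof is correct and follows essentially the same route as the paper's: both establish injectivity by shrinking the tame level so the twisting character $\chi$ lies in $H_{K'}$, then exploit the $H_{K'}$-equivariance of the $\SL_2$-Hecke operators (Lemma \ref{twisting and Hecke}) together with the level-invariance from Proposition \ref{changing tame level} to reduce vanishing at an arbitrary $z\in\pi_K^{-1}(y)$ to vanishing at the corresponding $z'\in\Phi$. The only differences are organizational (you fix the $K$-independence of the target first, the paper proves injectivity first and deduces $K$-independence from it, but the two are logically interchangeable), and a minor slip in the last sentence: the map $\T_y^{K'}\twoheadrightarrow\T_y^K$ does not literally restrict componentwise, since $\pi_{K'}^{-1}(y)$ is in general strictly larger than $\pi_K^{-1}(y)$ — some factors $\T_w^{K'}$ map to zero — though this does not affect the substance of the argument.
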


\begin{proof}
We start by proving injectivity. Let $L \in \s_y^{\GL_2,K}$ be such that it maps to $0$ in $\prod_{z\in \Phi} \T_z^K$. We have to show that $L=0$. For this, it suffices to show that $L$ maps to $0$ in $\T_z^K$ for any $z \in \pi_K^{-1}(y)$. By the discussion above, there exists a finite order Hecke character $\chi$, unramified at all $v \mid p$, and a point $z^\prime \in \Phi$ such that $z^\prime$ is the twist of $z$ by $\chi$. Choose $K^\prime = (K^\prime)^p I  \sub K$ prime to $p$ such that $\chi \in H_{K^\prime}$. Then twisting by $\chi$ defines an automorphism of $\wt{\E}_{K^\prime}$ sending $z$ to $z^\prime$. In particular, $\chi$ defines an isomorphism $\chi : \T_z^{K^\prime} \to \T_{z^\prime}^{K^\prime}$, and the diagram
\[
\begin{tikzcd}
\s_y^{\GL_2,K^\prime} \arrow[d, two heads] \arrow[r] & \T_z^{K^\prime} \arrow[d] \arrow[r, "\chi"] & \T_{z^\prime}^{K^\prime} \arrow[d]  \\
\s_y^{\GL_2,K} \arrow[r] \arrow[rr, bend right] & \T_z^{K} & \T_{z^\prime}^{K}     
\end{tikzcd}
\]
commutes. By Proposition \ref{changing tame level}, the middle and right vertical maps are isomorphisms and by the discussion above the left vertical map is surjective. Since $L$ maps to $0$ in $\T_{z^\prime}^{K}$, a simple diagram chase implies that it must map to $0$ in $\T_z^{K}$ as well, finishing the proof of injectivity.

\medskip

For independence of $K$, Proposition \ref{changing tame level} implies that $\prod_{z\in \Phi} \T_z^{K^\prime} \to \prod_{z\in \Phi} \T_z^{K}$ is an isomorphism. It then follows from the injectivity above that the surjective map $\s_y^{\GL_2,K^\prime} \to \s_y^{\GL_2,K}$ is an isomorphism.

\medskip

Finally, we prove the statements regarding the $C_\pi$-action. If $C_\pi$ is trivial, there is nothing to prove. Otherwise, for small enough $K$, $\psi \in H_K$ which defines the action (by twisting), and it is clear that $\s_y^{\GL_2,K}$ lies inside the $C_\pi$-invariants. The statement for all $K$ then follows from independence of $K$.
\end{proof}

With this, we can now prove that the $p$-adic functoriality map $\s_y^K \to \s_y^{\GL_2,K}$ is an isomorphism.

\begin{theorem}\label{main section 3}
The $p$-adic functoriality map $\s_y^K \to \s_y^{\GL_2,K}$ is an isomorphism, and these Hecke algebras are naturally independent of $K$.
\end{theorem}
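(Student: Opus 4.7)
The plan is to show that both surjections $\s_y^{\GL_2,K} \twoheadrightarrow \ol{\s}_y^K$ and $\s_y^K \twoheadrightarrow \ol{\s}_y^K$ in diagram~\eqref{main diagram} are isomorphisms, which forces the $p$-adic functoriality map $\s_y^K \to \s_y^{\GL_2,K}$ to be their composite and hence itself an isomorphism. Independence of $K$ will then follow at once, since $\s_y^{\GL_2,K}$ is already known to be independent of $K$ by Proposition~\ref{localstructure1}.

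To prove that $\s_y^{\GL_2,K} \twoheadrightarrow \ol{\s}_y^K$ is an isomorphism, I would use the decomposition $H^\ast(Y_K,\D_U)_{\leq h,\p_y} = \bigoplus_{z\in\pi_K^{-1}(y)} H^\ast(Y_K,\D_U)_{\leq h,\p_z}$ and identify the submodule $H^\ast(Y_K,\D_U)_{\leq h,H_K,\p_y}$ with the sum of those summands whose points $z$ lie on the identity component of $Y_K$. Since elements of $\s(K)$ have trivial determinant, they preserve components, while $H_K$ commutes with the $\s(K)$-action by Lemma~\ref{twisting and Hecke}; consequently, for any $\chi\in H_K$ the twist isomorphism identifies the $\s(K)$-action on the summand at $z$ with its action on the summand at $\chi\cdot z$. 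By Proposition~\ref{twists are classical}, the points of $\pi_K^{-1}(y)$ are $H_K$-twists of elements of $\Phi$ by finite-order Hecke characters unramified at $p$, so after shrinking $K$ so that these characters define classes in $H_K$, every $H_K$-orbit in $\pi_K^{-1}(y)$ meets the identity component. The faithful action of $\s_y^{\GL_2,K}$ on $H^\ast(Y_K,\D_U)_{\leq h,\p_y}$ then descends faithfully to the identity-component summand, giving the required injectivity.

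For the surjection $\s_y^K \twoheadrightarrow \ol{\s}_y^K$, I would apply the previous step at every $K'\subset K$ and combine it with Proposition~\ref{localstructure1} to identify the level-lowering map $\ol{\s}_y^{K'} \to \ol{\s}_y^K$ with the isomorphism $\s_y^{\GL_2,K'} \xrightarrow{\sim} \s_y^{\GL_2,K}$; in particular, $\ol{\s}_y^{K'} \to \ol{\s}_y^K$ is itself an isomorphism. For $K'\subset K$ sufficiently small, Lemma~\ref{shrinking} yields the dashed surjection $\ol{\s}_y^{K'}\twoheadrightarrow\s_y^K$ in diagram~\eqref{hecke algebras changing level}, whose composition with the surjection $\s_y^K \twoheadrightarrow \ol{\s}_y^K$ coincides with the isomorphism $\ol{\s}_y^{K'} \twoheadrightarrow \ol{\s}_y^K$ just identified. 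A factorization of an isomorphism into two surjections must consist of two isomorphisms, so this forces $\s_y^K \xrightarrow{\sim} \ol{\s}_y^K$. Running the analogous argument at level $K'$, the composition $\s_y^{K'} \twoheadrightarrow \ol{\s}_y^{K'} \twoheadrightarrow \s_y^K$ is the level-lowering map and factors as two isomorphisms, so $\s_y^{K'} \xrightarrow{\sim} \s_y^K$, yielding independence of $K$.

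The main obstacle is the first step, specifically the verification that every $H_K$-orbit in $\pi_K^{-1}(y)$ meets the identity component of $Y_K$. Granted Proposition~\ref{twists are classical}, this is a bookkeeping issue requiring that the finite-order Hecke characters appearing there actually lie in $H_K$, which is arranged by shrinking $K$; this is precisely why the independence-of-$K$ statement is naturally proven in tandem with the isomorphism of the $p$-adic functoriality map rather than afterwards.
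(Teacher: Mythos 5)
Your step~1, where you try to prove directly that $\s_y^{\GL_2,K} \twoheadrightarrow \ol{\s}_y^K$ is an isomorphism, has a genuine gap. The $H_K$-coinvariants $H^\ast(Y_K,\D_U)_{\leq h,H_K,\p_y}$ are not ``the sum of those summands whose points $z$ lie on the identity component'': a Hecke eigensystem $z\in\pi_K^{-1}(y)$ is not a point of $Y_K$, and the $\T(K)$-eigenspace $H_z := H^\ast(Y_K,\D_U)_{\leq h,\p_z}$ is spread across all components. Decomposed by eigensystems, the $H_K$-coinvariants are $\bigoplus_{[z]} H_z^{\mathrm{Stab}_{H_K}(z)}$, one summand per $H_K$-orbit, where $\mathrm{Stab}_{H_K}(z)$ is the (possibly nontrivial) stabilizer. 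Your twist-isomorphism argument correctly identifies the $\s(K)$-actions on different members of an orbit, and so reduces to a single representative $z$; but it says nothing when $\mathrm{Stab}_{H_K}(z)\neq 1$. That case occurs precisely when $\pi$ has CM by $\wt{F}$ and every $v\mid p$ splits in $\wt{F}$ (with the refinement fixed by $\psi$), which is the endoscopic case the whole paper is after. There an element of the closure of $\s(K)$ could kill $H_z^{\mathrm{Stab}(z)}$ without killing $H_z$, and the assertion that the faithful action ``descends faithfully'' to the invariant subspace is literally a restatement of the injectivity you are trying to prove. Nothing in Proposition~\ref{twists are classical} or Lemma~\ref{twisting and Hecke} addresses the nontrivial-stabilizer case, so step~1 is circular exactly where the theorem has content.

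The paper's proof never establishes $\s_y^{\GL_2,K}\cong\ol{\s}_y^K$ as an intermediate step. It runs a single diagram chase on the analogue of~(\ref{hecke algebras changing level}) with the $\GL_2$-row replaced by the $p$-adic functoriality maps: given $a\in\ker(\s_y^K\to\s_y^{\GL_2,K})$, lift it to $a'\in\s_y^{K'}$; its image in $\s_y^{\GL_2,K'}$ maps to $0$ in $\s_y^{\GL_2,K}$, hence is $0$ by Proposition~\ref{localstructure1}, hence its image in $\ol{\s}_y^{K'}$ is $0$, hence $a=0$ by the dashed arrow $\ol{\s}_y^{K'}\twoheadrightarrow\s_y^K$. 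The dashed arrow, which you invoke in step~2, is the ingredient that makes the argument noncircular; your step~2 is logically sound once $\ol{\s}_y^{K'}\to\ol{\s}_y^K$ is known to be injective, but you derive that from the gapped step~1. If you run the chase directly, as the paper does, step~1 can be discarded entirely (and $\s_y^{\GL_2,K}\cong\ol{\s}_y^K$ drops out afterwards as a corollary).
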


\begin{proof}
Consider the commutative diagram
\[
\begin{tikzcd}
\s_y^{\GL_2,K^\prime} \arrow[d, two heads] \arrow[r, two heads]  &  \ol{\s}_y^{K^\prime} \arrow[d, two heads] \arrow[dr, dashed, , two heads] & \s^{K^\prime}_y \arrow[l, two heads] \arrow[d, two heads] \arrow[ll, two heads, bend right] \\
\s_y^{\GL_2,K} \arrow[r,two heads] & \ol{\s}_y^{K}  & \s^{K}_y. \arrow[l, two heads] \arrow[ll, two heads, bend left]  
\end{tikzcd}
\]
obtained from diagram (\ref{hecke algebras changing level}) by removing $\GL_2$-Hecke algebras and instead inserting the $p$-adic functoriality maps. Choose $K^\prime = (K^\prime)^p I \sub K$ small enough so that the dashed arrow exists. By Proposition \ref{localstructure1} the left vertical arrow is an isomorphism. The injectivity of $\s_y^{K} \to \s_y^{\GL_2,K}$ then follows from a simple diagram chase (crucially using the dashed arrow), and the theorem then follows immediately.
\end{proof}

\section{The local geometry of eigenvarieties}\label{sec: local geometry}

In this section we compute the local structure of the $\SL_2$-eigenvariety at classical points under certain assumptions. We build on the description from Theorem \ref{main section 3} by using Galois deformation theory and the results on pseudorepresentations and deformation theory from \S \ref{subsec: pseudoreps} and \S \ref{subsec: def theory}.

\subsection{Setup and the non-CM case}\label{subsec: setup and non-cm} Our setup in this whole section will be as \S \ref{subsec: p-adic functoriality}, with one additional assumption. We now recall this briefly: $(\pi, \alpha)$ is a refined automorphic representation for $\GL_2(\A_F)$ of weight $k$. We make the following assumptions on $\pi$ throughout the rest of this paper:

\begin{enumerate}

\item If $F \neq \Q$, then $\ol{\rho}_\pi$ is irreducible and generic;

\item For every $v\mid p$, there is no $U_v$-eigenvalue on $\pi_v^{I_v}$ of multiplicity bigger than $1$;

\item If $F \neq \Q$ and $\pi$ has complex multiplication by a quadratic CM extension $\wt{F}/F$, then $\wt{F} \not\subseteq F(\zeta_{p^\infty})$.

\end{enumerate}
Conditions (1) and (2) were present in \S \ref{subsec: p-adic functoriality} (and, more generally, heavily used in \S \ref{sec: eigenvarieties}). Condition (3) is new and plays a crucial role because it implies that the Bloch--Kato Selmer group $H_f^1(G_F, \mathrm{ad}^0 \rho_\pi)$ vanishes (more on this in Proposition \ref{decency} below). Our assumptions are closely related to decency in the sense of \cite[Definition 6.6.1]{bh}; they may be described as the most general conditions under which we currently know decency. Unlike the decency hypothesis itself, they are trivially invariant under twisting and do not depend on the choice of refinement\footnote{The problematic condition is that $x=(\pi,\alpha)$ lies on $\wt{\E}_{mid}$. It is not clear to us how to prove Proposition \ref{twists are on E-mid} with only the assumption that $x\in \wt{\E}_{mid}$.}. We record these statements in a proposition.

\begin{proposition}\label{decency}
If $\pi$ satisfies conditions (1)-(3), then $(\pi,\alpha)$ is decent for any choice of refinement $\alpha$. In particular, the adjoint Bloch--Kato Selmer group $H_f^1(G_F,\mathrm{ad}\,\rho_\pi)$ vanishes. Moreover, if $\chi$ is a finite Hecke character of $F$ unramified at all $v \mid p$, then $\pi \otimes (\chi \circ \det)$ satisfies conditions (1)-(3).
\end{proposition}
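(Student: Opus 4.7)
The plan is to unpack the decency hypothesis of \cite[Definition 6.6.1]{bh}, match each condition against (1)--(3), and then check invariance under twist by hand. Decency in our setting amounts to three things: that $(\pi,\alpha)$ defines a classical point on the middle degree eigenvariety $\wt{\E}_{mid}$; that the $U_v$-eigenvalues on $\pi_v^{I_v}$ are pairwise distinct for each $v\mid p$; and that the adjoint Bloch--Kato Selmer group $H_f^1(G_F,\ad\,\rho_\pi)$ vanishes. The middle-degree condition is automatic for $F=\Q$ (where $\wt{\E}=\wt{\E}_{mid}$) and follows from Corollary \ref{defining classical points} in general, using assumption (1). Regularity is exactly assumption (2).

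The Selmer group vanishing is the heart of the matter and is the step I expect to be the main obstacle. I would invoke the same chain of references used in the proof of Proposition \ref{smoothness}: \cite[Theorem 5.4]{newton-thorne} combined with \cite[Proposition 2.15(iv)]{bellaiche-crit}. When $F=\Q$ the required vanishing is classical; when $F\neq \Q$ and $\pi$ has no CM, the Newton--Thorne result applies directly to $\ad^0\rho_\pi$; when $\pi$ has CM by $\wt{F}$, the hypothesis $\wt{F}\not\subseteq F(\zeta_{p^\infty})$ in (3) is exactly what is needed to apply \cite[Theorem 5.4]{newton-thorne} via the reduction in \cite[Proposition 2.15(iv)]{bellaiche-crit}. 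This gives the ``in particular'' clause.

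The twist-invariance is then straightforward and may be verified condition by condition. Writing $\pi':=\pi\otimes(\chi\circ\det)$, one has $\ol{\rho}_{\pi'} \cong \ol{\rho}_\pi\otimes \ol{\chi}$; irreducibility is immediate, and genericity in the sense of \cite[Definition 7.1.3(3)]{caraiani-tamiozzo} is preserved because twisting preserves ratios of Frobenius eigenvalues at unramified places. Since the definition only demands \emph{some} split prime $\ell\nmid 2p$ meeting the conditions, and $\chi$ is ramified at only finitely many places, Chebotarev lets us choose an $\ell$ that works for both $\ol{\rho}_\pi$ and $\ol{\chi}$. Condition (2) is preserved because the $U_v$-eigenvalues on $(\pi')_v^{I_v}$ are those of $\pi_v^{I_v}$ multiplied by the nonzero scalar $\chi_v(\varpi_v)$, which makes sense since $\chi$ is unramified at each $v\mid p$. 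Finally, $\pi'$ has CM by a quadratic extension $\wt{F}/F$ if and only if $\pi$ does, because $\pi'\cong \pi'\otimes\psi$ (for $\psi$ the quadratic character of $\wt{F}/F$) is equivalent to $\pi\cong \pi\otimes\psi$; hence the hypothesis $\wt{F}\not\subseteq F(\zeta_{p^\infty})$ transfers verbatim.
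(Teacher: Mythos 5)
Your proof is correct and follows essentially the same route as the paper: unpack the three components of decency, verify the middle-degree condition via Theorem \ref{consequence of ct} (packaged in Corollary \ref{defining classical points}), match regularity with (2), and derive the Selmer vanishing from Newton--Thorne together with \cite[Proposition 2.15(iv)]{bellaiche-crit}. The paper simply asserts the twist-invariance of (1)--(3) as ``trivial'' where you spell it out condition by condition (your Chebotarev argument for preserving genericity is the right reason), and the paper attributes \cite[Proposition 2.15(iv)]{bellaiche-crit} specifically to the case $F=\Q$ with CM and Newton--Thorne to all others, a slightly different division of cases from yours, but the references and substance are the same.
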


\begin{proof}
As mentioned above, conditions (1)-(3) are trivially invariant under twisting, so we turn to verifying decency. By definition, $x=(\pi,\alpha)$  decent if it lies on $\wt{\E}_{mid}$, if condition (2) above is satisfied, and if $H_f^1(G_F,\mathrm{ad}\,\rho_\pi)=0$. By Theorem \ref{consequence of ct}, condition (1) implies that $x$ lies on $\wt{\E}_{mid}$, and finally condition (3) implies that $H_f^1(G_F,\mathrm{ad}\,\rho_\pi)=0$  by \cite[Proposition 2.15(iv)]{bellaiche-crit} (when $F=\Q$ and $\pi$ has CM) and \cite[Theorem 5.4]{newton-thorne} (the remaining cases), so $(\pi,\alpha)$ is decent.
\end{proof}

For an extensive discussion of the decency condition we refer to \cite[\S 1.6]{bh}. As in \S \ref{subsec: p-adic functoriality} the pair $(\pi,\alpha)$ gives rise to a global refined $L$-packet $(\Pi,\gamma)$ of $\SL_2(\A_F)$, and we get classical points 
\[
x=(\pi,\alpha) \in \wt{\E} = \wt{\E}_{K^p}
\text{ and }
y = (\Pi,\gamma) \in \E_{K^p}.
\]
In light of Proposition \ref{localstructure1} and  Theorem \ref{main section 3} the choice of $K^p$ does not matter and we will drop it from the notation. We also abbreviate $S:=S(K)$. From those results, we may summarize our situation as follows: Let $\Lambda$ be the completed local ring of weight space at $k$. The completed local ring $\s_y$ of $\E$ at $y$ is the sub-$\Lambda$-algebra of $\prod_{z\in \Phi} \T_z$ generated by the $\SL_2$-Hecke operators, and lies inside the $C_\pi$-invariants. Here $\Phi$ is the set of refined automorphic representations $z = (\pi,\alpha_z)$ such that $\alpha_z$ maps to $\gamma$ and $\T_z$ is completed local ring of $\wt{\E}$ at $z$, and $C_\pi$ is the set of quadratic characters $\eta$ such that $\rho_\pi \otimes \eta \cong \rho_\pi$. The group $C_\pi$ is non-trivial if and only $\pi$ has CM, and in that case $C_\pi$ has order $2$ with the non-trivial element denoted by $\psi$. Since $\pi_v$ has non-zero Iwahori-fixed vectors for all $v\mid p$ (by definition), this means that:
\begin{itemize}
\item $\# \Phi = 2^r$, where $r$ is the number of places $v \mid p$ such that $\pi_v$ is an unramified principal series and $\alpha_v = -\beta_v$, where $\beta_v$ is the other $U_v$-eigenvalue on $\pi_v^{I_v}$.
\end{itemize}
Let us give a few details in the case when $\pi$ has CM by a quadratic CM extension $\wt{F}/F$. When $v$ is inert, the $L$-parameter of $\pi_v$ has the form $\chi \oplus \chi \psi_v$ for some unramified character $\chi$, and $\psi_v$ is the unramified quadratic character. In particular, the Satake parameters are $\chi(\varpi_v)$ and $-\chi(\varpi_v)$, so the ratio of the $U_v$-eigenvalues on $\pi_v^{I_v}$ is $-1$. When $v$ splits into $w \ol{w}$ say, we have $\rho_v = \chi_1 \oplus \chi_2$ for two unramified characters $\chi_1$ and $\chi_2$. The Satake parameters are $\chi_1(\varpi_w)$ and $\chi_2(\varpi_{\ol{w}})$ for uniformizers $\varpi_w$ of $\wt{F}_w$ \and $\varpi_{\ol{w}}$ of  $\wt{F}_{\ol{w}}$. When $F=\Q$ they have different slopes and so their ratio is always different from $-1$. In general however it can happen that $\chi_1(\varpi_w)=-\chi_2(\varpi_{\ol{w}})$. This is discussed in detail in \S \ref{subsec:CMHilbert}; see, in particular, Proposition \ref{behaviour of av at split primes}. 

\medskip

Let us now introduce the Galois deformation rings that will be relevant to us. Let $\rho = \rho_\pi$, and let $R_\rho$ be the (unrestricted) universal deformation ring of $\rho$ with fixed determinant $\det \rho$, as introduced (with slightly different notation) in \S \ref{subsec: def theory}. By Proposition \ref{smoothness}, $R_\rho$ is formally smooth of dimension $2d$. We will need a different description of the tangent space of $R_\rho$ than the one used in the proof of Proposition \ref{smoothness}. If $L/\Qp$ is a finite extension and $\sigma$ is finite dimensional continuous $G_L$-representation over $\olqp$, we write $H^1_f(G_L,\sigma)$ for the local (crystalline) Bloch--Kato Selmer group and $H_{/f}^1(G_L,\sigma)$ for the quotient
\[
H_{/f}^1(G_L,\sigma) := H^1(G_L,\sigma)/H_{f}^1(G_L,\sigma).
\]
There is a natural map
\begin{equation}\label{description of tangent space}
H^1(G_F,\mathrm{ad}^0\rho) \to \prod_{v\mid p} H_{/f}^1(G_{F_v},\mathrm{ad}^0\rho),
\end{equation}
which is injective since the kernel\footnote{Note that, since $\pi$  is cuspidal, $\rho_v$ is generic at all $v \nmid p$ by local-global compatibility. Hence the quotient $H^1(G_{F_v},\mathrm{ad}^0\rho)/H_{ur}^1(G_{F_v},\mathrm{ad}^0\rho) = 0$ for all $v\nmid p$, so $H_f^1(G_F,\mathrm{ad}^0\rho)$ is indeed the kernel to the map in (\ref{description of tangent space}).} $H^1_f(G_F,\mathrm{ad}^0\rho)$ vanishes by Proposition \ref{decency}. In fact, it is an isomorphism:

\begin{proposition}\label{description of tangent space no refinement}
The natural map (\ref{description of tangent space}) is an isomorphism.
\end{proposition}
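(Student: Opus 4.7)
The plan is to apply the Greenberg--Wiles / Poitou--Tate exact sequence for the Bloch--Kato Selmer group $H^1_f(G_F, V)$ with $V = \ad^0\rho$, using the local conditions $L_v = H^1_f(G_{F_v},V)$ at $v\mid p$ and $L_v = H^1_{ur}(G_{F_v},V)$ at other finite $v \in S$. Self-duality of $V$ via the trace pairing identifies the dual local conditions $L_v^\perp$ with the analogous conditions for $V(1)$, and the sequence reads
\[
0 \to H^1_f(G_F,V) \to H^1(G_{F,S},V) \to \prod_{v\in S_f} H^1(G_{F_v},V)/L_v \to H^1_f(G_F,V(1))^\vee \to H^2(G_{F,S},V).
\]
At finite $v\nmid p$ the quotient $H^1/L_v = H^1_{/ur}(G_{F_v},V)$ vanishes by the genericity of $\rho_v$ already recalled in the paragraph preceding the statement, so the middle product collapses to $\prod_{v\mid p} H^1_{/f}(G_{F_v},V)$, which is precisely the target of the map~(\ref{description of tangent space}).

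Injectivity, i.e.\ vanishing of the first term $H^1_f(G_F,V)$, is supplied by Proposition~\ref{decency}. For surjectivity, the plan is to combine the vanishing $H^2(G_{F,S},V) = 0$ established in the proof of Proposition~\ref{smoothness} with the vanishing of the twisted adjoint Selmer group $H^1_f(G_F, \ad^0\rho(1))$. The latter is the key new input: it should follow from the same sources used in Proposition~\ref{decency}, namely \cite[Theorem 5.4]{newton-thorne} in general and \cite[Proposition 2.15(iv)]{bellaiche-crit} in the CM case over $\Q$, whose arguments apply equally well to the cyclotomic twist of the adjoint representation.

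Alternatively, the plan can be carried out by a direct dimension count. Local Tate duality combined with the Bloch--Kato formula $\dim H^1_f(G_{F_v},V) = \dim H^0(G_{F_v},V) + \dim_{\Qp} t_V$ gives
\[
\dim H^1_{/f}(G_{F_v},V) \;=\; 2d_v + \dim H^0(G_{F_v},\ad^0\rho(1)),
\]
where $\dim_{\Qp} t_V = d_v$ comes from the Hodge--Tate weights $\{-(k_\sigma+1),0,k_\sigma+1\}$ of $\ad^0\rho$. Since $\ad^0\rho(1)$ has Hodge--Tate weights $\{-(k_\sigma+2),-1,k_\sigma\}$, the local $H^0$ vanishes automatically whenever $\pi$ is not of parallel weight $2$ above $v$; summing then yields $\sum_{v\mid p} \dim H^1_{/f}(G_{F_v},V) = 2d$, matching the dimension $\dim H^1(G_{F,S},V) = 2d$ from Proposition~\ref{smoothness} and forcing the injection to be an isomorphism. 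The expected obstacle in this approach is the parallel weight $2$ case, where Ramanujan-type bounds on the Satake parameters of $\pi$ (known for cuspidal cohomological Hilbert modular forms) are needed to exclude the degenerate configuration $\rho_v \cong \chi \oplus \chi\epsilon_{\mathrm{cyc}}^{-1}$.
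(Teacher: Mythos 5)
Your second (``alternative'') route is essentially identical to the paper's own proof: both combine the local Euler characteristic formula with the Bloch--Kato/Pottharst dimension formula to get
\[
\dim H^1_{/f}(G_{F_v},\ad^0\rho) \;=\; 2[F_v:\Qp] + \dim H^0(G_{F_v},\ad^0\rho(1)),
\]
and then match against $\dim H^1(G_{F,S},\ad^0\rho)=2d$ from Proposition~\ref{smoothness}. The gap you flag --- that the $H^0$ term cannot be killed on Hodge--Tate grounds alone when $\pi$ has parallel weight $2$ above $v$ --- is real, and it is exactly where the paper invokes the ``regular generic triangulation'' of $D_{\mathrm{rig}}(\rho_v)$ via \cite[Proposition 6.6.5]{bh} (building on \cite[Definition 3.4]{bergdall-smooth}), which rules out $\alpha_v/\beta_v = q_v^{\pm1}$ uniformly, including for the semistable non-crystalline $\pi_v$. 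Your appeal to Ramanujan-type bounds is morally the same input, but as stated it is a bit loose: you would want to phrase it as a statement about the ratio of crystalline Frobenius eigenvalues (not only Satake parameters of the unramified spherical representation), and handle the Steinberg/semistable case separately.

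Your first route (Greenberg--Wiles/Poitou--Tate) is a genuinely different argument from the paper's, and it is sound once a gap is filled. The exact control sequence reduces surjectivity of the restriction map to the vanishing of the dual Selmer group $H^1_f(G_F,\ad^0\rho(1))$; your additional use of $H^2(G_{F,S},\ad^0\rho)=0$ is in fact redundant here, since the dual Selmer vanishing alone forces the boundary map out of $\prod_{v\mid p}H^1_{/f}$ to vanish. The gap is the assertion that $H^1_f(G_F,\ad^0\rho(1))=0$ ``should follow from the same sources'': the cited results of Newton--Thorne and Bella\"iche are stated for the adjoint representation itself, not its Tate twist, and they do not transfer ``verbatim.'' The clean fix is the Wiles dimension formula: since $\ad^0\rho$ is self-dual, $\rho$ is totally odd, and the local conditions (crystalline at $v\mid p$, unramified at $v\nmid p\infty$) are in exact annihilation, the local terms cancel and one gets $\dim H^1_f(G_F,\ad^0\rho)=\dim H^1_f(G_F,\ad^0\rho(1))$; so the given vanishing of the former yields the latter. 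With that patch, your Poitou--Tate route is arguably cleaner than the paper's in that it needs no local input at $p$ beyond the Bloch--Kato orthogonality. What it loses is the explicit formula $\dim H^1_{/f}(G_{F_v},\ad^0\rho)=2[F_v:\Qp]$, which the paper's local computation produces directly and which is reused in Proposition~\ref{description of tangent spaces I}.
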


\begin{proof}
From the proof of Proposition \ref{smoothness}, we know that $H^1(G_F,\mathrm{ad}^0\rho)$ has dimension $2d$. Since the map (\ref{description of tangent space}) is injective it suffices to show that $H_{/f}^1(G_{F_v},\mathrm{ad}^0\rho)$ has dimension $2[F_v:\Qp]$ for all $v \mid p$. By the local Euler characteristic formula we have
\[
\dim H^1(G_{F_v},\mathrm{ad}^0\rho) = \dim H^0(G_{F_v},\mathrm{ad}^0\rho) + \dim H^2(G_{F_v},\mathrm{ad}^0\rho) + 3[F_v:\Qp],
\]
and by \cite[Equation 3-2]{pottharst} we have
\[
\dim H_f^1(G_{F_v},\mathrm{ad}^0\rho) = \dim H^0(G_{F_v},\mathrm{ad}^0\rho) + \dim D_{dR}(\mathrm{ad}^0\rho_v)/D_{dR}^+(\mathrm{ad}^0\rho_v).
\]
It follows that 
\[
\dim H_{/f}^1(G_{F_v},\mathrm{ad}^0\rho) = \dim H^2(G_{F_v},\mathrm{ad}^0\rho) + 3[F_v:\Qp] - \dim D_{dR}(\mathrm{ad}^0\rho_v)/D_{dR}^+(\mathrm{ad}^0\rho_v).
\]
To conclude, we note that $ \dim D_{dR}(\mathrm{ad}^0\rho_v)/D_{dR}^+(\mathrm{ad}^0\rho_v) = [F_v:\Qp]$ and that $H^2(G_{F_v},\mathrm{ad}^0\rho)=0$ (the latter point follows since $D_{rig}(\rho_v)$ has a regular generic triangulation in the sense of \cite[Definition 3.4]{bergdall-smooth}; this is proved in the proof of \cite[Proposition 6.6.5]{bh}).
\end{proof}

Now let $z=(\pi,\alpha_z) \in \Phi$. We will recall the structure of the Hecke algebras $\T_z$ from \cite{bh}, where it is equated with a certain Galois deformation ring. For any finite place $v$ of $F$, let $\rho_v$ denote the restriction of $\rho$ to a decomposition group at $v$, and let $\mf{X}_v$ denote the (not necessarily representable) deformation functor for $\rho_v$ with fixed determinant $\det \rho_v$. Recall that we have a finite extension $E/\Qp$ --- the coefficient field --- which we assume to be large enough for our constructions. Let $\eta \in E$. Following \cite{bergdall-smooth}, we will define a subfunctor $\mf{X}_v^{\mathrm{Ref}} = \mf{X}_v^{\mathrm{Ref},\eta}$. Recall that $x$ has cohomological weight $k=(k_1,k_2)$, and consider the restriction $k_{2,v}$ of $k_2$ to $\mc{O}_v^{\times}$, via the natural embedding $\mc{O}^{\times}_v\hookrightarrow \mc{O}_p^{\times}$, and define $\mathrm{LT}(k_{2,v}):F_v^\times \rightarrow E^{\times}$ to be the extension of $k_{2,v}$ to $F^\times_v$ defined by sending $\varpi_v$ to $1$. As $\mathrm{LT}(k_{2,v})$ is bounded, it extends to a character of $G_{F_v}$ which we also denote by $\mathrm{LT}(k_{2,v})$. The subfunctor $\mf{X}_v^{\mathrm{Ref},\eta}\sub \mf{X}_v$ of weakly refined deformations (with respect to $\eta$) is defined by declaring that a deformation $\wt{\rho}_v$ of $\rho_v$ to $A$ is in $\mf{X}_v^{\mathrm{Ref},\eta}(A)$ if and only if 
\[
D^+_{\rm{crys}}(\widetilde{\rho}_v \otimes \mathrm{LT}(\kappa_{2,v}))^{\varphi^{f_v}=\wt{\eta}}
\]
is free of rank one for some lift $\wt{\eta}$ of $\varpi_v^{k_2}\eta$ to $A$. Here the character $\kappa_{2,v}: \mc{O}_v^{\times}\rightarrow A^{\times}$ is a lift of $k_{2,v}$ (viewed as a character on $\oo_v^\times$) and encodes (and is built out of) the Hodge--Tate--Sen weights of $\wt{\rho}_v$ which lift $-k_{2,v}$ (now viewed as a $\Sigma_v$-tuple of integers); we refer to \cite[\S 2.3, 2.4]{bergdall-smooth} for the details (we remark that $\wt{\rho}$ determines $\kappa_{2,v}$ uniquely). We note that, for any $\sigma \in \Sigma_v$, $k_{2,\sigma}$ is the smallest $\sigma$-Hodge--Tate--Sen weight of $\rho_v$. By construction, $\wt{\rho}_v \otimes \mathrm{LT}(\kappa_{2,v})$ has exactly one $\sigma$-Hodge--Tate--Sen weight equal to $0$ (for all $\sigma \in \Sigma_v$). Let $\alpha_v$ and $\beta_v$ be the $\varphi^{f_v}$-eigenvalues on $D_{\rm{crys}}(\rho_v)$ (with $\beta_v$ only considered if $\rho_v$ is crystalline). Then $\mf{X}_{v}^{\mathrm{Ref},\eta}$ is empty unless $\eta = \alpha_v $ or $\eta = \beta_v$, as $\varpi_v^{-k_{2,v}}\alpha_v $ and $\varpi_v^{-k_{2,v}}\beta_v$ are the eigenvalues for $\varphi^{f_v}$ acting on $D^+_{\rm{crys}}(\rho_v \otimes \mathrm{LT}(k_{2,v}))$. By \cite[Proposition 3.1]{bergdall-smooth}, valid also in the semistable non-crystalline case as pointed out in \cite[Footnote 26]{bh}, $\mf{X}_v^{\mathrm{Ref},\eta} \sub \mf{X}_v$ is relatively representable for $\eta =\alpha_v, \beta_v$. Now let $\eta = (\eta_v)_{v\mid p} \in \prod_{v\mid p} E$. We may then define a global weakly refined deformation functor
\[
\mf{X}_\rho^{\mathrm{Ref},\eta} \sub \mf{X}_\rho,
\]
where $\mf{X}_\rho$ is the unrestricted deformation functor for $\rho$ (represented by $R_\rho$), as the subfunctor of those deformations $\wt{\rho}$ of $\rho$ whose restriction to each $v\mid p$ lies in $\mf{X}_v^{\mathrm{Ref},\eta_v}$. By relative representability of the $\mf{X}_v^{\Reff,\eta_v}$, $\mf{X}_\rho^{\Reff,\eta}$ is representable by a quotient $R_\rho^{\mathrm{Ref},\eta}$ of $R_\rho$. We note that if $\eta$ is a refinement of $\pi$, then $R_\rho^{\Reff,\eta}$ surjects onto the completed local ring $\T_{(\pi,\eta)}$ of $\wt{\E}$, by the argument in the proof of \cite[Theorem 6.6.3]{bh} (the only difference being that we fix the determinant). Let $\mf{t}_\rho^{\Reff,\eta}$ and $\mf{t}_v^{\Reff,\eta_v}$ denote the tangent spaces of $\mf{X}_\rho^{\mathrm{Ref},\eta}$ and $\mf{X}_v^{\Reff,\eta_v}$, respectively.

\begin{proposition}\label{description of tangent spaces I}
Let $v\mid p$.

\begin{enumerate}
\item If $\eta_v = \alpha_v$ or $\beta_v$, then $H^1_f(G_{F_v},\ad^0 \rho_v) \sub \mf{t}_v^{\Reff,\eta_v}$ and the quotient $\mf{t}_{v, /f}^{\Reff,\eta_v}$ has dimension $[F_v:\Qp]$.

\item If $\eta$ is a refinement of $\pi$, then $R_\rho^{\Reff,\eta}$ is formally smooth of dimension $d$ and the surjection $R_\rho^{\Reff,\eta} \to \T_{(\pi,\eta)}$ is an isomorphism. In particular, $\T_{(\pi,\eta)}$ is topologically generated by the $T_v$-operators for $v\notin S$. 

\item If $\pi_v$ is an unramified principal series, then $H_{/f}^1(G_{F_v},\ad^0 \rho_v) = \mf{t}_{v, /f}^{\Reff,\alpha_v} \oplus \mf{t}_{v, /f}^{\Reff,\beta_v}$.
\end{enumerate}
\end{proposition}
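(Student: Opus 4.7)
The plan is to deduce all three parts from a careful tangent-space computation combined with the $p$-adic Hodge-theoretic analysis of the weakly refined condition developed in \cite{bergdall-smooth}, using Proposition \ref{description of tangent space no refinement} as the key global input.

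\medskip

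\textbf{Part (1).} For the inclusion $H^1_f(G_{F_v},\ad^0\rho_v) \sub \mf{t}_v^{\Reff,\eta_v}$, observe that a crystalline first-order deformation has the same Hodge--Tate--Sen weights as $\rho_v$, so one can take $\ka_{2,v} = k_{2,v}$, and then $\wt{\rho}_v\otimes \LT(k_{2,v})$ is still crystalline. By assumption (2) the two $\varphi^{f_v}$-eigenvalues $\varpi_v^{-k_{2,v}}\alpha_v$ and $\varpi_v^{-k_{2,v}}\beta_v$ on $D_{\mathrm{crys}}(\rho_v \otimes \LT(k_{2,v}))$ are distinct, so the eigenline for $\varpi_v^{-k_{2,v}}\eta_v$ deforms uniquely (Hensel), providing the required rank-one piece of $D^+_{\mathrm{crys}}(\wt{\rho}_v\otimes \LT(k_{2,v}))$. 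The dimension statement is \cite[Proposition 3.1]{bergdall-smooth} (together with its extension to the semistable non-crystalline case noted in the footnote to \cite[Theorem 6.6.3]{bh}): the refinement condition cuts out a subspace of codimension exactly $[F_v:\Qp]$ inside $H^1_{/f}(G_{F_v},\ad^0\rho_v)$, which has dimension $2[F_v:\Qp]$ by the computation in the proof of Proposition \ref{description of tangent space no refinement}.

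\medskip

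\textbf{Part (2).} The tangent space $\mf{t}_\rho^{\Reff,\eta}$ of $\mf{X}_\rho^{\Reff,\eta}$ is by construction the kernel of
\[
H^1(G_{F,S},\ad^0\rho) \longrightarrow \prod_{v\mid p} H^1(G_{F_v},\ad^0\rho_v)\big/\mf{t}_v^{\Reff,\eta_v}.
\]
By part (1) each local quotient coincides with $H^1_{/f}(G_{F_v},\ad^0\rho_v)/\mf{t}_{v,/f}^{\Reff,\eta_v}$, and Proposition \ref{description of tangent space no refinement} identifies the source with $\prod_{v\mid p} H^1_{/f}(G_{F_v},\ad^0\rho_v)$. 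Hence
\[
\mf{t}_\rho^{\Reff,\eta} \;\cong\; \prod_{v\mid p} \mf{t}_{v,/f}^{\Reff,\eta_v}, \qquad \dim_E \mf{t}_\rho^{\Reff,\eta} \;=\; \sum_{v\mid p}[F_v:\Qp] \;=\; d.
\]
Since $R_\rho^{\Reff,\eta}$ surjects onto the local ring $\T_{(\pi,\eta)}$ of the $d$-dimensional equidimensional reduced space $\wt{\E}_{mid}$ at $x=(\pi,\eta)$, we have $\dim R_\rho^{\Reff,\eta}\geq d$; combined with the tangent-space bound $\dim R_\rho^{\Reff,\eta}\leq d$, both rings are regular (hence formally smooth over $E$) of dimension $d$, and the surjection must be an isomorphism. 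The final assertion is then standard: by Chebotarev the pseudocharacter of the universal lift is determined by its values at $\Frob_v$ for $v\notin S$, and for a residually absolutely irreducible deformation ring this pseudocharacter topologically generates the ring (Rouquier--Nyssen, comparing any quotient via the universal property); under the isomorphism $R_\rho^{\Reff,\eta}\cong \T_{(\pi,\eta)}$ these traces are identified with $T_v$.

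\medskip

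\textbf{Part (3).} Both sides have dimension $2[F_v:\Qp]$ (by (1) and by $\dim H^1_{/f}(G_{F_v},\ad^0\rho_v)=2[F_v:\Qp]$ from the proof of Proposition \ref{description of tangent space no refinement}), so it suffices to show that the two subspaces intersect trivially in $H^1_{/f}$, i.e.\ that $\mf{t}_v^{\Reff,\alpha_v} \cap \mf{t}_v^{\Reff,\beta_v} \sub H^1_f(G_{F_v},\ad^0\rho_v)$. Take a first-order deformation $\wt{\rho}_v$ lying in both functors; both conditions force the same Hodge--Tate--Sen datum $\ka_{2,v}$, being determined by $\wt{\rho}_v$. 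Then $D^+_{\mathrm{crys}}(\wt{\rho}_v\otimes\LT(\ka_{2,v}))$ contains rank-one $\varphi^{f_v}$-eigenspaces for both lifts $\wt{\alpha}_v$ and $\wt{\beta}_v$, and these are distinct because they reduce modulo the maximal ideal to the distinct eigenvalues $\varpi_v^{-k_{2,v}}\alpha_v\neq \varpi_v^{-k_{2,v}}\beta_v$ guaranteed by assumption (2). Thus $\dim D^+_{\mathrm{crys}}(\wt{\rho}_v\otimes\LT(\ka_{2,v}))\geq 2$, which equals the rank of the representation; this forces $\wt{\rho}_v\otimes\LT(\ka_{2,v})$, hence $\wt{\rho}_v$, to be crystalline, as desired.

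\medskip

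\textbf{Main obstacle.} The most delicate point is the dimension count in Part (1): the codimension-$[F_v:\Qp]$ statement for $\mf{t}_{v,/f}^{\Reff,\eta_v}\sub H^1_{/f}$. In the crystalline principal series case this is \cite[Proposition 3.1]{bergdall-smooth}, but when $\pi_v$ is an unramified twist of a special representation (so $D_{\mathrm{crys}}(\rho_v)$ is only $1$-dimensional and there is a single refinement) one must verify that Bergdall's analysis extends with the correct codimension; this is the content of the footnote to \cite[Theorem 6.6.3]{bh} and should be spelled out carefully using the triangulation of $\rho_v$ predicted by local-global compatibility.
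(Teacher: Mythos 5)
Your overall architecture matches the paper's (local bound plus global $R=T$ comparison), and Parts (2) and (3) are essentially correct, with your Part (3) usefully spelling out why a first-order deformation in both refined functors must be crystalline. However, there is a genuine misstep in Part (1) that amounts to a small circularity in the presentation.

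You claim that the local $p$-adic Hodge theory (citing \cite[Proposition 3.1]{bergdall-smooth}) directly yields $\dim \mf{t}_{v,/f}^{\Reff,\eta_v} = [F_v:\Qp]$. The paper is explicit that this is \emph{not} what is available locally: \cite[Proposition 3.1]{bergdall-smooth} is cited only for relative representability of $\mf{X}_v^{\Reff,\eta_v}\sub\mf{X}_v$, while the dimension estimate coming from \cite[Proposition 6.6.5, Lemma 6.6.7]{bh} gives only the \emph{upper bound} $\dim \mf{t}_{v,/f}^{\Reff,\eta_v} \leq [F_v:\Qp]$. The matching lower bound is not a purely local fact; it comes from the global comparison in Part (2): the upper bound on tangent space dimension gives $\dim \mf{t}_\rho^{\Reff,\eta}\leq d$, while the surjection onto $\T_{(\pi,\eta)}$ (which has Krull dimension $d$ because $\wt{\E}_{mid}$ is equidimensional of dimension $d$) forces equality, and this in turn propagates back to give the exact local dimensions in Part (1). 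You do in fact write ``the tangent-space bound $\dim R_\rho^{\Reff,\eta}\leq d$'' in Part (2), so the global step of your argument only ever uses the upper bound — which means your proof can be repaired simply by weakening Part (1) to an inequality and deducing equality as a consequence of Part (2). As written, though, Part (1) asserts an exact local computation that the cited reference does not supply, and the ``Main obstacle'' paragraph, while flagging the delicacy, still frames the equality as a purely local statement to be extracted from Bergdall's analysis rather than as a consequence of the global $R=T$ output.
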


\begin{proof}
We prove parts (1) and (2) together. Part (1) is essentially proved by \cite[Proposition 6.6.5, Lemma 6.6.7]{bh}, except that we only get $\dim \mf{t}_{v, /f}^{\Reff,\eta_v} \leq [F_v: \Qp]$. To prove equality, let $\eta = (\eta_v)_{v\mid p}$ to be a refinement of $\pi$, and then note that, from the definition and Proposition \ref{description of tangent space no refinement}, we have 
\[
\mf{t}_\rho^{\Reff,\eta} \cong \bigoplus_{v\mid p} \mf{t}_{v, /f}^{\Reff,\eta_v}.
\]
Thus $R_\rho^{\Reff,\eta}$ has a tangent space of dimension $\leq d$, but its quotient $\T_{(\pi,\eta)}$ has Krull dimension $d$ (for surjectivity of $R_\rho^{\Reff,\eta} \to \T_{(\pi,\eta)}$, see e.g. \cite[Proposition 4.3]{bergdall-smooth}). The equality in (1) follows, and (2) follows as well.

\medskip

For part (3) it suffices to prove that $\mf{t}_{v, /f}^{\Reff,\alpha_v} \cap \mf{t}_{v, /f}^{\Reff,\beta_v} = 0$ by part (1), since $\dim H_{/f}^1(G_{F_v},\ad^0 \rho_v) = 2[F_v:\Qp]$. But any deformation in $\mf{t}_{v}^{\Reff,\alpha_v} \cap \mf{t}_{v}^{\Reff,\beta_v}$ has to be crystalline, and hence lie in $H_f^1(G_{F_v},\ad^0\rho_v)$. This shows that $\mf{t}_{v, /f}^{\Reff,\alpha_v} \cap \mf{t}_{v, /f}^{\Reff,\beta_v} = 0$.
\end{proof}

We can now compute most of the structure of $\s_y$ in the non-CM case.

\begin{theorem}\label{main geometry non-CM}
Assume that $\pi$ does not have CM. Then the map $\s_y \to \T_z$ is surjective for every $z\in \Phi$, and the kernels are distinct. In particular, $\s_y$ has $2^r$ irreducible components, all of which are smooth.
\end{theorem}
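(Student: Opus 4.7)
My plan is to prove (a) surjectivity of $\s_y \to \T_z$ for each $z\in \Phi$, (b) distinctness of the kernels $\mf{p}_z := \ker(\s_y \to \T_z)$, and then (c) derive the component structure formally. For (c): since $y$ lies on $\E_{mid}$ by Lemma \ref{vanishing for SL(2)}, the ring $\s_y$ is equidimensional of dimension $d$. Each $\s_y/\mf{p}_z \cong \T_z$ is a formally smooth domain of the same dimension by Proposition \ref{description of tangent spaces I}(2), so each $\mf{p}_z$ is a minimal prime of $\s_y$. The injection $\s_y \hookrightarrow \prod_{z \in \Phi} \T_z$ from Proposition \ref{localstructure1} gives $\bigcap_z \mf{p}_z = 0$, so the $\mf{p}_z$ are precisely the minimal primes; hence $\s_y$ has exactly $|\Phi| = 2^r$ smooth irreducible components, each of dimension $d$.

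For (a), I will apply Theorem \ref{thethingIwant}. In the non-CM case the centralizer $H_r$ of $r = \ad^0 \rho_\pi$ in $\SO_3$ is trivial (since $\rho_\pi$ is absolutely irreducible and has no non-trivial quadratic self-twist), so $R_r^{H_r} = R_r \cong R_\rho$ by Corollary \ref{defringsiso}. Theorem \ref{thethingIwant} then gives that $R_\rho$ is topologically generated over $E$ by the traces $\tr(r(\Frob_v))$ for $v \notin S$, and passing to the quotient $\T_z \cong R_\rho^{\Reff, \alpha_z}$ the same generation holds for $\T_z$. Under this map the element $\tr(r(\Frob_v))$ maps to $\tr \ad^0 \rho_z(\Frob_v) = q_v^{-1}(t_v + 1)$ (Proposition \ref{Galois rep SL2}), which manifestly lies in the image of $\s_y$; hence $\s_y \to \T_z$ is surjective.

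For (b), suppose toward a contradiction that $\mf{p}_z = \mf{p}_{z'}$ for some $z \neq z'$. The surjections from (a) then induce an $\s_y$-algebra isomorphism $\phi : \T_z \xrightarrow{\sim} \T_{z'}$. The universal Galois representations $\rho_z, \rho_{z'}$ then produce pseudocharacters $\ad^0 \rho_z$ and $\ad^0 \rho_{z'}$ which are identified by $\phi$ (both coincide with the $\SL_2$-pseudocharacter of Proposition \ref{Galois rep SL2} modulo $\mf{p}_z = \mf{p}_{z'}$). Hence there is a continuous character $\chi : G_F \to \T_{z'}^\times$ with $\phi_* \rho_z \cong \rho_{z'} \otimes \chi$. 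The fixed-determinant constraint forces $\chi^2 = 1$, and since $\T_{z'}$ is a formally smooth domain, $\chi$ is valued in $\{\pm 1\}$ and constant, equal to its closed-point reduction $\bar\chi$. The relation $\rho_\pi \cong \rho_\pi \otimes \bar\chi$ forces $\bar\chi \in C_\pi = \{1\}$ by the non-CM hypothesis, so $\chi = 1$ and $\phi_* \rho_z \cong \rho_{z'}$. By universality of $R_\rho$, the two surjections
\[
R_\rho \twoheadrightarrow R_\rho^{\Reff, \alpha_z} \cong \T_z \xrightarrow{\phi} \T_{z'}
\qquad \text{and} \qquad
R_\rho \twoheadrightarrow R_\rho^{\Reff, \alpha_{z'}} \cong \T_{z'}
\]
must then coincide, forcing their kernel ideals in $R_\rho$ to agree. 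But for any $v\mid p$ at which $\alpha_{z,v} = -\alpha_{z',v}$ (which exists since $z \neq z'$), Proposition \ref{description of tangent spaces I}(3) shows that the weakly refined tangent subspaces defining these two kernels intersect trivially, so the kernels must differ modulo $\mf{m}_{R_\rho}^2$, a contradiction.

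The main obstacle will be (b): while surjectivity in (a) is an essentially direct application of Theorem \ref{thethingIwant}, the distinctness argument requires threading Galois-theoretic content through the putative isomorphism $\phi$, using the fixed-determinant constraint to reduce to a quadratic self-twist, the non-CM hypothesis to trivialize that twist, and finally the local tangent-space calculation of Proposition \ref{description of tangent spaces I}(3) to detect the refinement mismatch.
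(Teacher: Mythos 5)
Your proof is correct, and parts (a) and (c) match the paper closely. But in part (b) you take a substantially longer road than necessary, and the detour is worth flagging because it obscures a useful structural fact.

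The paper reduces distinctness of $\ker(\s_y \to \T_z)$ to distinctness of $\ker(R_\rho \to R_\rho^{\Reff,\alpha_z})$ in one step, by observing that each composite $R_\rho \twoheadrightarrow R_\rho^{\Reff,\alpha_z}\cong\T_z$ factors through $\s_y$: the generators of $R_\rho$ (the traces $\tr(\ad^0\rho^{univ}(\Frob_v))$, by Theorem \ref{thethingIwant} and Corollary \ref{defringsiso}) map to $q_v^{-1}(t_v+1)\in\s_y$, and $\s_y$ is closed in $\prod_z\T_z$ because it is finite over $\Lambda$, so the image of $R_\rho$ in $\prod_z\T_z$ lands inside $\s_y$. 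Once you have $R_\rho\to\s_y\to\T_z$, the kernel of $R_\rho\to\T_z$ is just the preimage of $\mf{p}_z$, so $\mf{p}_z=\mf{p}_{z'}$ forces $\ker(R_\rho\to\T_z)=\ker(R_\rho\to\T_{z'})$ immediately. One then concludes, as you both do, by the tangent-space calculation of Proposition \ref{description of tangent spaces I}(3). Your argument instead passes through the isomorphism $\phi:\T_z\xrightarrow{\sim}\T_{z'}$, matches $\PGL_2$-pseudocharacters, produces a character $\chi$, kills $\chi$ using the fixed-determinant constraint and the non-CM hypothesis, and only then invokes universality of $R_\rho$ to equate the kernels upstairs. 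All of this is correct (and implicitly re-proves the above factorization by hand), but it introduces Galois-theoretic machinery that isn't needed here: the non-CM hypothesis has already been used exactly once, to make $H_r$ trivial in Theorem \ref{thethingIwant}, and does not need to reappear. The factorization $R_\rho\to\s_y$ is also reused implicitly elsewhere in the paper, so it is worth making explicit rather than circumventing.
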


\begin{proof}
Choose $z=(\pi,\alpha_z) \in \Phi$. We consider the surjective map $R_\rho \to R_\rho^{\Reff,\alpha_z} \cong \T_z$. By Theorem \ref{thethingIwant} (note $H_r$ is trivial as $\pi$ does not have CM) and Corollary \ref{defringsiso}, $R_\rho$ is topologically generated by the traces of Frobenius at $v\notin S$ of $\ad^0$ of the universal deformation, which map to $q_v^{-1}(t_v +1) \in \T_z$, $v\notin S$. In particular, these elements topologically generate $\T_z$ and are in the image of $\s_y \to \T_z$, proving the surjectivity. Since $\s_y$ is equidimensional of dimension $d$ and reduced, and the map
\[
\s_y \to \prod_{z\in \Phi} \T_z
\] 
is injective, we conclude that the kernels of the maps $\s_y \to \T_z$ account for all minimal primes in $\s_y$. It remains to show that they are distinct. For this, it suffices to show that the kernels of the maps $R_\rho \to R_\rho^{\Reff,\alpha_z}$ are distinct. But this follows immediately from the description of tangent spaces in Proposition \ref{description of tangent spaces I} and its proof.
\end{proof}

\subsection{Tangent spaces in the CM case}\label{subsec: CM case I}

After Theorem \ref{main geometry non-CM}, we focus our attention on the CM case, which is rather more interesting. So, for the rest of section \ref{sec: local geometry}, we assume that $\pi$ has CM by a (necessarily unique) quadratic CM extension $\wt{F}$ of $F$. As before, we write $\psi$ for the quadratic character corresponding to the extension $\wt{F}/F$. The extension $\wt{F}/F$ is unramified at all $v\mid p$ in $F$, and $\psi(Frob_v) = 1$ if $v$ splits in $\wt{F}$ and $\psi(Frob_v) = -1$ if $v$ is inert in $\wt{F}$. We will assume that our tame level $K^p$ is small enough so that $\psi \in H_K$. In particular, $\psi$ defines a twisting automorphism on the eigenvariety $\wt{\E} = \wt{\E}_K$ and if $\eta$ is a refinement of $\pi$, then the twist
\[
(\pi \otimes (\psi \circ \det), (\psi(\varpi_v)\eta_v)_v ) = (\pi, (\psi(\varpi_v)\eta_v)_v )
\]
of $(\pi,\eta)$ by $\psi$ is equal to $(\pi, \eta)$ if and only if all $v \mid p$ in $F$ split in $\wt{F}$. Consider $\rho = \rho_\pi$. Since $\rho \cong \rho \otimes \psi$, there is a matrix $X \in \GL_2(E)$ such that
\[
\rho(g) = X \rho(g)\psi(g) X^{-1}
\]
for all $g\in G_F$. In particular, if $A$ is an Artinian local $E$-algebra, then the rule
\begin{equation}\label{eq: twisting action of def spaces}
\wt{\rho} \mapsto \left( g \mapsto X \wt{\rho}(g)\psi(g) X^{-1} \right)
\end{equation}
gives a well defined involution on $\mf{X}_{\rho}(A)$ compatible with changing $A$ which is independent of the choice of $X$. We will refer to this action on $\mf{X}_\rho$ (and $R_\rho$) as `twisting by $\psi$'. Under the isomorphism
\[
\mf{X}_\rho \to \mf{X}_{\ad^0 \rho}
\]
from Corollary \ref{defringsiso}, the twisting involution corresponds to the conjugation action of the non-trivial element centralizing $\ad^0 \rho$ (here viewed as $\PGL_2$-valued representation), and we will use this without further comment. Note that the formula (\ref{eq: twisting action of def spaces}) also defines a `twisting action' on $\mf{X}_v$ for any $v\mid p$, such that the natural map $\mf{X}_\rho \to \mf{X}_v$ is equivariant. We stress here that we are using the matrix $X$ from (\ref{eq: twisting action of def spaces}) from the global situation (which is unique up to a scalar) to define the action on $\mf{X}_v$; we are not choosing an arbitrary matrix $X$ satisfying equation (\ref{eq: twisting action of def spaces}) only for $g$ in $G_{F_v}$.

\medskip

The goal of this subsection is to compute the twisting action of $\psi$ on the tangent space of $\mf{X}_\rho$; we will then use the results in \S \ref{subsec: CM case II} to understand $\s_y$. Recall from Proposition \ref{description of tangent space no refinement} that we have isomorphism
\[
H^1(G_F,\ad^0 \rho) \cong \bigoplus_{v\mid p} H_{/f}^1(G_{F_v},\ad^0 \rho).
\]
Under this isomorphism, the twisting action of $\psi$ corresponds to the local twisting actions on the summands defined above (note that these actions preserve $H_f^1(G_{F_v},\ad^0 \rho)$ and hence descend to $H_{/f}^1(G_{F_v},\ad^0 \rho)$). In particular, our goal is reduced to understanding the twisting action on $H_{/f}^1(G_{F_v},\ad^0 \rho)$. This divides naturally into two cases: $v$ inert in $\wt{F}$ or $v$ split in $\wt{F}$. 

\medskip

We begin with the case when $v$ is inert. In this case $\psi_v$ is the quadratic character corresponding to the unique unramified quadratic extension of $F_v$, and $\beta_v = -\alpha_v$. From Proposition \ref{description of tangent spaces I} we have the description
\[
H_{/f}^1(G_{F_v},\ad^0 \rho) = \mf{t}_{v,/f}^{\Reff, \alpha_v} \oplus \mf{t}_{v,/f}^{\Reff, -\alpha_v}.
\]
Since $D_{\rm{crys}}(\psi_v)$ is the filtered $\varphi$-module with $\varphi^{f_v}$ acting by $-1$ and Hodge--Tate weights all being $0$, we see that the twisting action of $\psi_v$ interchanges the two subspaces $\mf{t}_{v,/f}^{\Reff, \alpha_v}$ and $\mf{t}_{v,/f}^{\Reff, -\alpha_v}$. This more or less determines the action in this case. We record this observation in the form of a proposition.

\begin{proposition}\label{inert tangent space}
Let $v \mid p$, and assume that $v$ is inert in $\wt{F}$. Then the twisting action of $\psi_v$ on $H_{/f}^1(G_{F_v},\ad^0 \rho)$ interchanges the two subspaces $\mf{t}_{v,/f}^{\Reff, \alpha_v}$ and $\mf{t}_{v,/f}^{\Reff, -\alpha_v}$.
\end{proposition}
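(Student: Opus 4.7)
The proposition essentially formalizes the observation made in the paragraph preceding it, so the plan is straightforward. I will verify directly from the definitions that twisting by $\psi_v$ carries the subfunctor $\mf{X}_v^{\Reff,\alpha_v}$ into $\mf{X}_v^{\Reff,-\alpha_v}$, and deduce the statement on tangent spaces.

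The first step is to unwind what twisting does to the weakly refined condition. Fix $A \in \Art_E$ and a lift $\wt{\rho}_v \in \mf{X}_v^{\Reff,\alpha_v}(A)$, associated with Hodge--Tate--Sen data encoded by $\ka_{2,v}$. Since $\psi_v$ is unramified with all Hodge--Tate weights equal to $0$, the twist $\wt{\rho}_v^\psi : g \mapsto X\wt{\rho}_v(g)\psi(g)X^{-1}$ has the same Hodge--Tate--Sen weights as $\wt{\rho}_v$; in particular, the character $\ka_{2,v}$ attached to $\wt{\rho}_v^\psi$ is the same. Then
\[
D^+_{\rm{crys}}\bigl(\wt{\rho}_v^\psi \otimes \mathrm{LT}(\ka_{2,v})\bigr) \cong D^+_{\rm{crys}}\bigl(\wt{\rho}_v \otimes \mathrm{LT}(\ka_{2,v})\bigr) \otimes_E D_{\rm{crys}}(\psi_v),
\]
and since $\varphi^{f_v}$ acts by $-1$ on $D_{\rm{crys}}(\psi_v)$, a $\varphi^{f_v}$-eigenvector with eigenvalue $\wt{\alpha}$ on the left factor produces one with eigenvalue $-\wt{\alpha}$ on the tensor product. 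Taking $\wt{\alpha}$ to be a lift of $\varpi_v^{k_2}\alpha_v$, we see that $\wt{\rho}_v^\psi \in \mf{X}_v^{\Reff,-\alpha_v}(A)$.

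Applying this to $A = E[\varepsilon]/(\varepsilon^2)$, the twisting involution on $\mf{X}_v$ restricts to mutually inverse isomorphisms $\mf{t}_v^{\Reff,\alpha_v} \xrightarrow{\sim} \mf{t}_v^{\Reff,-\alpha_v}$. Since the $\psi_v$-twisting action on $H^1(G_{F_v},\ad^0\rho)$ manifestly preserves the crystalline subspace $H^1_f(G_{F_v},\ad^0\rho)$ (as twisting takes crystalline deformations to crystalline deformations), it descends to an involution of $H^1_{/f}(G_{F_v},\ad^0\rho)$ that, via the decomposition of Proposition \ref{description of tangent spaces I}(3), interchanges the two summands $\mf{t}_{v,/f}^{\Reff,\alpha_v}$ and $\mf{t}_{v,/f}^{\Reff,-\alpha_v}$. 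This is precisely the claim.

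There is no serious obstacle here; the only subtle point is making sure that the Hodge--Tate--Sen character $\ka_{2,v}$ really is unchanged under the twist, which is immediate from the fact that $\psi_v$ has all Hodge--Tate weights zero. Once this is noted, the computation is formal.
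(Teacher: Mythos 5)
Your proof is correct and takes essentially the same approach as the paper, which likewise deduces the interchange from the fact that $D_{\rm{crys}}(\psi_v)$ is the rank-one filtered $\varphi$-module with $\varphi^{f_v}=-1$ and all Hodge--Tate weights zero. You have simply spelled out the details (invariance of $\ka_{2,v}$, the behaviour of $D^+_{\rm{crys}}$ under the unramified twist, descent modulo $H^1_f$) that the paper leaves implicit in the paragraph preceding the proposition.
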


We now move on to the case when $v$ is split in $\wt{F}$, which is more interesting. In this case $\psi_v$ is trivial, but note however that the twisting action may not be trivial, since we are still conjugating by a non-scalar matrix in $\GL_2(E)$. We note that in this case the twisting action preserves the subspaces $\mf{t}_{v,/f}^{\Reff, \alpha_v}$  and $\mf{t}_{v,/f}^{\Reff, \beta_v}$, since the condition defining these subspaces only depends upon the isomorphism class of the deformation (which \emph{is} unchanged by the twisting action, unlike the deformation class in general). Since $\rho$ is induced from $G_{\wt{F}}$, we can conjugate it if necessary so that the \emph{homomorphism} 
\[
\rho : G_F \to \GL_2(E)
\]
lands inside the normalizer of the diagonal matrices. In particular, we may choose the matrix $X$ above to be
\[
X = \begin{pmatrix} -1 & 0 \\ 0 & 1 \end{pmatrix}.
\]
The restriction of $\rho$ to $G_{F_v}$ may then be written as  
\[
\rho_v = \begin{pmatrix} \chi_1 & 0 \\ 0 & \chi_2 \end{pmatrix} 
\]
for two characters $\chi_1, \chi_2 : G_{F_v} \to E^\times$. From this, we see that in the decomposition
\begin{equation}\label{eq: decomposition of H1ad}
H^1(G_{F_v},\ad\, \rho) = \bigoplus_{i,j=1,2} H^1(G_{F_v}, \chi_i \chi_j^{-1}),
\end{equation}
twisting acts trivially on the terms with $i=j$ and by $-1$ on the terms with $i \neq j$, and it preserves $H_f^1(G_{F_v},\ad\, \rho)$. To understand the relationship between this decomposition and the subspaces $\mf{t}_{v,/f}^{\Reff, \alpha_v}$ and $\mf{t}_{v,/f}^{\Reff, \beta_v}$, we now consider the map
\begin{equation}\label{twisting map for refinements}
H^1(G_{F_v}, \ad\, \rho) \cong \Ext^1(\rho_v,\rho_v) \to \Ext^1(\rho_v \otimes \mathrm{LT}(k_{2,v}), \rho_v \otimes \mathrm{LT}(k_{2,v}) ) \cong H^1(G_{F_v}, \ad\, \rho)
\end{equation}
which is obtained by sending a lift $\wt{\rho}_v$ of $\rho_v$ to $E[\epsilon]$ to $\wt{\rho}_v \otimes \mathrm{LT}(\kappa_{2,v})$, as in the definition of the weakly refined deformation functor. We note that this endomorphism of $H^1(G_{F_v}, \ad\, \rho)$ is equivariant for the twisting action (i.e. conjugation by $X$), but it is not injective. However, one has the following lemma:

\begin{lemma}\label{injectivity of twisting}
Let $L: H^1(G_{F_v}, \ad^0 \rho) \to H^1(G_{F_v}, \ad\, \rho)$ be the restriction of the map in equation (\ref{twisting map for refinements}) to the subspace $H^1(G_{F_v}, \ad^0 \rho) \sub H^1(G_{F_v}, \ad\, \rho)$. Then $L^{-1}(H_f^1(G_{F_v}, \ad\, \rho)) = H_f^1(G_{F_v}, \ad^0 \rho)$. In particular, $L$ induces an injection $H_{/f}^1(G_{F_v}, \ad^0 \rho) \to H_{/f}^1(G_{F_v}, \ad\, \rho)$ which we will also denote by $L$.
\end{lemma}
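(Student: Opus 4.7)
The plan is to show that $L$ differs from the inclusion $H^1(G_{F_v}, \ad^0\rho) \hookrightarrow H^1(G_{F_v}, \ad\,\rho)$ by a \emph{scalar} correction landing in $H^1_f(G_{F_v}, E) \subseteq H^1_f(G_{F_v}, \ad\,\rho)$ (embedded as scalar matrices). Granting this, the lemma is immediate: for $c \in H^1(G_{F_v}, \ad^0\rho)$, the crystallinity of $L(c)$ is equivalent to that of $c$ viewed inside $H^1(G_{F_v}, \ad\,\rho)$, which, since $c$ is traceless, is in turn equivalent to $c \in H^1_f(G_{F_v}, \ad^0\rho) = H^1(G_{F_v}, \ad^0\rho) \cap H^1_f(G_{F_v}, \ad\,\rho)$; the induced injection on $H^1_{/f}$ is then a formal consequence.

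First I would make the tangent-space formula for $L$ explicit. For $c \in H^1(G_{F_v}, \ad\,\rho)$ corresponding to a deformation $\wt\rho_v = (1 + c\epsilon)\rho_v$, distinctness of the Hodge--Tate--Sen weights of $\rho_v$ at each $\sigma \in \Sigma_v$ (which holds since $\pi$ is cohomological, forcing $k_{1,\sigma} \neq k_{2,\sigma}$) together with Hensel's lemma produce unique lifts $(k_{1,\sigma} + a_\sigma\epsilon,\ k_{2,\sigma} + b_\sigma\epsilon)$ of the HT--Sen weights over $E[\epsilon]$ with $a_\sigma, b_\sigma \in E$, and $\kappa_{2,v}$ is the character encoding the smallest weight at each embedding. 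Writing $\LT(\kappa_{2,v}) = \LT(k_{2,v})(1 + \delta(c)\epsilon)$ and expanding $\wt\rho_v \otimes \LT(\kappa_{2,v})$ shows
\[
L(c) \,=\, c + \delta(c) \cdot \mathrm{id}_{\ad\,\rho} \in H^1(G_{F_v}, \ad\,\rho),
\]
with $\delta(c) \in H^1(G_{F_v}, E)$ a scalar class determined by $c$.

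The main step is then to verify that $\delta(c)$ is crystalline. By construction, the character $\LT(\kappa_{2,v})/\LT(k_{2,v})$ sends $\varpi_v$ to $1$ and, via local class field theory (normalized so that uniformizers correspond to geometric Frobenius), restricts on $\oo_v^\times$ to
\[
u \,\longmapsto\, 1 + \Bigl(\sum_{\sigma \in \Sigma_v} b_\sigma \log \sigma(u)\Bigr)\epsilon \,=\, \prod_{\sigma\in\Sigma_v} \sigma(u)^{b_\sigma \epsilon}.
\]
This is the algebraic (hence crystalline) shape of a lift of the trivial character, so $\delta(c) \in H^1_f(G_{F_v}, E)$, as required.

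The hardest conceptual point in the argument is obtaining the correct infinitesimal description of $\kappa_{2,v}$ and translating it back to a continuous character of $G_{F_v}$ via local class field theory. This requires unwinding the conventions around HT--Sen weights for deformations over Artinian local $E$-algebras, relying on the separation $k_{1,\sigma} > k_{2,\sigma}$ to define the ``smallest weight'' unambiguously. Once the translation is made, the standard classification of crystalline characters $G_{F_v} \to E[\epsilon]^\times$ lifting the trivial character as those with algebraic restriction to $\oo_v^\times$ finishes the proof.
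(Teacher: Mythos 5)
The explicit formula $L(c) = c + \delta(c)\cdot\mathrm{id}$, with $\delta(c)$ the scalar class of $\LT(\kappa_{2,v})\LT(k_{2,v})^{-1}$, is correct, but the central claim that $\delta(c) \in H^1_f(G_{F_v},E)$ for \emph{all} $c$ is false, and the justification you offer misapplies the criterion for a character to be crystalline. A rank-one character $\delta$ is crystalline if and only if $\delta|_{\oo_v^\times} = a^k$ for an \emph{integer} tuple $k$; over $E[\epsilon]$ the exponents $b_\sigma\epsilon$ are not integers unless they vanish. More concretely, $H^1_f(G_{F_v},E)$ is one-dimensional and equals the unramified line $H^1_{ur}(G_{F_v},E)$ (as the paper notes, it is the kernel of $d_{HT}$), so a scalar class of your specific shape --- trivial on $\vp_v$ and given on $\oo_v^\times$ by $u\mapsto\sum_\sigma b_\sigma\log\sigma(u)$ --- is crystalline if and only if $b_\sigma=0$ for all $\sigma$, i.e.\ $\delta(c)=0$. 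If your claim were true, $L$ would coincide on $H^1_{/f}$ with the inclusion of the traceless part; but Lemma \ref{Image of L} exhibits the image of $L$ as $H_{/f}^1(\chi_1\chi_1^{-1})^{\Delta_1}\oplus H_{/f}^1(\chi_1\chi_2^{-1})\oplus H_{/f}^1(\chi_2\chi_1^{-1})\oplus H_{/f}^1(\chi_2\chi_2^{-1})^{\Delta_2}$, which is visibly \emph{not} the traceless subspace.

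The paper's proof instead passes to determinants, using crucially that $c\in H^1(G_{F_v},\ad^0\rho)$ fixes the determinant: $\det(\wt\rho_v\otimes\LT(\kappa_{2,v})) = \det\rho_v\cdot\LT(\kappa_{2,v})^2$, so if $L(c)$ is crystalline then so is $\LT(\kappa_{2,v})^2$, hence $\LT(\kappa_{2,v})$ itself (as $\det\rho_v$ is crystalline and $2$ is invertible); given $\LT(\kappa_{2,v})(\vp_v)=1$ and $\kappa_{2,v}\equiv k_{2,v}\pmod\epsilon$, this forces $\kappa_{2,v}=k_{2,v}$ and hence $\wt\rho_v$ crystalline. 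Your explicit formula can be salvaged by the same move: since $\mathrm{tr}(c)=0$ one has $\mathrm{tr}(L(c))=2\delta(c)$, which is the determinant direction of $L(c)$; crystallinity of $L(c)$ thus makes $\delta(c)$ crystalline, and since $\delta(c)(\vp_v)=0$ this forces $\delta(c)=0$ and then $c=L(c)$ is crystalline. But the argument must go through this determinant/trace step; it is not the case that $\delta(c)$ is crystalline unconditionally.
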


\begin{proof}
Let $\wt{\rho}_v$ be a lift of $\rho_v$ to $E[\epsilon]$ lying in $H^1(G_{F_v}, \ad^0 \rho)$, i.e. with determinant $\det \rho_v$. If $\wt{\rho}_v \otimes \mathrm{LT}(\kappa_{2,v})$ is crystalline, then its determinant $\det \rho_v \cdot \mathrm{LT}(\kappa_{2,v})^2$ is crystalline as well, which implies that $\mathrm{LT}(\kappa_{2,v})$ is crystalline and in fact equal to $\mathrm{LT}(k_{2,v})$, so $\wt{\rho}_v$ must be crystalline as well. Conversely, if $\wt{\rho}_v$ is crystalline, then $\kappa_{2,v} = k_{2,v}$ and the twist $\wt{\rho}_v \otimes \mathrm{LT}(\kappa_{2,v})$ is crystalline as well.
\end{proof}

We can also characterize the image of $L$. Momentarily, let $\lambda : G_{F_v} \to \GL_n(E)$ be a crystalline representation of dimension $n$, with \emph{distinct} Hodge--Tate weights $k_{\lambda,\sigma,i}$, $i=1,\dots,n$, for $\sigma \in \Sigma_v$. Every deformation $\wt{\lambda}$ of $\lambda$ to an Artinian $E$-algebra $A$ has Hodge--Tate weights $k_{\wt{\lambda},\sigma,i} \in A$ lifting $k_{\lambda,\sigma,i}$, for $\sigma \in \Sigma_v$ (cf. \cite[\S 2.3]{bergdall-smooth}). In particular, if $A=E[\epsilon]$ is the dual numbers, we get map 
\[
d_{HT}(\lambda) : H^1(G_{F_v},\ad\, \lambda) \to \bigoplus_{\sigma\in \Sigma_v} E^n
\]
sending a deformation $\wt{\lambda}$ over $E[\epsilon]$ to $(\epsilon^{-1}(k_{\wt{\lambda},\sigma,i} - k_{\lambda,\sigma,i}))_{\sigma\in \Sigma_v, i=1,\dots,n}$. The kernel of $d_{HT}(\lambda)$ contains $H_f^1(G_{F_v},\ad\, \lambda)$, so we get an induced map
\[
H_{/f}^1(G_{F_v},\ad\,\lambda) \to \bigoplus_{\sigma\in \Sigma_v} E^n
\]
which we will also call $d_{HT}(\lambda)$. When $\lambda = \chi$ is a character, i.e. $n=1$, then the kernel of $d_{HT}(\chi)$ is exactly $H_f^1(G_{F_v},\chi \chi^{-1})$ (see, for example, \cite[Theorem 3.12]{bergdall-smooth}) and $d_{HT}(\chi)$ induces an isomorphism
\[
H_{/f}^1(G_{F_v},\chi \chi^{-1}) \to \bigoplus_{\sigma\in \Sigma_v} E,
\]
e.g. by comparing dimensions, using \cite[Equation 3-2]{pottharst}. Thus, in this case, both spaces above have dimension $[F_v:\Qp]$, with one dimension corresponding to deforming each Hodge--Tate weight. In particular, given a subset $\Delta \sub \Sigma_v$, we can define $H^1(G_{F_v},\chi \chi^{-1})^\Delta$ to be the subspace of deformations whose $\sigma$-Hodge--Tate weight is constant for $\sigma \in \Delta$. By construction $H^1(G_{F_v},\chi \chi^{-1})^\Delta$ is the preimage of a subspace $H_{/f}^1(G_{F_v},\chi \chi^{-1})^\Delta \sub H_{/f}^1(G_{F_v},\chi \chi^{-1})$ of dimension $[F_v:\Qp] - \# \Delta$. For $i=1,2$, let $\Delta_i \sub \Sigma_v$ be the set of $\sigma$ such that the $\sigma$-Hodge--Tate weight $k_{\chi_i,\sigma}$ of $\chi_i$ is equal to $k_{2,\sigma}$. Note that $\Delta_1$ and $\Delta_2$ are disjoint and that their union is $\Sigma_v$.

\begin{lemma}\label{Image of L}
Consider the injective map $L : H_{/f}^1(G_{F_v},\ad^0 \rho) \to H_{/f}^1(G_{F_v},\ad\, \rho)$. Under the decomposition in equation (\ref{eq: decomposition of H1ad}), the image of $L$ is
\[
H_{/f}^1(G_{F_v}, \chi_1 \chi_1^{-1})^{\Delta_1} \oplus H_{/f}^1(G_{F_v}, \chi_1 \chi_2^{-1}) \oplus H_{/f}^1(G_{F_v}, \chi_2 \chi_1^{-1}) \oplus H_{/f}^1(G_{F_v}, \chi_2 \chi_2^{-1})^{\Delta_2}.
\]
In particular, this space has dimension $2[F_v : \Qp]$.
\end{lemma}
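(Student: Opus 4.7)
The plan is to compute $L$ separately on each of the three summands in the decomposition
\[
H^1(G_{F_v}, \ad^0 \rho) = H^1(G_{F_v}, E) \oplus H^1(G_{F_v}, \chi_1 \chi_2^{-1}) \oplus H^1(G_{F_v}, \chi_2 \chi_1^{-1}),
\]
where the first summand consists of trace-free diagonal cocycles $(a,-a)$, and to match the result with the claimed subspace. I will use the canonical isomorphism $\ad\,\rho = \ad(\rho \otimes \mathrm{LT}(k_{2,v}))$ to regard the target as deformations of $\rho \otimes \mathrm{LT}(k_{2,v})$; in this picture the smaller HT weight at each $\sigma$ has been normalized to zero, so the subspaces $H^1(G_{F_v},\chi_i\chi_i^{-1})^{\Delta_i}$ appearing in the statement are exactly the deformations of $\chi_i\cdot\mathrm{LT}(k_{2,v})$ that keep this vanishing weight fixed at $\Delta_i$.

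First I would handle the off-diagonal summands. For a purely off-diagonal cocycle $Y$ the diagonal of $Y$ vanishes, and first-order perturbation of the Sen operator (which we can apply because the HT--Sen weights of $\rho_v$ are distinct at every $\sigma$) shows that the HT--Sen weights of $\wt{\rho}_v := (1+\epsilon Y)\rho_v$ do not move. Hence $\kappa_{2,v} = k_{2,v}$, and $\wt{\rho}_v \otimes \mathrm{LT}(\kappa_{2,v})$ is the fixed twist by $\mathrm{LT}(k_{2,v})$. Under our identification of $\ad$'s this twist acts as the identity on cocycles, so $L$ sends $H^1(G_{F_v},\chi_1\chi_2^{-1}) \oplus H^1(G_{F_v},\chi_2\chi_1^{-1})$ isomorphically onto the corresponding summand of the target, and (by the previous lemma) likewise on the $H^1_{/f}$ level.

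The main work is on the diagonal trace-free summand. For $Y = \mathrm{diag}(a,-a)$ with $a \in H^1(G_{F_v},E)$, first-order perturbation gives that the HT--Sen weights of $\wt{\rho}_v$ are $k_{\chi_1,\sigma}+\epsilon a_\sigma$ on the first diagonal and $k_{\chi_2,\sigma}-\epsilon a_\sigma$ on the second, where $a_\sigma = d_{HT}(a)_\sigma$. By definition $\kappa_{2,v}$ tracks the lift of the smaller HT weight, so $\kappa_{2,\sigma} = k_{2,\sigma}+\epsilon a_\sigma$ for $\sigma \in \Delta_1$ and $\kappa_{2,\sigma} = k_{2,\sigma}-\epsilon a_\sigma$ for $\sigma \in \Delta_2$. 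Writing $\mathrm{LT}(\kappa_{2,v}) = \mathrm{LT}(k_{2,v})\cdot(1 + \epsilon\,\mathrm{LT}(s_Y))$ for the resulting character $s_Y$ and translating the tensor product back to cocycles, one gets $L(Y) = Y + \mathrm{LT}(s_Y)\cdot\mathrm{Id}$. The $\sigma$-HT weight variation of the new first-diagonal class $a + \mathrm{LT}(s_Y)$ is then $a_\sigma + s_{Y,\sigma}$, which equals $0$ for $\sigma \in \Delta_1$ and $2a_\sigma$ for $\sigma \in \Delta_2$; the analogous vanishing holds on the second diagonal at $\Delta_2$. This places the image in $H^1_{/f}(G_{F_v},\chi_1\chi_1^{-1})^{\Delta_1} \oplus H^1_{/f}(G_{F_v},\chi_2\chi_2^{-1})^{\Delta_2}$, which together with the off-diagonal computation establishes the inclusion into the claimed subspace.

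To finish, I would verify equality by a dimension count. The image has dimension $2[F_v:\Qp]$ by injectivity of $L$ and the computation of $\dim H^1_{/f}(G_{F_v},\ad^0\rho)$ already performed; the claimed subspace has the same dimension via a standard Tate Euler characteristic and Bloch--Kato computation, once one notes that $\dim D_{dR}/D_{dR}^+$ of $\chi_1\chi_2^{-1}$ (resp.\ $\chi_2\chi_1^{-1}$) equals $\#\Delta_1$ (resp.\ $\#\Delta_2$) and that the relevant $H^2$'s vanish, which follows from distinctness of the HT weights of $\rho_v$ together with assumption (2) of the section. The main obstacle will be carrying out the Sen-theoretic computation of $\kappa_{2,v}$ in terms of $a$ rigorously: once the formulas for the HT weight variation on each summand are secured, the rest of the argument is essentially bookkeeping.
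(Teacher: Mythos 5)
Your proposal is correct, and at heart it is the same argument as the paper's: the defining feature of $\kappa_{2,v}$ — that the Hodge--Tate weights of $\wt\rho_v \otimes \mathrm{LT}(\kappa_{2,v})$ reducing to $0$ modulo $\epsilon$ are exactly $0$ — forces the diagonal components of $L(\wt\rho_v)$ into $H^1_{/f}(G_{F_v},\chi_1\chi_1^{-1})^{\Delta_1}$ and $H^1_{/f}(G_{F_v},\chi_2\chi_2^{-1})^{\Delta_2}$, after which a dimension count gives equality. Where you differ is in unpacking this: you split $\ad^0\rho$ into off-diagonal and trace-free diagonal summands, compute $\kappa_{2,v}$ by first-order perturbation of the Sen operator in each case, and exhibit $L(Y)$ explicitly as $Y + s_Y\cdot\mathrm{Id}$, whereas the paper reads the containment directly off the construction of $\kappa_{2,v}$, with no explicit Sen computation or case split. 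Both are valid; yours buys you a concrete formula for $L$ and makes the isomorphism on the off-diagonal summand explicit, at the cost of more bookkeeping (and the extra care needed to justify first-order perturbation of the Sen operator rigorously, which the paper sidesteps). You also reverse the order — containment then dimension count rather than dimension count then containment — which is immaterial. Your dimension count for the target matches the paper's (using $\#\Delta_1 + \#\Delta_2 = [F_v:\Q_p]$ together with $\dim H^1_{/f}(G_{F_v},\chi_1\chi_2^{-1}) = \#\Delta_1$ and $\dim H^1_{/f}(G_{F_v},\chi_2\chi_1^{-1}) = \#\Delta_2$ via local Euler characteristic and the Bloch--Kato formula).
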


\begin{proof}
We start by proving the last statement. From the remarks above, the sum of the dimensions of $H_{/f}^1(G_{F_v}, \chi_1 \chi_1^{-1})^{\Delta_1}$ and $H_{/f}^1(G_{F_v}, \chi_2 \chi_2^{-1})^{\Delta_2}$ is $[F_v:\Qp]$. Then, a computation using the local Euler characteristic formula and \cite[Equation 3-2]{pottharst} shows that $H_{/f}^1(G_{F_v}, \chi_1 \chi_2^{-1})$ has dimension $\# \Delta_1$ and $H_{/f}^1(G_{F_v}, \chi_2 \chi_1^{-1})$ has dimension $\#\Delta_2$, since $k_{\chi_1,\sigma} \neq k_{\chi_2,\sigma}$ for all $\sigma$. This proves the dimension formula.

\medskip

To prove the rest of the lemma, it therefore suffices that prove that the image of $L$ is contained in the given subspace, since both spaces have the same dimension. For this, let $\wt{\rho}_v \in H^1(G_{F_v},\ad^0\rho)$ be a deformation to $E[\epsilon]$ and consider $L(\wt{\rho}_v) = \wt{\rho}_v \otimes \mathrm{LT}(\ka_{2,v})$. By construction, the Hodge--Tate weights of $\wt{\rho}_v \otimes \mathrm{LT}(\ka_{2,v})$ that reduce to $0$ modulo $\epsilon$ are in fact $0$. After applying $d_{HT}$, this precisely translates into the condition that the components of $L(\wt{\rho}_v)$ in $H_{/f}^1(G_{F_v}, \chi_1 \chi_1^{-1})$ and $H_{/f}^1(G_{F_v}, \chi_2 \chi_2^{-1})$ lie in $H_{/f}^1(G_{F_v}, \chi_1 \chi_1^{-1})^{\Delta_1}$ and $H_{/f}^1(G_{F_v}, \chi_2 \chi_2^{-1})^{\Delta_2}$, respectively, proving the lemma.
\end{proof}

We can now prove the main result on the twisting action in the split case. For the purpose of this theorem, we label the eigenvalues of $\varphi^{f_v}$ on $D_{\rm{crys}}(\rho_v)$ so that $\alpha_v$ is the eigenvalue on $D_{\rm{crys}}(\chi_1)$ and $\beta_v$ is the eigenvalue on $D_{\rm{crys}}(\chi_2)$. 

\begin{theorem}\label{split tangent space}
We use the notation as above. Then, in terms of the decomposition (\ref{eq: decomposition of H1ad}) and the notation of Lemma \ref{Image of L}, we have
\[
L(\mf{t}_{v,/f}^{\Reff, \alpha_v}) = H_{/f}^1(G_{F_v}, \chi_1 \chi_2^{-1}) \oplus H_{/f}^1(G_{F_v}, \chi_2 \chi_2^{-1})^{\Delta_2}
\]
and 
\[
L(\mf{t}_{v,/f}^{\Reff, \beta_v}) = H_{/f}^1(G_{F_v}, \chi_1 \chi_1^{-1})^{\Delta_1} \oplus H_{/f}^1(G_{F_v}, \chi_2 \chi_1^{-1}).
\]
In particular, the dimension of the $-1$-eigenspace of $\mf{t}_{v,/f}^{\Reff, \alpha_v}$ is equal to the number of $\sigma \in \Sigma_v$ for which the refinement $\alpha_v$ of $\rho_v$ is critical, and similarly for $\beta_v$.
\end{theorem}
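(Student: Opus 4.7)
The plan is to prove the identification of $L(\mf{t}_{v,/f}^{\Reff,\alpha_v})$ with the claimed subspace; the analogous identification for $\beta_v$ then follows from Proposition~\ref{description of tangent spaces I}(3), which gives the complementary decomposition $H^1_{/f}(G_{F_v},\ad^0\rho) = \mf{t}_{v,/f}^{\Reff,\alpha_v} \oplus \mf{t}_{v,/f}^{\Reff,\beta_v}$. Once these identifications are in place, the final dimension statement is immediate by reading off the $-1$-eigenspace of the twisting involution, which (as noted in the discussion preceding the theorem) acts trivially on the diagonal components $H^1(G_{F_v},\chi_i\chi_i^{-1})$ and by $-1$ on the off-diagonal components $H^1(G_{F_v},\chi_i\chi_j^{-1})$ for $i\neq j$.

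First I would characterize weakly refined deformations in terms of $(\varphi,\Gamma)$-modules: unwinding the definition of $\mf{X}_v^{\Reff,\alpha_v}$ via the equivalence between crystalline eigenspaces and rank-one sub-$(\varphi,\Gamma)$-modules, $\wt\rho_v \in \mf{t}_v^{\Reff,\alpha_v}$ if and only if $D_{rig}(\wt\rho_v)$ admits a sub-$(\varphi,\Gamma)$-module $\mc{R}_{E[\epsilon]}(\wt\tau) \hookrightarrow D_{rig}(\wt\rho_v)$ lifting $\mc{R}(\delta_1^\prime) \hookrightarrow D_{rig}(\rho_v)$, with $\wt\tau|_{\oo_v^\times} = \ka_{2,v}^{-1}$ (so that the $\sigma$-Hodge--Tate weight of $\wt\tau$ equals the smaller of the two $\sigma$-Hodge--Tate weights of $\wt\rho_v$). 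Since the embedding $\mc{R}(\delta_1^\prime) \hookrightarrow D_{rig}(\rho_v)$ factors through $\mc{R}(\chi_1)$ via multiplication by $\prod_{\sigma \in \Delta_2} t_\sigma^{k_{1,\sigma}+1-k_{2,\sigma}}$, an infinitesimal analysis of the possible lifts in the spirit of \cite{kpx} and \cite{bergdall-smooth} translates the condition into two constraints on the cocycle $\phi = (\phi_{ij}) \in Z^1(G_{F_v},\ad^0\rho_v)$ representing $\wt\rho_v$: first, the $(2,1)$-component $\phi_{21}$ must lie in $H^1_f(G_{F_v},\chi_2\chi_1^{-1})$; and second, at each critical embedding $\sigma \in \Delta_2$, the infinitesimal $\sigma$-Hodge--Tate weight variation corresponding to the diagonal deformations $\chi_i \cdot (1+\epsilon\phi_{ii})$ must vanish for $i=1,2$ (both vanishings being equivalent by the trace-zero condition).

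I would then compute $L(\wt\rho_v) = \wt\rho_v \otimes \mathrm{LT}(\ka_{2,v})$ component-wise in the decomposition $\ad\rho = \bigoplus_{i,j}\chi_i\chi_j^{-1}$. The twist by $\mathrm{LT}(\ka_{2,v})$ acts as the identity on off-diagonal components in cohomology, so the $(1,2)$-image is $[\phi_{12}]$, arbitrary in $H^1_{/f}(\chi_1\chi_2^{-1})$, while the $(2,1)$-image vanishes in $H^1_{/f}(\chi_2\chi_1^{-1})$ by the first constraint. For the diagonal, using that $\ka_{2,v}$ encodes the smaller Hodge--Tate weight of $\wt\rho_v$, a direct calculation shows that the $(1,1)$-component of $L(\wt\rho_v)$ has constant Hodge--Tate weight at every $\sigma$ (automatically on $\Delta_1$ by the definition of $\ka_{2,v}$, and on $\Delta_2$ by the second constraint), hence vanishes in $H^1_{/f}(\mbf{1})$; while the $(2,2)$-component has free Hodge--Tate variation at $\sigma \in \Delta_1$ but constant Hodge--Tate weight at $\sigma \in \Delta_2$, placing it in $H^1_{/f}(\mbf{1})^{\Delta_2}$. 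Combining these gives the inclusion $L(\mf{t}_{v,/f}^{\Reff,\alpha_v}) \subseteq H^1_{/f}(\chi_1\chi_2^{-1}) \oplus H^1_{/f}(\chi_2\chi_2^{-1})^{\Delta_2}$, which is an equality since both sides have dimension $[F_v:\Qp]$ by Proposition~\ref{description of tangent spaces I}(1) and the dimension computations used in the proof of Lemma~\ref{Image of L}. The dimension assertion then follows: under $L$ the $-1$-eigenspace corresponds to the $(1,2)$-piece $H^1_{/f}(\chi_1\chi_2^{-1})$ of dimension $\#\Delta_2$, which by Remark~\ref{rem: non-crit vs crit in split case} is exactly the number of $\sigma \in \Sigma_v$ at which $\alpha_v$ is critical.

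The main obstacle will be the rigorous derivation of the two constraints on $\phi$ in the presence of critical embeddings. When some $\sigma \in \Delta_2$ is critical and $\phi_{21}$ represents a non-zero crystalline class, the sub $\mc{R}(\wt\tau)$ does not embed through any rank-one sub $\mc{R}(\wt\chi_1) \hookrightarrow D_{rig}(\wt\rho_v)$ --- no such sub exists over $E[\epsilon]$ in that case --- so one cannot simply put $\wt\rho_v$ in upper-triangular form. Instead, one must analyze the sub $\mc{R}(\wt\tau)$ inside $D_{rig}(\wt\rho_v)$ directly, tracking the evolution of the $t_\sigma$-exponents relating $\wt\tau$ to the ``$\chi_1$-component'' in a family of rank-one $(\varphi,\Gamma)$-modules. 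This is where the Hodge--Tate weight vanishing at critical embeddings emerges and where the crystalline class $\phi_{21}$ gets absorbed by an appropriate deformation of the parameter $\wt\tau$. Once this $(\varphi,\Gamma)$-module-theoretic analysis is carried out, the remaining computations reduce to routine manipulations of Hodge--Tate weights.
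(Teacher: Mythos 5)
Your proposal diverges from the paper's proof and contains an acknowledged gap at exactly the crux of the argument. The paper's proof is short: decompose $L(\wt{\rho}_v)$ as $\tau_1 + \tau_2$ via the pullbacks along $\chi_2, \chi_1 \hookrightarrow \rho_v$, observe that $\tau_2$ lives in $H^1(\chi_1\chi_1^{-1})^{\Delta_1} \oplus H^1(\chi_2\chi_1^{-1})$ by Lemma~\ref{Image of L}, and invoke \cite[Lemma 7.2]{bergdall-paraboline} to conclude that $\tau_2$ is \emph{crystalline} whenever $\wt{\rho}_v \in \mf{t}_v^{\Reff,\alpha_v}$. That single crystallinity statement simultaneously kills both the $(1,1)$- and $(2,1)$-parts of $L(\wt{\rho}_v)$ in $H^1_{/f}$, and equality then follows from the dimension count.

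Your plan instead aims to translate the weakly refined condition into two explicit constraints on the cocycle $\phi = (\phi_{ij})$ --- crystallinity of $\phi_{21}$, and vanishing of the infinitesimal $\sigma$-Hodge--Tate weight variation at $\sigma \in \Delta_2$ --- and then read off the image of $L$ component by component. If those constraints were established, the remaining computation would indeed match the paper's conclusion. But you explicitly name ``the rigorous derivation of the two constraints on $\phi$ in the presence of critical embeddings'' as ``the main obstacle'' and do not close it. That obstacle is not peripheral: deriving those constraints when $\alpha_v$ is critical at some $\sigma$ (so the rank-one sub $\mc{R}_{F_v,L}(\delta'_1)$ is not saturated in $D_{rig}(\rho_v)$, and $\phi_{21}$ need not land in an upper-triangular shape) is precisely the hard part, and it is exactly the content of \cite[Lemma 7.2]{bergdall-paraboline}. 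What remains in your plan is, in effect, to reprove that lemma.

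There is also a smaller gap: you claim the identity for $\beta_v$ ``follows from'' the $\alpha_v$ case together with Proposition~\ref{description of tangent spaces I}(3). But knowing the image of $L$, knowing $L(\mf{t}_{v,/f}^{\Reff,\alpha_v})$, and knowing that $\mf{t}_{v,/f}^{\Reff,\beta_v}$ is \emph{a} complement of $\mf{t}_{v,/f}^{\Reff,\alpha_v}$ does not identify $L(\mf{t}_{v,/f}^{\Reff,\beta_v})$ with the specific subspace $H_{/f}^1(G_{F_v},\chi_1\chi_1^{-1})^{\Delta_1} \oplus H_{/f}^1(G_{F_v},\chi_2\chi_1^{-1})$, since complements are far from unique. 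You would need to run the analogous inclusion argument for $\beta_v$, as the paper does (``the proof for $\beta_v$ is the same'').
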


\begin{proof}
(Cf. the proof of \cite[Theorem 2.16]{bellaiche-crit}.) We prove the statement for $\alpha_v$; the proof for $\beta_v$ is the same.  Consider $\wt{\rho}_v \in H^1(G_{F_v},\ad^0 \rho_v)$ and its image $L(\wt{\rho}_v)$ under $L$, viewed as an extension
\[
0 \to \rho_v \otimes \mathrm{LT}(k_{2,v}) \to L(\wt{\rho}_v) \to \rho_v \otimes \mathrm{LT}(k_{2,v}) \to 0.
\]
Since $\rho_v = \chi_1 \oplus \chi_2$ we get two extensions
\[
0 \to \rho_v \otimes \mathrm{LT}(k_{2,v}) \to \tau_1 \to \chi_2 \otimes \mathrm{LT}(k_{2,v}) \to 0
\]
and 
\[
0 \to \rho_v \otimes \mathrm{LT}(k_{2,v}) \to \tau_2 \to \chi_1 \otimes \mathrm{LT}(k_{2,v}) \to 0
\]
which, by Lemma \ref{Image of L} are elements of $H^1(G_{F_v},\chi_1\chi_2^{-1}) \oplus H^1(G_{F_v},\chi_2\chi_2^{-1})^{\Delta_2}$ and $H^1(G_{F_v},\chi_1\chi_1^{-1})^{\Delta_1} \oplus H^1(G_{F_v},\chi_2\chi_1^{-1})$, respectively, and $L(\wt{\rho}_v) = \tau_1 + \tau_2$. Now if $\wt{\rho}_v \in \mf{t}_{v}^{\Reff, \alpha_v}$, then $\tau_2$ is crystalline by \cite[Lemma 7.2]{bergdall-paraboline}, which shows that 
\[
L(\mf{t}_{v,/f}^{\Reff, \alpha_v}) \sub H_{/f}^1(G_{F_v}, \chi_1 \chi_2^{-1}) \oplus H_{/f}^1(G_{F_v}, \chi_2 \chi_2^{-1})^{\Delta_2}.
\]
Equality then follows since both spaces have the same dimension. Finally, we see that the $-1$-eigenspace of $\mf{t}_{v,/f}^{\Reff, \alpha_v}$ is (isomorphic to) $H_{/f}^1(G_{F_v}, \chi_1 \chi_2^{-1})$, whose dimension is equal to the number of $\sigma \in \Sigma_v$ for which $k_{\chi_1,\sigma} > k_{\chi_2,\sigma}$, i.e. the number of $\sigma$ for which $\alpha_v$ is critical.
\end{proof}

\subsection{The local structure in the CM case}\label{subsec: CM case II}

We now apply the computations of the previous subsection to the problem of understanding $\s_y$. We return to the setup from \S \ref{subsec: setup and non-cm}, and keep the assumption from \S \ref{subsec: CM case I} that $\pi$ has CM by $\wt{F}$. We will use the following notation throughout the rest of this paper: If $W$ is any $E$-vector space with an involution, we will write $W^+$ for the $+1$-eigenspace of the involution and $W^-$ for the $-1$-eigenspace of the involution, and we will write $W_+$ and $W_-$ for the largest quotients of $W$ on which the involution acts by $+1$ and $-1$, respectively. Note that the natural maps $W^+ \to W_+$ and $W^- \to W_-$ are isomorphisms --- this is sometimes useful and we will use it without further comment, though we will also want to distinguish between them. In what follows, the involution will always mean the twisting action coming from the non-trivial element $\psi \in C_\pi$.

\medskip

Next, we have the set $\Phi$ of points $z = (\pi,\alpha_z)$ mapping to $y$, and the inclusion
\[
\s_y \sub \prod_{z\in \Phi} \T_z.
\]
As in \S \ref{subsec: setup and non-cm}, we focus on the map
\[
R_\rho \to \prod_{z\in \Phi} R_\rho^{\Reff,\alpha_z} \cong \prod_{z\in \Phi} \T_z.
\]
We will use the identifications $\T_z \cong R_\rho^{\Reff,\alpha_z}$ throughout this subsection without further comment. Let $S_p$ denote the set of places of $F$ above $p$. We will use the following piece of notation: If $V$ is a finite dimensional $E$-vector space, then we write $E \llbracket V \rrbracket$ for the completion of the symmetric algebra $\Sym^{\bu}V$ along the ideal $\bigoplus_{n \geq 1} \Sym^n V$. $E \llbracket V \rrbracket$ is then isomorphic to a power series ring in $\dim V$ variables by choosing a basis for $V$. If $v\in S_p$, we set 
\[
V_v = H_{/f}^1(G_{F_v},\ad^0 \rho)^\vee.
\]
If $V_p := \bigoplus_{v\mid p} V_v$, then the isomorphism (\ref{description of tangent space}) gives us an isomorphism $R_\rho \cong E\llbracket V_p \rrbracket$. If $\eta = (\eta_v)_{v\mid p}$ is a refinement of $\pi$, we also define 
\[
V_{v,\eta_v} = (\mf{t}_{v, /f}^{\Reff,\eta_v})^\vee,
\]
which we view as a \emph{direct summand} of $V_v$ via Proposition \ref{description of tangent spaces I}(3). If we set
\[
V_{p,\alpha_z} = \bigoplus_{v\mid p} V_{v,\alpha_{z,v}},
\]
then we have $R_\rho^{\Reff,\alpha_z} \cong E \llbracket V_{p,\alpha_z} \rrbracket$.

\medskip

We divide $S_p$ into a disjoint union
\[
S_p = S_{1} \cup S_{2},
\] 
where $S_{1}$ is the set of $v\mid p$ such that $\alpha_v=-\beta_v$ and $S_{2}$ is the set $v\mid p$, where $\alpha_v\neq-\beta_v$. By definition, $\# S_1 = r$. Note that if $v$ is inert in $\wt{F}$, then $v \in S_1$. We denote by $S_{in}$ the set of inert places $v\mid p$ and by $S_{sp}$ the set of split places $v\mid p$. Our first statement about $\s_y$ concerns the number of its irreducible components.

\begin{theorem}
Let $f_z$ denote the natural map $R_\rho^+ \to R_\rho^{\Reff,\alpha_z}$. If $r=0$, then $\s_y$ is irreducible. If $r\geq 1$, then $\Ker f_z = \Ker f_{z^\prime}$ if and only if $z$ and $z^\prime$ lie in the same orbit of $\Phi$ under the action of $\psi$. As a consequence, the number of irreducible components of $\s_y$ equals the number of orbits of $\Phi$ under the action of $\psi$.
\end{theorem}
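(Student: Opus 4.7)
For $r=0$, $\Phi=\{x\}$ is a singleton, so $\s_y$ embeds into the formally smooth domain $\T_x \cong R_\rho^{\Reff,\alpha}$; being reduced (as a completed local ring of the reduced $\E_{mid}$), $\s_y$ is itself a domain, hence irreducible. For $r \geq 1$, I would first handle the easy direction. Write $\tau$ for the twisting involution on $R_\rho$, so $R_\rho^+ = R_\rho^\tau$, and set $I_z := \Ker(R_\rho \twoheadrightarrow R_\rho^{\Reff,\alpha_z})$. The calculation at \eqref{eq: twisting action of def spaces} shows that $\tau$ permutes $\{I_z\}_{z \in \Phi}$ via the $\psi$-action on $\Phi$, i.e.\ $\tau(I_z) = I_{\psi z}$; intersecting with the $\tau$-fixed subring $R_\rho^+$ gives $\Ker f_z = R_\rho^+ \cap I_z = R_\rho^+ \cap I_{\psi z} = \Ker f_{\psi z}$.

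For the converse, the plan is to extract from $\Ker f_z$ a canonical subspace of $T_\rho^+$ that recovers the $\psi$-orbit. Introduce
$$S_z := \{u \in T_\rho^+ : u(\Ker f_z) = 0\} \subseteq T_\rho^+,$$
where $T_\rho^+$ is the $\tau$-invariant part of $T_\rho = H^1(G_F,\ad^0\rho)$. Writing $V := T_\rho^* = \mathfrak{m}_{R_\rho}/\mathfrak{m}_{R_\rho}^2$ with its splitting $V = V^+ \oplus V^-$, and $L_z := (\mf{t}_\rho^{\Reff,\alpha_z})^\perp \subseteq V$ for the linear generators of $I_z$, a Cohen splitting $R_\rho \cong E\llbracket V \rrbracket$ yields the identity $\Ker f_z \bmod \mathfrak{m}_{R_\rho}^2 = L_z \cap V^+$: the inclusion $\subseteq$ is formal (linear parts land in $L_z$ from $I_z$ and in $V^+$ from $R_\rho^+$), while $\supseteq$ holds because every element of $V^+$ admits a $\tau$-invariant linear lift in $R_\rho$, which lies in $I_z$ as soon as its class lies in $L_z$. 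Since the pairing $V \times T_\rho \to E$ is $\tau$-equivariant, one has $\langle V^+, T_\rho^-\rangle = 0$, and therefore $S_z = (L_z \cap V^+)^{\perp_+}$ inside $T_\rho^+$.

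The crux is then a place-by-place computation of $S_z$. At inert $v$, Proposition \ref{inert tangent space} exchanges $\mf{t}_{v,/f}^{\Reff,\alpha_v}$ and $\mf{t}_{v,/f}^{\Reff,-\alpha_v}$, so $L_{z,v}$ and $\tau L_{z,v}$ are the two complementary summands of $V_v$; in particular $L_{z,v} \cap V_v^+ = 0$, and hence $S_z \cap T_{\rho,v}^+ = T_{\rho,v}^+$, independent of $\alpha_{z,v}$. At split $v$, $L_{z,v}$ is $\tau$-stable and a double-perp calculation gives $S_z \cap T_{\rho,v}^+ = (\mf{t}_{v,/f}^{\Reff,\alpha_{z,v}})^+$. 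Thus $S_z$ depends on the refinement only at split places. Since $\psi$ acts on $\Phi$ by flipping signs at precisely the inert places (noting $S_{in} \subseteq S_1$) and fixing split coordinates, and since Theorem \ref{split tangent space} places the two candidates $(\mf{t}_{v,/f}^{\Reff,\pm\alpha_v})^+$ at $v \in S_1 \cap S_{sp}$ into the disjoint summands $H_{/f}^1(G_{F_v},\chi_2\chi_2^{-1})^{\Delta_2}$ and $H_{/f}^1(G_{F_v},\chi_1\chi_1^{-1})^{\Delta_1}$ of the $+$-part, two refinements $z, z'$ produce the same $S_z$ iff they agree on $S_{sp}$, iff they share a $\psi$-orbit. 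Hence $\Ker f_z = \Ker f_{z'}$ forces $S_z = S_{z'}$, forcing $z, z'$ into the same orbit.

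For the corollary on components, $\s_y$ is reduced, equidimensional of dimension $d$, and embeds into the product of regular domains $\prod_z \T_z$, so its minimal primes are precisely the distinct kernels $K_z := \Ker(\s_y \to \T_z)$. The $C_\pi$-invariance of $\s_y$ inside the product yields $K_z = K_{\psi z}$. Conversely, if $z, z'$ lie in distinct orbits, the main theorem produces $r \in R_\rho^+$ with $f_z(r) = 0$ and $f_{z'}(r) \neq 0$; since the natural map $R_\rho^+ \to \prod_z \T_z$ is topologically generated by the traces $q_v^{-1}(t_v+1) \in \s_y$, it factors through the closed subring $\s_y$, and the image of $r$ witnesses $K_z \neq K_{z'}$. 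The number of minimal primes of $\s_y$ therefore equals the number of $\psi$-orbits. The main technical obstacle I expect is the split-place distinctness: checking that $(\mf{t}_{v,/f}^{\Reff,\alpha_v})^+$ and $(\mf{t}_{v,/f}^{\Reff,-\alpha_v})^+$ are always distinct subspaces of $T_{\rho,v}^+$ --- including in the degenerate cases where one of them vanishes --- requires a careful reading of the $+$-eigenspace information in Theorem \ref{split tangent space}, together with the dimension bookkeeping $|\Delta_1|+|\Delta_2| = [F_v:\Qp]$ that prevents both from collapsing simultaneously.
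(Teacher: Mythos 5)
The $r=0$ case, the easy direction ($\tau(I_z)=I_{\psi z}$, so intersecting with $R_\rho^+$ gives equal kernels on a $\psi$-orbit), and the split-place analysis are all fine; in particular the worry you flag at the end — whether $(\mf{t}_{v,/f}^{\Reff,\alpha_v})^+$ and $(\mf{t}_{v,/f}^{\Reff,-\alpha_v})^+$ can coincide at a split $v$ — is resolved exactly as you suspect, since they sit in the complementary summands indexed by $\Delta_1$ and $\Delta_2$ with $|\Delta_1|+|\Delta_2|=[F_v:\Qp]>0$, so they are always distinct complementary subspaces of $V_v^+\neq 0$. But the argument has a genuine gap elsewhere, and it is in the step you treat as routine.

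You assert the chain ``$S_z=S_{z'}$ iff $z,z'$ agree on $S_{sp}$ iff $z,z'$ share a $\psi$-orbit.'' The second equivalence is false as soon as $|S_{in}|\geq 2$. The involution $\psi$ acts on $\Phi$ by flipping the sign at \emph{all} inert places simultaneously (and fixing all split places), so its orbits have size at most $2$. Take $z,z'$ that agree on $S_{sp}$ and differ at some but not all of the inert places: they agree on $S_{sp}$, yet they lie in distinct $\psi$-orbits. Your invariant $S_z$ is blind here. Indeed at every inert $v$ you compute $L_{z,v}\cap V_v^+=0$, so $L_z\cap V^+$ depends on $z$ only through its split coordinates, and $S_z=S_{z'}$ whenever $z,z'$ agree on $S_{sp}$. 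Thus the linearization of $\Ker f_z$ (equivalently, $\Ker f_z\bmod\mathfrak m^2$) cannot distinguish kernels for $z,z'$ differing only at a proper nonempty subset of $S_{in}$, and the conclusion ``$\Ker f_z=\Ker f_{z'}\Rightarrow$ same orbit'' does not follow. For a concrete illustration, if all $v\mid p$ are inert in $\wt F$ and $|S_{in}|=r\geq 2$, you have $L_z\cap V^+=0$ for every $z\in\Phi$, so $S_z$ is the same for all $2^r$ elements of $\Phi$, and your argument would count a single component when the theorem asserts $2^{r-1}$. This is precisely the case the paper handles by passing to second-order data: it exhibits a $\tau$-invariant \emph{quadratic} element $x_1x_2+y_1y_2$, with $x_1\in V_{v,\alpha_{z,v}}$ and $x_2\in V_{v',\alpha_{z,v'}}$ at two suitably chosen inert places $v,v'$, that maps to a nonzero element of $\T_z$ but to $0$ in $\T_{z'}$. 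Any argument that stays entirely in $\mathfrak m/\mathfrak m^2$, as yours does, cannot see this separation; you need to descend one level deeper in the filtration.

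As a smaller remark, the inclusion $\Ker f_z\bmod\mathfrak m^2\supseteq L_z\cap V^+$ also needs more care than ``lift linearly'': the $\tau$-equivariant Cohen coordinates you fix are not adapted to $I_z$ (since $\tau$ does not preserve $I_z$ unless $z=\psi z$), so a $\tau$-invariant linear lift of an element of $L_z\cap V^+$ is only in $I_z+\mathfrak m^2$, not a priori in $I_z$. This can be repaired, but it is not automatic. The corollary paragraph is essentially correct, modulo its reliance on the flawed biconditional; the cleaner route, as the paper does, is to embed $\s_y$ into $(\prod_z\T_z)^+$ rather than into $\prod_z\T_z$ before applying lying-over and going-up.
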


\begin{proof}
If $r=0$, then $\Phi = \{ x\}$ and $\s_y \sub \T_x$, proving the result since $\T_x$ is smooth. Assume that $r\geq 1$; we prove the ``if and only if'' statement. Since the map
$R_\rho \to \prod_{z\in \Phi} \T_z$ is equivariant with respect to twisting, and twisting identifies $\T_z$ and $\T_{z^\prime}$ when $z$ and $z^\prime$ are twists of one another, the ``if'' part follows. Now assume that $z$ and $z^\prime$ are not in the same orbit. We have two cases:
\begin{enumerate}
\item $\alpha_{z,v} = \alpha_{z^\prime,v}$ for all $v \in S_{sp}$;
\smallskip
\item $\alpha_{z,v} = - \alpha_{z^\prime,v}$ for some $v \in S_{sp}$.
\end{enumerate}
We start with the first case (note that we must have $r\geq 2$ in this case). Then we can find $v,v^\prime \in S_{in}$ such that $\alpha_{z,v} = \alpha_{z^\prime,v}$ but $\alpha_{z,v^\prime} = - \alpha_{z^\prime,v^\prime}$. Then choose $x_1 \in V_{v,\alpha_{z,v}}$ and $x_2 \in V_{v^\prime,\alpha_{z,v^\prime}}$, and let $y_1 \in V_{v,-\alpha_{z,v}}$ and $y_2 \in V_{v^\prime,-\alpha_{z,v^\prime}}$ be the twists of $x_1$ and $x_2$, respectively. By construction, $x_1 x_2 + y_1 y_2 \in R_\rho^+$. On the other hand, we have $x_1,x_2 \in V_{p,\alpha_z}$, $y_1,y_2 \notin V_{p,\alpha_z}$, $x_1,y_2 \in V_{p,\alpha_{z^\prime}}$ and $y_1,x_2 \notin V_{p,\alpha_{z^\prime}}$, also by construction. In particular, $x_1 x_2 + y_1 y_2$ maps to $x_1 x_2$ in $\T_z$ but maps to $0$ in $\T_{z^\prime}$, showing that the kernels of $f_z$ and $f_{z^\prime}$ are distinct, as desired.

\medskip

Now consider the second case. We have a split place $v\mid p$ such that $\alpha_{z,v} = -\alpha_{z^{\prime},v}$. To show that $\Ker f_z \neq \Ker f_{z^\prime}$, we consider the inclusions
\[
V_{v,\alpha_{z,v}} \oplus V_{v,\alpha_{z^{\prime},v}}\sub V_p \sub R_{\rho}
\]
from above. Without loss of generality, $V_{v,\alpha_{z,v}}^+ \neq 0$, so we may pick any element $x \neq 0$ in $V_{v,\alpha_{z,v}}^+$. Then $x \in R_{\rho}^{+}$, $f_z(x)\neq 0$ (as $f_z$ projects onto $V_{v,\alpha_{z,v}}^+$), but $f_{z^{\prime}}(x)=0$ and hence the kernels are indeed different.

\medskip

Finally, to show the statement about the number of irreducible components of $\s_y$, consider
\[
R_\rho^+ \to \s_y \sub A:= \left( \prod_{z\in \Phi}\T_z \right)^+.
\]
Note that $R_\rho^+$ does map into $\s_y$ by Theorem \ref{thethingIwant} and Corollary \ref{defringsiso}. If $S_{in}\neq \emptyset$, then $A = \prod_{z\in \Phi^\prime}\T_z$, where $\Phi^\prime \sub \Phi$ is a set of orbit representatives for the twisting action; if $S_{in} = \emptyset$, then $A = \prod_{z\in \Phi}\T^+_z$ (and all orbits are singletons). The inclusion $\s_y \sub A$ is finite, and both $\s_y$ and $A$ are equidimensional of dimension $d$. Thus, by lying over and going up \cite[Proposition 4.15, Corollary 4.18]{eisenbud}, minimal primes of $\s_y$ are exactly the intersections with the minimal primes in $A$, and hence exactly the kernels of the maps $\s_y \to \T_z$ for $z$ going through a set of orbit representatives of $\Phi$ (since the $\T_z$ are domains).
\end{proof}

It remains to determine the structure of the components, for which we have to determine the image of $\s_y \to \T_z$. The methods developed here will determine the image of $R_\rho^+ \to \T_z$. In general this image might be smaller, but in the cases below (Theorems \ref{main geometry CM noncritical} and \ref{main geometry CM critical}) we can prove equality and hence compute the structure of the components.

\begin{theorem}\label{main geometry CM noncritical}
Assume that $\alpha_z$ is non-critical at all $\sigma$ corresponding to places in $S_{sp}$, for all $z\in \Phi$. Then the components of $\s_y$ are smooth.
\end{theorem}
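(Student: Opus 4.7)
The plan is to exploit the preceding theorem, which identifies the irreducible components of $\s_y$ with the $\psi$-orbits in $\Phi$, the component attached to the orbit of $z$ being the image of the composite $\s_y \hookrightarrow \prod_{z'\in\Phi}\T_{z'} \twoheadrightarrow \T_z$. Since $\T_z \cong R_\rho^{\Reff,\alpha_z}$ is formally smooth of dimension $d$ by Proposition \ref{description of tangent spaces I}(2), it suffices to prove that $\s_y \twoheadrightarrow \T_z$ is surjective for every $z \in \Phi$.

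I would then reduce to studying the map $R_\rho^+ := R_\rho^{S_\rho} \to \T_z$. By Theorem \ref{thethingIwant} and Corollary \ref{defringsiso}, $R_\rho^+$ is topologically generated by the traces $\tr(\Frob_v \mid \ad^0 \rho^{univ})$ for $v \notin S$, and under $R_\rho \twoheadrightarrow \T_{z'}$ these map to $q_v^{-1}(t_v + 1) \in \s_y$ independently of $z' \in \Phi$. Hence the map $R_\rho^+ \to \prod_{z'}\T_{z'}$ factors through $\s_y$, so it suffices to show $R_\rho^+ \twoheadrightarrow \T_z$ is surjective. By the standard complete-local version of Nakayama's lemma, this amounts to showing surjectivity on cotangent spaces $\mf{m}_{R_\rho^+} \to \mf{m}_{\T_z}/\mf{m}_{\T_z}^2$.

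To handle this cotangent-space question, note that $\pi_v$ is an unramified principal series for all $v\mid p$ (as $\pi$ has CM and $\wt{F}/F$ is unramified above $p$), so Proposition \ref{description of tangent spaces I}(3) yields the decomposition $V_v = V_{v,\alpha_v}\oplus V_{v,\beta_v}$ with $V_v := H^1_{/f}(G_{F_v},\ad^0\rho)^\vee$. The cotangent-space map is therefore the projection $V_p := \bigoplus_v V_v \twoheadrightarrow W := V_{p,\alpha_z} = \bigoplus_v V_{v,\alpha_{z,v}}$. Averaging over $\psi$ in characteristic zero gives $\mf{m}_{R_\rho^+} \twoheadrightarrow V_p^+$, so the whole problem reduces to showing $V_p^+ \twoheadrightarrow W$ is surjective. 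I would verify this place by place: for $v \in S_{sp}$, the non-criticality hypothesis combined with Theorem \ref{split tangent space} forces the $(-1)$-eigenspace of $\psi$ on both $V_{v,\alpha_v}$ and $V_{v,\beta_v}$ to vanish, so $V_v^+ = V_v$ and the projection to $V_{v,\alpha_{z,v}}$ is automatic; for $v \in S_{in}$ (necessarily in $S_1$), Proposition \ref{inert tangent space} says that $\psi$ interchanges $V_{v,\alpha_v}$ and $V_{v,-\alpha_v}$, so $V_v^+$ is the graph of this involution, of dimension $[F_v:\Q_p]$, and projects isomorphically onto $V_{v,\alpha_{z,v}}$.

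The main point of the argument will be this place-by-place tangent-space computation, where the non-criticality hypothesis is precisely what kills the $(-1)$-eigenspaces identified in Theorem \ref{split tangent space}. Without this hypothesis, those eigenvectors cannot be reached from $V_p^+$ and genuinely obstruct surjectivity, thereby producing the quotient singularities treated subsequently in Theorem \ref{thm D}.
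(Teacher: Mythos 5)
Your overall strategy — reduce to surjectivity of $R_\rho^+ \to \T_z$, pass to cotangent spaces, and verify place-by-place that $V_p^+ \twoheadrightarrow V_{p,\alpha_z}$ — is precisely the paper's argument. The analysis at inert places is correct. However, there is a genuine error in your description of the split places, although it does not affect the conclusion.

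You claim that for $v\in S_{sp}$ the non-criticality hypothesis forces the $(-1)$-eigenspace of $\psi$ on \emph{both} $V_{v,\alpha_v}$ and $V_{v,\beta_v}$ to vanish, hence $V_v^+ = V_v$. This is not what Theorem \ref{split tangent space} and Remark \ref{rem: non-crit vs crit in split case} give. For a fixed $\sigma\in\Sigma_v$, exactly one of $\alpha_v$ or $\beta_v$ is critical; so if $\alpha_v$ is non-critical at \emph{every} $\sigma\in\Sigma_v$ (which the hypothesis forces, together with the observation that $S_{sp}=S_2$ so that $\alpha_{z,v}=\alpha_v$ for all $z\in\Phi$), then $\beta_v$ is critical at \emph{every} $\sigma\in\Sigma_v$. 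Theorem \ref{split tangent space} then says $V_{v,\alpha_v}^- = 0$ (correct, as you say), but it equally says $\dim V_{v,\beta_v}^- = [F_v:\Q_p] = \dim V_{v,\beta_v}$, so $V_{v,\beta_v}$ is \emph{entirely} the $(-1)$-eigenspace, not zero. Hence $V_v^+ = V_{v,\alpha_v}$, a proper subspace of $V_v$ — not $V_v$ itself. Since what one needs is merely that $V_v^+ \twoheadrightarrow V_{v,\alpha_v}$, and in fact $V_v^+ = V_{v,\alpha_v}$, the surjectivity is still immediate and your conclusion stands, but the stated intermediate claim is false and would cause trouble if used elsewhere (e.g.\ it would misrepresent the dimension of $\s_y$, or the singularity analysis in the critical case).

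One smaller point: you phrase the reduction as proving $\s_y \twoheadrightarrow \T_z$, whereas the paper proves the a priori weaker statement $R_\rho^+ \twoheadrightarrow \T_z$ (from which $\s_y\twoheadrightarrow\T_z$ follows, since $R_\rho^+$ maps into $\s_y$ by Theorem \ref{thethingIwant} and Corollary \ref{defringsiso}). Your paragraph does make this reduction explicitly; just keep in mind that the final ingredient is that the trace elements land in $\s_y$.
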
 

\begin{proof}
The theorem follows from showing that $f_z : R_\rho^+ \to \T_z$ is surjective (for any $z\in \Phi$), since $\T_z$ is smooth. Note that the assumptions imply that $S_{sp}=S_2$ (cf. Remark \ref{rem: non-crit vs crit in split case}). Recall that $R_\rho \cong E\llbracket V_p \rrbracket$ and that $\T_z \cong E \llbracket V_{p,\alpha_z} \rrbracket$, and that when $\alpha_z$ is non-critical at all $\sigma$ over split places, 
\[
V_{p,\alpha_z} = \bigoplus_{v \in S_{in}} V_{v,\alpha_{z,v}} \oplus \bigoplus_{v\in S_{sp}}V_v^+
\]
(to see that $V_v^+ = V_{v,\alpha_{z,v}}$ for $v\in S_{sp}$, use Theorem \ref{split tangent space}). We need to show that the image of $R_\rho^+ \to \T_z$ contains a spanning set of $V_{p,\alpha_z}$. If $v\in S_{sp}$ and $x \in V_v^+$, then $x$ itself lies in $R_\rho^+$ and hence in the image of $f_z$. If $v\in S_{in}$ and $x \in V_{v,\alpha_{z,v}}$, let $y \in V_{v,-\alpha_{z,v}}$ denote its twist. Then $x+y \in R_\rho^+$ and $f_z(x+y)=x$, proving that $x$ lies in the image. By the description of $V_{p,\alpha_z}$ above, this finishes the proof.
\end{proof}

Our second main theorem, which we consider to be the most interesting one, concerns the case $S_{in}=\emptyset$.

\begin{theorem}\label{main geometry CM critical}
Assume that $S_{in}=\emptyset$. Then the maps $\s_y \to \T_z^+$ are surjective for all $z\in \Phi$. In particular, when $r=0$, $\s_y = \T_x^+$.
\end{theorem}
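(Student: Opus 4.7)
The plan is to exploit the twisting $\Z/2$-action of $\psi\in C_\pi$ on both $R_\rho$ and on each completed local ring $\T_z$, in a way that is compatible with the surjection $R_\rho \twoheadrightarrow \T_z$. The hypothesis $S_{in}=\emptyset$ means every place $v\mid p$ splits in $\wt{F}$, so $\psi_v$ is trivial at each such $v$. I would first observe that this forces $\psi$ to fix every point $z=(\pi,\alpha_z)\in\Phi$ (the refinement $\alpha_{z,v}$ transforms to $\psi_v(\vp_v)\alpha_{z,v}=\alpha_{z,v}$), and moreover to preserve the weakly refined condition defining $\mf{X}_v^{\Reff,\alpha_{z,v}}$ at each $v\mid p$: since $\psi_v$ is trivial, twisting by $\psi$ replaces a deformation $\wt{\rho}_v$ by an isomorphic one, and so leaves the space $D^+_{\rm{crys}}(\wt{\rho}_v\otimes\LT(\ka_{2,v}))^{\varphi^{f_v}=\wt\eta}$ unchanged. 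Consequently the twisting action descends to an action on each $R_\rho^{\Reff,\alpha_z}\cong\T_z$ making the quotient map $R_\rho\twoheadrightarrow\T_z$ equivariant.

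Next I would combine this with the results of \S\ref{subsec: def theory}. By Corollary \ref{defringsiso}, $R_\rho$ coincides with $R_r$ for $r=\ad^0\rho$, and by Theorem \ref{thethingIwant} the invariants $R_\rho^+=R_r^{H_r}$ are topologically generated by the traces $\tr(\Frob_v,\ad^0\rho^{univ})$ for $v\notin S$. Their images in $\prod_{z\in\Phi}\T_z$ are precisely the diagonal elements $q_v^{-1}(t_v+1)$, which lie in the closed subring $\s_y$ (cf.\ Proposition \ref{Galois rep SL2}). Hence the composite $R_\rho^+\to\prod_z\T_z$ factors through a map $R_\rho^+\to\s_y$; postcomposing with the projection to $\T_z^+$ yields the $+$-invariants of the surjection $R_\rho\twoheadrightarrow\T_z$. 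Because $\Z/2$ is finite and we work in characteristic $0$, the averaging idempotent $\tfrac{1}{2}(1+\psi)$ makes $(-)^+$ exact, so this composite is surjective. It follows that $\s_y\to\T_z^+$ is itself surjective, which is the first assertion.

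For the final statement, when $r=0$ the set $\Phi$ is the singleton $\{x\}$, so Proposition \ref{localstructure1} gives $\s_y\sub\T_x^{C_\pi}=\T_x^+$; combining this with the surjectivity just proved yields $\s_y=\T_x^+$.

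The only non-formal step is the verification that the twisting action genuinely descends to each $\T_z$, and this is precisely where the assumption $S_{in}=\emptyset$ is essential: at an inert place, twisting by $\psi_v$ (now the nontrivial unramified quadratic character) would send $\mf{X}_v^{\Reff,\alpha_{z,v}}$ to $\mf{X}_v^{\Reff,-\alpha_{z,v}}$ (cf.\ Proposition \ref{inert tangent space}), causing $\psi$ to permute distinct components of $\Phi$ and invalidating the descent. Once this descent is secured, the rest is a formal consequence of Theorem \ref{thethingIwant} together with the exactness of $(-)^+$ for a finite group action in characteristic zero.
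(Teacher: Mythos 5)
Your proof is correct and takes essentially the same approach as the paper: the paper's own argument observes that $R_\rho\twoheadrightarrow\T_z$ is $\psi$-equivariant with $\psi$ preserving $\T_z$ because $S_{in}=\emptyset$, passes to $+1$-eigenspaces, and uses that $R_\rho^+\to\s_y$ (via Theorem~\ref{thethingIwant} and Corollary~\ref{defringsiso}) together with the containment $\s_y\sub(\prod_z\T_z)^{C_\pi}$ from Proposition~\ref{localstructure1}. You have merely spelled out the verification that the twisting action descends to each $\T_z$ and the exactness of $(-)^+$, both of which the paper leaves implicit.
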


\begin{proof}
The maps $R_\rho \to \T_z$ are surjective and equivariant for the twisting action (which preserve the $\T_z$, since $S_{in} = \emptyset$). It follows that $R_\rho^+ \to \T_z^+$ is surjective, and hence so is $\s_y \to \T_z^+$, as desired. In particular, $\s_y = \T_x^+$ when $r=0$.
\end{proof}

Let us now look more closely at the structure of the components in Theorem \ref{main geometry CM critical}. Since all the relevant phenomena occur when $r=0$, we focus on this case to avoid heavier notation. We have $\T_x \cong E \llbracket V_{p,\alpha} \rrbracket$, so we see that $\T_x^+$ is the image of the natural map
\[
E \llbracket V_{p,\alpha}^+ \oplus \Sym^2 V_{p,\alpha}^- \rrbracket \to E \llbracket V_{p,\alpha} \rrbracket.
\]
If we set $m = \dim V_{p,\alpha}^+$, then $m$ is the number of $\sigma \in \Sigma_p$ for which $\alpha$ is non-critical at $\sigma$, and $n = \dim V_{p,\alpha}^-$ is the number of $\sigma \in \Sigma_p$ for which $\alpha$ is critical at $\sigma$. Choose bases $X_1,\dots, X_m$ of $V_{p,\alpha}^+$ and $Y_1,\dots,Y_n$ of $V_{p,\alpha}^-$. Then $\s_y$ is isomorphic to the subring
\[
E\llbracket X_i, Y_j Y_k \mid i=1,\dots,m,\, j,k = 1,\dots,n \rrbracket \sub E \llbracket X_1,\dots,X_m,Y_1,\dots,Y_n \rrbracket.
\]
This is a simple but yet interesting type of quotient singularity, whose geometric properties we recall in the appendix. In particular, we deduce the following varied behaviour of the local rings $\s_y$ from Proposition \ref{quotientsingularities}.

\begin{corollary}\label{main geometry CM critical cor}
Assume that $r=0$ and let $n$ be the number of $\sigma \in \Sigma_p$ such that $\alpha$ is critical at $\sigma$. Then the local ring $\s_y$ is normal and Cohen-Macaulay. Moreover:
\begin{enumerate}
\item If $n=0$ or $n=1$, then $\s_y$ is smooth.

\item If $n=2$, then $\s_y$ is a complete intersection but not regular.

\item If $n$ is odd and $\geq 3$, then $\s_y$ is $2$-Gorenstein (but not Gorenstein).

\item If $n$ is even and $\geq 4$, then $\s_y$ is Gorenstein but not a complete intersection.
\end{enumerate}

\end{corollary}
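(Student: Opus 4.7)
The plan is to reduce the statement directly to a calculation of invariants of a power series ring under an involution, and then invoke the appendix result Proposition \ref{quotientsingularities} on the resulting quotient singularity. Since the corollary is stated under the assumption $r=0$, Theorem \ref{main geometry CM critical} gives an isomorphism $\s_y \cong \T_x^+$, where $+$ denotes the invariants under the twisting involution induced by $\psi$. So everything reduces to understanding this invariant ring.

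First I would note that, by Proposition \ref{description of tangent spaces I}(2), $\T_x$ is formally smooth of dimension $d$ and isomorphic to $E\llbracket V_{p,\alpha}\rrbracket$, where $V_{p,\alpha} = \bigoplus_{v\mid p} (\mf{t}_{v,/f}^{\Reff,\alpha_v})^\vee$. The involution $\psi$ acts on this tangent space, and since $S_{in} = \emptyset$ (which is forced by $r=0$ in the CM case, as every inert place lies in $S_1$), Theorem \ref{split tangent space} pins down the action: $V_{p,\alpha}$ splits equivariantly as $V_{p,\alpha}^+ \oplus V_{p,\alpha}^-$, with $\dim V_{p,\alpha}^- = n$ being precisely the number of embeddings $\sigma \in \Sigma_p$ at which $\alpha$ is critical, and $\dim V_{p,\alpha}^+ = m = d-n$. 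Choosing bases $X_1,\dots,X_m$ of $V_{p,\alpha}^+$ and $Y_1,\dots,Y_n$ of $V_{p,\alpha}^-$ as in the discussion preceding the corollary, one identifies
\[
\s_y \cong E\llbracket X_1,\dots,X_m\rrbracket \,\widehat{\otimes}_E\, E\llbracket Y_1,\dots,Y_n\rrbracket^{\Z/2},
\]
where $\Z/2$ acts by $Y_j \mapsto -Y_j$.

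The remaining step is to read off the listed properties from the structure of this ring. Since the power series ring in the $X_i$ is already regular, normality, Cohen--Macaulayness, the complete intersection property, and the Gorenstein (or $2$-Gorenstein) properties of $\s_y$ reduce to the corresponding properties of $R_n := E\llbracket Y_1,\dots,Y_n\rrbracket^{\Z/2}$, which is the completion at the origin of the classical $\Z/2$-quotient singularity generated by the quadratic monomials $Y_jY_k$. These are exactly the rings analysed in Proposition \ref{quotientsingularities} in the appendix, which records that $R_n$ is always normal and Cohen--Macaulay, smooth if $n \leq 1$, a complete intersection but not regular if $n = 2$, Gorenstein but not a complete intersection if $n$ is even and $\geq 4$, and $2$-Gorenstein but not Gorenstein if $n$ is odd and $\geq 3$.

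I do not expect any genuine obstacle here: the only substantive input is the identification of the $\psi$-action on the tangent space provided by Theorem \ref{split tangent space}, together with the $R=T$-type isomorphism $R_\rho^{\Reff,\alpha} \cong \T_x$ from Proposition \ref{description of tangent spaces I}(2), both of which are already in hand. The minor point to be careful about is that invariants and completion at the origin commute here (the involution preserves the maximal ideal and the group has order prime to the residue characteristic $0$ of $E$), so that passing to completed local rings does not affect the invariant-theoretic description; this is immediate from averaging over $\Z/2$.
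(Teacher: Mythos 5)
Your proposal is correct and follows essentially the same route as the paper: Theorem \ref{main geometry CM critical} gives $\s_y \cong \T_x^+$, Proposition \ref{description of tangent spaces I}(2) and Theorem \ref{split tangent space} identify $\T_x \cong E\llbracket V_{p,\alpha}\rrbracket$ equivariantly with $\dim V_{p,\alpha}^- = n$ (and $r=0$ indeed forces $S_{in}=\emptyset$), and the commutative algebra is delegated to Proposition \ref{quotientsingularities}. The only cosmetic difference is that you factor $\s_y$ as $E\llbracket X\rrbracket\,\widehat{\otimes}_E\,E\llbracket Y\rrbracket^{\Z/2}$ before invoking the appendix, whereas the paper writes the same ring as the explicit subring $E\llbracket X_i, Y_jY_k\rrbracket$.
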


Let us finish this section by summarizing our results in the case $F=\Q$ (no longer assuming $S_{in}\neq \emptyset$). Recall that in this case we have $\alpha \neq -\beta$ if $p$ is split in $\wt{F}$, by Proposition \ref{behaviour of av at split primes}(1).

\begin{corollary}
Let $x=(\pi,\alpha)$ be a refined cuspidal cohomological automorphic representation for $\GL_2(\A_{\Q})$ and assume that $\pi_p^I$ does not have a repeated $U_p$-eigenvalue. Then the corresponding point $y$ on the $\SL_2$-eigencurve is smooth, unless $\pi$ does not have CM and $-\alpha$ is also a refinement of $\pi$, in which case there are two components going through $y$ (locally), both of which are smooth.
\end{corollary}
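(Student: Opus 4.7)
The proof is essentially a bookkeeping exercise, assembling the local structure results of Theorems \ref{main geometry non-CM}, \ref{main geometry CM noncritical}, \ref{main geometry CM critical} and Corollary \ref{main geometry CM critical cor} in the special case $F=\Q$, where $\Sigma_p$ consists of a single element and there is exactly one place above $p$. I would split the argument according to whether $\pi$ has CM, and if so, according to the splitting behaviour of $p$ in the CM field $\wt{F}$.

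\textbf{Non-CM case.} By Theorem \ref{main geometry non-CM}, $\s_y$ has $2^r$ smooth irreducible components, where $r$ is the number of places $v\mid p$ such that $-\alpha_v$ is a refinement. Since there is a unique place above $p$, $r\in\{0,1\}$: if $-\alpha$ is not a refinement, then $\s_y$ is smooth; if $-\alpha$ is a refinement, there are exactly two smooth components through $y$. This recovers the only non-smooth situation in the statement.

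\textbf{CM case, $p$ inert in $\wt{F}$.} Then the unique place above $p$ lies in $S_{in}$, so $S_{sp}=\emptyset$ and also $r=1$ (since $\beta=-\alpha$ at inert places). The vanishing of $S_{sp}$ means the non-criticality hypothesis in Theorem \ref{main geometry CM noncritical} is vacuously satisfied, so every component of $\s_y$ is smooth. Moreover, the twisting action by $\psi$ swaps the two elements of $\Phi$ (it sends $\alpha$ to $\psi(\vp_p)\alpha=-\alpha$), so there is a single $\psi$-orbit and hence a single component.

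\textbf{CM case, $p$ split in $\wt{F}$.} Proposition \ref{behaviour of av at split primes}(1) forces $\alpha\neq -\beta$, so $r=0$ and $\Phi=\{x\}$. Since $S_{in}=\emptyset$, Corollary \ref{main geometry CM critical cor} applies with $n\in\{0,1\}$ (because $\#\Sigma_p=1$), and both values of $n$ give a smooth local ring $\s_y$. Note that the only non-regular phenomena in Corollary \ref{main geometry CM critical cor} arise for $n\geq 2$, which cannot occur when $F=\Q$.

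The main (indeed, only) subtlety is invoking Proposition \ref{behaviour of av at split primes}(1) to rule out the critically refined split CM case over $\Q$; all other cases are immediate from the theorems, and the organization above shows that the only way to obtain two components is the non-CM situation with $-\alpha$ also a refinement.
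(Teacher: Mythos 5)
Your proof is correct and follows essentially the same route as the paper: the paper's preamble to the corollary amounts to the observation that by Proposition \ref{behaviour of av at split primes}(1), $\alpha\neq -\beta$ when $p$ splits in $\wt{F}$, and then one simply reads off the conclusion from Theorems \ref{main geometry non-CM}, \ref{main geometry CM noncritical} and Corollary \ref{main geometry CM critical cor} with $\#\Sigma_p=1$. Your three-way case split (non-CM; CM with $p$ inert; CM with $p$ split), including the orbit count under $\psi$ in the inert case and the bound $n\leq 1$ in the split case, is precisely the bookkeeping the paper leaves implicit.
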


\section{$p$-adic endoscopic forms}\label{sec: endoscopic forms}

In this section we prove our main theorems on classicality of $p$-adic eigenforms with classical eigenvalues. The notation, setup and assumptions are the same as in \S \ref{sec: local geometry}. Additionally, we use the notation for $\pm 1$-eigenspaces of involutions etc., introduced in \S \ref{subsec: CM case II}.

\medskip

In particular we let $(\pi, \alpha)$ be a refined automorphic representation satisfying the assumptions stated at the beginning of Section \ref{subsec: setup and non-cm}. This gives rise to a refined $L$-packet $(\Pi,\gamma)$ of $\SL_2(\A_F)$ and a classical point $y= (\Pi,\gamma)$ on $ \mc{E}= \mc{E}_{K^p}$.
We begin in \S \ref{subsec: formal nhood} by deriving a formula for the family of smooth $\SL_2(\A_F^{p,\infty})$-representations on $\E$ in a (formal) neighbourhood of $y$. The main ingredients here are Lemma \ref{shrinking} and the computation of the fibres $\pi_{K}^{-1}(y)$. In \S \ref{subsec: computation of fibre} we combine this formula with the results on the local geometry at $y$ from \S \ref{sec: local geometry} to study the fibres.

\subsection{Computation of the sheaf in the formal neighbourhood}\label{subsec: formal nhood}

Recall from \S \ref{sec: eigenvarieties} that we have the eigenvarieties $\wt{\E}_{K^p}$ and $\E_{K^p}$ together with their graded coherent sheaves $\wt{\mc{M}}_{K^p}$ and $\mc{M}_{K^p}$. In section we describe the fibre of $\mc{M}_{K^p}$ at $y$, proving Theorem \ref{thm E}. To do this, we first establish a formula for the formal completion of $\mc{M}_{K^p}$ at $y$ in terms of $\wt{\mc{M}}_{K^p}$ in this subsection. Since the map
\[
Y_K^1 \to Y_K
\]
rarely identifies $Y_K^1$ with the identity component of $Y_K$, it turns out to be better to work with the direct limit $\varinjlim_{K^p} \mc{M}_{K^p}$ instead (locally around $y$). In many ways this is natural --- the relationship between $\SL_2$ and $\GL_2$ is cleaner at the level of automorphic representation than at the level of their fixed vectors.

\medskip

We start by recalling some material from \S \ref{sec: eigenvarieties}. First, recall that we have the common weight space $U$ from \S \ref{subsec: p-adic functoriality}, which is a small neighbourhood of the cohomological weight $k$ of $\pi$ that we may shrink as we see fit. Recall that $h$ is the slope of $x$ and $y$. By construction, we have
\[
\mc{M}_{K^p}(\s^K_{U,h}) = H^\ast(Y_K^1,\mc{D}_U)_{\leq h}
\]
and
\[
\wt{\mc{M}}_{K^p}(\T^K_{U,h}) = H^\ast(Y_K,\mc{D}_U)_{\leq h}.
\]
From (\ref{hecke algebras prelim}) we have the sequence of Hecke algebras
\[
\T^K_{U,h} \hookleftarrow \s_{U,h}^{\GL_2, K} \twoheadrightarrow \ol{\s}_{U,h}^K \twoheadleftarrow \s_{U,h}^K
\]
and their localizations
\[
\T^K_y \hookleftarrow \s_y^{\GL_2, K} \twoheadrightarrow \ol{\s}_y^K \twoheadleftarrow \s_y^K,
\]
and we recall that we have proved that the second and third maps are isomorphisms and that these $\SL_2$-Hecke algebras are independent of $K^p$; this common algebra is what we have called $\s_y$. The goal of this subsection is to compute
\[
\mc{M}^{\wedge}_y := \varinjlim_{K^p} \mc{M}^{\wedge}_{K^p,y}
\]
in terms of the corresponding objects for $\GL_2$, where $\mc{M}^{\wedge}_{K^p,y}$ denotes the formal completion of $\mc{M}_{K^p}$ at $y$ (we will use similar notation for formal completions of other coherent sheaves as well). More precisely, for $z\in \Phi$, set
\[
\wt{\mc{M}}_{z}^\wedge := \varinjlim_{K^p} \wt{\mc{M}}_{K^p,z}^\wedge.
\]
To actually define these direct limits, we need transition maps.

\begin{lemma}\label{transition maps}
Let $K^p_0 \sub K^p$ be compact open subgroups of $\GL_2(\A_F^{p\infty})$, and set $K = K^p I$ and $K_0 = K_0^p I$. Then the natural maps $H^\ast(Y_K^1,\mc{D}_U)_{\leq h} \to H^\ast(Y_{K_0}^1,\mc{D}_U)_{\leq h}$ and $H^\ast(Y_K,\mc{D}_U)_{\leq h} \to H^\ast(Y_{K_0},\mc{D}_U)_{\leq h}$ induce maps
\[
\mc{M}_{K^p,y}^\wedge \to \mc{M}_{K_0^p, y}^{\wedge} \,\,\,\,\, \text{and} \,\,\,\,\, \wt{\mc{M}}_{K^p,z}^\wedge \to \wt{\mc{M}}_{K_0^p, z}^\wedge
\]
of $\s_y$- and $\T_z$-modules, respectively (the latter for any $z\in \Phi$).
\end{lemma}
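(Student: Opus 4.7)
The plan is to construct the maps by geometric functoriality and then pass to formal completions, using the identifications of completed Hecke algebras established in \S \ref{subsec: p-adic functoriality}. I first treat the $\SL_2$ case. The inclusion $K_0^p \sub K^p$ yields a natural map $Y_{K_0}^1 \to Y_K^1$, and pullback of cohomology produces a map
\[
H^\ast(Y_K^1, \mc{D}_U) \to H^\ast(Y_{K_0}^1, \mc{D}_U)
\]
that intertwines every Hecke operator defined at the coarser tame level, in particular it commutes with $u_p$, so it respects slope decompositions and restricts to an $\s(K) \otimes \oo(U)$-linear map on the slope-$\leq h$ parts (where $\s(K)$ acts on the target through the inclusion $\s(K) \sub \s(K_0)$ followed by the structure homomorphism to $\s^{K_0}_{U,h}$).

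Next, I would observe that the prime $\p_y^{K_0} \sub \s^{K_0}_{U,h}$ and the prime $\p_y^K \sub \s^K_{U,h}$ pull back to the same prime of $\s(K) \otimes \oo(U)$, namely the kernel of the map $\s(K) \otimes \oo(U) \to \olqp$ given by the system of Hecke eigenvalues at $y$ restricted to $\s(K)$ together with evaluation of weights at $k$. Localizing and completing the above map at this common prime produces the desired map $\mc{M}^\wedge_{K^p,y} \to \mc{M}^\wedge_{K_0^p,y}$. To promote this to $\s_y$-linearity, I invoke Theorem \ref{main section 3}, which identifies both $\s^K_y$ and $\s^{K_0}_y$ with the common ring $\s_y$ and shows in particular that $\s_y$ is topologically generated by the image of $\s(K)$; hence the $\s(K) \otimes \oo(U)$-linearity of the map automatically upgrades to $\s_y$-linearity.

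The argument for $\wt{\mc{M}}$ at a point $z \in \Phi$ is entirely parallel, with $Y_{K_0} \to Y_K$, $\T$, $\T^K_{U,h}$ and $\T^{K_0}_{U,h}$ in place of the $\SL_2$ analogues. The required identification of $\T^{K_0}_z$ with $\T^K_z$ comes from Proposition \ref{changing tame level}: our standing assumption $K_0^p \sub K^p \sub K_1^p(\m)$, with $\m$ the prime-to-$p$ conductor of $\pi$, ensures that the natural map $\wt{\E}_{K_0^p} \to \wt{\E}_{K^p}$ is a local isomorphism at each $z = (\pi, \alpha_z) \in \Phi$, since all points in $\Phi$ correspond to refinements of the same $\pi$ and so share the same prime-to-$p$ conductor. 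No essential obstacle appears; the lemma is simply functoriality of the overconvergent cohomology construction combined with the tame-level invariance of completed local Hecke algebras that we have already established.
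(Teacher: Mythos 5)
Your overall strategy — complete the pullback map at a common prime, then use the level-independence of the completed Hecke algebras — is the right idea, and your handling of the $\GL_2$ case via Proposition \ref{changing tame level} is a valid shortcut (note Proposition \ref{changing tame level} itself rests on Lemma \ref{changing Hecke operators}, i.e.\ a Chebotarev argument). However, there are two issues, the second of which is a genuine gap.

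First, a notational slip: the common prime lives in $\s(K_0)\otimes \oo(U)$, not $\s(K)\otimes\oo(U)$. Only the Hecke algebra $\s(K_0)$, built from the \emph{smaller} set of good places $v\notin S(K_0)$, acts compatibly on the cohomology at both levels; $\s(K)$ contains operators $t_v$ for $v\in S(K_0)\setminus S(K)$ which do not even act on $H^\ast(Y_{K_0}^1,\mc{D}_U)$, so there is no natural map $\s(K)\otimes\oo(U)\to\s_{U,h}^{K_0}$.

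Second, the substantive gap is in the sentence ``Localizing and completing \dots produces the desired map $\mc{M}^\wedge_{K^p,y}\to\mc{M}^\wedge_{K_0^p,y}$.'' Completing $H^\ast(Y_K^1,\mc{D}_U)_{\leq h}$ at the common prime $\q\sub\s(K_0)\otimes\oo(U)$ gives a priori a product $\prod_{y'}\mc{M}^\wedge_{K^p,y'}$ over \emph{all} points $y'$ of $\E_{K^p}$ whose eigenvalues for $\s(K_0)$ match those of $y$ — that is, points that may differ from $y$ only in the operators $t_v$ for $v\in S(K_0)\setminus S(K)$. You have implicitly assumed this product is a singleton. The paper proves exactly this: any such $y'$ lies on $\E_{K^p,mid}$ by Lemma \ref{vanishing for SL(2)}, hence carries the three-dimensional pseudocharacter of Proposition \ref{Galois rep SL2}, and then Chebotarev density (applied to the Frobenii at $v\notin S(K_0)$) forces $y'=y$. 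Without that step, your construction produces a map whose source is possibly larger than $\mc{M}^\wedge_{K^p,y}$; one would then have to restrict to the $y$-factor and re-check $\s_y$-linearity, which is considerably more delicate than simply appealing to Theorem \ref{main section 3}. In short: the $\SL_2$ analogue of the Chebotarev/level-independence argument that you invoke via Proposition \ref{changing tame level} for $\GL_2$ is needed here too, and it does not come for free from Theorem \ref{main section 3}.
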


\begin{proof}
We construct the map $\mc{M}_{K^p, y}^\wedge \to \mc{M}_{K_0^p, y}^\wedge$; the other map is constructed in exactly the same way. We need to show that $H^\ast(Y_K^1,\mc{D}_U)_{\leq h} \to H^\ast(Y_{K_0}^1,\mc{D}_U)_{\leq h}$ induces a map
\[
H^\ast(Y_K^1,\mc{D}_U)_{\leq h} \otimes_{\s_{U,h}^K} \s_y \to H^\ast(Y_{K_0}^1,\mc{D}_U)_{\leq h} \otimes_{\s_{U,h}^{K_0}} \s_y.
\]
Using the map $\s_{U,h}^{K_0} \to \s_{U,h}^K$, we get a map
\[
H^\ast(Y_K^1,\mc{D}_U)_{\leq h} \otimes_{\s_{U,h}^{K_0}} \s_y \to H^\ast(Y_{K_0}^1,\mc{D}_U)_{\leq h} \otimes_{\s_{U,h}^{K_0}} \s_y
\]
and it remains to show that $H^\ast(Y_K^1,\mc{D}_U)_{\leq h} \otimes_{\s_{U,h}^{K_0}} \s_y = H^\ast(Y_K^1,\mc{D}_U)_{\leq h} \otimes_{\s_{U,h}^K} \s_y$. Recall that $\Lambda$ is the completed local ring of $U$ at $k$. Set $\s_{\Lambda}^K = \s_{U,h}^K \otimes_{\oo(U)} \Lambda$ and similarly for $K_0$; then we are reduced to showing that
\[
(H^\ast(Y_K^1,\mc{D}_U)_{\leq h} \otimes_{\oo(U)} \Lambda) \otimes_{\s_\Lambda^{K_0}} \s_y \to (H^\ast(Y_K^1,\mc{D}_U)_{\leq h} \otimes_{\oo(U)} \Lambda) \otimes_{\s_\Lambda^K} \s_y
\]
is an isomorphism. This then follows if we know that $\s_\Lambda^K \otimes_{\s_\Lambda^{K_0}} \s_y = \s_y$, which is equivalent to knowing that $y$ is the only point on $\E_{K^p}$ mapping to $y$ on $\E_{K_0^p}$ (here we are already using that $\s_y = \s_y^{K_0} = \s_y^K$). Let $y^\prime$ be a point on $\E_{K^p}$ mapping to $y$ on $\E_{K_0^p}$. Then $y^\prime$ lies on $\E_{K^p,mid}^{\SL_2}$ by Lemma \ref{vanishing for SL(2)}, and hence has an associated Galois representation by Proposition \ref{Galois rep SL2}, so $y^\prime = y$ as desired by Chebotarev.
\end{proof}

Then we have the following:

\begin{proposition}\label{computation of sheaf}
We have
\[
\mc{M}_{y}^\wedge = \left( \prod_{z\in \Phi}    \wt{\mc{M}}_{z}^\wedge \right)_+
\]
and it is concentrated in degree $d$.
\end{proposition}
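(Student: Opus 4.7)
The plan is to derive the formula from Proposition \ref{limit} by taking slope $\leq h$ parts, completing at $y$, and passing to the direct limit over tame levels, using the orbit structure of $\pi_{K'}^{-1}(y)$ under the twisting $H_{K'}$-action to identify the $H_{K'}$-coinvariants with $C_\pi$-coinvariants. Fix $K_0^p$ small enough so that $\psi \in H_{K_0}$ in the CM case; all $(K')^p$ below are contained in $K_0^p$. Viewing $\mc{D}_U$ as a $K'$-module trivial on the centre (via projection $K' \to K'_p \sub I$), Proposition \ref{limit}, together with the fact that the slope $\leq h$ decomposition is functorial under the restriction maps and is preserved by $H_{K'}$ (since $u_p$ commutes with $H_{K'}$ by Lemma \ref{twisting and Hecke}), yields an isomorphism
\[
\varinjlim_{(K')^p} H^*(Y_{K'}, \mc{D}_U)_{\leq h, H_{K'}} \xrightarrow{\sim} \varinjlim_{(K')^p} H^*(Y_{K'}^1, \mc{D}_U)_{\leq h}.
\]

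Next, I would complete both sides at the maximal ideal of $\s^{\GL_2, K'}_{U,h} \cong \s^{K'}_{U,h}$ corresponding to $y$ (both having common completion $\s_y$ by Theorem \ref{main section 3}); completion commutes with $H_{K'}$-coinvariants since the $y$-ideal is $H_{K'}$-stable and $H_{K'}$ is finite in characteristic zero. The right-hand side becomes $\mc{M}_y^\wedge$ in the limit, by definition. For the left-hand side, the finite inclusion $\s^{\GL_2, K'}_{U,h} \hookrightarrow \T^{K'}_{U,h}$ and the fact that the preimage of the $y$-ideal in $\T^{K'}_{U,h}$ is $\bigcap_{z' \in \pi_{K'}^{-1}(y)} \p_{z'}$ identifies the completion with $\prod_{z' \in \pi_{K'}^{-1}(y)} \wt{\mc{M}}_{(K')^p, z'}^\wedge$. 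By the analysis preceding Proposition \ref{localstructure1}, for $(K')^p$ small enough every $z' \in \pi_{K'}^{-1}(y)$ is a twist $\chi \cdot z$ for some $z \in \Phi$ and $\chi \in H_{K'}$, while the $H_{K'}$-stabilizer of $z = (\pi, \alpha') \in \Phi$ consists of those $\chi \in H_{K'}$ with $\pi \otimes (\chi \circ \det) \cong \pi$ (forcing $\chi \in C_\pi$) and $\chi(\varpi_v)\alpha'_v = \alpha'_v$ for all $v \mid p$, which is exactly the stabilizer $C_\pi^z$ of $z$ in $C_\pi$. Applying the standard identification $(\mathrm{Ind}_{C_\pi^z}^{H_{K'}} N)_{H_{K'}} \cong N_{C_\pi^z}$ orbit by orbit then gives
\[
\left( \prod_{z' \in \pi_{K'}^{-1}(y)} \wt{\mc{M}}_{(K')^p, z'}^\wedge \right)_{H_{K'}} \cong \prod_{[z] \in \Phi/C_\pi} \left(\wt{\mc{M}}_{(K')^p, z}^\wedge\right)_{C_\pi^z} \cong \left( \prod_{z \in \Phi} \wt{\mc{M}}_{(K')^p, z}^\wedge \right)_+,
\]
where the last step is the orbit decomposition of $C_\pi$-coinvariants.

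Finally, passing to the filtered colimit over $(K')^p$ (coinvariants by the finite group $C_\pi$ commute with filtered colimits in characteristic zero, and finite products commute with colimits) yields the stated formula $\mc{M}_y^\wedge \cong (\prod_{z \in \Phi} \wt{\mc{M}}_z^\wedge)_+$. For the concentration in degree $d$, the assumptions on $\pi$ together with Theorem \ref{consequence of ct} (or the equality $\wt{\mc{E}} = \wt{\mc{E}}_{mid}$ when $F = \Q$) ensure every $z \in \Phi$ lies on $\wt{\mc{E}}_{mid}$, so each $\wt{\mc{M}}_{(K')^p, z}^\wedge$ (and hence $\wt{\mc{M}}_z^\wedge$) is concentrated in degree $d$, a property preserved by $C_\pi$-coinvariants. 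The main subtle point is the orbit-stabilizer identification, which rests on the detailed structure of $\pi_{K'}^{-1}(y)$ established in \S\ref{subsec: p-adic functoriality} (in particular on the description of the fibre in terms of $\Phi$ via Proposition \ref{twists are classical}).
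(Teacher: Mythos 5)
Your proof is correct and takes essentially the same approach as the paper: both rest on the sandwich from Lemma \ref{shrinking} (via Proposition \ref{limit}), the levelwise identification of the completed $H_{K'}$-coinvariants with $\prod_{z'\in\pi_{K'}^{-1}(y)}\wt{\mc{M}}_{(K')^p,z'}^\wedge$ via the finite map $\s_{U,h}^{\GL_2,K'}\hookrightarrow\T_{U,h}^{K'}$, and an orbit argument to pass from $H_{K'}$-coinvariants to $C_\pi$-coinvariants in the colimit. Where the paper gestures at ``the same type of argument as in Proposition \ref{localstructure1}'' you have spelled out the orbit-stabilizer identification explicitly, which is a welcome elaboration. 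One small point of imprecision: you assert that for $(K')^p$ small enough, \emph{every} $z'\in\pi_{K'}^{-1}(y)$ is a twist of an element of $\Phi$ by a character that already lies in $H_{K'}$; the paper deliberately avoids claiming this at a fixed level (it only knows $\chi$ is unramified at $p$, not that $\chi\in H_{K'}$) and instead runs the identification through the colimit, shrinking the level $z'$-by-$z'$ as in the proof of Proposition \ref{localstructure1}. Your argument goes through unchanged if you phrase the orbit bookkeeping at the colimit level rather than levelwise, so this is an expository rather than a mathematical issue, but you should adjust the wording so that you do not implicitly assert a uniform level bound you have not established.
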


\begin{proof}
We start by considering the diagram 
\[
\begin{tikzcd}
H^\ast(Y_{K_0},\mc{D}_U)_{\leq h,H_{K_0}} \ar{r} & H^\ast(Y^1_{K_0},\mc{D}_U)_{\leq h}  \\
H^\ast(Y_K,\mc{D}_U)_{\leq h,H_K} \ar{r} \ar{u} & H^\ast(Y^1_K,\mc{D}_U)_{\leq h} \ar{u} \arrow[lu,dashed]
\end{tikzcd}
\]
where the dashed arrow exists for small enough $K_0 = K_0^p I$ by Lemma \ref{shrinking} and all arrows are injective. Tensoring with $\s_y$ and using the vanishing results (Theorem \ref{consequence of ct} and Lemma \ref{vanishing for SL(2)}), we obtain the diagram
\[
\begin{tikzcd}
H^d(Y_{K_0},\mc{D}_U)_{\leq h,H_{K_0}} \otimes_{\s_{U,h}^{K_0}} \s_y \ar{r} & H^d(Y^1_{K_0},\mc{D}_U)_{\leq h} \otimes_{\s_{U,h}^{K_0}} \s_y  \\
H^d(Y_K,\mc{D}_U)_{\leq h,H_K} \otimes_{\s_{U,h}^K} \s_y \ar{r} \ar{u} & H^d(Y^1_K,\mc{D}_U)_{\leq h} \otimes_{\s_{U,h}^K} \s_y \ar{u}.  \arrow[lu,dashed]
\end{tikzcd}
\]
Here the horizontal arrows are injective by flatness of $\s_y$, the dashed arrow and the right vertical map exist and are injective by (the proof of) Lemma \ref{transition maps} and flatness, and the left vertical map exists since the composition of the lower horizontal map and the right vertical map lands inside $H^d(Y_{K_0},\mc{D}_U)_{\leq h,H_{K_0}} \otimes_{\s_{U,h}^{K_0}} \s_y$. In particular, the solid arrows exist for all $K_0^p \sub K^p$, not just those for which the dashed arrow exists. Taking direct limits, we get that 
\[
\mc{M}_y^\wedge = \varinjlim_{K^p} (H^d(Y_K,\mc{D}_U)_{\leq h,H_K} \otimes_{\s_{U,h}^K} \s_y).
\]
Next, note that
\[
H^d(Y_K,\mc{D}_U)_{\leq h,H_K} \otimes_{\s_{U,h}^K} \s_y = H^d(Y_K,\mc{D}_U)_{\leq h,H_K} \otimes_{\ol{\s}_{U,h}^K} \s_y = H^d(Y_K,\mc{D}_U)_{\leq h,H_K} \otimes_{\s_{U,h}^{\GL_2,K}} \s_y.
\]
Moreover,
\[
H^d(Y_K,\mc{D}_U)_{\leq h,H_K} \otimes_{\s_{U,h}^{\GL_2,K}} \s_y = \left( H^d(Y_K,\mc{D}_U)_{\leq h} \otimes_{\s_{U,h}^{\GL_2,K}} \s_y \right)_{H_K}
\]
and we have
\[
H^d(Y_K,\mc{D}_U)_{\leq h} \otimes_{\s_{U,h}^{\GL_2,K}} \s_y =  H^d(Y_K,\mc{D}_U)_{\leq h} \otimes_{\T_{U,h}^{K}} \T_y^K  = \prod_{z\in \pi_K^{-1}(y)} H^d(Y_K,\mc{D}_U)_{\leq h} \otimes_{\T_{U,h}^K} \T_z.
\]
Now note that  
\[
\varinjlim_{K^p} \left( \prod_{z\in \pi_K^{-1}(y)} H^d(Y_K,\mc{D}_U)_{\leq h} \otimes_{\T_{U,h}^K} \T_z \right)_{H_K} = \varinjlim_{K^p} \left( \prod_{z\in \Phi} H^d(Y_K,\mc{D}_U)_{\leq h} \otimes_{\T_{U,h}^K} \T_z \right)_{+}
\]
by the same type of argument as in Proposition \ref{localstructure1}. Summing up, we see that 
\[
\mc{M}_y^\wedge = \varinjlim_{K^p} \left( \prod_{z\in \Phi} H^d(Y_K,\mc{D}_U)_{\leq h} \otimes_{\T_{U,h}^K} \T_z \right)_{+} = \left( \prod_{z\in \Phi} \wt{\mc{M}}_z^\wedge \right)_+
\]
as desired (which is concentrated in degree $d$).
\end{proof}

\subsection{Computation of the fibre}\label{subsec: computation of fibre}

We start by comparing the spaces of classical forms of $\GL_2$ and $\SL_2$. Write $H^\ast(Y_K^1,\mathscr{L}_k)_y$ for the quotient of  $H^\ast(Y_K^1,\mathscr{L}_k)$ by $\p_y$, and similarly for $\GL_2$. Since the Hecke action is semisimple, $H^\ast(Y_K^1,\mathscr{L}_k)_y$ is also (canonically isomorphic to) the localization of $H^\ast(Y_K^1,\mathscr{L}_k)$ at $\p_y$, or the $\p_y$-torsion submodule $H^\ast(Y_K^1,\mathscr{L}_k)[\p_y]$.

\begin{proposition}\label{classical spaces}
We have
\[
\varinjlim_{K^p} H^\ast(Y_K^1,\mathscr{L}_k)_y = \left( \prod_{z\in \Phi} \varinjlim_{K^p} H^\ast(Y_K, \mathscr{L}_k)_z \right)_+
\]
and both sides are concentrated in degree $d$.
\end{proposition}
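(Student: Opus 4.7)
The plan is to mimic the proof of Proposition \ref{computation of sheaf}, replacing the overconvergent modules $H^\ast(Y_K,\mc{D}_U)_{\leq h}$ by their classical counterparts $H^\ast(Y_K,\mathscr{L}_k)$. The three structural inputs of that earlier calculation --- identification of $H_K$-coinvariants of $\GL_2$-cohomology with $\SL_2$-cohomology (in the direct limit over $K^p$), product decomposition of $\GL_2$-cohomology localized at $y$, and the collapse of $H_K$-orbits to $C_\pi$-orbits on $\Phi$ --- all have immediate classical analogues, so the whole computation essentially goes through.

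More concretely, I would proceed in three steps. First, combining Proposition \ref{limit} with the observation that the $H_K$-action commutes with the subalgebra $\s(K)\sub \T(K)$ (remark after Lemma \ref{twisting and Hecke}), so that localization at $\p_y$ commutes with taking $H_K$-coinvariants, yields
\[
\varinjlim_{K^p} H^\ast(Y_K^1,\mathscr{L}_k)_y \;\cong\; \varinjlim_{K^p} \bigl( H^\ast(Y_K,\mathscr{L}_k) \otimes_{\s(K)} \s(K)_{\p_y} \bigr)_{H_K}.
\]
Second, since cuspidal cohomology is semisimple as a Hecke module and $\p_y$ is a cuspidal eigensystem, localization at $\p_y$ through $\s(K) \hookrightarrow \T(K)$ decomposes as
\[
H^\ast(Y_K,\mathscr{L}_k) \otimes_{\s(K)} \s(K)_{\p_y} \;=\; \prod_{z \in \pi_K^{-1}(y)} H^\ast(Y_K,\mathscr{L}_k)_z,
\]
mirroring the decomposition $H^d(Y_K,\mc{D}_U)_{\leq h} \otimes_{\s_{U,h}^{\GL_2,K}} \s_y = \prod_{z \in \pi_K^{-1}(y)} H^d(Y_K,\mc{D}_U)_{\leq h} \otimes_{\T_{U,h}^K} \T_z$ used in Proposition \ref{computation of sheaf}. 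Third, taking $H_K$-coinvariants and passing to the direct limit, the same argument as in Proposition \ref{computation of sheaf} (using Proposition \ref{localstructure1} to see that, in the limit, $H_K$-orbits on $\pi_K^{-1}(y)$ collapse to $C_\pi$-orbits on $\Phi$, with residual involution accounting for the $+$) gives
\[
\varinjlim_{K^p} \Bigl( \prod_{z \in \pi_K^{-1}(y)} H^\ast(Y_K,\mathscr{L}_k)_z \Bigr)_{H_K} \;=\; \Bigl( \prod_{z \in \Phi} \varinjlim_{K^p} H^\ast(Y_K,\mathscr{L}_k)_z \Bigr)_+.
\]

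Concentration in degree $d$ then follows: for each $z \in \Phi$ we have $\pi_z = \pi$, which is cuspidal cohomological of weight $k$, so by the description of cuspidal cohomology recalled in \S\ref{automorphic reps}, $H^i(Y,\mathscr{L}_k)[\pi^\infty] = 0$ for $i \neq d$. Hence the right-hand side is concentrated in degree $d$, and by the isomorphism from the first step so is the left-hand side.

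The only point requiring care is the product decomposition in step two: although $H^\ast(Y_K,\mathscr{L}_k)$ is not semisimple as a $\T(K)$-module in general (due to Eisenstein contributions), the localization at the cuspidal prime $\p_y$ only sees the cuspidal part, on which semisimplicity holds, and the decomposition is then just the product decomposition of the semilocal ring $\T(K)_{\p_y}$. This is the direct classical analogue of the splitting $\T_y^K = \prod_{z \in \pi_K^{-1}(y)} \T_z^K$ used in the proof of Proposition \ref{computation of sheaf}, and as such it is the main (but minor) technical obstacle to the argument.
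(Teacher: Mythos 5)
Your proof is correct and follows the same approach as the paper's, which simply invokes Proposition \ref{limit} and then refers to the "similar (but simpler) calculation" from Proposition \ref{computation of sheaf}; you have spelled out the deferred details accurately, including the key point that the $H_K$-action commutes with the $\s(K)$-action so that passing to $H_K$-coinvariants is $\s(K)$-linear and hence commutes with localization at $\p_y$.
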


\begin{proof}
Both $H^\ast(Y_K^1,\mathscr{L}_k)_y$ and $H^\ast(Y_K, \mathscr{L}_k)_z$ are concentrated in degree $d$ by cuspidality of $\pi$. By Proposition \ref{limit} we have
\[
\varinjlim_{K^p} H^\ast(Y_K^1,\mathscr{L}_k)_y = \varinjlim_{K^p} H^\ast(Y_K,\mathscr{L}_k)_{y,H_K}.
\]
A similar (but simpler) calculation to that in the proof of Proposition \ref{computation of sheaf} then gives us
\[
\varinjlim_{K^p} H^\ast(Y_K,\mathscr{L}_k)_{y,H_K} = \varinjlim_{K^p} \left( \prod_{z\in \Phi} H^\ast(Y_K,\mathscr{L}_k)_z \right)_+
\]
and commuting the direct limit with the product and the coinvariants gives us the proposition.
\end{proof}

Let $\m_y \sub \s_y$ and $\m_z \sub \T_z$ for $z \in \Phi$ be the maximal ideals of these local rings. If $\m_k\sub \oo(U)$ is the maximal ideal corresponding to $k$, then natural map 
\[
H^d(Y_K,\mc{D}_U)_{\leq h} \otimes_{\oo(U)} \oo(U) / \m_k \to H^d(Y_K,\mc{D}_k)_{\leq h}
\]
is an isomorphism (by the Tor-spectral sequence of \cite[Theorem 3.3.1]{hansen} and vanishing outside degree $d$). By Proposition \ref{locfree}, $\wt{\mc{M}}_{K^p,z}^\wedge$ is locally free over $\T_z$. Thus, by Corollary \ref{3.2.13}, the integration map
\[
 \wt{\mc{M}}_{K^p,z} := \wt{\mc{M}}_{K^p,z}^\wedge \otimes_{\T_z}\T_z /\m_z =  H^d(Y_K,\mc{D}_k)_{\leq h} \otimes_{\T(K)} \T(K) / \p_z \to H^\ast(Y_K,\mathscr{L}_k)_z
\]
is an isomorphism. We want to compute the fibre
\[
\mc{M}_{K^p,y} := \mc{M}_{K^p,y}^\wedge \otimes_{\s_y} \s_y /\m_y,
\]
which is the maximal semisimple quotient of $H^d(Y_K^1,\mc{D}_k)_{\leq h}$ on which $\s(K)$ acts through $\phi_y$ (the `co-eigenspace'). To do this, we will divide into cases and combine Proposition \ref{computation of sheaf} with the results on the structure of $\s_y$ from \S \ref{sec: local geometry}.

\medskip

To start with, let us assume that $\pi$ does not have CM. Then Proposition \ref{computation of sheaf} reads
\[
\mc{M}_y^\wedge = \prod_{z\in \Phi} \wt{\mc{M}}_{z}^\wedge,
\]
corresponding to the embedding of rings $\s_y \sub \prod_{z\in \Phi} \T_z$, and each $\s_y \to \T_z$ is surjective by Theorem \ref{main geometry non-CM}. Set $\mc{M}_y = \mc{M}_y^\wedge \otimes_{\s_y} \s_y / \m_y$; note that $\mc{M}_y^{K^p} = \mc{M}_{K^p,y}$. This gives us the following result.

\begin{theorem}\label{classicality non-CM}
Assume that $\pi$ does not have CM. Then the integration map $\mc{M}_y \to \varinjlim_{K^p} H^\ast(Y_K^1, \mathscr{L}_k)_y$ is an isomorphism. 
\end{theorem}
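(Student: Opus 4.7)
The plan is to combine the structural results of the previous subsections in four steps. First, since $\pi$ does not have CM, the group $C_\pi$ is trivial, so Proposition \ref{computation of sheaf} specializes to an identification of $\s_y$-modules
\[
\mc{M}_y^\wedge \;\cong\; \prod_{z\in\Phi}\wt{\mc{M}}_z^\wedge,
\]
where on the right the $\s_y$-module structure on the factor indexed by $z$ is obtained by restriction along $\s_y\to\T_z$.

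Second, by Theorem \ref{main geometry non-CM} each of the local maps $\s_y\twoheadrightarrow\T_z$ is surjective, so the image of $\m_y$ in $\T_z$ equals $\m_z$, and hence $\T_z\otimes_{\s_y}\s_y/\m_y=\T_z/\m_z=E$. Applying $-\otimes_{\s_y}\s_y/\m_y$ to the isomorphism above and passing through each factor, we get
\[
\mc{M}_y \;\cong\; \prod_{z\in\Phi}\bigl(\wt{\mc{M}}_z^\wedge\otimes_{\T_z}E\bigr).
\]
Using that $\mc{M}_y=\varinjlim_{K^p}\mc{M}_{K^p,y}$ (tensor product commutes with direct limits) and the analogous identity $\wt{\mc{M}}_z^\wedge\otimes_{\T_z}E=\varinjlim_{K^p}\bigl(\wt{\mc{M}}_{K^p,z}^\wedge\otimes_{\T_z}E\bigr)=\varinjlim_{K^p}\wt{\mc{M}}_{K^p,z}$ (and finiteness of $\Phi$), this upgrades to
\[
\mc{M}_y \;\cong\; \varinjlim_{K^p}\prod_{z\in\Phi}\wt{\mc{M}}_{K^p,z}.
\]

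Third, by Theorem \ref{main geometry non-CM} each $\T_z$ is regular, so $\wt{\E}_{mid}$ is smooth at $z$. Proposition \ref{locfree} then gives local freeness of $\wt{\mc{M}}$ at $z$, and Corollary \ref{3.2.13} implies that the integration map induces isomorphisms $\wt{\mc{M}}_{K^p,z}\xrightarrow{\sim} H^d(Y_K,\ms{L}_k)_z$ for every $K^p$.

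Fourth, substituting and commuting the direct limit with the finite product, we obtain
\[
\mc{M}_y \;\cong\; \prod_{z\in\Phi}\varinjlim_{K^p} H^d(Y_K,\ms{L}_k)_z,
\]
and the right-hand side is identified with $\varinjlim_{K^p} H^\ast(Y_K^1,\ms{L}_k)_y$ by Proposition \ref{classical spaces} (again using triviality of $C_\pi$, so the $+$-coinvariants are the whole space). The only point requiring care — but essentially routine given our setup — is to verify that the composite isomorphism built this way coincides with the integration map of the statement; this follows from naturality of the integration map together with the fact that all the identifications above come from the defining properties of $\mc{M}_y^\wedge$, $\wt{\mc{M}}_z^\wedge$, and the $p$-adic functoriality map relating $\s_y$ to $\prod_{z\in\Phi}\T_z$.
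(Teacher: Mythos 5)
Your proposal is correct and follows essentially the same line of argument as the paper: specialize Proposition \ref{computation of sheaf} to the case $C_\pi=1$, use the surjectivity of $\s_y\to\T_z$ from Theorem \ref{main geometry non-CM} to pass from $\otimes_{\s_y}$ to $\otimes_{\T_z}$, identify each $\wt{\mc{M}}_z$ with classical cohomology via Proposition \ref{locfree} and Corollary \ref{3.2.13}, and conclude with Proposition \ref{classical spaces}. The extra bookkeeping you include about commuting direct limits with tensor products and finite products, and your closing remark about checking the composite agrees with the integration map, make explicit steps the paper treats as implicit, but do not change the substance.
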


\begin{proof}
From the discussion above, we see that
\[
\mc{M}_y = \mc{M}_y^\wedge \otimes_{\s_y} \s_y/\m_y = \prod_{z\in \Phi} \wt{\mc{M}}_{z}^\wedge \otimes_{\s_y} \s_y/ \m_y = \prod_{z\in \Phi} \wt{\mc{M}}_{z}^\wedge \otimes_{\T_z} \T_z/ \m_z = \prod_{z\in \Phi} \wt{\mc{M}}_z.
\]
Each $\wt{\mc{M}}_z$ maps isomorphically onto $\varinjlim_{K^p} H^\ast(Y_K, \mathscr{L}_k)_z$ via the integration map. Combining this with Proposition \ref{classical spaces} gives the result.
\end{proof}

Now assume that $\pi$ has CM by a quadratic extension $\wt{F}/F$. Recall that $S_{in}$ and $S_{sp}$ are the sets of places $v\mid p$ which are inert and split in $\wt{F}$, respectively.

\begin{theorem}\label{classicality CM non critical}
Assume that $S_{in}\neq \emptyset$ and that $\alpha_z$ is non-critical at all $\sigma$ corresponding to places in $S_{sp}$, for all $z\in \Phi$. Then the integration map $\mc{M}_y \to \varinjlim_{K^p} H^\ast(Y_K^1, \mathscr{L}_k)_y$ is an isomorphism. 
\end{theorem}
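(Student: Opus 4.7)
My plan is to run the same strategy as in Theorem \ref{classicality non-CM}, with two new inputs: (i) the assumption $S_{in}\neq\emptyset$ forces the twisting action of $\psi$ on $\Phi$ to be free (of order $2$), so the $+$-coinvariants appearing in Propositions \ref{computation of sheaf} and \ref{classical spaces} simplify to a product over orbits; and (ii) the non-criticality hypothesis, via Theorem \ref{main geometry CM noncritical} and its proof, makes each component map $\s_y\to\T_z$ surjective, which is what lets the Hecke-module computation pass cleanly from $\s_y$ down to the fibre.

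First I would verify that $\psi$ acts freely on $\Phi$: for any $z=(\pi,\alpha_z)\in\Phi$ and any $v\in S_{in}$ one has $\psi(\varpi_v)=-1$, so the $v$-component of $\psi\cdot z$ equals $-\alpha_{z,v}\neq\alpha_{z,v}$. Hence every $C_\pi$-orbit in $\Phi$ has size $2$. Choosing a set $\Phi_0\subset\Phi$ of orbit representatives, the standard identification $(M\oplus M)_+\cong M$ for a swap action (which, in characteristic $0$, is just the diagonal embedding into $(+1)$-invariants) together with Proposition \ref{computation of sheaf} gives
\[
\mc{M}_y^\wedge\;=\;\Bigl(\prod_{z\in\Phi}\wt{\mc{M}}_z^\wedge\Bigr)_+\;\cong\;\prod_{z\in\Phi_0}\wt{\mc{M}}_z^\wedge,
\]
and the same computation applied to Proposition \ref{classical spaces} yields
\[
\varinjlim_{K^p}H^*(Y_K^1,\mathscr{L}_k)_y\;\cong\;\prod_{z\in\Phi_0}\varinjlim_{K^p}H^*(Y_K,\mathscr{L}_k)_z.
\]
Because the integration map on $\SL_2$ is induced from the integration map on $\GL_2$ via the covering $Y_K^1\to Y_K$, it intertwines the two decompositions coordinate by coordinate.

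Next I would apply Theorem \ref{main geometry CM noncritical}, whose proof shows that under our non-criticality hypothesis the map $R_\rho^+\to\T_z$, and hence $\s_y\to\T_z$, is surjective for every $z\in\Phi$. This forces $\m_y$ to surject onto the maximal ideal $\m_z\subset\T_z$, so
\[
\wt{\mc{M}}_z^\wedge\otimes_{\s_y}\s_y/\m_y\;=\;\wt{\mc{M}}_z^\wedge\otimes_{\T_z}\T_z/\m_z\;=\;\wt{\mc{M}}_z.
\]
Since $\T_z$ is formally smooth by Proposition \ref{description of tangent spaces I}(2), Proposition \ref{locfree} gives local freeness of $\wt{\mc{M}}_z^\wedge$ over $\T_z$, and then Corollary \ref{3.2.13} identifies $\wt{\mc{M}}_z$ with $\varinjlim_{K^p}H^*(Y_K,\mathscr{L}_k)_z$ via the integration map. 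Assembling these componentwise isomorphisms over $\Phi_0$ shows that $\mc{M}_y\to\varinjlim_{K^p}H^*(Y_K^1,\mathscr{L}_k)_y$ is an isomorphism.

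The only obstacle of substance is the bookkeeping around the $+$-coinvariants: one must check that the orbit-wise identifications on the overconvergent side and the classical side really match up under the integration map, and that the induced $\s_y$-structures agree. Nothing genuinely new is needed beyond results already in place; the argument is essentially a repackaging of Propositions \ref{computation of sheaf} and \ref{classical spaces} and Theorem \ref{main geometry CM noncritical}, glued together by the freeness of the $\psi$-action on $\Phi$ that $S_{in}\neq\emptyset$ provides.
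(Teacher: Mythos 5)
Your proposal is correct and follows the same route as the paper: $S_{in}\neq\emptyset$ forces the $\psi$-action on $\Phi$ to be free, so the $+$-coinvariants in Propositions \ref{computation of sheaf} and \ref{classical spaces} collapse to products over orbit representatives, and then the surjectivity of $\s_y\to\T_z$ from Theorem \ref{main geometry CM noncritical} together with local freeness of $\wt{\mc{M}}_z^\wedge$ over the formally smooth $\T_z$ lets you run the Theorem \ref{classicality non-CM} argument verbatim, coordinate by coordinate over orbit representatives. The paper's proof is exactly this reduction, stated more tersely.
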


\begin{proof}
The proof is essentially identical to that of Theorem \ref{classicality non-CM}. Let $\Phi^\prime \sub \Phi$ be a set of orbit representatives for the twisting action. Then, since $S_{in}\neq \emptyset$,
\[
\mc{M}_y^\wedge = \prod_{z\in \Phi^\prime} \wt{\mc{M}}_{z}^\wedge,
\]
corresponding to the inclusion $\s_y \sub \prod_{z\in \Phi^\prime} \T_z$, and each $\s_y \to \T_z$ is surjective by Theorem \ref{main geometry CM noncritical}. Moreover, 
\[
\varinjlim_{K^p} H^\ast(Y_K^1,\mathscr{L}_k)_y = \prod_{z\in \Phi^\prime} \varinjlim_{K^p} H^\ast(Y_K, \mathscr{L}_k)_z
\]
by Proposition \ref{classical spaces}. One may then argue exactly as in the proof of Theorem \ref{classicality non-CM} (changing $\Phi$ to $\Phi^\prime$).  
\end{proof}

Finally, we come to the most interesting case. Assume that $S_{in} = \emptyset$. By Theorem \ref{main geometry CM critical}, $\s_y$ surjects onto $\T_z^+$ for all $z\in \Phi$. By Proposition \ref{computation of sheaf}, we then have
\[
\mc{M}_y^\wedge = \prod_{z\in \Phi}(\wt{\mc{M}}_z^\wedge)_+.
\]
Set $\n_z = \m_y\T_z$ and note that $\m^2_z \sub \n_z \sub \m_z$. Tensoring the exact sequence
\[
0 \to \m_z/\n_z \to \T_z / \n_z \to \T_z / \m_z \to 0 
\]
with the flat $\T_z$-module $\wt{\mc{M}}_z^\wedge$ we obtain the exact sequence
\[
0 \to \m_z \wt{\mc{M}}_z^\wedge /\n_z \wt{\mc{M}}_z^\wedge \to \wt{\mc{M}}_z^\wedge / \n_z \wt{\mc{M}}_z^\wedge \to \wt{\mc{M}}_z \to 0.
\]
Taking coinvariants, we get
\[
0 \to (\m_z \wt{\mc{M}}_z^\wedge /\n_z \wt{\mc{M}}_z^\wedge)_+ \to (\wt{\mc{M}}_z^\wedge / \n_z \wt{\mc{M}}_z^\wedge)_+ \to (\wt{\mc{M}}_z)_+ \to 0,
\]
and taking the product over all $z\in \Phi$ we get the short exact sequence
\[
0 \to \prod_{z\in \Phi}(\m_z \wt{\mc{M}}_z^\wedge /\n_z \wt{\mc{M}}_z^\wedge)_+ \to \prod_{z\in \Phi}(\wt{\mc{M}}_z^\wedge / \n_z \wt{\mc{M}}_z^\wedge)_+ \to \prod_{z\in \Phi}(\wt{\mc{M}}_z)_+ \to 0.
\]
We compute the three terms of this short exact sequence:

\begin{lemma}\label{terms in the SES}
We have $\SL_2(\A_F^{p\infty})$- and $u_v$-equivariant (for all $v\mid p$) isomorphisms:
\begin{enumerate}
\item $\prod_{z\in \Phi} (\wt{\mc{M}}_z)_+ = \left( \prod_{z\in \Phi} \varinjlim_{K^p} H^\ast(Y_K,\mathscr{L}_k)_z \right)_+ = \varinjlim_{K^p} H^\ast(Y_K^1, \mathscr{L}_k)_y$;

\item $\prod_{z\in \Phi}(\wt{\mc{M}}_z^\wedge / \n_z \wt{\mc{M}}_z^\wedge)_+ = \mc{M}_y$;

\item $\prod_{z\in \Phi}(\m_z \wt{\mc{M}}_z^\wedge /\n_z \wt{\mc{M}}_z^\wedge)_+ \cong  \prod_{z\in \Phi} \left( \varinjlim_{K^p} H^\ast(Y_K,\mathscr{L}_k)_z \right)_-^{\oplus n_z}$, where $n_z = \dim_E \m_z/\n_z$.

\item In fact, $\left( \varinjlim_{K^p} H^\ast(Y_K,\mathscr{L}_k)_z \right)_- \cong \left( \varinjlim_{K^p} H^\ast(Y_K,\mathscr{L}_k)_x \right)_-$ for all $z\in \Phi$, so
\[
\prod_{z\in \Phi}(\m_z \wt{\mc{M}}_z^\wedge /\n_z \wt{\mc{M}}_z^\wedge)_+ \cong \left( \varinjlim_{K^p} H^\ast(Y_K,\mathscr{L}_k)_x \right)_-^{\oplus n},
\]
where $n=\sum_{z\in \Phi}n_z$.
\end{enumerate}
\end{lemma}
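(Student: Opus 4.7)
The plan is to prove the four parts in order, building on Propositions \ref{computation of sheaf} and \ref{classical spaces} together with the $R=T$-style description from Section \ref{sec: local geometry}. The unifying principle is that $\wt{\mc{M}}_z^\wedge$ is locally free over $\T_z$ (by Proposition \ref{locfree} and the smoothness of $\wt{\E}_{mid}$ at each $z\in\Phi$, which follows from \cite{bh} and the fact that each $z$ corresponds to a refined automorphic representation satisfying our hypotheses), so every relevant tensor or quotient computation is flat and can be carried out explicitly from the presentations $\T_z \cong E\llbracket V_{p,\alpha_z}\rrbracket$ and $\T_z^+ \cong E\llbracket V_{p,\alpha_z}^+ \oplus \Sym^2 V_{p,\alpha_z}^-\rrbracket$ from Section \ref{subsec: CM case II}.

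For (1), Corollary \ref{3.2.13} and local freeness give $\wt{\mc{M}}_{K^p,z} \cong H^d(Y_K,\mathscr{L}_k)_z$ via the integration map, which is $\GL_2(\A_F^{p\infty})$- and $u_v$-equivariant. Passing to the direct limit (which commutes with $\otimes_{\T_z} E$ and with $(-)_+$) and then taking the product over $z\in\Phi$ gives the first equality; the second is precisely Proposition \ref{classical spaces}. For (2), Theorem \ref{main geometry CM critical} gives $\s_y \twoheadrightarrow \T_z^+$, hence $\m_y$ surjects onto $\m_{\T_z^+}$. Since $\n_z = \m_{\T_z^+}\T_z$ and $\T_z \wt{\mc{M}}_z^\wedge = \wt{\mc{M}}_z^\wedge$, we have $\n_z \wt{\mc{M}}_z^\wedge = \m_{\T_z^+}\wt{\mc{M}}_z^\wedge$, and taking $+$-coinvariants (exact in characteristic $0$) yields $(\n_z\wt{\mc{M}}_z^\wedge)_+ = \m_y(\wt{\mc{M}}_z^\wedge)_+$. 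Combining with Proposition \ref{computation of sheaf} then produces the required identification of $\mc{M}_y$.

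For (3), I would tensor the short exact sequence $0 \to \m_z/\n_z \to \T_z/\n_z \to E \to 0$ with the flat $\T_z$-module $\wt{\mc{M}}_z^\wedge$, obtaining
\[
0 \to (\m_z/\n_z) \otimes_E \wt{\mc{M}}_z \to \wt{\mc{M}}_z^\wedge/\n_z\wt{\mc{M}}_z^\wedge \to \wt{\mc{M}}_z \to 0,
\]
where the left-hand term collapses because $\m_z/\n_z$ is an $E$-vector space. The crucial computation is that $\m_z/\n_z \cong V_{p,\alpha_z}^-$ as $E$-vector spaces with $\psi$-action: comparing the presentations of $\T_z$ and $\T_z^+$, the image of $\m_{\T_z^+}$ in $\m_z/\m_z^2 = V_{p,\alpha_z}$ is exactly $V_{p,\alpha_z}^+$ (since $\Sym^2 V_{p,\alpha_z}^- \subset \m_z^2$), so modulo $\n_z$ only the $-1$-eigenspace survives. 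Applying $(-)_+$ to the sequence and using that $\psi$ acts as $-1$ on $\m_z/\n_z$ gives
\[
\bigl((\m_z/\n_z) \otimes_E \wt{\mc{M}}_z\bigr)_+ = (\m_z/\n_z) \otimes_E (\wt{\mc{M}}_z)_- \cong (\wt{\mc{M}}_z)_-^{\oplus n_z}.
\]
Taking the product over $z \in \Phi$ and rewriting $(\wt{\mc{M}}_z)_-$ via part (1) gives (3).

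For (4), the point is that as an $\SL_2(\A_F^{p\infty}) \times \prod_{v\mid p}\langle u_v\rangle$-representation equipped with the $\psi$-involution, the space $\varinjlim_{K^p} H^d(Y_K,\mathscr{L}_k)_z = H^d(Y,\mathscr{L}_k)[\p_z]$ depends only on the $\SL_2$-eigensystem $\phi_y$, not on the particular $z$. Indeed, by the description of cuspidal cohomology the underlying $\GL_2(\A_F^{p\infty})$-representation is $(\pi^{(p\infty)})^{\oplus 2^d}$ times an at-$p$ contribution $\bigotimes_{v\mid p} \pi_v^{I_v}[U_v = \alpha_{z,v}]$, which is one-dimensional by the multiplicity-one hypothesis, with $u_v$ acting by $\alpha_{z,v}^2 \omega_\pi(\varpi_v)^{-1}$; this eigenvalue is unchanged when $\alpha_{z,v}$ is replaced by $\pm\alpha_{z,v}$. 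Since $S_{in}=\emptyset$ forces $\psi$ to be trivial on $F_p^\times$ (so $\psi$ acts only on the tame part, where the data are the same for all $z$), we get an isomorphism of $\SL_2(\A_F^{p\infty}) \times \prod \langle u_v\rangle$-modules compatible with $\psi$, and hence on the $-1$-eigenspaces. The main obstacle will be part (3): carefully tracking the involution through the flatness computation and rigorously identifying the cotangent space $\m_z/\n_z$ with $V_{p,\alpha_z}^-$, since this is where the surjection $\s_y \twoheadrightarrow \T_z^+$ of Theorem \ref{main geometry CM critical} meets the tangent-space computations of \S\ref{subsec: CM case I}.
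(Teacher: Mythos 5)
Your parts (1)--(3) follow the paper's proof essentially verbatim: part (1) is the integration-map identification via Corollary \ref{3.2.13} and Proposition \ref{locfree} plus Proposition \ref{classical spaces}; part (2) is the observation that $\n_z\wt{\mc{M}}_z^\wedge = \m_y\wt{\mc{M}}_z^\wedge$ followed by Proposition \ref{computation of sheaf}; and in part (3) your identification of $\m_z/\n_z$ with the image of the $Y_j$'s (hence with $V_{p,\alpha_z}^-$, carrying $\psi = -1$) is the same tangent-space computation the paper performs from the presentations $\T_z \cong E\llbracket X_i, Y_j\rrbracket$ and $\T_z^+ \cong E\llbracket X_i, Y_jY_k\rrbracket$.

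For part (4), however, there is a genuine gap. You observe correctly that the underlying $\SL_2(\A_F^{p\infty})$-representation and the $u_v$-eigenvalues agree for all $z\in\Phi$, and that $\psi$ is trivial on $F_p^\times$; but the conclusion ``we get an isomorphism compatible with $\psi$'' does not follow from these two facts alone. An abstract isomorphism of $\SL_2(\A_F^{p\infty})\times\prod_{v\mid p}\langle u_v\rangle$-modules need not commute with the $\psi$-involution, and one must actually \emph{construct} a $\psi$-equivariant intertwiner. The paper does this as follows: since $\psi$ is trivial on $F_p^\times$, the twisting action of $\psi$ on $V$ commutes with the $\GL_2(F_p)$-action (by the computation in Lemma \ref{twisting and Hecke}), so the full Iwahori--Hecke algebra at $p$ acts $\psi$-equivariantly on $V_\pi^I$. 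Since $\pi_p$ is absolutely irreducible, $\pi_p^I$ is an irreducible module over this algebra, so one can choose an element $Q$ of the Iwahori--Hecke algebra which is invertible on $\pi_p^I$ and swaps the one-dimensional subspaces $\pi_p^I[U_v=\alpha_{z,v}]$ and $\pi_p^I[U_v=\alpha_{x,v}]$. The action of $Q$ on $V_\pi^I$ is then the required $\psi$- and $\GL_2(\A_F^{p\infty})$-equivariant isomorphism, and applying $(-)_-$ finishes the proof. Your sketch contains the key inputs (one-dimensionality from multiplicity one, $\psi$ trivial at $p$), but the Hecke-operator construction of $Q$ is what makes the $\psi$-compatibility rigorous and should be included.
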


\begin{proof}
Part (1) follows by Proposition \ref{classical spaces} since $\wt{\mc{M}}_z = \varinjlim_{K^p} H^\ast(Y_K,\mathscr{L}_k)_z $ for all $z\in \Phi$ via the integration map.

\medskip

For part (2), we have
\[
\prod_{z\in \Phi}(\wt{\mc{M}}_z^\wedge / \n_z \wt{\mc{M}}_z^\wedge)_+ = \prod_{z\in \Phi}(\wt{\mc{M}}_z^\wedge / \m_y \wt{\mc{M}}_z^\wedge)_+ = \prod_{z\in \Phi} (\wt{\mc{M}}_z^\wedge)_+ / \m_y (\wt{\mc{M}}_z^\wedge)_+ = \mc{M}_y.
\]

We now prove part (3). To start with, we analyse $\m_z / \n_z$, and in particular the dimension $n_z := \dim \m_z / \n_z$. First, in this proof only, write $\p_z$ for $\m_y \T_z^+$; this is the maximal ideal of $\T_z^+$ and we have $\n_z = \p_z \T_z$. Next, choose coordinates such that
\[
\T_z \cong E \llbracket X_1,\dots,X_m,Y_1,\dots,Y_n \rrbracket
\]
with the $X_i$ $+1$-eigenvectors for the twisting action and the $Y_j$ $-1$-eigenvectors. Then $\T_z^+$ corresponds to the subring  
\[
E \llbracket X_i,Y_j Y_k \mid i=1,\dots,m, \,\, j,k=1,\dots,n \rrbracket,
\]
giving generators for $\p_z$. In particular, the quantity labelled $n$ is $n_z$, and it is the dimension of $-1$-eigenspace of $\m_z/\m_z^2$ or, equivalent, its dual, the tangent space. By Theorem \ref{split tangent space}, $n_z$ is the number of $\sigma \in \Sigma_p$ for which $\alpha_{z}$ is critical.

\medskip

Next, we write $\m_z \wt{\mc{M}}_z^\wedge /\n_z \wt{\mc{M}}_z^\wedge = \m_z/\n_z \otimes_{\T_z} \wt{\mc{M}}_z^\wedge$ by flatness of $\wt{\mc{M}}_z^\wedge$ over $\T_z$. Since $\m_z^2 \sub \n_z$, the $\T_z$-action on $\m_z/\n_z$ factors through $\T_z/\m_z =E$ and we get
\[
\m_z/\n_z \otimes_{\T_z} \wt{\mc{M}}_z^\wedge = \m_z/\n_z \otimes_{E} \wt{\mc{M}}_z.
\]
Since $\m_z/\n_z$ is dual to the $-1$-eigenspace of the tangent space $(\m_z/\m_z^2)^\vee$, twisting acts by $-1$ on $\m_z/\n_z$. As $n_z = \dim_E \m_z/\n_z$, we obtain
\[
(\m_z \wt{\mc{M}}_z^\wedge /\n_z \wt{\mc{M}}_z^\wedge)_+ = (\m_z/\n_z \otimes_{E} \wt{\mc{M}}_z)_+ \cong \left( \varinjlim_{K^p} H^\ast(Y_K,\mathscr{L}_k)_z \right)_-^{\oplus n_z}.
\]
Taking the product over all $z\in \Phi$ then finishes the proof.

\medskip

It remains to prove part (4). Fix $z\in \Phi$ and consider the left $\GL_2(\A^\infty)$-module
\[
V = \varinjlim_{K} H^\ast(Y_K,\mathscr{L}_k),
\]
where now $K$ runs through all compact open subgroups of $\GL_2(\A^\infty)$. The $\pi^\infty$-part $V_\pi$ of $V$ is a direct summand, and we have
\[
\varinjlim_{K^p} H^\ast(Y_K,\mathscr{L}_k)_z = V_\pi^I [U_v = \alpha_{z,v} \mid v|p ],
\]
and similarly for $x$. Consider the twisting action of $\psi$ on $V$, which preserves $V_\pi$. Since $\psi$ is trivial on $F_p^\times$, a calculation analogous to that in the proof of Lemma \ref{twisting and Hecke} shows that the $\GL_2(F_p)$-action on $V$ intertwines the twisting action. In particular, the action of the full Iwahori-Hecke algebra (at $p$) on $V_{\pi}^I$ is $\psi$-equivariant. Since $\pi_p$ is (absolutely) irreducible, we can choose an element $Q$ of the Iwahori-Hecke algebra which is an isomorphism of $\pi_p^I$ and exchanges $\pi_p^I [U_v = \alpha_{z,v} \mid v|p ]$ and $\pi_p^I [U_v = \alpha_{x,v} \mid v|p ]$. The action of $Q$ on $V_\pi^I$ is then invertible and $\psi$- and $\GL_2(\A^{p,\infty}_F)$-equivariant and sends $V_\pi^I [U_v = \alpha_{z,v} \mid v|p ]$ to $V_\pi^I [U_v = \alpha_{x,v} \mid v|p ]$. Applying $(-)_-$ then completes the proof of the first part of (4), and the second part is a direct consequence.
\end{proof}

We summarize what we have proved, and elaborate further on the statement in Remark \ref{interpretation}.

\begin{theorem}\label{non-classicality CM critical}
For each $z\in \Phi$, let $n_z$ be the number of $\sigma \in \Sigma_p$ for which $\alpha_z$ is critical at $\sigma$. Set $n= \sum_{z\in \Phi} n_z$. The fibre $\mc{M}_y$ sits in a short exact sequence
\[
0 \to \left( \varinjlim_{K^p} H^\ast(Y_K,\mathscr{L}_k)_x \right)_-^{\oplus n} \to \mc{M}_y \to \varinjlim_{K^p} H^\ast(Y_K^1, \mathscr{L}_k)_y \to 0.
\]
In particular, the non-classical part of the $L$-packet of $\pi$ contributes with $p$-adic, non-classical, forms to the fibre $\mc{M}_y$ when $n$ is positive.
\end{theorem}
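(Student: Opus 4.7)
The plan is to assemble the theorem directly from the machinery already developed just before Lemma \ref{terms in the SES}, together with the three identifications proved in that lemma. First I would observe that under the current assumption $S_{in}=\emptyset$, Theorem \ref{main geometry CM critical} gives surjections $\s_y\twoheadrightarrow\T_z^+$ for every $z\in\Phi$, and Proposition \ref{computation of sheaf} therefore reads $\mc{M}_y^\wedge=\prod_{z\in\Phi}(\wt{\mc{M}}_z^\wedge)_+$. The starting point is then the tautological exact sequence of $\T_z$-modules
\[
0\to \m_z/\n_z\to \T_z/\n_z\to \T_z/\m_z\to 0,
\]
where $\n_z=\m_y\T_z\subset\m_z$. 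Tensoring with the flat $\T_z$-module $\wt{\mc{M}}_z^\wedge$, taking $(-)_+$ (exact since $C_\pi$ has order two and we work in characteristic $0$), and multiplying over $z\in\Phi$ gives
\[
0\to \prod_{z\in\Phi}(\m_z\wt{\mc{M}}_z^\wedge/\n_z\wt{\mc{M}}_z^\wedge)_+\to \prod_{z\in\Phi}(\wt{\mc{M}}_z^\wedge/\n_z\wt{\mc{M}}_z^\wedge)_+\to \prod_{z\in\Phi}(\wt{\mc{M}}_z)_+\to 0.
\]

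Next I would identify the three terms by quoting the four parts of Lemma \ref{terms in the SES}: part (2) identifies the middle term with $\mc{M}_y=\mc{M}_y^\wedge\otimes_{\s_y}\s_y/\m_y$, since $(\wt{\mc{M}}_z^\wedge)_+/\m_y(\wt{\mc{M}}_z^\wedge)_+=(\wt{\mc{M}}_z^\wedge/\n_z\wt{\mc{M}}_z^\wedge)_+$; part (1), combined with the fact that the integration map $\wt{\mc{M}}_z\to\varinjlim_{K^p}H^d(Y_K,\mathscr{L}_k)_z$ is an isomorphism (Proposition \ref{locfree} and Corollary \ref{3.2.13}), together with Proposition \ref{classical spaces}, identifies the right term with $\varinjlim_{K^p}H^\ast(Y_K^1,\mathscr{L}_k)_y$; and parts (3)--(4) identify the left term with $\bigl(\varinjlim_{K^p}H^\ast(Y_K,\mathscr{L}_k)_x\bigr)_-^{\oplus n}$, where the key computation (carried out in the proof of Lemma \ref{terms in the SES}(3)) is that $\m_z/\n_z$ lies in the $(-1)$-eigenspace of the twisting action and has dimension $n_z$, by Theorem \ref{split tangent space} applied at each $v\mid p$. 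Substituting these three identifications into the displayed short exact sequence gives exactly the statement of the theorem.

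The last sentence of the theorem is then purely interpretive: the kernel lives in the $(-1)$-eigenspace of the twisting action by $\psi$ on the classical $\pi^\infty$-isotypic component, and the $(-1)$-eigenspace corresponds to the members of the global $\SL_2$-packet $\Pi(\pi)$ on which $\psi$ acts by $-1$ under the Labesse--Langlands description recalled in \S\ref{subsec:packets}, i.e.\ precisely the non-automorphic members (by Lemma \ref{swap}). A more careful version of this identification will be discussed in Remark \ref{interpretation}; for the statement of the theorem itself no further argument is required.

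There is essentially no hard step left: the real content has been packaged into Theorem \ref{main geometry CM critical} (which gave $\s_y\twoheadrightarrow\T_z^+$), Theorem \ref{split tangent space} (which computed the dimension of the $(-1)$-eigenspace and thus $n_z$), Proposition \ref{computation of sheaf} (the formal-neighbourhood formula), and Lemma \ref{terms in the SES} (the three module identifications). The only point that deserves care, and which I would highlight in the write-up, is the flatness of $\wt{\mc{M}}_z^\wedge$ over $\T_z$, used to pass from the exact sequence of rings to an exact sequence of modules: this is guaranteed by Proposition \ref{locfree} since $x\in\wt{\E}_{mid}$ is smooth (by Proposition \ref{description of tangent spaces I}(2)) and the same holds for every twist $z\in\Phi$.
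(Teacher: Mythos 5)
Your proposal is correct and follows essentially the same path as the paper: in \S\ref{subsec: computation of fibre} the authors set up the identical short exact sequence (tensor the tautological sequence for $\m_z/\n_z$ with the flat module $\wt{\mc{M}}_z^\wedge$, apply $(-)_+$, take the product over $z\in\Phi$) and then plug in the three identifications from Lemma~\ref{terms in the SES}, which is exactly your argument. The extra justifications you add (exactness of $(-)_+$ in characteristic $0$, local freeness via Proposition~\ref{locfree} together with smoothness of $\T_z$ from Proposition~\ref{description of tangent spaces I}(2)) are correct and are also present, if slightly more implicitly, in the paper.
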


\begin{remark}\label{interpretation}
We make some remarks on Theorem \ref{non-classicality CM critical}, mainly concerning the interpretation of the exact sequence. See also Remark \ref{interpretation quat}.

\begin{enumerate}
\item Because of our setup, $\mc{M}_{K^p,y}$ is the maximal quotient of $H^d(Y_K^1,\D_k)$ on which $\s(K)$ acts via $\phi_y$, rather than the $\phi_y$-eigenspace. It is more common, and easier in terms of established vocabulary, to think of classical spaces of automorphic forms as \emph{subspaces} of $p$-adic automorphic forms, rather than quotients as in the distribution-valued cohomology that we use here. This slight niggle can be solved by applying (smooth) duality. Applying this to the exact sequence in Theorem \ref{non-classicality CM critical}, we get  
\[
0 \to \varinjlim_{K^p} (H^\ast(Y_K^1, \mathscr{L}_k)_y)^\vee  \to (\mc{M}_y)^\vee \to  \varinjlim_{K^p} \left(  H^\ast(Y_K,\mathscr{L}_k)_x \right)_-^{\oplus n, \vee}\to 0.
\]
With some work, including duality between $H^\ast(Y_K,\D_k)$ and $H_\ast(Y_K,\mc{A}_k)$ (cf. the proof of \cite[Proposition 3.1.5]{hansen}), Poincar\'e duality (in the form \cite[Theorem III.3.11]{bellaiche-eigenbook}) as well as the interaction between the integration map and duality, one obtains an exact sequence 
\[
0 \to \varinjlim_{K^p} H^\ast(Y_K^1, \mathscr{L}_k)[\phi_y]  \to \varinjlim_{K^p} H^\ast(Y_K^1, \mc{A}_k)[\phi_y] \to  \varinjlim_{K^p} \left(  H^\ast(Y_K,\mathscr{L}_k)[\phi_x] \right)_-^{\oplus n}\to 0.
\]
This puts us back in a setting where we are discussing eigenspaces rather than quotients. This passage between $\mc{A}$'s and $\D$'s using duality is entirely analogous to the passage between completed homology and completed cohomology in Emerton's construction of eigenvarieties.

\medskip

\item Let us now discuss the term $\left( \varinjlim_{K^p} H^\ast(Y_K,\mathscr{L}_k)_x \right)_-^{\oplus n}$ from Theorem \ref{non-classicality CM critical}. We have
\[
\varinjlim_{K^p} H^\ast(Y_K,\mathscr{L}_k)_x \cong \pi^{p\infty} \otimes \bigotimes_{v\mid p} \pi_v^{I_v}[U_v=\alpha_v] \otimes \bigotimes_{v\mid \infty} H^\ast(\mf{g}_v,K_v,\pi_\infty \otimes \mathscr{L}_k)
\]
as representations of $\GL_2(\A_F^{p\infty})$, the $\GL_2$-Iwahori-Hecke algebra at $p$, and $\prod_{v\mid \infty} \pi_0(\GL_2(\R))$, where, for $v\mid \infty$, $\mf{g}_v$ is the Lie algebra of $\GL_2(F_v)$ and $K_v$ is the maximal compact connected subgroup of $\GL_2(F_v)$. The twisting action on $\varinjlim_{K^p} H^\ast(Y_K,\mathscr{L}_k)_x$ breaks up into a product of local twisting actions on the right hand side, with $\varinjlim_{K^p} (H^\ast(Y_K^1, \mathscr{L}_k)_y) = \varinjlim_{K^p} (H^\ast(Y_K,\mathscr{L}_k)_x)_+$ being the automorphic part for $\SL_2$ and $\varinjlim_{K^p} (H^\ast(Y_K,\mathscr{L}_k)_x)_-$ being the non-automorphic part. The local $L$-packets at places $v\mid p$ are singletons, so the action at those places are trivial, whereas the local action at a place $v\mid \infty$ breaks up the two-dimensional space $H^\ast(\mf{g}_v,K_v,\pi_\infty \otimes \mathscr{L}_k)$ into one-dimensional $+1$- and $-1$-eigenspaces. In particular, one sees that 
\[
\varinjlim_{K^p} (H^\ast(Y_K,\mathscr{L}_k)_x)_+ \cong \varinjlim_{K^p} (H^\ast(Y_K,\mathscr{L}_k)_x)_-
\]
abstractly as representations for $\SL_2(\A_F^{p\infty})$ and the $\SL_2$-Iwahori-Hecke algebra at $p$. One might thus wonder about our interpretation of Theorem \ref{non-classicality CM critical}, where we say that the non-automorphic part of the $L$-packet contributes to the non-classical part of the fibre, but we hope the proof together with the remarks above justify our interpretation. To further strengthen this point we present what happens in the case of definite quaternion algebras in the next section, where the $+1$- and $-1$-eigenspaces \emph{are non-isomorphic} (see Remark \ref{interpretation quat}). We also note that one could try to reinstate the difference between $\varinjlim_{K^p} (H^\ast(Y_K,\mathscr{L}_k)_x)_+$ and
$\varinjlim_{K^p} (H^\ast(Y_K,\mathscr{L}_k)_x)_-$ abstractly by taking into account some kind of action at the infinite places, which remembers whether the forms are holomorphic or anti-holomorphic discrete series at infinity. It is not clear to the authors what kind of action at infinity that one should expect a ``$p$-adic automorphic representation'' to have, so we do not pursue this here. Nevertheless, it seems likely to us that there should be such an action, and that it would allow us to distinguish between $\varinjlim_{K^p} (H^\ast(Y_K,\mathscr{L}_k)_x)_+$ and $\varinjlim_{K^p} (H^\ast(Y_K,\mathscr{L}_k)_x)_-$.

\medskip

\item In the case when $S_{in} \neq \emptyset$ and $\alpha_v$ is critical at some $\sigma\in \Sigma_v$ for $v$ split, we currently lack the necessary control over the local geometry to determine whether there are non-classical forms in the fibre or not. We suspect is that there are non-classical forms and that the components are singular.

\medskip

\item Finally, we remark that \cite{L1} uses Emerton's construction of eigenvarieties via completed (co)homology, whereas we have used overconvergent cohomology. One might reasonably wonder whether this makes a difference for the end result. The answer to this should be no, in a very strong sense: The $d$-dimensional parts of both eigenvarieties are canonically isomorphic, as are the fibres of the coherent sheaves on the $d$-dimensional parts. The isomorphism of $d$-dimensional parts of eigenvarieties follows from a standard argument using the interpolation theorem, whereas the isomorphism of fibres of coherent sheaves is unpublished work of one of us (C.J) and David Hansen. Moreover, that work strongly suggests that if one replaces Emerton's eigenvariety by a suitable ``derived'' version (perhaps that of \cite{fu}), the whole eigenvarieties and their coherent sheaves should be isomorphic. For groups which are compact modulo centre at infinity, this comparison is essentially contained in \cite{loeffler}. In light of these results and heuristics, we regard the choice between overconvergent cohomology and completed cohomology as a matter of convenience.
\end{enumerate}
\end{remark}

\section{Definite quaternion algebras}\label{sec: quat}

As is well known, it can be advantageous to work with non-split inner forms of $\GL_2$ (or $\SL_2$) instead of the split form. In particular, if $F$ has even degree over $\Q$, there is a unique quaternion algebra $B$ with centre $F$ which is split at all finite places and ramified at all infinite places. In this case, the Jacquet--Langlands correspondence gives us a canonical bijection between infinite dimensional automorphic representations of $(B\otimes_F \A_F)^\times$ and cuspidal automorphic representation of $\GL_2(\A_F)$ that are discrete series at all infinite places. In this section, we will very briefly outline the results that our methods give if one works with $B$ instead of $\GL_2$. The main differences are the following:
\begin{enumerate}
\item One can drop the condition that $\ol{\rho}_\pi$ is irreducible and generic;

\item Automorphic and non-automorphic members of the same $L$-packet for the corresponding inner form of $\SL_{2/F}$ can no longer be isomorphic at all finite places; cf. Remark \ref{interpretation}.
\end{enumerate}

In particular, point (2) makes it clear that, even abstractly, the extra contribution comes from the non-automorphic part of the $L$-packet.

\subsection{Setup and assumptions}

In this section, we assume that $[F:\Q]$ is even. As above, we let $B$ denote the quaternion algebra with centre $F$ which is split at all finite places and ramified at all infinite places. Let $\mathrm{Nm} : B \to F$ be the reduced norm map. We define algebraic groups $G_B$ and $G_B^1$ over $F$ by
\[
G_B(R) = (B \otimes_F R)^\times
\]
and
\[
G_B^1(R) = \Ker (\mathrm{Nm} : G_B(R) \to R^\times).
\]
Then $G_B$ is an inner form of $\GL_{2/F}$ and $G_B^1$ is an inner form of $\SL_{2/F}$. By assumption $B \otimes_F F_v \cong M_2(F_v)$ ($2 \times 2$-matrices) for all finite places $v$, and we will fix such isomorphisms and use them to equate $G_B(F_v)$ and $G_B^1(F_v)$ with $\GL_2(F_v)$ and $\SL_2(F_v)$, respectively (and similarly for $\A_F^S$-points, where $S$ is a finite set of places containing the infinite places). Let us now very briefly recall overconvergent cohomology and eigenvarieties for $G_B$ and $G_B^1$. We will use the same weight spaces as we did for $\GL_2$ and $\SL_2$, and the distribution modules are exactly the same as $G_B(F_v) = \GL_2(F_v)$ and $G_B^1(F_v) = \SL_2(F_v)$ for all $v\mid p$. If $K \sub \GL_2(\A_F^\infty)$ is a compact open subgroup, the locally symmetric space for $G_B$ with level $K$ is
\[
X_K := G_B(F)^\circ \backslash \GL_2(\A_F^\infty) / K,
\]
where $G_B(F)^\circ = G_B(F) \cap \prod_{v\mid \infty} G_B(F_v)^\circ$. This is a finite discrete topological space, and it carries a surjection
\[
X_K \to Cl^+_K
\]
induced by the reduced norm. If $\alpha \in Cl_K^+$, we will denote the preimage of $\alpha$ in $X_K$ by $X_K^{\mathrm{Nm}=\alpha}$. The centre of $G_B$ can be canonically identified with the centre $Z$ of $\GL_{2/F}$, and as for $\GL_{2/F}$ we set $Z(K) = Z(F) \cap K$. Just as in \S \ref{subsec: symmetric spaces}, any right $K/Z(K)$-module $N$ induces a local system on $X_K$ which we will also denote by $N$. In particular, all the local systems we used on $Y_K$ have counterparts on $X_K$, corresponding to the same $K/Z(K)$-module. Similarly, for any compact open $K \sub \SL_2(\A_F^\infty)$, we have the locally symmetric space
\[
X_K^1 := G_B^1(F) \backslash \SL_2(\A_F^\infty) / K
\]
for $G_B^1$, which is also finite and discrete. As before, if $K \sub \GL_2(\A_F^\infty)$ is compact open, we will write $X_K^1 := X_{K\cap \SL_2(\A_F^\infty)}^1$. If $K\sub \GL_2(\A_F^\infty)$ and $K^\prime \sub \SL_2(\A_F^\infty)$ are compact opens with $K^\prime \sub K$, then we have an induced map
\[
X_{K^\prime}^1 \to X_K 
\]
whose image lands inside $X_K^{\mathrm{Nm}=1}$. We will use the same notation for compact open subgroups as in \S \ref{subsec: symmetric spaces} and the rest of this paper. In particular, if $\m\sub \oo_F$ is an ideal coprime to $p$, we write $X_\m$ for $X_{K^p(\m)I}$ (and define $X_\m^1$ similarly). We record the analogue of Lemma \ref{shrinking}.

\begin{lemma}\label{shrinking quaternion}
Let $K^p \sub \GL_2(\A_F^{p\infty})$ be a compact open subgroup and set $K = K^p I$. For sufficiently small compact opens $K_0^p \sub \GL_2(\A_F^{p\infty})$, the dashed arrow in the diagram
\[
\begin{tikzcd}
X_{K_0}^1 \ar{r} \ar{d} & X_{K_0}^{\mathrm{Nm}=1} \ar{d} \arrow[ld,dashed] \\
X_{K}^1 \ar{r} & X_{K}^{\mathrm{Nm}=1}
\end{tikzcd}
\]
exists, where $K_0 = K_0^p I$.
\end{lemma}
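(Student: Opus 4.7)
My plan is to adapt the proof of Lemma \ref{shrinking} essentially verbatim, replacing the statement about actions on $D_\infty$ (which is trivial here) by an explicit construction of the dashed arrow on representatives. As in the original, I would first reduce to the case $K^p = K^p(\n)$ for some ideal $\n \subseteq \oo_F$ prime to $p$: any compact open $K^p \subseteq \GL_2(\A_F^{p\infty})$ contains some $K^p(\n)$, and if the dashed arrow exists for this smaller level, composing with the natural map $X^1_{K^p(\n) I} \to X^1_{K^p I}$ gives the arrow for the original $K^p$. I would then invoke \cite[Th\'eor\`eme 1]{chevalley} to pick $\m \subseteq \n$ prime to $p$ with $U(\m)_+ \subseteq U(\n)_+^2$, and set $K_0^p := K^p(\m)$.

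The next step is to define the dashed arrow on representatives. Given $g \in \GL_2(\A_F^\infty) = G_B(\A_F^\infty)$ whose class lies in $X_{K_0}^{\mathrm{Nm}=1}$, we have $\mathrm{Nm}(g) \in \mathrm{Nm}(G_B(F)) \cdot \mathrm{Nm}(K_0)$, so we may write $\mathrm{Nm}(g) = t \cdot u$ accordingly. Since $B$ is ramified at every infinite place, the Hasse--Schilling--Maass theorem guarantees that $\mathrm{Nm} : G_B(F) \to F^\times$ is surjective onto the subgroup of totally positive elements, so we can lift $t$ to some $\tilde{t} \in G_B(F)$ and lift $u$ to some $k_0 \in K_0$. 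Then $g' := \tilde{t}^{-1} g k_0^{-1}$ lies in $\SL_2(\A_F^\infty)$, and the dashed arrow is defined by $[g] \mapsto [g'] \in X_K^1$.

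The main difficulty--mirroring the original lemma--will be the verification of well-definedness, and this is where the choice of $\m$ becomes essential. A second set of choices produces $g'' = \gamma^{-1} g' k^{-1}$ with $\gamma \in G_B(F)$ and $k \in K_0$ satisfying $\mathrm{Nm}(\gamma) \cdot \mathrm{Nm}(k) = 1$. Thus $\mathrm{Nm}(\gamma)$, being totally positive and a unit congruent to $1$ modulo $\m$ at every finite place, lies in $U(\m)_+ \subseteq U(\n)_+^2$. Writing $\mathrm{Nm}(\gamma) = z^2$ with $z \in U(\n)_+$ and setting $\delta := z^{-1} \gamma$, we get $\delta \in G_B^1(F)$ and a decomposition $\gamma = z \delta$ with $z$ central. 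Because $z \in U(\n)_+$ is a unit congruent to $1$ modulo $\n$ at every finite place, it lies in the center of $K = K^p(\n) I$, so the identity
\[
g'' = \delta^{-1} g' (z^{-1} k^{-1})
\]
exhibits $[g'] = [g'']$ in $X_K^1$ (noting $z^{-1} k^{-1} \in K \cap \SL_2(\A_F^\infty)$). Independence of the chosen representative of $[g] \in X_{K_0}^{\mathrm{Nm}=1}$ is a similar but easier verification, and the commutativity of the square is then a direct unwinding of the definitions.
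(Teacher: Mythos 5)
Your proof is correct and follows essentially the same route as the paper's: both reduce to principal congruence levels, invoke Chevalley's theorem to arrange $U(\m)_+ \subseteq U(\n)_+^2$, and define the dashed arrow on representatives by renormalizing the reduced norm to $1$ via (Hasse--Schilling--Maass) surjectivity of $\mathrm{Nm}\colon G_B(F) \to (F^\times)^\circ$ together with surjectivity of $\det$ on $K_0$. The paper leaves the well-definedness as ``easily checked''; you have supplied exactly the intended verification, factoring $\gamma = z\delta$ with $z \in U(\n)_+$ a central unit lying in $K$ and $\delta \in G_B^1(F)$.
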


\begin{proof}
One may reduce to the case $K^p = K^p(\m)$ and $K_0^p = K^p(\n)$ for some ideals $\n \sub \m \sub \oo_F$ coprime to $p$. Let
\[
C(\m) = \{ x \in \wh{\oo}_F \mid x \equiv 1 \,\, (\m) \}.
\]
Then
\[
X_\m^{\mathrm{Nm}=1} = \{ [g] \in G_B(F)^\circ \backslash \GL_2(\A_F^\infty) / K^p(\m)I \mid \mathrm{Nm}(g) \in (F^\times)^\circ \cdot C(\m) \}.
\]
Recall the notation $U(\m)_+ = C(\m) \cap (F^\times)^\circ$ for the totally positive units congruent to $1$ modulo $\m$. Now assume that $\n$ is sufficiently small that $U(\n)_+ \sub U(\m)_+^2$; this is possible by \cite[Th\'eor\`eme 1]{chevalley}.Under this assumption, we may define a map $X_\n^{\mathrm{Nm}=1} \to X_\m^1$ as follows: If $[g] \in X_\n^{\mathrm{Nm}=1}$, one may choose $\gamma \in G_B(F)^\circ$ and $k \in K^p(\n)I$ such that $\mathrm{Nm}(\gamma g k) = 1$, and we send $[g]$ to $[\gamma g k]$. One then easily checks that this is well defined and makes the diagram above commute.
\end{proof}

We can now define overconvergent and classical cohomology using the same local systems (and weight spaces) that we used for $\GL_2$ and $\SL_2$. We get eigenvarieties $\wt{\E}_B$ for $G_B$ and $\E_B$ for $G_B^1$, which are equidimensional and reduced of dimension $d$, since $X_K$ and $X_K^1$ only have cohomology in degree $0$. It is standard to interpolate the Jacquet--Langlands correspondence (cf.\ \cite{chenevier-jl}, \cite[5.1]{hansen} and \cite{JN2}) to closed immersions
\[
\wt{\E}_B \to \wt{\E}, \,\,\,\,\, \E_B \to \E,
\]
which identifies $\wt{\E}_B$ with the union of the $d$-dimensional irreducible components of $\wt{\E}$ (by contrast, $\E_B$ is a union of $d$-dimensional irreducible components of $\E$, but it is subtle to determine if it is all of them). 

\medskip

Finally, let us briefly recall the functorial transfer from $G_B$ to $G_B^1$, which is exactly parallel to that from $\GL_2$ to $\SL_2$. Given an infinite dimensional automorphic representation $\pi$ of $G_B(\A_F)$, we may form the corresponding $L$-packet $\Pi$ of $G_B^1(\A_F)$-representations by taking all irreducible subquotients of the restriction of $\pi$ to $G_B^1(\A_F)$. The main difference from the case of $\GL_2$ and $\SL_2$ is that, if we write
\[
\pi = \pi^\infty \otimes \pi_\infty
\]
as the tensor product of a representation $\pi^\infty$ of $\GL_2(\A_F^\infty)$ and a representation $\pi_\infty$ of $G_B(F_\infty)$, then $\pi_\infty$ remains irreducible when restricted to $G_B^1(F_\infty)$ (since $G_B(F_\infty)$ is the product of its centre and $G_B^1(F_\infty)$). In particular, decomposing $\pi|_{G_B^1(\A_F)}$ is equivalent to decomposing $\pi^\infty |_{\SL_2(\A_F^\infty)}$.

\subsection{Results}

With this setup, all of the arguments that we used for $\GL_2$ and $\SL_2$ hold essentially verbatim, with the important simplification that we do not have to worry about having overconvergent cohomology in multiple degrees. Let $\pi$ be an infinite dimensional automorphic representation of $G_B(\A_F)$, corresponding to a cuspidal automorphic representation on $\GL_2(\A_F)$ of cohomological weight $k$ via the Jacquet--Langlands correspondence. We make the following assumptions on $\pi$:

\begin{enumerate}

\item For every $v\mid p$, the $U_v$-eigenvalues on $\pi_v^{I_v}$ have multiplicity $1$;

\item If $\pi$ has complex multiplication by a quadratic CM extension $\wt{F}/F$, then $\wt{F} \not\subseteq F(\zeta_{p^\infty})$.

\end{enumerate}

Compared to working with $\GL_2$, we no longer need the assumption that $\ol{\rho}_\pi$ is irreducible and generic, since we only have cohomology in degree $0$. Let $\alpha$ be a refinement of $\pi$ and let $x=(\pi,\alpha)$ be the corresponding classical point on $\wt{\E}_B$. Let $\Pi$ be the $L$-packet of $G_B^1(\A_F)$ corresponding to $\pi$ and let $\gamma$ be the refinement induced by $\alpha$; we write $y=(\Pi,\gamma)$ for the corresponding classical point on $\E_B$ (and we assume that we have chosen tame levels so that these points appear as classical points). As before, $\s_y$ denotes the completed local ring of $\E_B$ at $y$, $\T_x$ denotes the completed local ring of $\wt{\E}_B$ at $x$, $\Phi$ denotes the set of $z=(\pi,\alpha^{\prime}) \in \wt{\E}_B$ that are mapped to $y$ and $\# \Phi = 2^r$ is the number of refinements of $\pi$ whose corresponding refinement of $\Pi$ is $\gamma$ (as before). We then obtain exactly the same results on the geometry of $\s_y$ as we did for $\SL_2$; we summarize them as follows:

\begin{theorem}\label{main geometry non-CM quat}
Assume that $\pi$ does not have CM. Then $\s_y$ has $2^r$ irreducible components, all of which are smooth.
\end{theorem}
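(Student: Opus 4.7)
The plan is to mimic the proof of Theorem \ref{main geometry non-CM} from the split case, exploiting the fact that the quaternionic situation is technically cleaner because $X_K$ and $X_K^1$ have cohomology concentrated in degree $0$. First I would observe that all the formalism of Sections \ref{sec: eigenvarieties} and \ref{sec: local geometry} carries over to $G_B$ and $G_B^1$ with obvious notational changes: one has the analogues of the diagrams (\ref{hecke algebras prelim}) and (\ref{hecke algebras localized}), the $p$-adic functoriality map $\s_y \to \s_y^{G_B}$ (defined locally on weight space using the interpolation theorem applied to $\wt{\E}_B$, which is reduced by construction), and the analogue of Theorem \ref{main section 3} identifying $\s_y$ with the subalgebra of $\prod_{z\in\Phi}\T_z$ generated by the $G_B^1$-Hecke operators. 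The proof of Theorem \ref{main section 3} used only reducedness and equidimensionality of the middle-degree eigenvariety, both of which hold for all of $\wt{\E}_B$; crucially, the hypothesis that $\ol{\rho}_\pi$ be irreducible and generic is not needed here because we may invoke Lemma \ref{shrinking quaternion} in place of Lemma \ref{shrinking} and we no longer need Theorem \ref{consequence of ct} to prove vanishing in degrees $\neq d$.

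Next I would set up the Galois deformation theory. The point $x = (\pi,\alpha)$ is classical on $\wt{\E}_B \hookrightarrow \wt{\E}$, and the Galois representation $\rho = \rho_\pi$ is irreducible (via Jacquet--Langlands it corresponds to a cuspidal cohomological automorphic representation of $\GL_2(\A_F)$). Proposition \ref{decency} applies verbatim: condition (2) is assumed, and condition (3) together with \cite[Theorem 5.4]{newton-thorne} and \cite[Proposition 2.15(iv)]{bellaiche-crit} give vanishing of the adjoint Bloch--Kato Selmer group, hence Proposition \ref{smoothness} and Corollary \ref{defringsiso} give formal smoothness of $R_\rho$ and the isomorphism $R_\rho \cong R_r$. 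For each $z \in \Phi$ we have the quotient $R_\rho^{\Reff,\alpha_z}$, and a Bergdall--Hansen style $R = T$ theorem produces an isomorphism $R_\rho^{\Reff,\alpha_z} \cong \T_z$; this is exactly the same argument as in Proposition \ref{description of tangent spaces I}(2), relying on the local description of tangent spaces of weakly refined functors, which is insensitive to whether one works with $\GL_2$ or its inner form $G_B$ since the Galois-theoretic input is the same.

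With these ingredients in place, the proof of Theorem \ref{main geometry non-CM} transcribes directly. Since $\pi$ is non-CM, the centralizer $H_r$ is trivial, so Theorem \ref{thethingIwant} tells us $R_\rho$ is topologically generated by the traces $\tr(\Frob_v, \ad^0 \rho^{univ})$ for $v \notin S$. These traces map to $q_v^{-1}(t_v + 1) \in \T_z$, which lie in the image of $\s_y$; hence each composition $\s_y \to \T_z$ is surjective. The quaternionic eigenvariety $\E_B$ is equidimensional of dimension $d$ and reduced (since $X_K^1$ has cohomology only in degree $0$), and $\s_y$ injects into $\prod_{z\in\Phi}\T_z$ by the analogue of Proposition \ref{localstructure1}, so the minimal primes of $\s_y$ are precisely the kernels of the maps $\s_y \twoheadrightarrow \T_z$. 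Distinctness of these kernels follows from the tangent space description in Proposition \ref{description of tangent spaces I}(3): the direct sum decomposition $H_{/f}^1(G_{F_v},\ad^0\rho) = \mf{t}_{v,/f}^{\Reff,\alpha_v} \oplus \mf{t}_{v,/f}^{\Reff,\beta_v}$ at each of the $r$ places where $\alpha_v = -\beta_v$ shows that the kernels of $R_\rho \twoheadrightarrow R_\rho^{\Reff,\alpha_z}$ are pairwise distinct as $z$ varies over $\Phi$, and this distinctness is inherited by $\s_y$.

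The only step one might worry about is the analogue of Theorem \ref{main section 3}, i.e.\ that $p$-adic functoriality is an isomorphism $\s_y \xrightarrow{\sim} \s_y^{G_B}$ with both sides independent of $K^p$; but as noted above, the arguments of Section \ref{subsec: p-adic functoriality} work essentially verbatim here, with Lemma \ref{shrinking quaternion} replacing Lemma \ref{shrinking} and with no need for the generic irreducibility hypothesis. I therefore expect no genuine obstacle; the proof is a routine transcription of the split case simplified by the absence of higher cohomology.
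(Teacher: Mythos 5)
Your proposal is correct and follows exactly the route the paper intends: it is a verbatim transcription of the proof of Theorem \ref{main geometry non-CM}, with the genericity/irreducibility hypothesis on $\ol{\rho}_\pi$ dropped because $X_K$ and $X_K^1$ have cohomology only in degree $0$, and with the $R=T$ isomorphism $R_\rho^{\Reff,\alpha_z}\cong\T_z$ re-established directly on $\wt{\E}_B$ (using its equidimensionality of dimension $d$) rather than by transfer to $\wt{\E}_{mid}$. This matches the paper's own brief remark in Section \ref{sec: quat} flagging exactly that subtlety about the smoothness of $\T_z$.
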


\begin{theorem}
Assume that $\pi$ has CM by $\wt{F}/F$. Then the following holds
\begin{enumerate}
\item If $r=0$, then $\s_y$ is irreducible. If $r\geq 1$, the number of irreducible components of $\s_y$ equals the number of orbits of $\Phi$ under the twisting action.

\smallskip

\item Assume that $\alpha_z$ is non-critical at all $\sigma$ corresponding to places in $S_{sp}$, for all $z\in \Phi$, then all irreducible components of $\s_y$ are smooth.

\smallskip

\item Assume that all places $v \mid p$ of $F$ are split in $\wt{F}$. Then the maps $\s_y \to \T_z^+$ are surjective for all $z\in \Phi$ and in particular, when $r=0$, $\s_y = \T_x^+$.

\end{enumerate} 
\end{theorem}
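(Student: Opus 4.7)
The strategy is to transport the proofs of the unnamed theorem on components in \S \ref{subsec: CM case II}, Theorem \ref{main geometry CM noncritical}, and Theorem \ref{main geometry CM critical} verbatim to the quaternionic setting, using that the Jacquet--Langlands transfer identifies $\wt{\E}_B$ with a union of $d$-dimensional irreducible components of $\wt{\E}$, and that cohomology of $X_K$ and $X_K^1$ is concentrated in degree $0$, so that $\wt{\E}_B$ and $\E_B$ are automatically reduced, equidimensional of dimension $d$, and everywhere ``middle degree''.

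First I would set up the parallel infrastructure of \S \ref{sec: eigenvarieties}: a common weight chart $U \sub \mc{W}$ isomorphic to its image under restriction, a slope $h$ at which both $x$ and $y$ lie, the $p$-adic functoriality map, and (using Lemma \ref{shrinking quaternion} in place of Lemma \ref{shrinking}) the resulting independence from tame level after localization at $y$. These are direct analogs of Propositions \ref{p-adic functoriality} and \ref{localstructure1} and Theorem \ref{main section 3}, and are in fact easier since reducedness and equidimensionality are built in, while the three-dimensional pseudocharacter on $\E_B$ exists by Zariski density of classical points. The outcome is a finite inclusion $\s_y \hookrightarrow \prod_{z \in \Phi} \T_z$ of rings of dimension $d$ whose image lies in the $C_\pi$-invariants, with $C_\pi = \{1, \psi\}$.

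Next, the Galois-theoretic machinery of \S \ref{subsec: def theory} and \S \ref{subsec: CM case I} applies unchanged. Proposition \ref{decency} still goes through: the clause of condition (1) there requiring $\ol{\rho}$ irreducible and generic is used only to force $x \in \wt{\E}_{mid}$, which is now automatic. Hence $R_\rho$ is formally smooth of dimension $2d$ with tangent space $\bigoplus_{v\mid p} H_{/f}^1(G_{F_v}, \ad^0 \rho)$ by Propositions \ref{smoothness} and \ref{description of tangent space no refinement}, and the local $R = T$ isomorphism $R_\rho^{\Reff,\alpha_z} \cong \T_z$ of Proposition \ref{description of tangent spaces I}(2) holds. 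Moreover $R_\rho^{S_\rho}$ is topologically generated by $\tr(\Frob_v, \ad^0 \rho^{univ})$ by Theorem \ref{thethingIwant}; these map to $q_v^{-1}(t_v + 1) \in \T_z$ for every $z \in \Phi$ and hence lie in the image of $\s_y$. The local tangent space computations in Proposition \ref{inert tangent space} and Theorem \ref{split tangent space} are purely Galois-theoretic and require no modification.

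With this in place, each of (1), (2), (3) follows by reproducing the split-case argument. For (1), the kernels of $\s_y \to \T_z$ and $\s_y \to \T_{z^\prime}$ coincide iff $z, z^\prime$ lie in the same $C_\pi$-orbit, with the split-place versus inert-place dichotomy handled exactly as in \S \ref{subsec: CM case II}; a lying-over/going-up count of minimal primes in the finite extension $\s_y \hookrightarrow \bigl(\prod_{z \in \Phi} \T_z\bigr)^{C_\pi}$ then yields the component count. For (2), I would show $R_\rho^+ \twoheadrightarrow \T_z$ is surjective by exhibiting a spanning set of the tangent space $V_{p,\alpha_z}$ inside the image: at split $v$, non-criticality forces $V_v^+ = V_{v,\alpha_{z,v}}$ by Theorem \ref{split tangent space}, and at inert $v$ one lifts $x \in V_{v,\alpha_{z,v}}$ to $x + \psi(x) \in R_\rho^+$, as in the proof of Theorem \ref{main geometry CM noncritical}. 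For (3), when every $v \mid p$ splits in $\wt{F}$ the twisting action preserves each $\T_z$ individually (no interchange), so $R_\rho \twoheadrightarrow \T_z$ is $\psi$-equivariant, whence $R_\rho^+ \twoheadrightarrow \T_z^+$ factors through $\s_y \twoheadrightarrow \T_z^+$; in the case $r = 0$ one has $\Phi = \{x\}$ and the inclusion $\s_y \hookrightarrow \T_x^+$ is then an isomorphism. The main obstacle is purely bookkeeping: verifying that the slope-decomposition and $p$-adic-functoriality formalism of \S \ref{sec: eigenvarieties} transposes to $G_B$ and $G_B^1$. This should be routine because cohomology lives in a single degree, which removes the delicate vanishing inputs (Theorem \ref{consequence of ct} and Lemma \ref{vanishing for SL(2)}) and with them the genericity hypothesis on $\ol{\rho}$.
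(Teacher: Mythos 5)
Your proposal is correct and follows essentially the same approach the paper takes: the paper's own ``proof'' is the single remark that the arguments from \S\ref{sec: local geometry} carry over verbatim, with the simplification that overconvergent cohomology for $G_B$ and $G_B^1$ is concentrated in degree $0$, so that reducedness and equidimensionality of $\wt{\E}_B$, $\E_B$ come for free and the genericity/irreducibility hypothesis on $\ol{\rho}$ can be dropped. You have correctly identified the one subtle point the paper explicitly flags, namely the smoothness of $\T_z$ (equivalently, the local $R=T$ isomorphism of Proposition \ref{description of tangent spaces I}(2)): since the transfer of $z$ need not lie on $\wt{\E}_{mid}$, one cannot simply invoke \cite{bh}, but your observation that $\wt{\E}_B$ is reduced and equidimensional of dimension $d$ is exactly what makes the proof of Proposition \ref{description of tangent spaces I}(2) go through (the surjection $R_\rho^{\Reff,\alpha_z}\twoheadrightarrow\T_z$ combined with the tangent-space bound $\leq d$ and Krull dimension $d$). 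The rest of the transport — Lemma \ref{shrinking quaternion} replacing Lemma \ref{shrinking}, the Zariski-density argument for the pseudocharacter, the tangent-space computations of \S\ref{subsec: CM case I} being purely Galois-theoretic, and the component-counting via lying-over/going-up in the finite extension $\s_y\hookrightarrow(\prod_z\T_z)^{C_\pi}$ — is exactly what the paper has in mind when it says ``essentially verbatim.''
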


In this, we are using the fact that, for $z\in \Phi$, the rings $\T_z$ are smooth. This is not strictly speaking contained in the smoothness results of \cite{bh} since the transfer of $z$ might not lie on $\wt{\E}_{mid}$, but the proof goes through since $\wt{\E}_B$ is equidimensional of dimension $d$. 

\medskip

Next, we discuss the analogues of the results of \S \ref{sec: endoscopic forms}. We begin with the computation of the classical eigenspace.

\begin{proposition}\label{classical spaces quat}
We have
\[
\varinjlim_{K^p} H^0(X_K^1,\mathscr{L}_k)_y = \left( \prod_{z\in \Phi} \varinjlim_{K^p} H^0(X_K, \mathscr{L}_k)_z \right)_+.
\]
\end{proposition}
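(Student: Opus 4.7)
The plan is to follow the same strategy as in the proof of Proposition \ref{classical spaces}, with the crucial simplification that $X_K$ and $X_K^1$ are zero-dimensional, so there is no vanishing to worry about and everything happens in degree $0$.

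First, I would establish the quaternion analogue of Proposition \ref{limit}: for any right $E[K]$-module $N$ (with $Z(K)$ acting trivially), the natural map
\[
\varinjlim_{K^p} H^0(X_K, N)_{H_K} \to \varinjlim_{K^p} H^0(X_K^1, N)
\]
is an isomorphism, where $K = K^p I$. As in the $\GL_2$ case, the $H_K$-coinvariants of $H^0(X_K, N)$ are naturally identified with $H^0(X_K^{\mathrm{Nm}=1}, N)$, and the map $X_K^1 \to X_K^{\mathrm{Nm}=1}$ in combination with the diagram in Lemma \ref{shrinking quaternion} (in place of Lemma \ref{shrinking}) provides the inverse on the direct limit.

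Second, applying this to $N = \mathscr{L}_k$ and then passing to the $\p_y$-quotient (which commutes with $H_K$-coinvariants and with filtered colimits, both being exact), the statement reduces to the identification
\[
\varinjlim_{K^p} H^0(X_K,\mathscr{L}_k)_{y,H_K} = \left( \prod_{z\in \Phi} \varinjlim_{K^p} H^0(X_K,\mathscr{L}_k)_z \right)_+.
\]
To see this, fix $K^p$ and note that $H^0(X_K,\mathscr{L}_k)_y$ decomposes as $\prod_{z\in \pi_K^{-1}(y)} H^0(X_K,\mathscr{L}_k)_z$, where $\pi_K$ is the local $G_B$-to-$G_B^1$ map on eigenvarieties. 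The action of $H_K$ permutes this product by twisting. As $K^p$ shrinks, every point in $\pi_K^{-1}(y)$ is (the twist of) an element of the $K^p$-independent set $\Phi$, exactly as in the proof of Proposition \ref{localstructure1}; only the twisting action of $C_\pi$ (which fixes each element of $\Phi$ setwise) persists in the limit. Commuting the direct limit with the product and the $C_\pi$-coinvariants then yields the desired identification.

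The only potentially delicate point is to confirm that the auxiliary results from Section \ref{sec: eigenvarieties}, namely Propositions \ref{changing tame level} and \ref{localstructure1}, carry over to the $G_B$-setting so that the set $\Phi$ and the completed local rings $\T_z$ are genuinely independent of $K^p$. This is already built into the framework of the quaternionic eigenvariety recalled above: $\wt{\E}_B$ is reduced and equidimensional of dimension $d$, classical points of appropriate conductor are Zariski dense on components through $x$, and the argument of Proposition \ref{changing tame level} based on Weil--Deligne families applies verbatim once one knows local-global compatibility for $\pi$, which follows from the Jacquet--Langlands transfer. Hence no new obstacle arises, and the rest of the proof is formal.
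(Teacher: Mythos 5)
Your proposal is correct and matches the paper's (implicit) approach: the paper gives no separate proof of this proposition, merely noting that the $\GL_2$/$\SL_2$ arguments carry over verbatim with the simplification that the locally symmetric spaces are zero-dimensional, and you have spelled out exactly that adaptation, using Lemma \ref{shrinking quaternion} in place of Lemma \ref{shrinking} and following the coinvariants calculation from the proofs of Propositions \ref{computation of sheaf} and \ref{classical spaces}. One small imprecision worth flagging: the parenthetical ``which fixes each element of $\Phi$ setwise'' is misleading, since $C_\pi$ stabilizes the \emph{set} $\Phi$ but in general permutes its elements nontrivially (e.g.\ when $S_{in}\neq\emptyset$); this does not affect your argument because $(-)_+$ applied to $\prod_{z\in\Phi}$ correctly takes coinvariants for the full $C_\pi$-action including that permutation of factors.
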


We let $\mc{M}_{B, K^p}$ denote the coherent sheaf on $\E_B$, characterized by the property
\[
\mc{M}_{B,K^p}(\Spa \s_{U,h}^{K}) = H^0(X_{K}^1, \D_U)_{\leq h}
\] 
for any slope adapted pair $(U,h)$. Similar to before, $\mc{M}_{B, K^p, y}$ denotes the fibre of $\mc{M}_{B, K^p}$ at $y$, and set $\mc{M}_{B,y} = \varinjlim_{K^p} \mc{M}_{B,K^p, y}$. Then we have the following results for the structure of $\mc{M}_{B,y}$.

\begin{theorem}\label{classicality non-CM quat}
Assume that $\pi$ does not have CM. Then the integration map $\mc{M}_{B,y} \to \varinjlim_{K^p} H^0(X_K^1, \mathscr{L}_k)_y$ is an isomorphism. 
\end{theorem}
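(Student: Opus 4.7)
The argument will run parallel to that of Theorem \ref{classicality non-CM}, with the quaternion analogues of the ingredients used there. The plan is to establish three ingredients -- a product formula for the formal fibre of $\mc{M}_B$ at $y$, surjectivity of $\s_y \to \T_z$ for each $z\in \Phi$, and the identification $\wt{\mc{M}}_{B,z} \cong \varinjlim_{K^p} H^0(X_K,\mathscr{L}_k)_z$ via the integration map -- and then combine them with Proposition \ref{classical spaces quat}.

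First I would derive the quaternion analogue of Proposition \ref{computation of sheaf}, namely
\[
\mc{M}_{B,y}^\wedge = \prod_{z\in \Phi} \wt{\mc{M}}_{B,z}^\wedge,
\]
by repeating the argument of that proposition with $Y_K$ and $Y_K^1$ replaced by $X_K$ and $X_K^1$, Lemma \ref{shrinking} replaced by Lemma \ref{shrinking quaternion}, and $H_K$-coinvariants replaced by the coinvariants for the natural action on $X_K$ whose quotient identifies with $X_K^{\mathrm{Nm}=1}$. The vanishing invoked in the split case (Theorem \ref{consequence of ct} and Lemma \ref{vanishing for SL(2)}) is automatic here, since $X_K$ and $X_K^1$ are zero-dimensional and only $H^0$ appears. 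The simplification from $\pi$ being non-CM is that the $C_\pi$-coinvariants step is trivial, so the formula is a plain product rather than its $(+)$-part.

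Next, by Theorem \ref{main geometry non-CM quat}, each map $\s_y \to \T_z$, $z \in \Phi$, is surjective with pairwise distinct kernels (the argument is the same as in Theorem \ref{main geometry non-CM}, using that the $R=T$ theorem applies in the quaternionic setting because $\wt{\E}_B$ is equidimensional of dimension $d$ and reduced). Combined with the remark in the text that $\T_z$ is smooth for each $z \in \Phi$, this implies that $\wt{\mc{M}}_B$ is locally free at $z$: indeed, the analogue of Proposition \ref{locfree} holds as $\wt{\mc{M}}_B$ is a finite module over the $d$-dimensional regular local ring $\T_z$ whose generic fibre dimension is constant on nearby classical points of sufficiently small slope. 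Consequently, by the quaternion analogue of Corollary \ref{3.2.13}, the integration map induces an isomorphism $\wt{\mc{M}}_{B,z} \cong \varinjlim_{K^p} H^0(X_K,\mathscr{L}_k)_z$ on fibres.

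Finally, tensoring the product formula for $\mc{M}_{B,y}^\wedge$ with $\s_y/\m_y$ and using that the surjection $\s_y \twoheadrightarrow \T_z$ identifies $\s_y/\m_y$ with $\T_z/\m_z$, we obtain
\[
\mc{M}_{B,y} = \mc{M}_{B,y}^\wedge \otimes_{\s_y} \s_y/\m_y = \prod_{z\in \Phi} \wt{\mc{M}}_{B,z} \cong \prod_{z\in \Phi} \varinjlim_{K^p} H^0(X_K,\mathscr{L}_k)_z,
\]
and comparing with Proposition \ref{classical spaces quat} yields the claimed isomorphism. I expect no serious obstacle in this argument; the only thing to verify with some care is that the formal manipulations behind Proposition \ref{computation of sheaf} go through verbatim in the quaternion setting, which amounts to replacing each ingredient by its quaternionic counterpart and noting that the cohomological vanishing steps are trivialized by dimension.
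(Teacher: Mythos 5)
Your proposal is correct and takes essentially the same route as the paper, which simply remarks that all the arguments from the $\GL_2$/$\SL_2$ setting carry over verbatim (with the simplification that cohomology is concentrated in degree $0$). You have filled in exactly the right ingredients: the quaternion analogue of Proposition \ref{computation of sheaf} with Lemma \ref{shrinking quaternion} in place of Lemma \ref{shrinking}, the surjectivity from Theorem \ref{main geometry non-CM quat}, local freeness of $\wt{\mc{M}}_B$ from smoothness of $\T_z$ (noting, as the paper does, that one needs the remark after the theorem statements since $z$ need not lie on $\wt{\E}_{mid}$), and the comparison via Proposition \ref{classical spaces quat}.
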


\begin{theorem}\label{classicality CM non critical quat}
Assume that $\pi$ has CM and that $\alpha_z$ is non-critical at all $\sigma$ corresponding to places in $S_{sp}$, for all $z\in \Phi$. Then the integration map $\mc{M}_{B,y} \to \varinjlim_{K^p} H^0(X_K^1, \mathscr{L}_k)_y$ is an isomorphism. 
\end{theorem}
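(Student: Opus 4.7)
The plan is to transpose the proofs of Theorems \ref{classicality non-CM} and \ref{classicality CM non critical} to the quaternionic setting, replacing the vanishing results (Theorem \ref{consequence of ct} and Lemma \ref{vanishing for SL(2)}) by the trivial fact that $X_K$ and $X_K^1$ carry cohomology only in degree $0$, and Lemma \ref{shrinking} by Lemma \ref{shrinking quaternion}. The first step is to establish a quaternionic analogue of Proposition \ref{computation of sheaf}, namely
\[
\mc{M}_{B,y}^{\wedge} = \Bigl(\prod_{z\in\Phi} \wt{\mc{M}}_{B,z}^{\wedge}\Bigr)_{\!+},
\]
by a verbatim translation of its proof, where $\mc{M}_{B,y}^{\wedge} := \varinjlim_{K^p}\mc{M}_{B,K^p,y}^{\wedge}$ and $\wt{\mc{M}}_{B,z}^{\wedge}$ is defined analogously. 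Since $\wt{\E}_B$ is equidimensional of dimension $d$ and each $\T_z$ is smooth (as used in the proof of Theorem \ref{main geometry non-CM quat}), $\wt{\mc{M}}_{B,z}^{\wedge}$ is locally free over $\T_z$, with fibre at $\m_z$ canonically identified via the integration map with $\varinjlim_{K^p} H^0(X_K, \mathscr{L}_k)_z$.

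We then split according to whether $S_{in}$ is empty. When $S_{in}\neq\emptyset$, twisting by $\psi$ acts freely on $\Phi$; letting $\Phi^\prime\sub\Phi$ be a set of orbit representatives, the identification above collapses to $\mc{M}_{B,y}^{\wedge} = \prod_{z\in\Phi^\prime} \wt{\mc{M}}_{B,z}^{\wedge}$. The quaternionic analogue of Theorem \ref{main geometry CM noncritical} (part (2) of the CM geometry theorem stated above) yields surjectivity of each $\s_y\to\T_z$ under the non-criticality hypothesis at split places. Tensoring with $\s_y/\m_y$ gives $\mc{M}_{B,y} = \prod_{z\in\Phi^\prime} \wt{\mc{M}}_{B,z}$, which by Proposition \ref{classical spaces quat} matches $\varinjlim_{K^p} H^0(X_K^1, \mathscr{L}_k)_y$.

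When $S_{in}=\emptyset$, the non-criticality hypothesis applies at \emph{every} $v\mid p$, so by the quaternionic analogue of Theorem \ref{split tangent space} the $-1$-eigenspace $V_{v,\alpha_{z,v}}^{-}$ vanishes for all $v\mid p$ and all $z\in\Phi$. Hence the twisting involution acts trivially on each $\T_z$, giving $\T_z^{+}=\T_z$, and simultaneously trivially on $\Phi$ (since $\psi(\varpi_v)=1$ at every split place), so $\Phi^\prime=\Phi$. Part (3) of the CM geometry theorem (the quaternionic analogue of Theorem \ref{main geometry CM critical}) then provides the surjection $\s_y\to\T_z^{+}=\T_z$ needed to rerun the previous argument verbatim.

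The main technical point, ultimately routine, is to verify that the quaternionic analogue of Proposition \ref{computation of sheaf} genuinely transfers using only Lemma \ref{shrinking quaternion} and the degree-$0$ concentration, and that the coincidence $\T_z=\T_z^{+}$ in the $S_{in}=\emptyset$ subcase is indeed forced by the non-criticality assumption via the tangent-space computation of \S \ref{subsec: CM case I}. Beyond this bookkeeping we anticipate no genuinely new difficulties.
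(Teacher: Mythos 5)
Your proof is correct and matches the paper's intended transfer of the $\GL_2$/$\SL_2$ argument to the quaternionic setting, and you have rightly noticed that the quaternionic statement drops the $S_{in}\neq\emptyset$ hypothesis present in Theorem \ref{classicality CM non critical}, splitting accordingly into two subcases. One small imprecision in the $S_{in}=\emptyset$ subcase: ``rerunning the previous argument verbatim'' is not quite literal, because here $\mc{M}_{B,y}^\wedge = \bigl(\prod_{z\in\Phi}\wt{\mc{M}}_{B,z}^\wedge\bigr)_+ = \prod_{z\in\Phi}(\wt{\mc{M}}_{B,z}^\wedge)_+$ with $(-)_+$ applied term-by-term rather than collapsing onto orbit representatives, so after invoking the surjection $\s_y\twoheadrightarrow\T_z^+=\T_z$ you still need the further observation that the twisting involution acts $\T_z$-linearly on $\wt{\mc{M}}_{B,z}^\wedge$ (since it is trivial on $\T_z$), giving a $\T_z$-module splitting $\wt{\mc{M}}_{B,z}^\wedge = (\wt{\mc{M}}_{B,z}^\wedge)_+\oplus(\wt{\mc{M}}_{B,z}^\wedge)_-$ so that $(\wt{\mc{M}}_{B,z}^\wedge)_+\otimes_{\T_z}\T_z/\m_z = (\wt{\mc{M}}_{B,z})_+$, which is exactly what Proposition \ref{classical spaces quat} demands. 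A cleaner alternative for this subcase is to observe that non-criticality together with $S_{in}=\emptyset$ forces $n_z=0$ for every $z\in\Phi$ (indeed it already forces $r=0$, cf.\ Remark \ref{rem: non-crit vs crit in split case}) and simply invoke Theorem \ref{non-classicality CM critical quat} with $n=0$.
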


\begin{theorem}\label{non-classicality CM critical quat}
Assume that $\pi$ has CM by $\wt{F}/F$ and that all places $v \mid p$ in $F$ split in $\wt{F}$. For each $z\in \Phi$, let $n_z$ be the number of $\sigma \in \Sigma_p$ such that $\alpha_z$ is critical at $\sigma$, and set $n=\sum_{z\in \Phi} n_z$. Then $\mc{M}_{B,y}$ sits in a short exact sequence
\[
0 \to \left( \varinjlim_{K^p} H^0(X_K,\mathscr{L}_k)_x \right)_-^{\oplus n} \to \mc{M}_{B,y} \to \varinjlim_{K^p} H^0(X_K^1, \mathscr{L}_k)_y \to 0.
\]
\end{theorem}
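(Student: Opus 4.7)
The argument runs exactly parallel to the proof of Theorem \ref{non-classicality CM critical}, with Lemma \ref{shrinking quaternion} replacing Lemma \ref{shrinking}, and with the simplification that only degree $0$ cohomology appears, so the vanishing inputs from \cite{caraiani-tamiozzo} and Lemma \ref{vanishing for SL(2)} become automatic. First I would establish the quaternionic analogue of Proposition \ref{computation of sheaf}, namely
\[
\mc{M}_{B,y}^\wedge = \left(\prod_{z\in\Phi} \wt{\mc{M}}_{B,z}^\wedge \right)_+,
\]
where $\wt{\mc{M}}_{B,z}^\wedge := \varinjlim_{K^p}\wt{\mc{M}}_{B,K^p,z}^\wedge$. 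The proof is identical to that of Proposition \ref{computation of sheaf}: Lemma \ref{shrinking quaternion} produces the dashed arrow between $X^1_{K_0}$ and $X_K^{\mathrm{Nm}=1}$ for sufficiently small $K_0^p$, and passing to the direct limit reduces computing $\mc{M}_{B,y}$ to computing the $H_K$-coinvariants of $H^0(X_K, \D_U)_{\leq h}$. Decomposing the latter over the fibre $\pi_K^{-1}(y)$ and using the quaternionic analogue of Proposition \ref{localstructure1} (which goes through verbatim since $\wt{\E}_B$ is equidimensional of dimension $d$ and reduced) yields the formula.

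Next, since the classical points $z\in\Phi$ lie on $\wt{\E}_B$, which is equidimensional and reduced of dimension $d$, and since $\T_z \cong R_\rho^{\mathrm{Ref},\alpha_z}$ is smooth by Proposition \ref{description of tangent spaces I}, $\wt{\mc{M}}_{B,z}^\wedge$ is locally free over $\T_z$ (cf.\ Proposition \ref{locfree}); moreover the integration map identifies $\wt{\mc{M}}_{B,z}$ with $\varinjlim_{K^p}H^0(X_K,\mathscr{L}_k)_z$. Since all $v\mid p$ are split in $\wt{F}$ we have $S_{in}=\emptyset$, so the same argument as in Theorem \ref{main geometry CM critical} shows that $\s_y$ surjects onto $\T_z^+$ for every $z\in\Phi$, which is what is needed to run the Tor-computation.

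With these ingredients in hand, set $\n_z = \m_y\T_z \subseteq \m_z$ and tensor the exact sequence $0\to \m_z/\n_z \to \T_z/\n_z \to \T_z/\m_z \to 0$ with the flat $\T_z$-module $\wt{\mc{M}}_{B,z}^\wedge$. Taking coinvariants for the twisting action (exact since $C_\pi$ has order $2$ and we are in characteristic $0$) and then taking the product over $\Phi$ yields a short exact sequence of the three terms
\[
\prod_{z\in\Phi}\bigl(\m_z\wt{\mc{M}}_{B,z}^\wedge/\n_z\wt{\mc{M}}_{B,z}^\wedge\bigr)_+,\quad \prod_{z\in\Phi}\bigl(\wt{\mc{M}}_{B,z}^\wedge/\n_z\wt{\mc{M}}_{B,z}^\wedge\bigr)_+,\quad \prod_{z\in\Phi}(\wt{\mc{M}}_{B,z})_+.
\]
The middle term is $\mc{M}_{B,y}$ by the formula above; the right term is $\varinjlim_{K^p}H^0(X_K^1,\mathscr{L}_k)_y$ by Proposition \ref{classical spaces quat} combined with the integration map isomorphism.

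The main (but routine) step is the identification of the left term. By Theorem \ref{split tangent space} applied at each split place $v\mid p$, the dimension $n_z = \dim_E \m_z/\n_z$ counts the number of $\sigma\in\Sigma_p$ at which $\alpha_z$ is critical, and the action of twisting on $\m_z/\n_z$ is by $-1$ (it is dual to the $-$-eigenspace of the tangent space $(\m_z/\m_z^2)^\vee$). Since $\m_z^2\subseteq \n_z$, the $\T_z$-action on $\m_z/\n_z$ factors through $E$, so
\[
\bigl(\m_z\wt{\mc{M}}_{B,z}^\wedge/\n_z\wt{\mc{M}}_{B,z}^\wedge\bigr)_+ = \bigl(\m_z/\n_z \otimes_E \wt{\mc{M}}_{B,z}\bigr)_+ \cong \Bigl(\varinjlim_{K^p}H^0(X_K,\mathscr{L}_k)_z\Bigr)_-^{\oplus n_z}.
\]
Finally, the argument in Lemma \ref{terms in the SES}(4), using an element $Q$ of the Iwahori--Hecke algebra at $p$ together with the fact that the local twisting action at $p$ is trivial (as $\psi$ is trivial on $F_p^\times$), shows that $(\varinjlim_{K^p}H^0(X_K,\mathscr{L}_k)_z)_- \cong (\varinjlim_{K^p}H^0(X_K,\mathscr{L}_k)_x)_-$ for every $z\in\Phi$. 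Summing over $\Phi$ gives the asserted left term, and the short exact sequence of the theorem follows.
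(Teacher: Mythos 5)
Your proposal is correct and takes essentially the same route the paper intends: the paper states the quaternionic theorems in \S\ref{sec: quat} without proof, relying on the blanket observation that the arguments from \S\ref{sec: endoscopic forms} carry over verbatim (with Lemma \ref{shrinking quaternion} replacing Lemma \ref{shrinking}, the degree-$0$ simplification making the vanishing inputs automatic, and equidimensionality/reducedness of $\wt{\E}_B$ supplying smoothness of the $\T_z$). You have simply written out that transfer explicitly, including the correct reuse of Lemma \ref{terms in the SES}(4), so this matches the paper's argument.
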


To finish, we follow up on Remark \ref{interpretation}(2). We continue to assume that we are in the situation of Theorem \ref{non-classicality CM critical quat}. Let $\Pi^{aut} \sub \Pi$ denote the subset consisting of automorphic members of the $L$-packet, and let $\Pi^{non-aut} = \Pi \setminus \Pi^{aut}$. Then, from Proposition \ref{classical spaces quat}, the definitions and multiplicity one, we have
\[
\varinjlim_{K^p} H^0(X_K^1, \mathscr{L}_k)_y = \left( \prod_{z\in \Phi}\varinjlim_{K^p} H^0(X_K, \mathscr{L}_k)_z \right)_+ \cong \bigoplus_{\sigma \in \Pi^{aut}} \left( \sigma^{p,\infty} \otimes \bigotimes_{v\mid p} \sigma_v^{I_{0,v}}[u_v = \gamma_v] \right)
\]
and \[
\left( \prod_{z\in \Phi} \varinjlim_{K^p} H^0(X_K, \mathscr{L}_k)_z \right)_- \cong \bigoplus_{\sigma \in \Pi^{non-aut}} \left( \sigma^{p,\infty} \otimes \bigotimes_{v\mid p} \sigma_v^{I_{0,v}}[u_v = \gamma_v] \right)
\]
as $\SL_2(\A_F^\infty)$-representations. The following proposition then shows that, as $\SL_2(\A_F^{p,\infty})$-representations, these two spaces have no common subrepresentations.

\begin{proposition}
Assume that $\sigma \in \Pi^{aut}$ and $\tau \in \Pi^{non-aut}$. Then $\sigma^{p,\infty} \not\cong \tau^{p,\infty}$.
\end{proposition}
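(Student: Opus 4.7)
The plan is to derive a contradiction from the Labesse--Langlands multiplicity formula (adapted to the inner form $G_B^1$ in the spirit of Lemma~\ref{swap}) by showing that, under our standing assumptions, the global sign $\langle \epsilon,\cdot\rangle$ is completely determined by the representation away from $p\infty$.

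First I would assume for contradiction that $\sigma^{p,\infty} \cong \tau^{p,\infty}$, so that $\sigma_v \cong \tau_v$ for every $v \nmid p\infty$ by uniqueness of tensor-product decompositions of irreducible admissible representations. This immediately forces the local Labesse--Langlands signs to agree at every such $v$, and it then suffices to check that the contributions at $v \mid \infty$ and at $v \mid p$ also coincide. At an archimedean place the essential point is that $G_B^1(F_v) \cong \mathrm{SU}(2)$ is compact (because $B$ is ramified at every infinite place), so restriction from $G_B(F_v)$ to $G_B^1(F_v)$ preserves irreducibility and the archimedean local $L$-packet is a singleton; in particular $\sigma_\infty = \tau_\infty$ and the archimedean signs automatically match. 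For a place $v \mid p$, the standing assumption of Theorem~\ref{non-classicality CM critical quat} is that $v$ splits in $\wt{F}$, so the local component $\psi_v$ of the quadratic Hecke character cutting out $\wt{F}/F$ is trivial, and hence so is the local Labesse--Langlands sign $\langle \epsilon_v, \cdot \rangle$ on the whole local packet $\Pi_v$. Combining the three contributions gives $\langle \epsilon,\sigma\rangle = \langle \epsilon,\tau\rangle$, so the multiplicity formula forces $m_B(\sigma) = m_B(\tau)$, contradicting the assumption that these are $1$ and $0$ respectively.

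The delicate point of the argument---and the step I expect to require the most care---is the triviality of $\langle \epsilon_v,\sigma_v\rangle$ at a split place $v \mid p$ in the case $\alpha_v = -\beta_v$, since there the local packet $\Pi_v$ has size $2$ and a priori one might worry that the two members give opposite signs. The resolution is that although the local $\SL_2$-packet does split into two pieces (a smooth/Weil parameter phenomenon, controlled by whether $(\chi_1^{\mathrm{sm}}/\chi_2^{\mathrm{sm}})^2$ is trivial), the Labesse--Langlands local sign is built from the epsilon factor attached to the restriction of $\psi$ to $W_{F_v}$; this restriction is trivial at every $v$ split in $\wt{F}$, and the sign collapses to $+1$ for either packet member. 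Equivalently, in an Arthur-style formulation, the natural map from the global component group $S_\Pi$ to the local one $S_v$ factors through $\Gal(\wt{F}_{\widetilde{v}}/F_v)$, which vanishes at split $v$. Once this local triviality is verified the rest of the argument assembles as outlined above.
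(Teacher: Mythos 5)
Your argument is correct in outline, but it takes a genuinely different route from the paper, and the route you take is heavier and has a couple of points that would need to be pinned down more carefully.

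The paper's proof avoids the Labesse--Langlands multiplicity formula for $G_B^1$ entirely. It works inside the space $A$ of smooth automorphic forms for $G_B(\A_F^\infty)$ of weight $k$, uses multiplicity one (via Jacquet--Langlands and strong multiplicity one for $\GL_2$) to produce a unique copy of $\pi^\infty\subset A$, and then observes that the twisting action of $\psi$ on $A$ preserves $\pi^\infty$ and is $\SL_2(\A_F^{p,\infty})\times\GL_2(F_p)$-equivariant \emph{because} $\psi$ is trivial on $F_p^\times$. Since $\pi^\infty|_{\SL_2(\A_F^{p,\infty})\times\GL_2(F_p)}$ is multiplicity-free, the $+1$- and $-1$-eigenspaces of the twisting involution have no common subrepresentations, and one identifies these eigenspaces with the automorphic and non-automorphic parts of $\Pi$ via the cohomological formula of Proposition~\ref{classical spaces quat}. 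So the paper packages the ``local sign is trivial at split $v\mid p$'' fact into the equivariance of the twisting action, where it becomes a one-line observation, and multiplicity-freeness does the rest.

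Your approach, by contrast, goes through the multiplicity formula directly, which is a valid strategy but requires: (a) verifying the Labesse--Langlands multiplicity formula for the inner form $G_B^1$ (which does hold, but you invoke it only ``in the spirit of Lemma~\ref{swap}''; the archimedean places need the corresponding local data for $SU(2)$-packets, though as you note this doesn't matter once you observe $\sigma_\infty=\tau_\infty$); and (b) the triviality of the local pairing at split $v\mid p$ even when $\Pi_v$ has two members, which is the real content. Your conclusion there is correct, but your justification (``the local sign is built from the epsilon factor attached to the restriction of $\psi$''; ``the map $S_\Pi\to S_v$ factors through $\Gal(\wt{F}_{\tilde v}/F_v)$'') is imprecise. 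The cleaner statement is component-group theoretic: the generator $\epsilon$ of $S_\Pi\cong\Z/2$ is realized by $\mathrm{diag}(1,-1)$ (up to conjugation), and at a split place $v$ the local parameter $\phi_v$ lands in a maximal torus, so $\mathrm{diag}(1,-1)$ lies in the identity component of $\mathrm{Cent}(\phi_v,\PGL_2)$ and hence maps to the trivial element of $S_{\phi_v}$ --- even though $S_{\phi_v}\cong\Z/2$ when $\alpha_v=-\beta_v$. In particular the local $\SL_2$-packet does have two members there, so you cannot argue that the local packet is a singleton; what saves you is precisely that the localization map on component groups kills $\epsilon$. With these two points cleaned up your argument works, but the paper's symmetry argument is tighter and has fewer external inputs.
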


\begin{proof}
Let $A = \varinjlim_{K} H^0(X_K,\ms{L}_k)$ denote the space of smooth automorphic forms of $G_B(\A_F^\infty)$ of weight $k$. By multiplicity one, there is a unique subrepresentation of $A$ isomorphic to $\pi^\infty$; we will conflate $\pi^\infty$ with this subspace of $A$. Since the $L$-packets for $\SL_2(F_v)$ are multiplicity free for all finite $v$, $\pi^\infty$ is multiplicity free when considered as a representation of $\SL_2(\A^{p,\infty})\times \GL_2(F_p)$. Now the twisting action on $A$, which is defined by
\[
(\psi \otimes f)(g) = f(g)\psi(\det(g)),
\]
preserves $\pi^\infty$ since $\pi \cong \pi \otimes (\psi \circ \det)$. Since $\wt{F}/F$ is split at all $v\mid p$, $\psi$ is trivial on $F_p^\times$ and hence $\psi$ intertwines the $\SL_2(\A^{p,\infty})\times \GL_2(F_p)$-action on $A$. It follows that $(\pi^\infty)_+$ and $(\pi^\infty)_-$ are $\SL_2(\A^{p,\infty})\times \GL_2(F_p)$-representations, and hence have no isomorphic $\SL_2(\A^{p,\infty})\times \GL_2(F_p)$-subrepresentations. Since
\[
(\pi^\infty)_+ \cong \bigoplus_{\sigma \in \Pi^{aut}} \sigma^{p,\infty} \otimes \pi_p
\]
and
\[
(\pi^\infty)_- \cong \bigoplus_{\tau \in \Pi^{non-aut}} \tau^{p,\infty} \otimes \pi_p
\]
as $\SL_2(\A^{p,\infty})\times \GL_2(F_p)$-representations, the proposition follows.
\end{proof}

\begin{remark}\label{interpretation quat}
This shows that the sub- and quotient representations in the short exact sequence
\[
0 \to  \left( \varinjlim_{K^p} H^0(X_K,\mathscr{L}_k)_x \right)_-^{\oplus n} \to \mc{M}_{B,y} \to \varinjlim_{K^p} H^0(X_K^1, \mathscr{L}_k)_y \to 0
\]
of $\SL_2(\A_F^{p,\infty})$-representations have no common subquotients, and that the non-classical part is a contribution from the non-automorphic part of the $L$-packet. Compare with Remark \ref{interpretation}(2).
\end{remark}

\begin{remark}
This provides an explanation for the phenomenon observed in \cite{L2}, to which we refer for details on the terminology used in this remark (the setting there is slightly different, but the key fact is that we work with a quaternion algebra that is ramified at all infinite places). Consider an $L$-packet $\Pi$ for which there is an idempotent $e$ of the type considered in \cite{L2} that picks out a particular member $\sigma \in \Pi$. The corresponding eigenvariety of idempotent type $e$ is a Zariski closed subvariety of the eigenvariety which contains $y$, and its corresponding coherent sheaf will have fibre equal to $e\mc{M}_{B,y}$, sitting in the exact sequence
\[
0 \to e \left( \varinjlim_{K^p} H^\ast(X_K,\mathscr{L}_k)_x \right)_-^{\oplus n} \to e \mc{M}_{B,y} \to e \left( \varinjlim_{K^p} H^\ast(X_K^1, \mathscr{L}_k)_y \right) \to 0.
\]
Thus $e\mc{M}_{B,y} \cong \sigma^{p,\infty} \otimes \bigotimes_{v\mid p} \sigma_v^{I_{0,v}}[u_v = \gamma_v]$ if $\sigma$ is automorphic, and
\[
e\mc{M}_{B,y} \cong \left( \sigma^{p,\infty} \otimes \bigotimes_{v\mid p} \sigma_v^{I_{0,v}}[u_v = \gamma_v] \right)^{\oplus n}
\]
if $\sigma$ is non-automorphic. This gives a general form of (the analogue of) the main theorem of \cite{L2}, for the quaternion algebras considered here. 
\end{remark}

\appendix

\section{Miscellaneous results}\label{sec:appendix}

\subsection{CM Hilbert modular forms}\label{subsec:CMHilbert}

When $f=\sum_n a_n q^n$ is a (classical) CM modular eigenform of weight $k\geq 2$ with respect to an imaginary quadratic field $\wt{F}$, then $a_\ell = 0$ for primes $\ell$ which are inert in $\wt{F}$, and $a_\ell \neq 0$ for primes $\ell$ that split in $\wt{F}$. In this subsection we discuss the corresponding behaviour for CM Hilbert modular forms, which is slightly more subtle.

\medskip

As in the main text, let $F$ be a totally real field, and let $\wt{F}/F$ be a totally imaginary quadratic extension. Let $\psi$ be an algebraic character of $\wt{F}$ of weights $(k_\sigma)_\sigma$, i.e., a group homomorphism 
\[
\psi : \A_{\wt{F}}^\times \to \Qbar^\times
\] 
with open kernel satisfying $\psi(a) = \prod_{\sigma} \sigma(a)^{k_\sigma}$ for all $a\in \wt{F}^\times$, where $\sigma$ runs through all the embeddings of $\wt{F}$ into $\Qbar$. `Complex conjugation on $\wt{F}$' will always refer to the non-trivial element of $\Gal(\wt{F}/F)$ and will be denoted by $z \mapsto \bar{z}$. If $\sigma : \wt{F} \to \Qbar$ is an embedding, then $\bar{\sigma}$ denotes the conjugate embedding, obtained by precomposing with complex conjugation. The quantity $w := k_\sigma + k_{\bar{\sigma}}$ is independent of $\sigma$ and is called the weight of $\psi$. 

\medskip

Fix an embedding $\Qbar \to \Qbar_p$ and use it to identify the set $\Sigma_p$ of embeddings $\wt{F} \to \Qbar_p$ with the set of embeddings $\sigma : \wt{F} \to \Qbar$. We can decompose 
\[
\Sigma_p = \bigsqcup_{w|p} \Sigma_w
\]
where $\Sigma_w$ is the set of embeddings $\wt{F}_w \to \Qbar_p$. Let $v_p$ denote the $p$-adic valuation on $\Qbar_p$, normalized so that $v_p(p)=1$. Then we have the following key lemma.

\begin{lemma}\label{key lemma appendix}
Let $w\mid p$ be a place of $\wt{F}$, and let $\varpi_w$ be a uniformizer of $\wt{F}_w$ which we also view as the element of $\A_{\wt{F}}^\times$ with $\varpi_w$ in the $w$-component and $1$'s elsewhere. Then
\[
v_p(\psi(\varpi_w)) = e_w^{-1} \sum_{\sigma\in \Sigma_w} k_{\sigma},
\]
where $e_w$ is the ramification index of the extension $\wt{F}/\Qp$.
\end{lemma}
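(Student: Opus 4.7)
The plan is to reduce the computation to the value of $\psi$ on a principal id\`ele, where the algebraicity hypothesis gives us direct control. First, I would pick an element $\alpha\in\wt{F}^{\times}$ generating a power of the prime ideal $\mathfrak{p}_w$ associated to $w$: concretely, if $h$ is any positive multiple of the class number of $\wt{F}$, then $\mathfrak{p}_w^h$ is principal, so we may choose $\alpha$ with $(\alpha)=\mathfrak{p}_w^h$. This forces $v_w(\alpha)=h$ and $v_{w'}(\alpha)=0$ for every other finite place $w'$ of $\wt{F}$.

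Next, I would decompose $\alpha$ as a finite id\`ele. Writing $\varpi_w^h$ for the id\`ele with $\varpi_w^h$ at $w$ and $1$ elsewhere, we have $\alpha = \varpi_w^h\cdot \beta$ in $\mathbb{A}_{\wt{F}}^{\infty,\times}$, where $\beta_w=\alpha/\varpi_w^h\in\mc{O}_{\wt{F}_w}^{\times}$ and $\beta_{w'}=\alpha\in\mc{O}_{\wt{F}_{w'}}^{\times}$ for all other finite $w'$. In particular $\beta\in\wh{\mc{O}}_{\wt{F}}^{\times}$. Because $\psi$ is continuous with open kernel and valued in $\overline{\mathbb{Q}}^{\times}$, its restriction to the compact profinite group $\wh{\mc{O}}_{\wt{F}}^{\times}$ has finite image, so $\psi(\beta)$ is a root of unity and in particular $v_p(\psi(\beta))=0$. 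Applying $\psi$ to the decomposition gives
\[
\psi(\alpha)=\psi(\varpi_w)^h\cdot\psi(\beta),\qquad\text{so}\qquad v_p(\psi(\alpha))=h\cdot v_p(\psi(\varpi_w)).
\]

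The remaining step is to compute $v_p(\psi(\alpha))$ on the other side by algebraicity. Since $\alpha\in\wt{F}^{\times}$, we have $\psi(\alpha)=\prod_{\sigma}\sigma(\alpha)^{k_\sigma}$. Under the fixed embedding $\overline{\mathbb{Q}}\hookrightarrow\overline{\mathbb{Q}}_p$ (equivalently the identification $\Sigma_\infty\cong\Sigma_p$), each $\sigma:\wt{F}\to\overline{\mathbb{Q}}_p$ extends uniquely to a continuous embedding $\wt{F}_{w(\sigma)}\hookrightarrow\overline{\mathbb{Q}}_p$ of the completion at the place $w(\sigma)$ it induces. Hence $v_p(\sigma(\alpha))=e_{w(\sigma)}^{-1}v_{w(\sigma)}(\alpha)$, which equals $h/e_w$ when $\sigma\in\Sigma_w$ and vanishes otherwise. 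Summing,
\[
v_p(\psi(\alpha))=\sum_{\sigma\in\Sigma_w} k_\sigma\cdot\frac{h}{e_w}=\frac{h}{e_w}\sum_{\sigma\in\Sigma_w} k_\sigma,
\]
and dividing by $h$ yields the claimed formula.

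The argument is essentially bookkeeping; the only point that requires care is the convention governing the embedding $\Sigma_\infty\cong\Sigma_p$ and the corresponding interpretation of the values of $\psi$ in $\overline{\mathbb{Q}}^\times\subset\overline{\mathbb{Q}}_p^{\times}$, so that the algebraicity identity $\psi(\alpha)=\prod_\sigma\sigma(\alpha)^{k_\sigma}$ can be evaluated $p$-adically in a manner compatible with the decomposition $\alpha=\varpi_w^h\cdot\beta$ in $\mathbb{A}_{\wt{F}}^{\infty,\times}$. Once this is unwound, the computation is immediate.
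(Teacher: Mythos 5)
Your proof is correct and follows the same strategy as the paper: pick a generator $\alpha$ of $\mathfrak{p}_w^h$, factor it in the id\`eles as $\varpi_w^h$ times a unit id\`ele, and use the algebraicity relation $\psi(\alpha)=\prod_\sigma\sigma(\alpha)^{k_\sigma}$ on the principal side. The one step you should spell out rather than relegate to ``bookkeeping'': passing from the decomposition in $\A_{\wt{F}}^{\infty,\times}$ to the algebraicity identity implicitly identifies $\psi$ of the finite id\`ele $\alpha$ with $\psi$ of the full diagonal embedding of $\alpha$, which requires $\psi$ to be trivial on $\wt{F}_\infty^\times$ --- true because $\psi$ has open kernel and $\wt{F}$ is totally complex (so $\wt{F}_\infty^\times$ is connected); the paper's proof makes this explicit through the factor $a_\infty$.
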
 

\begin{proof}
Let $\p_w \sub \oo_{\wt{F}}$ denote the prime ideal corresponding to $w$ and choose $n\in \Z_{>0}$ so that $\p_w^n$ is principal. Let $a \in \oo_{\wt{F}}$ generate $\p^n_w$. We can write the id\`ele $\varpi^n_w$ as
\[
\varpi^n_w = aua_\infty^{-1}
\]
where $a\in \wt{F}^\times$ (embedded diagonally), $a_\infty^{-1}$ has $1$ at all finite components and $a^{-1}$ at all infinite components, and $u$ is simply $\varpi_w^na^{-1}a_\infty$. Since $a$ generates $\p_w^n$, one sees that $u \in \wh{\oo}_{\wt{F}}^\times$. Since $\psi$ has open kernel (and $\wt{F}$ is totally complex), we have $\psi(a_\infty) = 1$ and $\psi(u)$ is a root of unity. In particular, $v_p(\psi(a_\infty)) = v_p(\psi(u))=0$ and we see that
\[
v_p(\psi(\varpi_w^n)) = v_p(\psi(a)) = \sum_{\sigma \in \Sigma_p} k_\sigma v_p(\sigma(a)).
\]
By definition (and since $a$ generates $\p_w^n$), $v_p(\sigma(a))=0$ if $\sigma \notin \Sigma_w$ and $v_p(\sigma(a)) = e_w^{-1}n$ if $\sigma \in \Sigma_w$. Thus $v_p(\psi(\varpi_w^n)) = e_w^{-1}n \sum_{\sigma \in \Sigma_w} k_\sigma$, and dividing by $n$ gives the result.
\end{proof}

\begin{corollary}\label{condition on char for av=0}
Let $v\mid p$ be a place of $F$, which we assume splits as $v=w\bar{w}$ in $\wt{F}$. Let $\varpi_w$ and $\varpi_{\bar{w}}$ be uniformizers of $\wt{F}_w$ and $\wt{F}_{\bar{w}}$, respectively. Assume that $\psi(\varpi_w) = - \psi(\varpi_{\bar{w}})$. Then we must have
\[
\sum_{\sigma\in \Sigma_w} k_{\sigma} = \sum_{\sigma\in \Sigma_{\bar{w}}} k_{\sigma}.
\]
\end{corollary}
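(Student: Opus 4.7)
The plan is a direct application of Lemma \ref{key lemma appendix}. The hypothesis $\psi(\varpi_w) = -\psi(\varpi_{\bar{w}})$ forces $v_p(\psi(\varpi_w)) = v_p(\psi(\varpi_{\bar{w}}))$, since $-1$ is a root of unity and hence has $p$-adic valuation zero.

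By Lemma \ref{key lemma appendix} applied to each of $w$ and $\bar{w}$, this equality becomes
\[
e_w^{-1} \sum_{\sigma \in \Sigma_w} k_\sigma = e_{\bar{w}}^{-1} \sum_{\sigma \in \Sigma_{\bar{w}}} k_\sigma.
\]
Since $v$ is assumed to split in $\wt{F}/F$ as $v = w\bar{w}$, the local extensions $\wt{F}_w/\Q_p$ and $\wt{F}_{\bar{w}}/\Q_p$ are both isomorphic to $F_v/\Q_p$ (the splitting means both completions of $\wt{F}$ at $w,\bar{w}$ coincide with $F_v$). In particular, their ramification indices agree: $e_w = e_{\bar{w}}$. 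Cancelling this common factor yields the desired identity
\[
\sum_{\sigma \in \Sigma_w} k_\sigma = \sum_{\sigma \in \Sigma_{\bar{w}}} k_\sigma.
\]

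There is no real obstacle here --- the lemma does all the work, and the only substantive input is the observation that split places have equal ramification indices on both sides. The statement is essentially extracting the numerical constraint on the Hodge--Tate weights of $\psi$ imposed by the sign relation between the two Satake-type parameters.
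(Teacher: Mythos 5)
Your proof is correct and follows the same route as the paper's: take $p$-adic valuations of the hypothesis, apply Lemma~\ref{key lemma appendix} to both $w$ and $\bar w$, and cancel the common ramification index $e_w = e_{\bar w}$. You have merely spelled out the steps slightly more explicitly (e.g.\ noting that $-1$ has valuation zero).
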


\begin{proof}
If $\psi(\varpi_w) = - \psi(\varpi_{\bar{w}})$, then we must have $v_p(\psi(\varpi_w)) = v_p( \psi(\varpi_{\bar{w}}))$. Since $e_w = e_{\bar{w}}$, the result follows directly from Lemma \ref{key lemma appendix}.
\end{proof}

Before discussing CM Hilbert modular forms, we construct examples of $\psi$ satisfying $\psi(\varpi_w) = - \psi(\varpi_{\bar{w}})$ for some $w$ and such that $k_\sigma \neq k_{\bar{\sigma}}$ for all $\sigma$ (the relevance of this condition, if not clear to the reader now, will be explained later). Let $L$ be the cyclotomic field $\Q(\mu_8)$, which we view as a subfield of $\Qbar$. For the facts about $L$ that we list below, we refer to \cite[https://www.lmfdb.org/NumberField/4.0.256.1]{lmfdb}. $L$ has class number one, is CM, Galois and biquadratic,  and its quadratic subfields are $\Q(\sqrt{2})$, $\Q(i)$ and $\Q(i\sqrt{2})$. The ring of integers of $L$ is $\oo_L=\Z[\mu_8]$ and the unit group $\oo_L^\times$ has rank one, with a fundamental unit given by $1+\sqrt{2}$ and torsion equal to $\mu_8$.

\medskip

Let $\sigma_1 \in \Gal(L/\Q(\sqrt{2}))$ and $\sigma_2 \in \Gal(L/\Q(i))$ be the non-trivial elements. Set $\sigma_0=id$ and $\sigma_3 = \sigma_1 \sigma_2 \in \Gal(L/\Q)$. Note that $\sigma_1$ is the complex conjugation of the CM field $L$. Since $L$ has class number one, we can write
\[
\A_L^\times = L^\times \wh{\oo}_L^\times L_\infty^\times.
\] 
Concretely, every $\alpha \in \A_L^\times$ can be written as $\alpha = \gamma u x$ with $\gamma \in L^\times$, $u \in \wh{\oo}_L^\times$ and $x \in L_\infty^\times$. Note that $5$ prime factorizes as $5=(2+i)(2-i)$ in $L$ and that $2+i$ and $2-i$ are complex conjugates. To try to avoid confusion, let us write $\varpi_+$ for the id\`ele that is $2+i$ at $L_{(2+i)}$ and $1$ elsewhere. Similarly, we write $\varpi_-$ for the id\`ele that is $2-i$ at $L_{(2-i)}$ and $1$ elsewhere.

\begin{lemma}\label{infinity type Hecke character}
Let $a,b \in \Z$ with $a+b = w \in 2\Z$. Set $k_0 = k_3 = a$ and $k_1 = k_2 = b$. Write $\alpha \in \A_L^\times$ as $\alpha = \gamma u x$ as above. Then
\[
\psi_{a,b}(\alpha) = \prod_{i=0}^3 \sigma_i(\gamma)^{k_i}
\]
is an algebraic character of $L$ of weight $(k_i)_{i=0}^3$. Moreover, $\psi_{a,b}(\varpi_+) = \psi_{a,b}(\varpi_-)$.
\end{lemma}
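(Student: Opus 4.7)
The plan is to verify three properties: (a) that the formula for $\psi_{a,b}$ is independent of the decomposition $\alpha = \gamma u x$; (b) that $\psi_{a,b}$ is then an algebraic character of the claimed weight; and (c) the final equality $\psi_{a,b}(\varpi_+) = \psi_{a,b}(\varpi_-)$.

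For (a), the decomposition $\alpha = \gamma u x$ is unique up to multiplying $\gamma$ by an element of $L^\times \cap \wh{\oo}_L^\times L_\infty^\times = \oo_L^\times$ (and adjusting $u, x$ correspondingly), so I need to check $\prod_{i=0}^3 \sigma_i(\eta)^{k_i} = 1$ for every $\eta \in \oo_L^\times$. Since $\oo_L^\times = \mu_8 \times \langle 1+\sqrt{2}\rangle$, it suffices to check this on the two generators. For the fundamental unit $\eta_0 = 1+\sqrt{2} \in \Q(\sqrt{2})$, the elements $\sigma_0,\sigma_1$ fix $\eta_0$ while $\sigma_2,\sigma_3$ send it to $1-\sqrt{2}$, so the product becomes $(1+\sqrt{2})^{k_0+k_1}(1-\sqrt{2})^{k_2+k_3} = ((1+\sqrt{2})(1-\sqrt{2}))^w = (-1)^w$, which equals $1$ because $w$ is even. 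For $\zeta = \zeta_8 = (1+i)/\sqrt{2}$, a direct calculation of the Galois action gives $\sigma_0(\zeta) = \zeta$, $\sigma_1(\zeta) = \zeta^{-1}$, $\sigma_2(\zeta) = -\zeta = \zeta^5$, $\sigma_3(\zeta) = \zeta^3$, so $\prod_i \sigma_i(\zeta)^{k_i} = \zeta^{4a+12b} \equiv \zeta^{4w} \pmod{\zeta^8 = 1}$, which again equals $1$ since $w$ is even.

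For (b), once well-defined, $\psi_{a,b}$ is obviously multiplicative, and taking the trivial decomposition $u = x = 1$ on $L^\times$ yields $\psi_{a,b}(\gamma) = \prod_i \sigma_i(\gamma)^{k_i}$, exhibiting the weight $(k_i)$. Openness of the kernel follows immediately because $\psi_{a,b}$ is trivial on $\wh{\oo}_L^\times$ by construction.

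For (c), I will compute both sides explicitly. For $\varpi_+$, choose the decomposition with $\gamma = 2+i$: then $\gamma^{-1}\varpi_+$ is trivial at $(2+i)$, lies in $\oo_{L,w}^\times$ at every other finite place, and contributes only to $u$ and $x$. Hence $\psi_{a,b}(\varpi_+) = \prod_i \sigma_i(2+i)^{k_i}$; using $\sigma_0(2+i) = \sigma_2(2+i) = 2+i$ and $\sigma_1(2+i) = \sigma_3(2+i) = 2-i$, this equals $(2+i)^{k_0+k_2}(2-i)^{k_1+k_3} = (2+i)^w(2-i)^w = 5^w$ since $k_0+k_2 = k_1+k_3 = w$. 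The same computation with $\gamma = 2-i$ gives $\psi_{a,b}(\varpi_-) = 5^w$ as well. The main (modest) obstacle is the well-definedness check at $\zeta_8$, where the symmetry $k_0=k_3$, $k_1=k_2$ and the parity of $w$ combine just right to force the exponent of $\zeta$ to lie in $8\Z$; everything else is either formal or a short direct computation.
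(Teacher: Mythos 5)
Your argument is correct and follows exactly the route the paper's own proof sketches: reduce well-definedness to the generators $1+\sqrt{2}$ and $\zeta_8$ of $\oo_L^\times$ (using that ambiguity in the decomposition $\alpha = \gamma u x$ is precisely $\oo_L^\times$ by class number one), check the invariance using the parity of $w$ and the specific choice of weights, and then compute $\psi_{a,b}(\varpi_\pm)$ directly from the decomposition with $\gamma = 2\pm i$; you have simply written out the "short straightforward calculation" that the paper leaves to the reader, and all the Galois-action computations ($\sigma_i(\zeta_8)$, $\sigma_i(2\pm i)$) are right.
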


\begin{proof}
The first statement is clear by definition as long as $\psi_{a,b}$ is well defined (note that the kernel contains $\wh{\oo}_L^\times L_\infty^\times$). If we write $\alpha$ in a different way as $\alpha = \delta v y$ with $\delta \in L^\times$, $v \in \wh{\oo}_L^\times$ and $y \in L_\infty^\times$, then $\gamma = \delta \omega$ for some $\omega \in \oo_L^\times$, so we need to check that $\prod_{i=0}^3 \sigma_i(\omega)^{k_i} = 1$ for all $\omega \in \oo_L^\times$. This can be checked on generators, so it is enough to check it for $\omega = (1+i)/\sqrt{2}$ and $\omega = 1+\sqrt{2}$, which is a short straightforward calculation (using that $w$ is even). Similarly, a straightforward computation gives that $\psi_{a,b}(\vp_+) = \psi_{a,b}(\vp_-)=5^w$.
\end{proof}

We then have the following corollary.

\begin{corollary}\label{hecke character with av=0}
Let $a,b \in \Z$ with $a+b = w \in 2\Z$ and set $k_0 = k_3 = a$ and $k_1 = k_2 = b$. Then there exists an algebraic character $\psi$ of $L$ of weight $(k_i)_{i=0}^3$ which is unramified at $2+i$ and $2-i$ and such that  $\psi(\vp_+) = - \psi(\vp_-)$.
\end{corollary}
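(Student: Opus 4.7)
The plan is to start from the character $\psi_{a,b}$ constructed in Lemma \ref{infinity type Hecke character} and correct the relation $\psi_{a,b}(\varpi_+)=\psi_{a,b}(\varpi_-)=5^w$ by multiplying by a quadratic finite-order Hecke character $\chi$ of $L$ that is unramified at $\varpi_+$ and $\varpi_-$ and satisfies $\chi(\varpi_+)=-1$ and $\chi(\varpi_-)=+1$. Since any finite-order Hecke character is algebraic of weight $(0,0,0,0)$ and is automatically trivial on $\wh{\oo}_L^\times$ at places where it is unramified, the product $\psi:=\psi_{a,b}\cdot\chi$ will still be algebraic of weight $(k_0,k_1,k_2,k_3)$, still unramified at $\varpi_\pm$, and will satisfy $\psi(\varpi_+)=-5^w$ and $\psi(\varpi_-)=5^w$, as required.

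To build $\chi$, we will use class field theory: a quadratic character of $\A_L^\times/L^\times$ corresponds to a quadratic extension $M/L$, and the desired conditions on $\chi$ translate into the requirement that $M/L$ be unramified at $\varpi_+$ and $\varpi_-$, with $\varpi_+$ inert and $\varpi_-$ split. The construction I would use is $M=L(\sqrt{u})$ for a well-chosen $u\in\oo_L$: the residue fields at $\varpi_+$ and $\varpi_-$ are both $\F_{25}$, and by the Chinese Remainder Theorem applied to the coprime primes $\varpi_+,\varpi_-$ one can find $u\in\oo_L$ with $u\equiv a\pmod{\varpi_+}$ and $u\equiv b\pmod{\varpi_-}$, where $a\in\F_{25}^\times$ is any non-square and $b\in\F_{25}^\times$ is any non-zero square. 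Such $u$ is automatically a unit at $\varpi_\pm$; hence $v_{\varpi_\pm}(u)=0$ is even, so, the residue characteristic being $5\neq 2$, the extension $L(\sqrt{u})/L$ is unramified at both $\varpi_+$ and $\varpi_-$. Moreover, $u$ is a non-square at $\varpi_+$ and a square at $\varpi_-$, so $\varpi_+$ is inert and $\varpi_-$ splits in $M/L$.

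It then remains to compute the values $\chi(\varpi_\pm)$. The character $\chi$ attached to $M/L$ sends a uniformizer $\varpi$ (at an unramified prime $\p$) to $+1$ or $-1$ according as $\p$ splits or is inert in $M/L$; equivalently, $\chi(\varpi)$ equals the Legendre symbol $\left(\frac{u}{\p}\right)$. By our choice of $u$, this gives $\chi(\varpi_+)=-1$ and $\chi(\varpi_-)=+1$, and multiplying $\psi_{a,b}$ by $\chi$ concludes the proof.

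There is no serious obstacle: the construction is entirely standard, and the only points to verify carefully are (i) that the CRT step produces an element which is globally non-zero and has the prescribed local residues (immediate since $\varpi_+\neq\varpi_-$) and (ii) the identification of $\chi(\varpi)$ with the Legendre symbol, which is the usual description of the Artin map for the extension $L(\sqrt{u})/L$ at an odd unramified prime.
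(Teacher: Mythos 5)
Your proposal is correct and takes essentially the same approach as the paper: both proofs set $\psi = \psi_{a,b}\cdot\chi$ with $\chi$ a quadratic Hecke character of $L$, unramified at $\varpi_\pm$, taking opposite signs at $\varpi_+$ and $\varpi_-$. The only difference is how $\chi$ is produced: the paper invokes a general existence theorem of Artin--Tate for a quadratic extension $L'/L$ with $\varpi_+$ split and $\varpi_-$ inert, while you construct such an extension explicitly as $L(\sqrt{u})$ with $u$ chosen by CRT to be a square mod one of the primes and a non-square mod the other, then read off $\chi(\varpi_\pm)$ from the Artin/Legendre symbol. Both are valid; your version is marginally more elementary, at the cost of needing the small checks you already flag (that $u$ is a non-square globally so $[L(\sqrt u):L]=2$, and that an odd residue characteristic with $u$ a local unit gives an unramified extension).
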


\begin{proof}
Let $L^\prime/L$ be a quadratic extension such that $2+i$ splits and $2-i$ is inert in $L^\prime$, which exists by e.g. \cite[\S 10.2, Theorem 5]{artin-tate}\footnote{Note that there is a misprint in the original version of \cite{artin-tate}, so that there are two `Theorem 5' in \S 10.2. We are citing the first one, which concerns the existence of characters with prescribed local behaviour at a finite set of places.}. If $\chi$ is the quadratic character corresponding to $L^\prime/L$, then we can view it as an algebraic character of $L$ which is trivial on $L^\times L_\infty^\times$ and satisfies $\chi(2+i)=1$ and $\chi(2-i)=-1$. Then, if $\psi_{a,b}$ is as in Lemma \ref{infinity type Hecke character}, $\psi = \chi \cdot \psi_{a,b}$ satisfies the requirements of the corollary.  
\end{proof}

We now return to Hilbert modular forms over $F$, or rather automorphic representations of $\GL_{2/F}$. By automorphic induction, every algebraic character $\psi$ of a CM extension $\wt{F}/F$ gives rise (via its associated Gr\"o{\ss}encharacter, see e.g. \cite[Theorem 2.43]{gee-mlt}) to an automorphic representation $\pi = \pi(\psi)$ of $\GL_2(\A_F)$. The weights $(k_\sigma)_\sigma$ of $\psi$ satisfy $k_\sigma \neq k_{\bar{\sigma}}$ for all $\sigma$ if and only if $\pi$ is cuspidal and cohomological. Moreover, $\pi$ is unramified at any finite place $v$ which is unramified in $\wt{F}$ and such that $\psi$ is unramified at all places above $v$. At such a place $v$, we denote by $a_v(\pi)$ be the eigenvalue of the Hecke operator $T_v$ (cf.\ Section \ref{subsec:Heckealgebras}). We then have $a_v(\pi)=0$ if $v$ is inert in $\wt{F}$, and 
\[
a_v(\pi) = \psi(\varpi_w) + \psi(\varpi_{\bar{w}})
\]
if $v$ splits as $v=w\bar{w}$, where $\varpi_w$ and $\varpi_{\bar{w}}$ are uniformizers in $\wt{F}_w$ and $\wt{F}_{\bar{w}}$, respectively. In particular, we obtain the following the result, which is what we wanted to establish in this subsection.

\begin{proposition}\label{behaviour of av at split primes}
Let $\psi$ be an algebraic character as above with weights $(k_\sigma)_\sigma$ of $\psi$ satisfying $k_\sigma \neq k_{\bar{\sigma}}$ for all $\sigma$ . Let $v|p$ and assume that $v$ splits as $v=w\bar{w}$ in $\wt{F}$, and that $\psi$ is unramified at $w$ and $\bar{w}$. Then:
\begin{enumerate}
\item If $\sum_{\sigma\in \Sigma_w} k_{\sigma} \neq \sum_{\sigma\in \Sigma_{\bar{w}}} k_{\sigma}$, then $a_v(\pi) \neq 0$. In particular, if $F_v= \Qp$, then $a_v(\pi) \neq 0$.

\smallskip

\item It can happen that $a_v(\pi)=0$.
\end{enumerate}  
\end{proposition}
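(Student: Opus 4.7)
The plan is to prove (1) by a direct valuation argument and to establish (2) by exhibiting an explicit example built from Corollary \ref{hecke character with av=0}.

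For (1), I would apply Lemma \ref{key lemma appendix} to both $\psi(\varpi_w)$ and $\psi(\varpi_{\bar w})$. Because $v$ splits in $\wt F$, the local ramification indices $e_w$ and $e_{\bar w}$ both coincide with the ramification index $e_v$ of $F_v/\Qp$, so the lemma gives $v_p(\psi(\varpi_w)) = e_v^{-1}\sum_{\sigma \in \Sigma_w}k_\sigma$ and $v_p(\psi(\varpi_{\bar w})) = e_v^{-1}\sum_{\sigma \in \Sigma_{\bar w}}k_\sigma$. The hypothesis that the two sums differ forces the two valuations to differ, and two nonzero elements of $\Qbar_p$ with distinct valuations cannot sum to zero, so $a_v(\pi) = \psi(\varpi_w) + \psi(\varpi_{\bar w}) \neq 0$. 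In the special case $F_v = \Qp$, the sets $\Sigma_w$ and $\Sigma_{\bar w}$ are singletons consisting of a pair of conjugate embeddings $\sigma, \bar\sigma$, and the standing hypothesis $k_\sigma \neq k_{\bar\sigma}$ makes the hypothesis of the first clause automatic.

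For (2), I would take $F = \Q(\sqrt 2)$ and $\wt F = L = \Q(\mu_8)$, and work with the rational prime $5$. A short elementary check (using that $2$ is not a square modulo $5$, while $-1$ is a square in $\mathbb F_{25}$) shows that $5$ is inert in $F$ and that the resulting prime $v := 5\oo_F$ splits in $\wt F/F$ as the product of the primes $(2+i)$ and $(2-i)$ used throughout \S \ref{subsec:CMHilbert}. Choosing integers $a, b$ with $a + b$ even and $a \neq b$ (for concreteness, $a = 0$, $b = 2$) and applying Corollary \ref{hecke character with av=0}, I obtain an algebraic character $\psi$ of $L$ of weight $(a, b, b, a)$ satisfying $\psi(\varpi_{(2+i)}) = -\psi(\varpi_{(2-i)})$. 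The condition $a \neq b$ is precisely the translation of $k_\sigma \neq k_{\bar\sigma}$ for every $\sigma$, so $\pi := \pi(\psi)$ is cuspidal cohomological, and the formula for $a_v(\pi)$ recalled just before the proposition yields $a_v(\pi) = \psi(\varpi_{(2+i)}) + \psi(\varpi_{(2-i)}) = 0$.

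The only mild obstacle is verifying that this $\psi$ is unramified at the two primes above $v$, as the setup of the proposition requires. This is automatic from the construction: $\psi_{a,b}$ is unramified at all finite places by definition, and the auxiliary quadratic character $\chi$ provided by Artin--Tate is cut out by an extension $L'/L$ in which $(2+i)$ splits and $(2-i)$ is inert---both unramified behaviours---so $\psi = \chi \cdot \psi_{a,b}$ has the required unramifiedness at both primes.
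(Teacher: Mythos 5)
Your argument is correct and follows essentially the same route as the paper: part (1) is the contrapositive of Corollary~\ref{condition on char for av=0} via the valuation computation of Lemma~\ref{key lemma appendix}, and part (2) invokes the example of Corollary~\ref{hecke character with av=0} with $a\neq b$. Your write-up usefully makes explicit the details the paper leaves implicit in part (2) --- that $F=\Q(\sqrt2)$, that $5$ is inert in $F$ and splits as $(2+i)(2-i)$ in $L/F$, and that the Artin--Tate character is unramified at both primes above $5$ --- and these checks are all accurate.
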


\begin{proof}
We have $a_v(\pi)=0$ if and only if $\psi(\varpi_w)= - \psi(\varpi_{\bar{w}})$, so the first part follows from Corollary \ref{condition on char for av=0} and the second part follows from the example in Corollary \ref{hecke character with av=0}, choosing $a\neq b$.
\end{proof}

\subsection{Some quotient singularities}

Here we recall some facts about a certain type of quotient singularity. Let $H=\Z / 2$, where the non-trivial element acts on $A = E [[ x_1, \dots, x_n ]]$ by sending each $x_i$ to $-x_i$. The ring of invariants $B=A^H$ is a complete local domain. Its geometry is summarized by the following proposition. 

\begin{proposition}\label{quotientsingularities}
$B$ is normal and Cohen-Macaulay. Moreover:

\begin{enumerate}
\item If $n=1$, then $B$ is regular.

\item If $n=2$, then $B$ is a complete intersection but not regular.

\item If $n$ is odd and $\geq 3$, then $B$ is $2$-Gorenstein (but not Gorenstein).

\item If $n$ is even and $\geq 4$, then $B$ is Gorenstein but not a complete intersection.
\end{enumerate}

\end{proposition}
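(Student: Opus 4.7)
The plan is to exploit the structure of $B = A^H$ as the invariant ring of $\Z/2$ acting by $-1$ on all variables. Observe first that $B$ is generated as a complete local $E$-algebra by the $\binom{n+1}{2}$ quadratic monomials $x_ix_j$, and if we write $U_{ij}$ for the corresponding abstract generators we have a presentation
\[
B \cong E\llbracket U_{ij} \mid 1 \leq i \leq j \leq n \rrbracket \big/ I_2(U),
\]
where $I_2(U)$ is the ideal of $2\times 2$ minors of the generic symmetric $n\times n$ matrix $U=(U_{ij})$. Normality is standard since invariant rings (with $|G|$ invertible) of normal rings are normal, and Cohen-Macaulayness follows from Hochster-Eagon applied to the regular local ring $A$.

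The small cases $n \leq 2$ are explicit: $B = E\llbracket x_1^2 \rrbracket$ is regular, while for $n=2$ we get $B \cong E\llbracket u,v,w\rrbracket/(uw-v^2)$, a hypersurface singularity, complete intersection but not regular, settling (1) and (2).

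For $n \geq 3$, I would settle the Gorenstein question via Watanabe's theorem: $-I_n \in \GL_n(E)$ has no fixed hyperplane, so $H$ acts on $A$ without pseudo-reflections, and $A^H$ is Gorenstein iff $H \subseteq \SL_n$, equivalently $(-1)^n = 1$, i.e.\ $n$ even. This gives the Gorenstein statement in (4) and the non-Gorenstein statement in (3). To upgrade (3) to the $2$-Gorenstein claim, I would use the standard isomorphism $\mathrm{Cl}(B) \cong \Hom(H, E^\times) \cong \Z/2$ (valid since $H$ acts on the UFD $A$ without pseudo-reflections), under which the canonical class $[\omega_B]$ is identified with the determinant character $\det|_H$. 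For $n$ odd this character is non-trivial but has order $2$, so $2[\omega_B] = 0$ in $\mathrm{Cl}(B)$, which is precisely the condition $\omega_B^{[2]} \cong B$.

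The step I expect to be most delicate is ruling out the complete intersection property for $n \geq 3$ in case (4), since Gorenstein is strictly weaker. The approach is to compare the minimal number of generators of $I_2(U)$ with the codimension $\binom{n+1}{2} - n = \binom{n}{2}$. Since $I_2(U)$ is homogeneous of degree $2$ when we place each $U_{ij}$ in degree $1$, the minimal number of generators equals the dimension of the degree-$2$ part of $I_2(U)$, which is the number of distinct $2\times 2$ minors of $U$. Accounting for the identity $\det U[I,J] = \det U[J,I]$ forced by symmetry, this count is $\binom{n}{2} + \binom{\binom{n}{2}}{2}$, strictly greater than $\binom{n}{2}$ for $n \geq 3$, so $I_2(U)$ cannot be generated by a regular sequence and $B$ is not a complete intersection.
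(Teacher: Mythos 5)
Your proof takes a genuinely different route from the paper's: where the paper computes the canonical sheaf $\omega_X$ (and $\omega_X^{\otimes 2}$) directly by gluing over the punctured spectrum, and shows non--complete-intersection by exhibiting a length-$\binom{n}{2}$ regular sequence $f_{ij}=u_iu_j-v_{ij}^2$ that fails to generate the kernel, you instead invoke Hochster--Eagon for Cohen--Macaulayness, Watanabe's theorem for the Gorenstein dichotomy, and the identification $\mathrm{Cl}(B)\cong\Hom(H,E^\times)$ with $[\omega_B]=\det|_H$ for the $2$-Gorenstein claim. These parts are all correct and arguably cleaner than the paper's hands-on computations; the Veronese presentation $B\cong E\llbracket U_{ij}\rrbracket/I_2(U)$ is also right.

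However, there is a genuine gap precisely in the step you flagged as delicate. You claim that the minimal number of generators of $I_2(U)$ equals the number of formally distinct $2\times 2$ minors of the generic symmetric matrix, namely $\binom{n}{2}+\binom{\binom{n}{2}}{2}$. The first equality (minimal generators $=\dim (I_2(U))_2$, by graded Nakayama) is fine, but the second is false: the minors of a symmetric matrix satisfy linear relations beyond the trivial $\det U[I,J]=\det U[J,I]$. Already for $n=4$ one has
\[
\det U[12,34]\;-\;\det U[13,24]\;+\;\det U[14,23]\;=\;0,
\]
as one checks directly using $U_{ij}=U_{ji}$. So for $n=4$ your count gives $21$ while the true dimension is
\[
\dim (I_2(U))_2 \;=\; \binom{\binom{n+1}{2}+1}{2}-\binom{n+3}{4}\;=\;55-35\;=\;20,
\]
and for larger $n$ the discrepancy grows (there is one such relation for each $4$-element subset of $\{1,\dots,n\}$). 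The conclusion you want, $\dim(I_2(U))_2 > \binom{n}{2}$ for $n\geq 3$, is still true and easy to extract from the correct Hilbert-function formula above (using $B_d\cong E[x]_{2d}$, so $\dim B_2=\binom{n+3}{4}$), so the argument is repairable; but as written the key numerical claim is incorrect from $n=4$ onward.
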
 

\begin{proof}
We may equivalently work with the affine form $R=E[x_1,\dots,x_n] \sub A$ and $S=R^H$ instead. That $S$ is Cohen-Macaulay and normal is a general property of quotient singularities (indeed of rational singularities). When $n=1$, $S= E[x_1^2]$ and part (1) follows. Part (2) follows from the description $S = E[x_1^2,x_1 x_2, x_2^2] \cong E[u,v,w]/(uw-v^2)$ when $n=2$.

\medskip

From now on assume $n\geq 3$. Let $U \sub \Spec S = X$ be the complement of the origin and write $j$ for the inclusion. Then $\omega_X = j_\ast(\omega_U)$. In particular, $H^0(X,\omega_X) = H^0(U, \omega_U)$. Set $U_i = X \setminus \{ x_i = 0 \}$, then $U$ is the union of the $U_i$ and $H^0(U_i,\omega_{U})$ consists of all top forms
\[
\frac{f(x_1,\dots,x_n)}{x_i^b} \, dx_1 \wedge \dots \wedge dx_n,
\]
where the degrees of all terms in $f\in R$ have the same parity $d$, and $d-b \equiv n$ modulo $2$. Gluing, we see that $H^0(X,\omega_X)$ consists of the top forms
\[
g(x_1,\dots,x_n) \, dx_1 \wedge \dots \wedge dx_n
\]
where all terms in $g \in R$ have the same parity as $n$. In particular, $\omega_X$ is free of rank $1$ when $n$ is even, so $S$ is Gorenstein, but when $n$ is odd the top forms $x_i \, dx_1 \wedge \dots \wedge dx_n$, $i=1,\dots,n$, form a minimal set of generators of $\omega_X$ in any neighbourhood of $0$, so $S$ is then not Gorenstein. A similar calculation replacing $\omega_X$ by $\omega_X^{\otimes 2}$ shows that $S$ is $2$-Gorenstein when $n$ is odd.

\medskip

It remains to show that $B$ is not a complete intersection when $n$ is even. Let $T$ be the polynomial ring $T=E[u_{i},v_{jk}]$ with $i=1,\dots,n$ and $1\leq j < k \leq n$. $T$ surjects onto $S$ by sending $u_i$ to $x_i^2$ and $v_{jk}$ to $x_j x_k$. Let $I$ be the kernel of this surjection. Consider the elements $f_{ij} = u_i u_j - v_{ij}^2 \in I$. We claim that the $f_{ij}$ form a regular sequence, but that they do not generate $I$. Comparing the dimensions of $S$ and $T$ and the length of that sequence, this would imply that $S$ is not a complete intersection (by \cite[Tag 09PZ]{sta}). Set $Z_{ij} = \{ f_{ij} = 0 \} \sub \Spec T$. To show that the $f_{ij}$ form a regular sequence, it suffices to show, for each $i < j$, that no irreducible component of 
\[
Y_{ij} = \bigcap_{(a,b) \neq (i,j)} Z_{ab}
\]
is contained in $Z_{ij}$. Let $x \in Z_{ij}\cap Y_{ij}$ be any point. By deforming the $v_{ij}$-coordinate of $x$ but not the other coordinates, we see that we stay in $Y_{ij}$ but move out of $Z_{ij}$. This shows that no component of $Y_{ij}$ is contained in $Z_{ij}$, as desired, and finishes the proof that the $f_{ij}$ form a regular sequence. It remains to show that they do not generate $I$. To see this, consider, for example, the element $v_{12}v_{23} - u_2 v_{13} \in I$. If 
\[
v_{12}v_{23} - u_2 v_{13} = \sum_{i < j} g_{ij}f_{ij}
\] 
for some $g_{ij} \in T$, then we would obtain a contradiction by setting $u_i =1$ for all $i$ and $v_{jk} = -1$ for all $j < k$. This finishes the proof.
\end{proof}

\subsection{$p$-adic geometry}  Here we record a well known result about the local geometry of rigid analytic spaces, which we have not found in the literature in the form we need. To start, let us explicitly record the following (even more well known) proposition; see e.g. \cite[Chapter 4, Proposition 2]{bosch}.

\begin{proposition}
Let $X = \Spa(A)$ be an affinoid rigid space and let $x\in X$ be a point corresponding to a maximal ideal $\m$. Then the natural map $A \to \oo_{X,x}$ induces an isomorphism $\wh{A}_{\m} \to \wh{\oo}_{X,x}$.
\end{proposition}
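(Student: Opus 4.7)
The strategy is to factor $A \to \oo_{X,x}$ through the localization $A_\m$ and then identify the two $\m$-adic completions. First I would check that any $f \in A \setminus \m$ becomes a unit in $\oo_{X,x}$: since $f(x) \neq 0$, one can choose $c \in E^\times$ with $|c| < |f(x)|$, and then $f$ is invertible on the Laurent neighbourhood $U_f = \{|f| \geq |c|\} \ni x$, hence in the stalk. This yields the natural map $A_\m \to \oo_{X,x}$, and it suffices to prove that the induced map on $\m$-adic completions is an isomorphism.

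Second, since $A/\m^n$ is already a local ring (with maximal ideal $\m/\m^n$), the natural map $A/\m^n \to A_\m/\m^n A_\m$ is an isomorphism, so taking inverse limits reduces the proposition to showing that for each $n \geq 1$ the natural map
\[
A/\m^n \longrightarrow \oo_{X,x}/\m^n \oo_{X,x}
\]
is an isomorphism. Since $\oo_{X,x} = \varinjlim_U \oo_X(U)$ over rational neighbourhoods $U = \Spa(B_U)$ of $x$, and since filtered colimits commute with taking quotients modulo $\m^n$, this in turn reduces to proving that for all sufficiently small $U$, the map
\[
A/\m^n \longrightarrow B_U/\m^n B_U
\]
is an isomorphism.

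Third, I would establish this last claim in two steps. Injectivity follows from the flatness of the localisation homomorphism $A \to B_U$ for rational opens (a standard fact in the theory of affinoid algebras), which gives $\m^n B_U \cap A = \m^n$. For surjectivity, I would use that the vanishing locus $V(\m) \subset X$ is the single point $\{x\}$; by Noetherianity of $A$ and $B_U$, $\m B_U$ is therefore $\m_{B_U}$-primary, where $\m_{B_U}$ denotes the maximal ideal of $B_U$ corresponding to $x$. Hence $B_U/\m^n B_U$ is a finite-length Artinian local $E$-algebra with residue field $k(x)$. By choosing $U$ to be a small enough Laurent neighbourhood cut out by generators $f_1, \dots, f_r$ of $\m$, one arranges that $\m_{B_U} = \m B_U$, so that $B_U/\m^n B_U$ is generated over $E$ (equivalently, over $A/\m^n$) by the images of $f_1, \dots, f_r$, i.e. by elements of $A/\m^n$. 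Together with injectivity this gives the desired isomorphism.

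The main obstacle is the surjectivity in the third step: one must verify, for a suitable shrinking of $U$, that no new elements are introduced in $B_U/\m^n B_U$ beyond those already in $A/\m^n$. Equivalently, the map $A \to B_U$ must become an isomorphism after modding out by an $\m_{B_U}$-primary ideal of appropriate size. This is essentially the statement that the $\m$-adic infinitesimal neighbourhood of $x$ in $X$ is intrinsic to the (classical) point $x$ and does not see the global structure of $A$; making this precise requires combining flatness with a careful choice of rational neighbourhood cut out by generators of $\m$, using the Noetherianity of affinoid algebras to control $\m$-primary decompositions uniformly in $n$.
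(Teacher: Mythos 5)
The paper does not prove this proposition; it cites it to \cite[Chapter 4, Proposition 2]{bosch}. Your proof reconstructs that standard argument, and its structure is correct: reduce to showing that $A/\m^n \to B_U/\m^n B_U$ is an isomorphism for a cofinal family of affinoid neighbourhoods $U$ of $x$, then combine flatness (for injectivity) with an explicit computation on Laurent domains (for surjectivity). Two points deserve sharpening. First, flatness of $A \to B_U$ alone does not yield $\m^n B_U \cap A = \m^n$; you must also use that $x \in U$, so that $\m B_U \neq B_U$, which makes $A/\m^n \to B_U/\m^n B_U$ a flat map to a nonzero module over the Artinian local ring $A/\m^n$, hence faithfully flat and therefore injective. (Equivalently, one can appeal to the universal property of affinoid subdomains: $\Sp(A/\m^n) = \{x\}$ lies in $U$, so $A \to A/\m^n$ factors through $B_U$, producing a retraction of $A/\m^n \to B_U/\m^n B_U$.) Second, for the colimit defining the stalk, the passage from the chosen Weierstrass/Laurent domains $\{|f_i| \leq |c|\}$ (with $f_1,\dots,f_r$ generating $\m$ and $c \to 0$) to arbitrary affinoid neighbourhoods requires that these domains form a cofinal system of neighbourhoods of $x$; this is true and standard, but should be stated or cited rather than left implicit. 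Granting these, your computation that $\m B_U$ equals the maximal ideal $\m_{B_U}$ on such a domain, so that $B_U/\m^n B_U$ is generated over $A/\m^n$ by the images of the $f_i$, is precisely the decisive step of the argument in the reference, and your proof is sound.
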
 

\medskip

We will make use of this in the paper without further mention, and we also use the following result.

\begin{proposition}\label{local isom}
Let $f : X \to Y$ be a morphism of rigid analytic varieties over a non-archimedean field $K$, and let $x\in X$ be a $K$-point with image $f(x)=y \in Y$. Assume that $f$ is locally quasi-finite at $x$. Then the following are equivalent:
\begin{enumerate}
\item $f$ is a local isomorphism at $x$.

\item $f$ is \'etale at $x$.

\item $f$ induces an isomorphism $\wh{\oo}_{Y,y} \cong \wh{\oo}_{X,x}$.
\end{enumerate}
\end{proposition}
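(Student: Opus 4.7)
The plan is to first reduce to a clean local algebraic situation and then verify each implication using standard commutative algebra. Since $f$ is locally quasi-finite at $x$, by the rigid-analytic analogue of Zariski's Main Theorem, after shrinking to affinoid neighborhoods we may assume $f$ comes from a finite ring map $B \to A$ of affinoid $K$-algebras where $x$ is the unique point of $\Spa(A)$ lying over $y \in \Spa(B)$. Writing $\m_x \subset A$ and $\m_y \subset B$ for the corresponding maximal ideals, we then have $\wh{\oo}_{X,x} = \wh{A}_{\m_x}$ and $\wh{\oo}_{Y,y} = \wh{B}_{\m_y}$, and the completion map $B_{\m_y} \to \wh{B}_{\m_y}$ is faithfully flat. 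Because $x$ is the unique preimage of $y$ and the fibre is supported at a $K$-point, the finiteness of $A$ over $B$ gives $A \otimes_B \wh{B}_{\m_y} = \wh{A}_{\m_x}$.

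For $(1) \Rightarrow (2)$ there is nothing to do, since local isomorphisms are \'etale. For $(2) \Rightarrow (3)$, the hypothesis gives that $\wh{\oo}_{Y,y} \to \wh{\oo}_{X,x}$ is a formally \'etale morphism of complete local Noetherian rings; since $x$ and $y$ are $K$-points, the induced map of residue fields is the identity $K \to K$, and a formally \'etale local homomorphism of complete local Noetherian rings inducing an isomorphism on residue fields is automatically an isomorphism.

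The main content is $(3) \Rightarrow (1)$. Let $L := \Ker(B \to A)$ and $C := \Coker(B \to A)$, viewed as finitely generated $B$-modules. Tensoring the exact sequence $0 \to L \to B \to A \to C \to 0$ with the flat $B$-module $\wh{B}_{\m_y}$ and using the identification $A \otimes_B \wh{B}_{\m_y} = \wh{A}_{\m_x}$, the hypothesis that $\wh{B}_{\m_y} \to \wh{A}_{\m_x}$ is an isomorphism yields $L \otimes_B \wh{B}_{\m_y} = 0$ and $C \otimes_B \wh{B}_{\m_y} = 0$. By faithful flatness of $B_{\m_y} \to \wh{B}_{\m_y}$ we get $L_{\m_y} = 0 = C_{\m_y}$, so $y$ is not in the supports of $L$ and $C$. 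These supports are closed in $\Spa(B)$, so after shrinking $\Spa(B)$ (and correspondingly $\Spa(A)$) to a smaller affinoid neighborhood of $y$ avoiding them, the map $B \to A$ becomes an isomorphism, proving that $f$ is a local isomorphism at $x$.

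The only nontrivial input is the Zariski Main Theorem reduction in the first paragraph, which is the main obstacle; once it is in place the remainder is formal commutative algebra exploiting faithful flatness of completion and the fact that $x$ and $y$ are $K$-points.
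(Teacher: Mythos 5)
Your proof is correct and handles the key implication $(3)\Rightarrow(1)$ by a genuinely different route from the paper's. Both proofs begin with the same reduction via Huber's form of Zariski's Main Theorem to an affinoid situation where $f$ corresponds to a finite map $B \to A$ with $x$ the only point above $y$, but from there the arguments diverge. The paper first observes that $(3)$ forces $\oo_{Y,y} \to \oo_{X,x}$ to be flat (descending flatness along the faithfully flat completion maps), invokes openness of the flat locus to make $f$ flat on a neighbourhood, and then concludes by a rank-one counting argument: $B \otimes_A \wh{\oo}_{Y,y} \cong \wh{\oo}_{X,x} \cong \wh{\oo}_{Y,y}$ shows $B$ is finite flat of rank one over $A$, hence $A \to B$ is an isomorphism. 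You instead bypass any discussion of flatness or rank of $f$ itself: you tensor the four-term exact sequence $0 \to L \to B \to A \to C \to 0$ with the (always) flat module $\wh{B}_{\m_y}$, use the identification $A \otimes_B \wh{B}_{\m_y} = \wh{A}_{\m_x}$ together with hypothesis $(3)$ to kill $L \otimes_B \wh{B}_{\m_y}$ and $C \otimes_B \wh{B}_{\m_y}$, descend along the faithfully flat map $B_{\m_y} \to \wh{B}_{\m_y}$ to get $L_{\m_y} = C_{\m_y} = 0$, and then shrink away from the (closed) supports of the finitely generated modules $L$ and $C$. This buys a slightly more elementary argument: it avoids the theorem that the flat locus of a rigid-analytic morphism is open and replaces the rank-one bookkeeping with the cleaner observation that the kernel and cokernel of $B \to A$ vanish in a Zariski neighbourhood of $y$. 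Your treatment of $(2)\Rightarrow(3)$ is also slightly more detailed than the paper's one-line dismissal, but both are standard. The one place to be a little careful in a write-up is the first sentence: after applying ZMT to factor $f$ near $x$ as an open immersion into a finite morphism, one still needs a brief topological argument (as the paper notes) to shrink so that $x$ is the \emph{unique} preimage of $y$; you package this into the reduction step, which is fine but worth saying explicitly.
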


\begin{proof}
That (1) implies (2) is trivial, and that (2) implies (3) is clear since $f$ induces an isomorphism on residue fields (since $x$ is defined over the base field $K$). It remains to prove that (3) implies (1). 

\medskip

First, note that $\oo_{Y,y} \to \oo_{X,x}$ is flat since it induces an isomorphism on completions and the local rings $\oo_{X,x}$ and $\oo_{Y,y}$ are Noetherian (using that completion of Noetherian local rings is faithfully flat). In other words, $f$ is flat at $x$. Since flatness is an open property, we may shrink $X$ to ensure that $f$ is flat. Second, using \cite[Proposition 1.5.4]{huber-book}, we may assume that $X$ and $Y$ are affinoid and $f$ is finite. Since the subset of rank $1$ points in $X$ is Hausdorff and $f^{-1}(y)$ is finite, a standard topology argument allows us to shrink $X$ and $Y$ so that $f^{-1}(y) = \{ x \}$. Let $A$ and $B$ be the affinoid $K$-algebras so that $X=\Spa B$ and $Y=\Spa A$, and consider $f^\ast : A \to B$. Then
\[
B \otimes_A \wh{\oo}_{Y,y} \cong \prod_{x^\prime \in f^{-1}(y)} \wh{\oo}_{X,x^\prime} = \wh{\oo}_{X,x},
\]
so $f^\ast$ has rank $1$ (since it has so after tensoring with $\wh{\oo}_{Y,y}$) and is therefore an isomorphism, as desired. This finishes the proof.
\end{proof}

\bibliography{endoscopybib}
\bibliographystyle{alpha}

\end{document}